\newtheorem{theorem}{Theorem}[section]
\newtheorem{proposition}[theorem]{Proposition}
\newtheorem{lemma}[theorem]{Lemma}
\newtheorem{corollary}[theorem]{Corollary}
\theoremstyle{definition}
\theoremstyle{remark}
\newtheorem{remark}[theorem]{Remark}
\DeclareRobustCommand{\SkipTocEntry}[5]{}
\newcommand{\1}{\mathbbm{1}}
\renewcommand{\epsilon}{\varepsilon}
\newcommand{\Haus}{\mathcal{H}}
\newcommand{\loc}{{\rm loc}}
\newcommand{\N}{\mathbb{N}}
\renewcommand{\phi}{\varphi}
\newcommand{\R}{\mathbb{R}}
\newcommand{\Z}{\mathbb{Z}}
\DeclareMathOperator{\dist}{dist}
\DeclareMathOperator{\dom}{dom}
\DeclareMathOperator{\supp}{supp}
\DeclareMathOperator{\sgn}{sgn}
\DeclareMathOperator{\Tr}{Tr}
\newcommand{\limplus}{{\mathchoice{\vcenter{\hbox{$\scriptstyle +$}}}
		{\vcenter{\hbox{$\scriptstyle +$}}}
		{\vcenter{\hbox{$\scriptscriptstyle +$}}}
		{\vcenter{\hbox{$\scriptscriptstyle +$}}}
}}
\newcommand{\limminus}{{\mathchoice{\vcenter{\hbox{$\scriptstyle -$}}}
		{\vcenter{\hbox{$\scriptstyle -$}}}
		{\vcenter{\hbox{$\scriptscriptstyle -$}}}
		{\vcenter{\hbox{$\scriptscriptstyle -$}}}
}}
\begin{document}

\title[Riesz means asymptotics on Lipschitz domains]{Riesz means asymptotics for Dirichlet and Neumann Laplacians on Lipschitz domains}

\author{Rupert L. Frank}
\address[Rupert L. Frank]{Mathe\-matisches Institut, Ludwig-Maximilians Universit\"at M\"unchen, The\-resienstr.~39, 80333 M\"unchen, Germany, and Munich Center for Quantum Science and Technology, Schel\-ling\-str.~4, 80799 M\"unchen, Germany, and Mathematics 253-37, Caltech, Pasa\-de\-na, CA 91125, USA}
\email{r.frank@lmu.de}

\author{Simon Larson}
\address{\textnormal{(Simon Larson)} Department of Mathematical Sciences, Chalmers University of Technology and the University of Gothenburg, SE-41296 Gothenburg, Sweden}
\email{larsons@chalmers.se}


\thanks{\copyright\, 2025 by the authors. This paper may be reproduced, in its entirety, for non-commercial purposes.\\
	Partial support through US National Science Foundation grant DMS-1954995 (R.L.F.), as well as through German Research Foundation grants EXC-2111-390814868 and TRR 352-Project-ID 470903074 (R.L.F.), the Knut and Alice Wallenberg foundation grant KAW 2017.0295 (S.L.), as well as the Swedish Research Council grant no.~2023-03985 (S.L.) is acknowledged.}

\begin{abstract}
    We consider the eigenvalues of the Dirichlet and Neumann Laplacians on a bounded domain with Lipschitz boundary and prove two-term asymptotics for their Riesz means of arbitrary positive order. Moreover, when the underlying domain is convex, we obtain universal, non-asymptotic bounds that correctly reproduce the two leading terms in the asymptotics and depend on the domain only through simple geometric characteristics. Important ingredients in our proof are non-asymptotic versions of various Tauberian theorems.
\end{abstract}

\maketitle

\setcounter{tocdepth}{1}

\tableofcontents

\phantomsection
\addcontentsline{toc}{part}{Introduction}

\section{Introduction and main results}

A famous theorem of Weyl from 1911 describes the asymptotic behaviour of eigenvalues of the Laplacian \cite{Weyl_11}. Weyl's motivation came, at least in part, from a question related to black-body radiation \cite{Lorentz,Sommerfeld}, but this result and its ramifications continue to be of fundamental importance in the spectral theory of differential operators and in mathematical physics. Various macroscopic theories in physics, including Thomas--Fermi theory in atomic physics \cite{LiebSimon_77} and Ginzburg--Landau theory in the physics of superconductivity \cite{FrHaSeSo}, have been rigorously understood on the basis of Weyl's law and the corresponding semiclassical analysis. Recent applications include the study of the area law for the entanglement entropy of a free Fermi gas and its logarithmic violation \cite{Sob,LeSoSp}.

In the present paper we revisit the original problem studied by Weyl, namely the asymptotic distribution of Laplace eigenvalues. Our focus lies on assuming rather minimal regularity of the boundary of the underlying domain and on keeping track of the geometric dependence of the error terms. One motivation for doing this comes from a certain spectral shape optimization problem, which we describe in more detail later in this introduction. Our methods are also in the spirit of semiclassics at low regularity, which is fundamental when using such techniques in the study of quantum many-body systems, for instance in the derivation of Thomas--Fermi and Ginzburg--Landau theory mentioned before.

Let us be more specific. For an open set $\Omega\subset\R^d$ we denote by $-\Delta_\Omega^{\rm D}$ and $-\Delta_\Omega^{\rm N}$ the Dirichlet and Neumann realizations of the Laplacian in $\Omega$, defined using the method of quadratic forms; see, e.g., \cite[Section 3.1]{LTbook}. We write $-\Delta_\Omega^\#$ when we make statements that refer to either of these operators. Assuming that $-\Delta_\Omega^\#$ has discrete spectrum, its eigenvalues in nondecreasing order and repeated according to multiplicities are denoted by $\lambda_n(-\Delta_\Omega^\#)$, $n\in\N=\{1, 2, 3, \ldots\}$. Then Weyl's law states that, as $n\to\infty$,
\begin{equation}
    \label{eq:weylintro}
    \lambda_n(-\Delta_\Omega^\#) = (L_{0,d}^{\rm sc})^{-\frac{2}d} |\Omega|^{-\frac{2}d} n^{\frac{2}d} + o(n^{\frac{2}d})
\end{equation}
with a certain constant $L_{0,d}^{\rm sc}$ depending only on $d$; see \eqref{eq:ltconst} below for its explicit expression. It is remarkable and one of the reasons behind the ubiquity of this law that the leading term in the asymptotics depends on $\Omega$ only through its measure and not through the details of its shape. In the Dirichlet case the asymptotics \eqref{eq:weylintro} are valid under the sole assumption that $\Omega\subset\R^d$ is open with finite measure, as shown by Rozenblum \cite{Rozenblum_Weyl_72}. In the Neumann case, a sufficient condition for the validity is that $\Omega\subset\R^d$ is a bounded open set with the extension property (whose definition we recall before Proposition \ref{prop:rieszptwbgn} below). For proofs and references, see, e.g., \cite[Sections 3.2 and 3.3]{LTbook}. 

As is well known, the proof of Weyl asymptotics and also the formulation of their finer properties become more natural in terms of the eigenvalue counting function, defined by 
$$
N(\lambda,-\Delta_\Omega^\#) := \#\{ n:\ \lambda_n(-\Delta_\Omega^\#) <\lambda \}
\qquad\text{for} \ \lambda\geq 0 \,.
$$
In terms of this function, the asymptotics \eqref{eq:weylintro} can be equivalently stated as
\begin{equation}
    \label{eq:weylintro2}
    N(\lambda,-\Delta_\Omega^\#) = L_{0,d}^{\rm sc} |\Omega| \lambda^{\frac d2} + o(\lambda^{\frac d2}) 
    \qquad\text{as}\ \lambda\to\infty \,.
\end{equation}

It is natural to ask whether the error term $o(\lambda^{\frac{d}2})$ can be improved. This was achieved by Seeley \cite{Seeley78,Seeley80}, who showed that for domains $\Omega$ with smooth boundary one has
\begin{equation}
    \label{eq:seeleyintro}
    N(\lambda,-\Delta_\Omega^{\rm D}) = L_{0,d}^{\rm sc} |\Omega| \lambda^{\frac d2} + O(\lambda^{\frac{d-1}2}) \,.
\end{equation}
This remains valid for Neumann boundary conditions. Ivrii \cite{Ivrii80} has shown that, if apart from the smoothness of the boundary a certain dynamical condition on $\Omega$ holds, then one has a two-term asymptotic expansion
\begin{equation}\label{eq:ivrii}
\begin{aligned}
    N(\lambda,-\Delta_\Omega^{\rm D}) = L_{0,d}^{\rm sc} |\Omega| \lambda^{\frac d2} - \frac14 L_{0,d-1}^{\rm sc} \mathcal H^{d-1}(\partial\Omega) \lambda^{\frac{d-1}2} + o(\lambda^{\frac{d-1}2}) \,, \\
    N(\lambda,-\Delta_\Omega^{\rm N}) = L_{0,d}^{\rm sc} |\Omega| \lambda^{\frac d2} + \frac14 L_{0,d-1}^{\rm sc} \mathcal H^{d-1}(\partial\Omega) \lambda^{\frac{d-1}2} + o(\lambda^{\frac{d-1}2}) \,.
\end{aligned}
\end{equation}
Here $\mathcal H^{d-1}(\partial\Omega)$ denotes the surface measure of the (smooth) boundary $\partial\Omega$ of $\Omega$ and we emphasize that, while the leading-order terms coincide, there is a difference in the signs of the second terms in the asymptotics. We do not recall the definition of Ivrii's dynamical condition, as we will not need it in what follows, but we note that it is believed to be satisfied for all open sets $\Omega$ with smooth boundary. This conjecture has been verified in a very limited number of cases, but remains open in general. For textbook presentations of \eqref{eq:seeleyintro} and \eqref{eq:ivrii} we refer to \cite[Section 17.5]{Hormander_bookIII} and \cite{SafarovVassiliev_book}. In \cite{BronsteinIvrii_03,Ivrii_03} Bronstein and Ivrii claim that, still imposing the dynamical condition, the regularity assumption on $\partial\Omega$ for \eqref{eq:ivrii} to hold can be reduced to a $C^1$ Dini condition.

Our results in this paper concern open sets $\Omega$ that have only a rather minimal amount of smoothness, namely we only assume that their boundary is Lipschitz. Among classical smoothness assumptions on the boundary this seems to be quite optimal to have the second term $\mathcal H^{d-1}(\partial\Omega)$ well defined. For a textbook definition of $\mathcal H^{d-1}$ as well as its properties, the reader is referred to \cite[Chapter 2]{EvansGariepy}. We also emphasize that Lipschitz boundaries can be quite rough and may exhibit fractal behaviour.

An informal statement of our first main result is that asymptotics \eqref{eq:ivrii} remain valid for Lipschitz domains, provided there is a tiny amount of averaging with respect to the spectral parameter $\lambda$. No dynamical condition is needed. The latter may have been known to experts in the area when the boundary is smooth, but we have not found a corresponding assertion in the literature.

By `averaging with respect to spectral parameter' we mean that we consider Riesz means of order $\gamma>0$, defined by
$$
\sum_n ( \lambda_n(-\Delta_\Omega^\#) - \lambda )_\limminus^\gamma =
\Tr(-\Delta_\Omega^\# -\lambda)_\limminus^\gamma = \gamma \int_0^\lambda (\lambda-\mu)^{\gamma-1} N(\mu,-\Delta_\Omega^\#)\,d\mu \,. 
$$
Here and in what follows, we use the notation $x_\pm = \frac{1}{2}(|x|\pm x)$. Note that $\mu\mapsto\gamma\lambda^{-1} (1-\mu/\lambda)^{\gamma-1}_\limplus$ is a probability density, which explains why Riesz means are certain averages (or means) of the eigenvalue counting function $N(\mu,-\Delta_\Omega^\#)$. We also note that as $\gamma$ decreases to zero, the probability density concentrates more and more on the right endpoint $\mu=\lambda$ and converges in the sense of measures to $\delta(\mu-\lambda)$. In this sense, the amount of averaging decreases as $\gamma$ tends to zero. By saying that our results will be valid `with a tiny amount of averaging' we mean that they are valid for Riesz means $\Tr(-\Delta_\Omega^\# -\lambda)_\limminus^\gamma$ of arbitrarily small order $\gamma>0$.

Riesz means of eigenvalues of the Dirichlet and Neumann Laplacian, and also of more general operators, are a classical object in spectral theory and have been studied, for instance, in \cite{HormanderRieszMeans}. In the context of Schr\"odinger operators they gained popularity in the context of Lieb--Thirring inequalities \cite{LiebThirring_76}.

It follows easily from \eqref{eq:weylintro2} that for any $\gamma>0$ we have
$$
\Tr(-\Delta_\Omega^\# -\lambda)_\limminus^\gamma
= L_{\gamma,d}^{\rm sc} |\Omega| \lambda^{\gamma+ \frac d2} + o(\lambda^{\gamma+ \frac d2})
\qquad\text{as}\ \lambda\to\infty
$$
with a certain constant $L_{\gamma,d}^{\rm sc}$ depending only on $\gamma$ and $d$; see \eqref{eq:ltconst} below for its explicit expression. Similarly, \eqref{eq:seeleyintro} and \eqref{eq:ivrii} imply corresponding variants for $\gamma>0$, under the same assumptions for which \eqref{eq:seeleyintro} and \eqref{eq:ivrii} are valid. In contrast, in this paper we will show that the Riesz means asymptotics corresponding to \eqref{eq:ivrii}, and consequently also the Riesz means analogue of \eqref{eq:seeleyintro}, are valid under much weaker assumptions than \eqref{eq:ivrii} and~\eqref{eq:seeleyintro}.

In what follows we restrict ourselves to dimensions $d\geq 2$, as in the one-dimensional case explicit formulas are available.

The following is our first main result.

\begin{theorem}\label{thm: Weyl asymptotics Lipschitz}
	Let $d\geq 2$, $\gamma>0$ and let $\Omega\subset\R^d$ be a bounded open set with Lipschitz boundary. Then, as $\lambda\to\infty$,
	\begin{equation}
		\label{eq:maind}
		\Tr(-\Delta_\Omega^{\rm D} -\lambda)_\limminus^\gamma = L_{\gamma,d}^{\rm sc} |\Omega| \lambda^{\gamma+\frac d2} - \frac14 L_{\gamma,d-1}^{\rm sc} \mathcal H^{d-1}(\partial\Omega) \lambda^{\gamma+\frac{d-1}{2}} + o(\lambda^{\gamma+\frac{d-1}{2}})
	\end{equation}
	and
	\begin{equation}
		\label{eq:mainn}
		\Tr(-\Delta_\Omega^{\rm N} -\lambda)_\limminus^\gamma = L_{\gamma,d}^{\rm sc} |\Omega| \lambda^{\gamma+\frac d2} + \frac14 L_{\gamma,d-1}^{\rm sc} \mathcal H^{d-1}(\partial\Omega) \lambda^{\gamma+\frac{d-1}{2}} + o(\lambda^{\gamma+\frac{d-1}{2}}) \,.
	\end{equation}
\end{theorem}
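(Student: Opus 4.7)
My approach would be to reduce the statement to a two-term short-time expansion of the heat trace and then apply a quantitative Tauberian theorem. Concretely, I would aim to establish, under the Lipschitz assumption on $\partial\Omega$, that
\begin{equation*}
\Tr e^{t\Delta_\Omega^\#} = (4\pi t)^{-\frac{d}{2}}\,|\Omega| \mp \tfrac14 (4\pi t)^{-\frac{d-1}{2}}\,\mathcal H^{d-1}(\partial\Omega) + o\bigl(t^{-\frac{d-1}{2}}\bigr) \qquad (t\to 0^+),
\end{equation*}
with the minus sign in the Dirichlet case and the plus sign in the Neumann case. Combined with the identity
\begin{equation*}
\Gamma(\gamma+1)\, t^{-\gamma-1}\,\Tr e^{t\Delta_\Omega^\#} = \int_0^\infty e^{-t\lambda}\,\Tr(-\Delta_\Omega^\# - \lambda)_\limminus^\gamma\,d\lambda,
\end{equation*}
this would let me extract the asymptotics \eqref{eq:maind} and \eqref{eq:mainn} by a Tauberian inversion sharp enough to transfer both terms together with the remainder.

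For the heat trace expansion I would localize by a finite partition of unity subordinate to a Lipschitz atlas of $\partial\Omega$. On balls entirely contained in $\Omega$ the heat kernel equals, modulo exponentially small corrections, the free Euclidean heat kernel, and this yields the bulk term. On each boundary patch, a bi-Lipschitz chart straightens $\partial\Omega$ and reduces the task to estimating the heat trace on a half-space with respect to a merely $L^\infty$ but uniformly elliptic Riemannian metric, with Dirichlet or Neumann conditions on the flat face. The coefficient $\pm\tfrac14$ comes from the reflection formula for the heat kernel on the flat half-space, while the remainder is controlled by Aronson-type Gaussian bounds together with the area formula for Lipschitz maps, which ensures that the surface measure transforms correctly under the chart. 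Summing over patches recovers the global expansion.

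The Tauberian step is where the averaging $\gamma>0$ plays its decisive role. Since the inverse Laplace transform of $t\mapsto t^{-\gamma-1-\alpha}$ equals $\Gamma(\gamma+1+\alpha)^{-1}\lambda^{\gamma+\alpha}$, a formal inversion of the heat trace expansion produces the right-hand sides of \eqref{eq:maind} and \eqref{eq:mainn} with exactly the expected constants. To make this rigorous one cannot simply invoke Karamata's theorem twice, since the remainder after subtracting the leading term of $\Tr(-\Delta_\Omega^\#-\lambda)_\limminus^\gamma$ need not be monotone. Instead I would rely on a non-asymptotic Tauberian theorem tailored to Riesz means of positive order, exploiting monotonicity of $\lambda\mapsto\Tr(-\Delta_\Omega^\#-\lambda)_\limminus^\gamma$ and the averaging built into its definition to transfer an $o(t^{-(d-1)/2})$ error on the Laplace side into an $o(\lambda^{\gamma+(d-1)/2})$ error on the Riesz mean side.

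The main obstacle, in my view, lies in establishing the sharp second term in the heat trace expansion on a merely Lipschitz boundary. The constant $\tfrac14$ must remain correct after replacing the flat half-space by the epigraph of a Lipschitz function, and in this regularity one has no parametrix, only pointwise Gaussian bounds; I therefore expect to need an indirect comparison argument, either probabilistic (reflected or killed Brownian motion) or based on an energy estimate combined with the area formula along the Lipschitz chart. Closely tied to this is the requirement that the $o(t^{-(d-1)/2})$ remainder be quantitative enough that, after the Tauberian inversion, it still produces the error $o(\lambda^{\gamma+(d-1)/2})$ claimed in \eqref{eq:maind} and \eqref{eq:mainn}; this uniformity is precisely what the non-asymptotic Tauberian framework advertised in the abstract is designed to supply.
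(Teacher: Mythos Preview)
The heat trace expansion you describe is already a theorem of Brown (1993) for Lipschitz domains, so the issue is not Step~1 but the Tauberian Step~2, and there is a genuine gap there. Your identity
\[
\Gamma(\gamma+1)\,t^{-\gamma-1}\Tr e^{t\Delta_\Omega^\#}=\int_0^\infty e^{-t\lambda}\,\Tr(-\Delta_\Omega^\#-\lambda)_-^\gamma\,d\lambda
\]
is correct, and after subtracting the two Weyl terms the Laplace transform of the signed remainder is $o(t^{-\gamma-1-(d-1)/2})$. But there is no Tauberian theorem that converts a polynomial $o$-remainder on the Laplace side into a polynomial $o$-remainder on the original side under the one-sided monotonicity you have available. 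Tauberian remainder theorems (Freud, Ganelius, Korevaar) require an \emph{exponentially} small remainder on the Laplace side to produce a power-like remainder on the original side; a power-like Laplace remainder yields only logarithmic gain. Concretely, in the paper's own Proposition~14.1 the bound has the form $B^k\cdot(\text{Laplace error})+Ck^{-\gamma-1}\lambda^{d/2}$; to make the second term $o(\lambda^{(d-1)/2})$ one needs $k\gg\lambda^{1/(2\gamma+2)}$, and then $B^k$ is super-polynomial, swamping the merely $o(t^{-\gamma-1-(d-1)/2})$ control on the Laplace error. The monotonicity of $\lambda\mapsto\Tr(-\Delta_\Omega^\#-\lambda)_-^\gamma$ does not help here, because the Tauberian condition you need concerns the signed measure obtained \emph{after} subtracting both Weyl terms.

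The paper circumvents exactly this obstruction, and not by a better Tauberian theorem. For $\gamma=1$ in the Dirichlet case the asymptotics come from a variational principle (their earlier paper), not from the heat trace. For $\gamma=1$ in the Neumann case they exploit a hidden monotonicity: the function $f(\lambda)=\Tr(-\Delta_\Omega^{\rm N}-\lambda)_- - \Tr(-\Delta_\Omega^{\rm D}-\lambda)_-$ is nondecreasing (by the variational ordering of eigenvalues), so after taking differences the leading term cancels and the boundary term becomes leading; now a \emph{standard} Karamata argument suffices, and the Dirichlet result supplies the missing piece. For $0<\gamma<1$ they interpolate via Riesz's log-convexity theorem between the $\gamma=1$ asymptotics and an order-sharp one-term bound at small $\kappa>0$; the latter comes from \emph{pointwise} heat kernel bounds in the bulk, where the remainder is genuinely exponentially small in $d_\Omega(x)^2/t$, and it is this exponential decay (absent in the integrated heat trace) that makes a Tauberian argument with sharp remainder possible at the pointwise level.
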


We emphasize again that the two crucial points of this theorem are that it is valid for arbitrarily small $\gamma>0$ and that it is valid assuming only Lipschitz regularity of the boundary.

Theorem \ref{thm: Weyl asymptotics Lipschitz} is a vast improvement over our main result in \cite{FrankLarson_Crelle20}, which concerned the Dirichlet case for $\gamma\geq 1$. As we will explain below, fundamentally new ingredients are needed to reach parameter values $\gamma<1$.

The error term $o(\lambda^{\gamma+\frac{d-1}{2}})$ in Theorem \ref{thm: Weyl asymptotics Lipschitz} cannot be improved within the class of Lipschitz domains. In the Dirichlet case this follows from our results in \cite{FrankLarson_JMP20}, which show that for any nonnegative function $R:(0,\infty)\to\R$ with $\lim_{\lambda\to\infty} R(\lambda)=0$ there is a bounded, open and connected set $\Omega\subset\R^d$ with Lipschitz boundary such that for all $\gamma\geq 0$ we have
$$
\limsup_{\lambda\to\infty} \frac{\Tr(-\Delta_\Omega^{\rm D} -\lambda)_\limminus^\gamma - L_{\gamma,d}^{\rm sc} |\Omega| \lambda^{\gamma+\frac d2} + \frac14 L_{\gamma,d-1}^{\rm sc} \mathcal H^{d-1}(\partial\Omega) \lambda^{\gamma+\frac{d-1}{2}}}{\lambda^{\gamma+\frac{d-1}2} \, R(\lambda)} = \infty \,.
$$

We now turn to our second main result, for which we restrict our attention to \emph{convex} sets $\Omega$. Since convex sets are Lipschitz, Theorem \ref{thm: Weyl asymptotics Lipschitz} is applicable to them and we have two-term asymptotics. The point of the following theorem is that it provides a uniform, non-asymptotic bound that depends on $\Omega$ only through the simple geometric characteristics $|\Omega|$, $\mathcal H^{d-1}(\partial\Omega)$ and $r_{\rm in}(\Omega)$, the latter being the inradius of $\Omega$, see \eqref{eq:inrad}. The bound is asymptotically sharp in the sense that in the limit $\lambda \gg r_{\rm in}(\Omega)^{-2}$ it recovers the first two terms in the asymptotics in Theorem \ref{thm: Weyl asymptotics Lipschitz}.

\begin{theorem}\label{thm: Weyl asymptotics convex}
	Let $d\geq 2$ and let $\Omega \subset \R^d$ be an open, bounded, and convex set. Then, for all $\lambda > 0$
	\begin{align*}
		\biggl|\Tr(-\Delta_\Omega^{\rm D}-&\lambda)_\limminus^\gamma - L_{\gamma, d}^{\rm sc} |\Omega| \lambda^{\gamma+\frac d2} + \frac{1}{4}L_{\gamma, d-1}^{\rm sc}\mathcal H^{d-1}(\partial\Omega)\lambda^{\gamma+\frac{d-1}2}\biggr| \hspace{135pt}\\
		&\leq C \mathcal H^{d-1}(\partial\Omega)\lambda^{\gamma+ \frac{d-1}{2}}\bigl(r_{\textup{in}}(\Omega)\sqrt{\lambda}\bigr)^{-\frac{\alpha}{11}}\,, 
    \end{align*}
    and
    \begin{align*}
		\biggl|\Tr(-\Delta_\Omega^{\rm N}-&\lambda)_\limminus^\gamma - L_{\gamma, d}^{\rm sc} |\Omega| \lambda^{\gamma+\frac d2} - \frac{1}{4}L_{\gamma, d-1}^{\rm sc}\mathcal H^{d-1}(\partial\Omega)\lambda^{\gamma+\frac{d-1}2}\biggr|\hspace{135pt} \\
        &\leq C \mathcal H^{d-1}(\partial\Omega)\lambda^{\gamma+ \frac{d-1}{2}}\Bigl[\bigl(1+\ln_\limplus\bigl(r_{\rm in}(\Omega)\sqrt{\lambda}\bigr)\bigr)^{-\alpha\max\{1, \gamma\}}+\bigl(r_{\textup{in}}(\Omega)\sqrt{\lambda}\bigr)^{1-d}\Bigr],
	\end{align*} 
	with
	\begin{equation*}
		\alpha = 1 \mbox{ for }\gamma \geq 1 \quad \mbox{and any} \quad \alpha \in (0, \gamma) \mbox{ for }0< \gamma< 1\,,
	\end{equation*}
	and where $C$ depends only on $\gamma, \alpha$, and the dimension.
\end{theorem}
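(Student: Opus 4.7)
The strategy is to deduce the Riesz means bound from a non-asymptotic two-term estimate for the heat trace $\Tr(e^{t\Delta_\Omega^\#})$ and then invoke a quantitative Tauberian theorem that converts such heat-trace estimates into bounds on $\Tr(-\Delta_\Omega^\# - \lambda)_\limminus^\gamma$. Both of these ingredients are the new non-asymptotic tools advertised in the abstract. The target heat-trace estimate has the shape $(4\pi t)^{-d/2}|\Omega| \mp \tfrac14(4\pi t)^{-(d-1)/2}\mathcal H^{d-1}(\partial\Omega) + R^\#(t)$ for $\# \in \{\mathrm D, \mathrm N\}$, with an explicit remainder $R^\#(t)$ that depends on $\Omega$ only through $|\Omega|$, $\mathcal H^{d-1}(\partial\Omega)$, and $r_{\rm in}(\Omega)$.

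The heat-trace bound is obtained by integrating pointwise on-diagonal heat kernel estimates. For the Dirichlet Laplacian, classical half-space comparison on a convex domain yields $p_t^{\rm D}(x,x) \leq (4\pi t)^{-d/2}\bigl(1 - e^{-\dist(x,\partial\Omega)^2/t}\bigr)$, and combining this with the convex layer-cake inequality $|\{x \in \Omega : \dist(x,\partial\Omega) < s\}| \leq s\,\mathcal H^{d-1}(\partial\Omega)$ produces the two leading terms together with a sharp error in terms of $t/r_{\rm in}(\Omega)^2$. In the Neumann case, domain monotonicity runs the wrong way; the key input is a universal pointwise upper bound on $p_t^{\rm N}(x,x)$ for convex domains, which is one of the main new technical results of the paper and relies crucially on convexity through reflections across supporting hyperplanes. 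Once such a bound is in hand, the same kind of integration yields the Neumann two-term heat-trace expansion with the opposite sign.

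For the final step, a non-asymptotic Tauberian theorem converts the heat-trace bound into the stated bound on $\Tr(-\Delta_\Omega^\#-\lambda)_\limminus^\gamma$. The case $\gamma\geq 1$ is comparatively mild, since one can directly exploit differentiability and the relation between Laplace transforms; the structural power loss $1/11$ in the exponent of the Dirichlet error reflects the quantitative cost of this Tauberian inversion when applied to an error of fractional order in $t$. For $0 < \gamma < 1$ the conversion is considerably more delicate and is responsible for the restriction $\alpha \in (0,\gamma)$. In the Neumann case, the additional term $(r_{\rm in}\sqrt\lambda)^{1-d}$ encodes the contribution of the low eigenvalues, and in particular of the zero mode $\lambda_1(-\Delta_\Omega^{\rm N}) = 0$, whereas in the Dirichlet case Faber--Krahn on convex sets gives $\lambda_1 \gtrsim r_{\rm in}(\Omega)^{-2}$ and no such contribution appears.

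I expect the main obstacle to be the universal Neumann heat kernel bound on convex domains: the usual techniques (Gaussian bounds via Nash-type arguments or Davies' perturbation) yield constants that depend implicitly on the domain, whereas here the constants must be truly universal and the dependence on $\dist(x,\partial\Omega)$ must be sharp enough to reproduce the boundary term after integration. The second most delicate point is the non-asymptotic Tauberian step for small $\gamma$, where one must carefully track the interplay between the heat-trace error and the weight of the Riesz means in order to produce the logarithmic rate and the power $\max\{1,\gamma\}$ appearing in the Neumann statement.
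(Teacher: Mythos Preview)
Your proposal has a genuine gap at the Tauberian step. Applying a quantitative Tauberian theorem directly to a two-term heat-trace expansion with a \emph{polynomial} remainder (of the form $\mathcal H^{d-1}(\partial\Omega)\sqrt{t}\,(\sqrt t/r_{\rm in})^{\delta}$) does \emph{not} yield a two-term Riesz means formula: Tauberian remainder theorems convert polynomial Laplace-transform errors into only logarithmic errors for the underlying measure, so the direct route you describe would lose the power saving and fail to identify the second term at all. The paper discusses this obstruction explicitly and circumvents it by a different architecture.

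For the Dirichlet case with $\gamma\geq 1$, the paper does \emph{not} go through the heat trace; it quotes the $\gamma=1$ bound from \cite{FrankLarson_Crelle20}, which is proved via the variational principle for $\Tr(-\Delta_\Omega^{\rm D}-\lambda)_-$, and then integrates up via the Aizenman--Lieb identity. The exponent $1/11$ is inherited from that earlier variational argument, not from a Tauberian inversion cost. For the Neumann case with $\gamma\geq 1$, the key trick is to form the \emph{difference} $\Tr(-\Delta_\Omega^{\rm N}-\lambda)_- - \Tr(-\Delta_\Omega^{\rm D}-\lambda)_-$, whose Laplace transform has leading order $\mathcal H^{d-1}(\partial\Omega)\,t^{-(d+1)/2}$ (the volume terms cancel); now the boundary term is the \emph{leading} term, and a standard Tauberian theorem applied to this difference produces the logarithmic rate in the Neumann bound. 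The Neumann heat kernel bound you highlight is indeed essential here, but only to control the Laplace transform of this difference. Finally, for $0<\gamma<1$ the paper does not use a more refined Tauberian argument; it uses Riesz's log-convexity theorem to interpolate between the $\gamma=1$ two-term bound and an order-sharp one-term bound at some $\kappa\in(0,\gamma)$ (the latter obtained by integrating pointwise spectral-function estimates). This interpolation is exactly what produces the restriction $\alpha\in(0,\gamma)$.
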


The order of the error terms in Theorem \ref{thm: Weyl asymptotics convex} is probably not optimal, but it shows the correct dependence on the product $r_{\rm in}(\Omega)\sqrt\lambda$ and it will be sufficient for the applications that we have in mind, which we describe next.

Similarly like Theorem \ref{thm: Weyl asymptotics Lipschitz}, Theorem \ref{thm: Weyl asymptotics convex} had a predecessor in our work \cite{FrankLarson_Crelle20}, where we treated the Dirichlet case for $\gamma\geq 1$. Both to reach parameter values $\gamma<1$ and to treat the Neumann case requires significantly different arguments. Earlier results in a similar spirit include \cite{Melas_03,Weidl,KoVuWe,GeLaWe,KoWe,LarsonPAMS,HarrellStubbe_18,Harrell_etal_21}.


\subsection*{A spectral shape optimization problem}

Our motivation for Theorem \ref{thm: Weyl asymptotics Lipschitz} and, in particular, Theorem \ref{thm: Weyl asymptotics convex} comes from the following spectral shape optimization problems
\begin{equation}\label{eq:shapeopt}
\begin{aligned}
& \sup\left\{ \Tr(-\Delta_\Omega^{\rm D}-\lambda)_\limminus^\gamma :\ \Omega \in\mathcal C  \,,\ |\Omega| = 1 \right\}, \\
& \inf\left\{ \Tr(-\Delta_\Omega^{\rm N}-\lambda)_\limminus^\gamma :\ \Omega \in\mathcal C  \,,\ |\Omega| = 1 \right\},
\end{aligned}
\end{equation}
where $\mathcal C$ is a given class of open sets in $\R^d$. While these problems are interesting for a fixed value of $\lambda$, we focus on solving them in the regime of large $\lambda$. If the asymptotics in \eqref{eq:maind} and \eqref{eq:mainn} were completely uniform with respect to open sets $\Omega\in\mathcal C$, then asymptotically these optimization problems would reduce to the isoperimetric problem
$$
\inf\left\{ \mathcal H^{d-1}(\partial\Omega) :\ \Omega\in\mathcal C \,, |\Omega| = 1 \right\}.
$$
In particular, if $\mathcal C$ contains the ball of unit measure, the latter is a solution of this isoperimetric problem and we arrive at the conjecture that in the limit $\lambda\to \infty$ optimizing sets $\Omega_\lambda$ for the shape optimization problems \eqref{eq:shapeopt} should converge, in some sense, to a ball of unit measure. This problem was first investigated by one of us in \cite{LarsonJST}. In \cite{FrankLarson_Crelle20} we proved convergence to a ball in Hausdorff sense when $\mathcal C$ is the class of open convex sets and $\gamma\geq 1$.

In a companion paper \cite{FrankLarson_Convex2024} we use Theorem \ref{thm: Weyl asymptotics convex} to extend our shape optimization result in \cite{FrankLarson_Crelle20} to the Neumann case. Moreover, in both the Dirichlet and Neumann cases, we further extend these results to a range of $\gamma>\gamma_d$ for certain parameters $\gamma_d<1/2$ that we characterize (see \cite[Theorem~1.7]{FrankLarson_Convex2024}). This leads to interesting compactness questions. 

The problem with the heuristics that were used to arrive at the conjecture is that the two-term Weyl asymptotics are far from uniform for $\Omega\in\mathcal C$ (when $\mathcal C$ is large enough). While we have not stated it explicitly, it is clear from its proof that Theorem \ref{thm: Weyl asymptotics Lipschitz} is uniform within any subclass of Lipschitz domains for which certain (rather explicit) geometric characteristics are uniformly controlled. Therefore, given our Theorem \ref{thm: Weyl asymptotics Lipschitz}, the proof of the conjecture reduces to establishing sufficiently good control of these geometric quantities for optimizing sets $\Omega_\lambda$. This remains an open problem.

The situation is somewhat simpler, but still nontrivial, when $\mathcal C$ is the class of open convex sets. In this case Theorem \ref{thm: Weyl asymptotics convex} allows us to deal with the situation where the optimizing sets $\Omega_\lambda$ satisfy $\sqrt\lambda\, r_{\rm in}(\Omega_\lambda)\gg 1$. In \cite{FrankLarson_Convex2024} we show how to exclude the case $\liminf_{\lambda\to\infty} \sqrt\lambda\, r_{\rm in}(\Omega_\lambda)<\infty$.

As shape optimization in spectral theory is an active field of research that dates back at least to Rayleigh~\cite{Rayleigh_1877}, it is impossible to provide an overview of the topic that does it justice. The interested reader is instead referred to~\cite{Henrot_17} and references therein. Problems concerning the asymptotic behaviour of solutions of spectral shape optimization problems saw a rise in interest in recent years, largely motivated by a connection to the famous P\'olya conjecture highlighted in~\cite{ColboisElSoufi_14} (see also~\cite{Freitas_etal_21}); see, for instance, \cite{AntunesFreitas_13,BucurFreitas_13, vdBerg_15, vdBergBucurGittins_16, vdBergGittins_17, GittinsLarson_17, Freitas_17, LarsonAFM, Lagace20, BuosoFreitas_20} where a variety of problems of this type are studied.


\subsection*{Some elements of our proofs}

Let us conclude this introduction by briefly describing the methods used to prove our main results. We mostly focus on Theorem \ref{thm: Weyl asymptotics Lipschitz}. As a general rule, the arguments for Theorem \ref{thm: Weyl asymptotics convex} are similar, but significantly more involved, since the dependence on the geometry needs to be tracked more carefully.

As we already mentioned, Theorem \ref{thm: Weyl asymptotics Lipschitz} is a vast extension of the main result of \cite{FrankLarson_Crelle20}, which concerns the Dirichlet case for $\gamma\geq 1$. A property that lies at the basis of our work in \cite{FrankLarson_Crelle20} was a variational principle for $\Tr(-\Delta_\Omega^{\rm D} -\lambda)_\limminus^\gamma$ when $\gamma=1$. This variational principle was used both in the proof of the upper and the lower bound on this trace. Similar variational principle exist also for $\gamma>1$, but do not exist for $\gamma<1$. This is connected with the fact that $E\mapsto (E-\lambda)_\limminus^\gamma$ is convex if and only if $\gamma\geq 1$.

In this paper we mostly deal with the case $\gamma<1$, where no variational principle is available. We will compensate the lack of a variational principle by certain results from classical analysis, in the spirit of Tauberian theorems. A first, important observation is
\begin{quote}
    \emph{To prove asymptotics for $\gamma$ it suffices to prove asymptotics for some $\gamma_1>\gamma$ and an order-sharp bound for some $\gamma_0<\gamma$.}
\end{quote}
This is a rather straightforward consequence of a classical convexity theorem of Riesz \cite{Riesz1922} (see Proposition \ref{prop: Riesz logconvexity}), but seems to have been largely overlooked in the context of spectral asymptotics. An exception is H\"ormander's paper \cite{HormanderRieszMeans}, but there the focus is somewhat different and lies on (non-sharp) error bounds, rather than on asymptotics.

According to this principle, our work consists in 
\begin{adjustwidth}{25pt}{25pt}\vspace{5pt}
\begin{enumerate}
    \item[Task I:] Proving order-sharp bounds for arbitrarily small $\gamma>0$
    \item[Task II:] Proving asymptotics for some (possibly large) $\gamma$
\end{enumerate}\vspace{5pt}
\end{adjustwidth}
For Task II we can choose $\gamma=1$ and make use of the variational principle. Indeed, in the Dirichlet case the corresponding asymptotics are already known from our previous work \cite{FrankLarson_Crelle20}. The proof in the Neumann case requires some additional ideas, which we will describe below.

The methods that we will use to tackle Task I and the Neumann aspect of Task~II are based on \emph{Tauberian theorems}. Before describing our way of using them, let us provide some historical context. For background on these theorems we refer to the monograph~\cite{Korevaar_TauberianTheory_book}. Their use in connection with Weyl asymptotics goes back to Carleman \cite{Carleman34} with many further developments in the 1950s and 1960s. Tauberian theorems provide a robust tool to prove Weyl asymptotics, not only for the Laplacian, but also for higher order elliptic operators with variable coefficients, see, e.g., \cite{Garding54}. The Tauberian method consists of two steps:

\begin{adjustwidth}{25pt}{25pt}\vspace{5pt}
\begin{enumerate}
    \item[Step I:] Proving asymptotics for the trace of the heat semigroup or of (a power of) the resolvent
    \item[Step II:] Proving a Tauberian theorem for the Laplace or Stieltjes transform (when using the heat semigroup or resolvents, respectively)
\end{enumerate}\vspace{5pt}
\end{adjustwidth}

In most applications the corresponding Tauberian theorems are already known and the main part of the work goes into carrying out Step I. These asymptotics can be obtained by building local approximations for the solutions of the corresponding parabolic or elliptic equations.

While Tauberian theorems are a powerful method to derive the leading order term in Weyl asymptotics, it appears to be general wisdom that the Tauberian method (in the above form) performs poorly with respect to terms of subleading order. Even if a second term in the heat or resolvent trace asymptotics is known, this does not lead, in general, to a second term in the asymptotics for the number of eigenvalues. In particular, both in Task I and in the Neumann aspect of Task II, where we are interested in second terms (or at least in a bound of the same order as the expected second term), standard Tauberian theorems seem to be of little use.

There are so-called Tauberian remainder theorems (see \cite[Chapter VII]{Korevaar_TauberianTheory_book}), but their consequences for spectral asymptotics, at least when applied naively, are rather disappointing. Power-like remainders in asymptotics for the Laplace transform result in logarithmic remainders for the underlying measure, while in order to get power-like remainder for the underlying measure one would need exponentially small remainders for the Laplace transform. In our situation two-term heat trace asymptotics on Lipschitz domains are known from work of Brown \cite{Brown93}; see Theorem \ref{thm: Weyl asymptotics Lipschitz heat} below. The remainders there are not exponentially small and therefore it seems, at first sight, to be hopeless to apply Tauberian remainder theorems.

Here is how we get around the poor performance of Tauberian theorems when it comes to second terms:
\begin{enumerate}
    \item[(a)] In the bulk of $\Omega$ we turn our attention to \emph{pointwise asymptotics}. For these we do have an exponentially small remainder, which does allow us to get an order-sharp remainder term through Tauberian theorems.
    \item[(b)] When interested in the asymptotics in the Neumann case and assuming that the asymptotics in the Dirichlet case are already known, we consider the difference between the Dirichlet and Neumann Riesz means. In the difference the leading orders cancel, so the second term, which is what we are interested in, becomes the leading term. For Riesz means with $\gamma \geq 1$ the variational principle implies that the difference is \emph{monotone} and we are able to apply rather standard Tauberian methods.
\end{enumerate}
Items (a) and (b) will be used to accomplish Task I and the Neumann part of Task~II, respectively. 

The observation in (a) is rooted in work of Avakumovi\'{c} \cite{Avakumovic52}, but we track the geometric dependence more carefully and show that this allows one to get a sharp remainder term for the Riesz means as soon as $\gamma>0$. This is the content in Theorems \ref{thm: Sharp Weyld} and \ref{thm: Sharp Weyln}. In this connection we mention Avakumovi\'{c}'s order-sharp remainder bound for the Laplacian on manifolds without boundary \cite{Avakumovic56} and the adaption of his method to a larger class of operators \cite{Niemeyer65,Gromes70,Bruning74} and to the presence of singular potentials \cite{FrSa}.

The observation in (b) is elementary, but seems to be new. It is worth mentioning that the difference of the Dirichlet and Neumann Riesz means is not the only natural quantity that has the key properties necessary for carrying out the argument in (b) to accomplish Task II. An alternative construction, which is interesting in its own right, is discussed in Section \ref{sec:variations}.

In our discussion so far, we have focused on the Tauberian method based on the heat semigroup or resolvent powers. There is an alternative method based on the wave equation, consisting in
\begin{adjustwidth}{25pt}{25pt}\vspace{5pt}
\begin{enumerate}
    \item[Step I${}^{'}$:] Proving asymptotics for the wave propagator
    \item[Step II${}^{'}$:] Proving a Tauberian theorem for the Fourier transform
\end{enumerate}\vspace{5pt}
\end{adjustwidth}
This method goes back to Levitan \cite{Levitan54} and was perfected in \cite{Hormander_Acta68}. It is usually the method of choice to obtain more precise information, used for instance in the proofs of \eqref{eq:seeleyintro} and \eqref{eq:ivrii}. The drawback of this method, however, is that it needs, in general, very strong smoothness assumption on the boundary of the domain (and on the coefficients of the operator, if these are variable).

We also give an alternative proof of the order-sharp remainder for $\gamma>0$ in Theorems \ref{thm: Sharp Weyld} and \ref{thm: Sharp Weyln}, which bypasses Avakumovi\'{c}'s observation and is based on Levitan's wave equation method. However, heat equation methods enter also in this proof in order to handle the boundary region, where wave equation methods require too much regularity.

This concludes our discussion of Steps I and I${}^{'}$ of the Tauberian method.

Concerning Steps II and II${}^{'}$, the proof of Tauberian theorems, we note that some of the theorems that we need appear as announcements without proofs \cite{Avakumovic52,Ganelius56} or in journals that are not easily accessible \cite{Ganelius54}. Also, we need uniform versions of these results that track the dependence of the error term in terms of various parameters. We have not found the corresponding statements in the literature and a considerable fraction of our work is devoted to providing complete proofs. This constitutes Part 2 of this paper. 

This finishes our rough sketch of how we prove Theorem \ref{thm: Weyl asymptotics Lipschitz}. As we said, the proof of Theorem \ref{thm: Weyl asymptotics convex} follows a similar route, except that asymptotic statements are replaced by corresponding uniform, non-asymptotic statements. A crucial ingredient in the Tauberian Step I is a heat trace analogue of Theorem \ref{thm: Weyl asymptotics convex}. For the Dirichlet case such a result can be deduced from our results in \cite{FrankLarson_Crelle20} while the Neumann version is proved in \cite{FrankLarson_NeumannHeat2025}.

Finally, we point out that the proof strategy is rather general and can be adapted to different situations. In Section \ref{sec:variations} we illustrate this claim by sketching four further applications. Here we only highlight Theorems \ref{thm: three term asymptotics in polygons} and \ref{thm: three term asymptotics in polygons, Neumann}, which give \emph{three-term asymptotics} for $\gamma>1$ when $\Omega\subset\R^2$ has polygonal boundary. 


\subsection*{Structure of the paper}
This paper consists of two parts. So far in this introduction we have almost exclusively mentioned the result of the first part. The core of this first part consists in Sections \ref{sec:ptwweyl}, \ref{sec:intweyl}, \ref{sec:mainprooflarge} and \ref{sec:mainproofsmall}, whose content we have outlined above. In the additional section, Section \ref{sec:variations}, we briefly sketch some further applications for the methods that we develop.

The results in Part 2 are independent of those in Part 1.

Part 2 of this paper is devoted to various Tauberian-type theorems, which are needed in our proofs in Part 1. While some basic versions of these results are known, the references are often not easily accessible or do not contain the refined form of the results that we need. For this reason we have decided to include full details of all the results that we are using. In Section \ref{sec: Riesz convexity} we present a proof due to Riesz concerning a certain convexity property of Riesz means. In Section \ref{sec: Laplace Tauber} we present uniform versions of Tauberian theorems for Laplace transforms, which have their roots in work of Avakumovic, Freud, Garnelius, Korevaar and others. In Section \ref{sec: Fourier tauber} we present an extension to Riesz means of Tauberian theorems for the Fourier transform, as used by Levitan, H\"ormander, Safarov and others.

In Appendix \ref{app: Convex geometry} we summarize some more or less known results from convex geometry, which will play a role in the proof of Theorem \ref{thm: Weyl asymptotics convex}.


\part{Spectral asymptotics}


\section{Pointwise one-term Weyl law}\label{sec:ptwweyl}

\subsection{Main results of this section}

It is well known that the spectral projectors $\1(-\Delta_\Omega^\#<\lambda)$ are integral operators with integral kernels that are smooth on $\Omega\times\Omega$; see \cite{Garding_ScandCongr54,Garding54} or Lemma \ref{lem: spectral func} below. This implies the corresponding fact for Riesz means. In particular, we have the identity
\begin{equation*}
	(-\Delta_\Omega^\# -\lambda)_\limminus^\gamma(x,x) = \gamma \int_0^\lambda (\lambda-\mu)^{\gamma-1} \1(-\Delta_\Omega^\#<\mu)(x,x) \,d\mu \,.
\end{equation*}

This section is devoted to the proof of pointwise bounds on the on-diagonal Riesz means kernel. We will consider the cases $\gamma=0$ and $\gamma>0$ simultaneously and put
$$
(-\Delta_\Omega^\#-\lambda)_\limminus^0 := \1(-\Delta_\Omega^\# <\lambda) \,.
$$
The asymptotics will involve the constant
\begin{equation}
    \label{eq:ltconst}
    L_{\gamma,d}^{\rm sc} := \frac{\Gamma(\gamma+1)}{(4\pi)^{\frac{d}2} \ \Gamma(\gamma+\frac d2 +1)}
    \qquad\text{for}\ \gamma\geq 0 \,.
\end{equation}
Finally, our pointwise bounds will depend on the distance of the relevant point from the boundary, for which we use the abbreviation
\begin{equation}
    \label{eq:distance}
    d_\Omega(x) := \dist(x,\Omega^c) \,.
\end{equation}

The following theorem and three propositions are the main results of this section. We state and discuss them in this subsection and devote the remaining subsections in this section to their proofs. While it will not be important for us, we emphasize that the bounds in this section do not require the underlying set $\Omega$ to be bounded or to have finite measure.

The first result concerns the Riesz mean kernels in the bulk of $\Omega$, i.e.\ the region where $d_\Omega(x)\gtrsim 1/\sqrt{\lambda}$.
\begin{theorem}\label{thm: bulk}
    Let $d\geq 1$, $\gamma\geq 0$ and $\kappa >0$. Then there is a constant $C_{\gamma, d, \kappa}$ such that for any open set $\Omega\subset\R^d$, all $x\in\Omega$ and all $\lambda> 0$ with $d_\Omega(x)\geq \kappa/\sqrt{\lambda}$ we have
    $$
    \bigl| (-\Delta_\Omega^{\#}-\lambda)_\limminus^\gamma(x,x) - L_{\gamma,d}^{\rm sc} \lambda^{\gamma+\frac{d}2} \bigr| \leq C_{\gamma, d, \kappa} \lambda^{\frac{\gamma+d-1}2} d_\Omega(x)^{-1-\gamma} \,.
    $$
\end{theorem}

We find it remarkable that the constant in Theorem \ref{thm: bulk} depends only on $d, \gamma$ and $\kappa$ and not in any way on the geometry of $\Omega$ or the boundary conditions.

An important aspect of our bound in Theorem~\ref{thm: bulk} is that the decay of the relative remainder term $\lambda^{-\frac{1+\gamma}2} d_\Omega(x)^{-1-\gamma}$ with respect to $d_\Omega(x)$ increases with $\gamma$. This is particularly important in the next section when we integrate the pointwise bounds in order to get bounds on the Riesz means. For $\gamma>0$ this feature allows us to achieve an error bound of the correct order of magnitude.

We will give two proofs of Theorem \ref{thm: bulk}, one based on the heat equation and one on the wave equation. Those will be presented in the sections \ref{sec: Heat method} and \ref{sec: Wave method}, respectively. In both proofs we will apply Tauberian theorems, whose proofs can be found in Part~2 of this paper.

The next three propositions provide further bounds for the Riesz mean kernels. These bounds are less precise than that in Theorem~\ref{thm: bulk} but with the important feature that they remain valid also in the boundary region  $d_\Omega(x)\lesssim 1/\sqrt{\lambda}$. In this region both the geometry of $\Omega$ as well as the boundary conditions come into play. 

The first result concerns the Dirichlet case.
\begin{proposition}\label{prop:rieszptwbgd}
	Let $d\geq 1$ and $\gamma\geq 0$. There is a constant $C_{\gamma,d}$ such that for all open sets $\Omega\subset\R^d$, all $\lambda>0$ and all $x\in\Omega$,
	$$
	0\leq (-\Delta_\Omega^{\rm D} - \lambda)_\limminus^\gamma(x,x) \leq C_{\gamma,d} \lambda^{\gamma+\frac{d}2}\,.
	$$
\end{proposition}

In contrast to the Dirichlet case, in the Neumann case it is necessary to impose some further assumption on $\Omega$ for the validity of the corresponding bounds. Here we shall prove a bound that is valid in sets with the extension property. Recall that an open set $\Omega \subset \R^d$ is said to have the \emph{extension property} if there is a bounded, linear operator $\mathcal{E}\colon H^1(\Omega)\to H^1(\R^d)$ such that if $u\in H^1(\Omega)$ then $\mathcal{E}u(x) = u(x)$ for almost every $x\in \Omega$.

\begin{proposition}\label{prop:rieszptwbgn}
    Let $d\geq 1$ and $\gamma\geq 0$. For every open set $\Omega \subset \R^d$ with the extension property and every $\lambda_0>0$ there is a constant $C_{\gamma,\Omega,\lambda_0}$ such that for all $\lambda>0$ and all $x\in\Omega$ we have
    	$$
    	0\leq (-\Delta_\Omega^{\rm N} - \lambda)_\limminus^\gamma(x,x) \leq C_{\gamma, \Omega, \lambda_0} \lambda^{\gamma+\frac{d}2} \max\bigl\{1, (\lambda_0/\lambda)^{\frac{d}2}\bigr\} \,.
        $$   
\end{proposition}

In our applications to shape optimization problems it is important to have bounds that depend in a simple and uniform manner on the set $\Omega$. In the above results we emphasize that this is the case in both Theorem \ref{thm: bulk} and Proposition \ref{prop:rieszptwbgd}, as the constants in these results are independent of the set $\Omega$. In contrast, the constant that appears in Proposition~\ref{prop:rieszptwbgn} depends on the geometry in a more complicated manner. In fact, in the Neumann case any bound for the kernels in the region $d_\Omega(x)\lesssim 1/\sqrt{\lambda}$ needs to take into account more delicate properties of the boundary as otherwise such bounds would contradict a bound of Li--Yau \cite{LiYau86} for the corresponding heat kernel; see also Lemma~\ref{lem: off-diagonal kernel bound convex} below. However, that $\Omega$ has the extension property is a rather weak assumption on the boundary. Under stronger assumptions it is possible to prove more explicit bounds for the Neumann kernels also in the boundary region, $d_\Omega(x) \lesssim 1/\sqrt{\lambda}$. Our next result provides such a bound under the assumption that $\Omega$ is convex.
\begin{proposition}\label{prop:rieszptwbgnc}
	 Let $d\geq 2$ and $\gamma\geq 0$. Then there is a constant $C_{\gamma,d}$ such that for any convex open set $\Omega\subset\R^d$, all $x\in\Omega$, all $\lambda> 0$ and all $t>0$
    \begin{align*}
 		0\leq (-\Delta_\Omega^{\rm N}-\lambda)_\limminus^\gamma(x, x) \leq C_{\gamma,d} \, \frac{t^{-\gamma} e^{t\lambda}}{|\Omega \cap B_{\sqrt t}(x)|} \,.
 	\end{align*}
\end{proposition}

In the remainder of this section we first make some historical comments and then we provide proofs of Theorem~\ref{thm: bulk} and Propositions \ref{prop:rieszptwbgd}, \ref{prop:rieszptwbgn}, and \ref{prop:rieszptwbgnc}. 

\subsubsection*{Historical remarks}
The dependence on the distance from the boundary of the remainder in the bound of Theorem \ref{thm: bulk} is crucial for the proof of our main results. We have not found it in this form stated in the literature, but there are some precursors that we would like to mention. Avakumovi\'c~\cite{Avakumovic52} for $\gamma=0$ and Levitan~\cite{Levitan54} and G\aa rding~\cite{Garding54} for $\gamma\geq 0$ have shown that for $x$ in compact subsets of $\Omega$ one has
\begin{equation}
	\label{eq:interiorbound}
  	|(-\Delta_\Omega^\# - \lambda)_\limminus^\gamma(x,x) - L_{\gamma,d}^{\rm sc}\, \lambda^{\gamma+\frac{d}2} | \lesssim \lambda^{\frac{\gamma+d-1}2}
\end{equation}
for $\lambda$ large enough with an implicit constant depending on the compact subset. An alternative proof is given by H\"ormander \cite[Corollary 5.4]{HormanderRieszMeans}. Br\"uning \cite{Bruning74} attributes bounds showing how \eqref{eq:interiorbound} degenerates close to the boundary for $\gamma=0$ to unpublished lectures of Avakumovi\'c (1965). Since we do not have access to notes of these lectures, we cannot say which boundary conditions are imposed and which regularity of the boundary is assumed. We note that in several papers \cite{Niemeyer65,Gromes70,Bruning74}, that seem to be influenced by the work of Avakumovi\'c, similar bounds are shown in the case $\gamma=0$ for more complicated operators than $-\Delta$, but under stronger assumptions on the boundary than ours and only asymptotically and not uniformly as ours. Independently, Agmon~\cite{Agmon68} proved results about the degeneration of the interior bound \eqref{eq:interiorbound} for $\gamma=0$ as $x$ approaches the boundary, but his bounds are off by an arbitrarily small exponent $\epsilon>0$. These bounds are not good enough for our purposes.
        
All the bounds mentioned so far, with the exception of that of G\aa rding~\cite{Garding54}, concern the case $\gamma=0$. Safarov \cite{Safarov01} has essentially proved the special case $\gamma=1$ of Theorem~\ref{thm: bulk}; see Remark \ref{safarovrem} for what we believe to be a (fixable) gap in this proof. Apart from this, we are not aware of previous works that concern the boundary degeneration of the interior bounds \eqref{eq:interiorbound} for $\gamma>0$, although such bounds might be known to experts in the field.

The thrust of Propositions \ref{prop:rieszptwbgd}, \ref{prop:rieszptwbgn}, and \ref{prop:rieszptwbgnc} is that they are valid up to the boundary. Bounds of this type are known to experts in the field and follow rather directly, for instance, from corresponding heat kernel bounds.


\subsection{Heat kernels and a priori bounds}

The purpose of this subsection is twofold. On the one hand, we will introduce the heat kernels and state some basic bounds for them. On the other hand, we will use them to derive the a priori bounds on the spectral function and its Riesz means in Propositions \ref{prop:rieszptwbgd}, \ref{prop:rieszptwbgn} and \ref{prop:rieszptwbgnc}. 

We recommend Davies's book \cite{Davies_heatkernels} for background on heat kernels. It is known that for any $t>0$ the operators $e^{t\Delta_\Omega^\#}$ are integral operators; see \cite[Theorems 2.3.6 and 2.4.4]{Davies_heatkernels}. Their integral kernels are denoted by $k_\Omega^\#(t,\cdot,\cdot)$, that is,
$$
(e^{t\Delta_\Omega^\#}f)(x) = \int_\Omega k_\Omega^\#(t,x,x') f(x')\,dx'
\qquad\text{for all}\ f\in L^2(\Omega) \,.
$$

We recall the following leading order bounds for the heat kernels. They are classical and can be found, for instance, in~\cite{Davies_heatkernels}.

\begin{lemma}[{\cite[Eq.~(1.9.1)]{Davies_heatkernels}}]\label{heatdapriori}
    Let $d\geq 1$ and let $\Omega\subset\R^d$ be open. Then, for all $x,x'\in\R^d$ and $t>0$
    $$
    0\leq k_\Omega^{\rm D}(t,x,x') \leq (4\pi t)^{-\frac{d}2} \,.
    $$
\end{lemma}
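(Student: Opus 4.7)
The plan is to prove the two inequalities separately using classical semigroup comparison arguments, both of which rest on the variational (quadratic form) definition of $-\Delta_\Omega^{\rm D}$ and the Beurling--Deny framework for Markovian forms.

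For the lower bound $k_\Omega^{\rm D}(t,x,x')\geq 0$, I would invoke positivity preservation of the Dirichlet heat semigroup. The quadratic form $\mathcal E^{\rm D}_\Omega(u)=\int_\Omega |\nabla u|^2\,dx$ on $H^1_0(\Omega)$ is Markovian: truncation $u\mapsto (u\wedge 1)\vee 0$ maps $H^1_0(\Omega)$ to itself and does not increase the form. By the Beurling--Deny first criterion, $e^{t\Delta_\Omega^{\rm D}}$ is positivity preserving, which by the standard separability/continuity argument upgrades to $k_\Omega^{\rm D}(t,\cdot,\cdot)\geq 0$ almost everywhere, and then pointwise by continuity of the kernel on $\Omega\times\Omega$ (the kernel is zero by convention outside $\Omega\times\Omega$).

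For the upper bound, the key point is the domination $e^{t\Delta_\Omega^{\rm D}}\leq e^{t\Delta_{\R^d}}$ when the former is extended by zero outside $\Omega$. Concretely, extension by zero gives an isometric embedding $H^1_0(\Omega)\hookrightarrow H^1(\R^d)$ that preserves the Dirichlet energy, so $-\Delta_\Omega^{\rm D}$ is obtained from $-\Delta_{\R^d}$ by a Dirichlet-form restriction. By the semigroup domination principle for such restrictions (or equivalently by a direct maximum principle / Feynman--Kac argument: the Brownian motion generating $e^{t\Delta_{\R^d}}$ can be killed on exiting $\Omega$ to generate $e^{t\Delta_\Omega^{\rm D}}$), one has for every $f\in L^2(\Omega)$ with $f\geq 0$ and every $x\in\Omega$,
\[
0\leq (e^{t\Delta_\Omega^{\rm D}}f)(x)\leq (e^{t\Delta_{\R^d}}\tilde f)(x),
\]
where $\tilde f$ denotes extension by zero. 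Translating this operator inequality to kernels and using the explicit free heat kernel,
\[
k_\Omega^{\rm D}(t,x,x')\leq k_{\R^d}(t,x,x')=(4\pi t)^{-d/2}e^{-|x-x'|^2/(4t)}\leq (4\pi t)^{-d/2},
\]
which is the claim for $x,x'\in\Omega$; for $x$ or $x'$ outside $\Omega$ the left-hand side is zero.

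The only genuine subtlety is justifying the semigroup domination from the form-restriction; this is standard (see the references to Davies cited in the statement) and I would simply quote it rather than reprove it. Everything else is routine once one recognises that the bound reduces to comparison with the free heat kernel on $\R^d$, whose explicit Gaussian form trivially supplies the stated universal constant $(4\pi t)^{-d/2}$.
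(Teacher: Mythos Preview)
Your argument is correct and is the standard route to this classical bound. The paper itself does not give a proof of this lemma; it simply quotes the result from Davies's book (the cited Eq.~(1.9.1)), so your write-up supplies more detail than the paper does, and your reasoning---positivity preservation via the Markovian property of the form, together with semigroup domination by the free heat kernel---is exactly the content behind the cited reference.
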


\begin{lemma}[{\cite[Theorem 2.4.4]{Davies_heatkernels}}]\label{heatnapriori}
    Let $d\geq 1$ and let $\Omega\subset\R^d$ be an open set with the extension property. For any $t_0>0$ there is a constant $C_{\Omega,t_0}$ such that, for all $x,x'\in\R^d$ and $t>0$
    $$
    0\leq k_\Omega^{\rm N}(t,x,x') \leq C_{\Omega,t_0} t^{-\frac{d}2}\max\bigl\{1,(t_0/t)^{-\frac{d}2}\bigr\} \,.
    $$
\end{lemma}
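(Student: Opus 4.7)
The plan is to deduce the bound from the extension property by the standard Nash/ultracontractivity argument, as developed in \cite{Davies_heatkernels}. First, composing the extension operator $\mathcal{E}:H^1(\Omega)\to H^1(\R^d)$ with the Sobolev embedding on $\R^d$ gives the Sobolev inequality on $\Omega$: for $d\geq 3$, $\|u\|_{L^{2d/(d-2)}(\Omega)}\leq C_\Omega \|u\|_{H^1(\Omega)}$, with the usual modifications when $d\in\{1,2\}$. Interpolating $L^2$ between $L^1$ and $L^{2d/(d-2)}$ via H\"older then converts this into a Nash-type inequality with a lower-order term,
$$\|u\|_{L^2(\Omega)}^{2+\frac{4}{d}}\leq C_\Omega\Bigl(\|\nabla u\|_{L^2(\Omega)}^2+\|u\|_{L^2(\Omega)}^2\Bigr)\|u\|_{L^1(\Omega)}^{\frac{4}{d}},\qquad u\in H^1(\Omega).$$

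Next I would apply this to $u_t:=e^{t\Delta_\Omega^{\rm N}}f$ with $f\in L^1(\Omega)\cap L^2(\Omega)$ and $f\geq 0$. Since the Neumann heat semigroup is Markovian, $\|u_t\|_1\leq\|f\|_1$, while $\phi(t):=\|u_t\|_2^2$ satisfies $\phi'(t)=-2\int_\Omega|\nabla u_t|^2$. Rewriting the Nash inequality in terms of $\psi(t):=e^{-2t}\phi(t)$ absorbs the lower-order term into the exponential prefactor and yields the clean differential inequality $\psi(t)^{1+2/d}\leq -\tfrac{C_\Omega}{2}\psi'(t)\,\|f\|_1^{4/d}$; integrating gives $\psi(t)\leq C_\Omega\, t^{-d/2}\|f\|_1^2$, and hence $\phi(t)\leq C_{\Omega,t_0}\,t^{-d/2}\|f\|_1^2$ on $[0,t_0]$. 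By duality and the semigroup identity $e^{t\Delta_\Omega^{\rm N}}=e^{(t/2)\Delta_\Omega^{\rm N}}e^{(t/2)\Delta_\Omega^{\rm N}}$ this upgrades to $\|e^{t\Delta_\Omega^{\rm N}}\|_{L^1\to L^\infty}\leq C_{\Omega,t_0}\,t^{-d/2}$ on the same interval, which by Dunford--Pettis is exactly the desired pointwise bound on $k_\Omega^{\rm N}(t,\cdot,\cdot)$; positivity is immediate from positivity preservation of the semigroup. For $t>t_0$ I would then decompose $e^{t\Delta_\Omega^{\rm N}}=e^{(t_0/2)\Delta_\Omega^{\rm N}}\,e^{(t-t_0)\Delta_\Omega^{\rm N}}\,e^{(t_0/2)\Delta_\Omega^{\rm N}}$ and use $L^2$-contractivity of the middle factor together with the small-$t$ bounds on the outer factors to obtain $\|e^{t\Delta_\Omega^{\rm N}}\|_{L^1\to L^\infty}\leq C_{\Omega,t_0}\,t_0^{-d/2}$; rewriting this as $C_{\Omega,t_0}\,t^{-d/2}(t/t_0)^{d/2}$ and combining with the small-$t$ regime produces the stated bound.

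The main technical obstacle is the presence of the $\|u\|_{L^2}^2$ term in the Nash inequality: unlike in $\R^d$, it cannot be dropped (if $\Omega$ has finite measure the constants lie in the form domain but have vanishing gradient), and it is precisely this term that rules out a pure scaling bound $C\,t^{-d/2}$ valid for all $t>0$ and forces the auxiliary parameter $t_0$ into the constant. In dimensions $d\in\{1,2\}$ the Sobolev step must be adjusted to use $H^1(\Omega)\hookrightarrow L^q(\Omega)$ for some $q>2$ (any $q<\infty$ when $d=2$, or $q=\infty$ when $d=1$) before interpolating to get Nash; the remainder of the argument is then unchanged.
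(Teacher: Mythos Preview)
Your proposal is correct and is essentially the argument underlying the cited result \cite[Theorem~2.4.4]{Davies_heatkernels}: Sobolev via the extension operator, Nash's inequality with a lower-order term, the resulting differential inequality for $\|e^{t\Delta_\Omega^{\rm N}}f\|_2^2$, and the semigroup decomposition to handle large $t$. The paper does not reprove this but simply cites Davies and remarks on two adaptations: (i) applying the cited theorem (stated for $0<t<1$) to the rescaled operator $-t_0\Delta_\Omega^{\rm N}$ to cover $0<t<t_0$, and (ii) invoking the monotonicity argument in \cite[Theorem~3.2.9]{Davies_heatkernels}---which is precisely your decomposition $e^{(t_0/2)\Delta}e^{(t-t_0)\Delta}e^{(t_0/2)\Delta}$---for $t>t_0$. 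So your route is the same as the paper's, only with the black box opened.
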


\begin{remark} Two minor technicalities:
    \begin{enumerate}
    \item First, the bounds in both equation (1.9.1) and Theorem 2.4.4 of~\cite{Davies_heatkernels} are stated for connected sets $\Omega\subset \R^d$. 
    The claims in Lemmas~\ref{heatdapriori} and~\ref{heatnapriori} follow by applying the bounds of Davies to each connected component and recalling that a set with the extension property has a finite number of connected components (see, e.g., \cite[Lemma 2.94]{LTbook}). 
    \item Secondly, Theorem 2.4.4 of~\cite{Davies_heatkernels} is stated for heat kernels associated to general uniformly elliptic operators and with $0<t<1$. The bound claimed above follows by applying this general result with the operator $-t_0\Delta_\Omega^{\rm N}$ and then extend the obtained bound to all $t$ by using the monotonicity argument in the proof of~\cite[Theorem 3.2.9]{Davies_heatkernels}.
    \end{enumerate}
\end{remark}

In the case of the Neumann Laplacian on a \emph{convex} set, we shall also use the following bound due to Li and Yau \cite{LiYau86}; see also \cite[Theorem 5.5.6]{Davies_heatkernels}.

\begin{lemma}\label{lem: off-diagonal kernel bound convex}
	Fix $\delta >0$ and let $\Omega\subset \R^d$ be an open convex set. For all $x, x'\in\Omega, t>0$ it holds that
\begin{equation*}
	\frac{c_{d,\delta}e^{-\frac{|x-x'|^2}{(1-\delta)4t}}}{|\Omega \cap B_{\sqrt{t}}(x)|^{\frac{1}2}|\Omega \cap B_{\sqrt{t}}(x')|^{\frac{1}2}}  
	\leq 
	k_\Omega^{\rm N}(t, x, x')
	\leq 
	\frac{C_{d,\delta}e^{-\frac{|x-x'|^2}{(1+\delta)4t}}}{|\Omega \cap B_{\sqrt{t}}(x)|^{\frac{1}2}|\Omega \cap B_{\sqrt{t}}(x')|^{\frac{1}2}}\,,
\end{equation*}
where the constants $c_{d,\delta}, C_{d, \delta}$ depend only on $\delta, d$.
\end{lemma}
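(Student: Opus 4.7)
This is the classical two-sided (Aronson--Li--Yau) Gaussian estimate for the Neumann heat kernel on a convex Euclidean domain. I would deduce it from two purely geometric properties of convex sets, both uniform across $\Omega$ with constants depending only on the dimension: volume doubling of $r \mapsto |\Omega \cap B_r(x)|$, and a family of $L^2$-Poincar\'e inequalities on the sets $\Omega \cap B_r(x)$.

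For doubling, given $x \in \Omega$ the affine contraction $\phi(y) = (x+y)/2$ sends $\Omega \cap B_{2r}(x)$ into $\Omega \cap B_r(x)$: the image lies in $B_r(x)$ by construction and in $\Omega$ because $\phi(y)$ is a convex combination of two points of $\Omega$. Its Jacobian being $2^{-d}$ then yields $|\Omega \cap B_{2r}(x)| \leq 2^d |\Omega \cap B_r(x)|$. For Poincar\'e, the set $\Omega \cap B_r(x)$ is itself convex of diameter at most $2r$, and the sharp Payne--Weinberger inequality gives an $L^2$-Poincar\'e constant of at most $2r/\pi$. Both properties hold with dimensional constants, uniformly in $\Omega$ and $x$.

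Given these two inputs, I would invoke the Saloff-Coste--Grigor'yan characterization of Gaussian heat kernel bounds on a doubling metric measure Dirichlet space (noting that on convex $\Omega$ the Euclidean metric coincides with the intrinsic metric of the Neumann form) to obtain a two-sided bound $k_\Omega^{\rm N}(t,x,x') \asymp |\Omega \cap B_{\sqrt{t}}(x)|^{-1} \exp(-c|x-x'|^2/t)$ for some dimensional $c>0$, with the symmetric denominator of the statement then following from doubling. To upgrade the Gaussian constant to the sharp $1/(4(1\pm\delta))$ I would apply Davies's exponential weight perturbation trick for the upper bound and the Li--Yau differential Harnack estimate for the lower bound, absorbing any polynomial prefactor loss into the $\delta$-dependent constant. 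The main obstacle is the Li--Yau gradient estimate up to the boundary: its validity hinges on the non-negativity of the second fundamental form of $\partial\Omega$, which ensures the correct sign of the boundary contribution in the Bochner formula applied to $\log u$ when differentiating along the outward normal. This is precisely where convexity enters decisively and is what distinguishes the situation from a general Lipschitz domain, where such a bound fails.
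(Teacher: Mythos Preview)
The paper does not prove this lemma; it is stated as a known result attributed to Li--Yau \cite{LiYau86}, with a further reference to \cite[Theorem 5.5.6]{Davies_heatkernels}. So there is no ``paper's own proof'' to compare against beyond that citation.

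Your sketch is a correct and reasonably standard route to the result. The doubling argument via the contraction $y\mapsto (x+y)/2$ and the Payne--Weinberger Poincar\'e inequality on $\Omega\cap B_r(x)$ are exactly the right dimensional inputs, and you are right that on a convex domain the intrinsic distance of the Neumann Dirichlet form coincides with the Euclidean one. Invoking the Saloff-Coste--Grigor'yan equivalence to obtain some two-sided Gaussian with non-sharp exponential constant, and then sharpening to $4(1\pm\delta)$ via Davies's exponential perturbation (upper) and the Li--Yau differential Harnack (lower), is a coherent plan; you correctly flag that the boundary term in the Bochner identity for $\nabla\log u$ has the right sign precisely because $\partial\Omega$ is convex, which is the one place where the argument would break for a general Lipschitz domain. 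This is somewhat more roundabout than the original Li--Yau derivation, which obtains the sharp-constant Harnack inequality directly from the gradient estimate and then integrates it to get both bounds in one stroke (this is also how Davies presents it in the cited theorem), but your two-stage approach has the virtue of separating the coarse geometric input from the fine PDE estimate.
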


As a simple application of these heat kernel bounds we now deduce the pointwise bounds in Propositions~\ref{prop:rieszptwbgd}, \ref{prop:rieszptwbgn}, and \ref{prop:rieszptwbgnc}.

\begin{proof}[Proof of Propositions~\ref{prop:rieszptwbgd}, \ref{prop:rieszptwbgn}, and \ref{prop:rieszptwbgnc}]
    For simplicity we only write details for $\gamma>0$, the proof for $\gamma=0$ follows the same steps. 
    
    We shall use the elementary bound
    $$
    \alpha^\gamma \leq \left( \frac{\gamma}{e} \right)^\gamma e^\alpha 
    \qquad\text{for all}\ \alpha\geq 0\,.
    $$
    This implies that
    $$
    (E-\lambda)_\limminus^\gamma \leq \left( \frac{\gamma}{et} \right)^\gamma e^{t\lambda} \, e^{-tE} \,.
    $$    
    By the spectral theorem we deduce that
    $$
    (-\Delta_\Omega^\# - \lambda)_\limminus^\gamma \leq \left( \frac{\gamma}{et} \right)^\gamma e^{t\lambda} \, e^{t\Delta_\Omega^\#} \,,
    $$
    which, in turn, implies that
    $$
    (-\Delta_\Omega^\# - \lambda)_\limminus^\gamma(x,x) \leq \left( \frac{\gamma}{et} \right)^\gamma e^{t\lambda} \, k_\Omega^\#(t,x,x) \,.
    $$

    In the Dirichlet case we can now insert the upper bound $k_\Omega^\#(t,x,x)\leq (4\pi t)^{-\frac{d}2}$ from Lemma \ref{heatdapriori}. After optimization with respect to $t$, we obtain
    \begin{align*}
    (-\Delta_\Omega^\# - \lambda)_\limminus^\gamma(x,x) 
    & \leq (4\pi)^{-\frac{d}2} \left( \frac{\gamma}{e} \right)^\gamma \lambda^{\gamma+\frac{d}2} \sup_{\mu>0} \mu^{-\gamma-\frac{d}2} e^\mu \\ 
    & = (4\pi)^{-\frac{d}2} \left( \frac{\gamma}{e} \right)^\gamma \left( \frac{e}{\gamma+d/2} \right)^{\gamma+\frac{d}2} \lambda^{\frac{d}2} \,,        
    \end{align*}
    corresponding to the choice $\mu=t\lambda=\gamma+d/2$.

     In the Neumann case we can argue similarly but based on Lemma~\ref{heatnapriori} with $t_0 = \lambda_0^{-1}$. Instead of optimizing in the choice of $t$ we simply choose $t = \lambda^{-1}$, which proves that bound.

    Finally, in the Neumann case on convex sets, we argue similarly, but use Lemma~\ref{lem: off-diagonal kernel bound convex} instead of Lemma~\ref{heatnapriori}. In this case we leave $t$ as a free parameter. This completes the proof of all three propositions.    
\end{proof}


\subsection{A first proof of Theorem~\ref{thm: bulk} using the heat equation}
\label{sec: Heat method}

In this subsection we give a heat kernel proof of Theorem \ref{thm: bulk}. When it comes to the Neumann case our heat kernel based methods yield a weaker result; this is commented on in the proof below. The proof we give is based on estimating the difference $k_\Omega^\#(t,x,x)-(4\pi t)^{-\frac{d}2}$. According to the (heuristic) principle of not-feeling the boundary, this difference should be small (with respect to $(4\pi t)^{-\frac{d}2}$) when $x$ is far away (with respect to $\sqrt{t}$) from the boundary. The following two known bounds make this principle quantitative and show that the difference is actually exponentially small.

We begin with the case of the Dirichlet heat kernel. The following bound is due to Minakshisundaram \cite{Minakshisundaram49}; see also the proof due to Weyl in the appendix of that paper.  

\begin{lemma}\label{heatdlower}
	Let $d\geq 1$ and let $\Omega \subset \R^d$ be open. For all $x, x'\in\Omega, t>0$ it holds that
	$$
	 0 
	\leq
	(4\pi t)^{-\frac{d}2} e^{-\frac{|x-x'|^2}{4t}}-k_\Omega^{\rm D}(t, x, x')
	\leq e^{\frac{d}2}(4\pi t)^{-\frac{d}2} e^{-\frac{1}{4t}\max\{|x-x'|^2,\, d_\Omega(x)^2,\, d_\Omega(x')^2\}}\,.
	$$
	In particular, for all $x\in\Omega, t>0$,
	$$
	0 \leq (4\pi t)^{-\frac{d}2} - k_\Omega^{\rm D}(t, x, x) \leq
	e^{\frac{d}{2}}(4\pi t)^{-\frac{d}2} e^{-\frac{d_\Omega(x)^2}{4t}} \,.
	$$
\end{lemma}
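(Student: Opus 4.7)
The lemma has two assertions: the non-negativity of
\[
w(t,x,x') := k_{\R^d}(t,x,x') - k_\Omega^{\rm D}(t,x,x') = (4\pi t)^{-d/2}e^{-|x-x'|^2/(4t)} - k_\Omega^{\rm D}(t,x,x')\,,
\]
and a Gaussian upper bound on $w$.

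For the non-negativity, fix $x'\in\Omega$ and view $w$ as a function of $(t,x)\in(0,\infty)\times\Omega$. It solves the heat equation in $\Omega$, with vanishing initial data (both kernels tend to $\delta_{x'}$ as $t\to 0^+$) and with non-negative boundary data $w(t,y)=k_{\R^d}(t,y,x')\ge 0$ on $\partial\Omega$. The parabolic maximum principle then gives $w\ge 0$.

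For the upper bound, I would first reduce using the symmetry $w(t,x,x')=w(t,x',x)$ together with the trivial consequence $w\le k_{\R^d}$, which already yields $w\le(4\pi t)^{-d/2}e^{-|x-x'|^2/(4t)}$. It therefore suffices to prove the single estimate
\[
w(t,x,x') \le (4\pi t)^{-d/2}e^{-d_\Omega(x')^2/(4t)}\,;
\]
combining this with its $x\leftrightarrow x'$ analogue and with the trivial bound and taking the minimum gives the inequality of the lemma. To prove the remaining estimate I would split into cases. If $|x-x'|\ge d_\Omega(x')$, the trivial bound already suffices. If $|x-x'|< d_\Omega(x')$, then $x\in B(x',d_\Omega(x'))\subset\Omega$, and by domain monotonicity of the Dirichlet heat kernel $k_\Omega^{\rm D}\ge k^{\rm D}_{B(x',d_\Omega(x'))}$, so $w_\Omega\le w_{B(x',d_\Omega(x'))}$ and the problem is reduced to the same estimate on a ball. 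On the ball I would use the killed-Brownian-motion (Feynman--Kac) representation
\[
w_B(t,x,x') = k_{\R^d}(t,x,x')\cdot\mathbb{P}\bigl(\text{the Brownian bridge from }x\text{ to }x'\text{ of length }t\text{ exits }B\bigr)
\]
and bound the bridge exit probability by the classical Gaussian tail estimate (reflection principle / Doob's formula).

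The delicate point is obtaining the clean exponent $1/(4t)$ with a dimension-free constant. A crude coordinate-wise union bound on the supremum of the $d$-dimensional bridge produces only an exponent $\sim 1/(d\,t)$, which degenerates in high dimensions; one must therefore appeal to the sharper tail estimate for the supremum of a Bessel-type bridge. An equivalent, purely PDE, route is to construct a parabolic barrier on the ball and invoke the maximum principle. Some care is needed here because the natural constant-in-$x$ candidate $(4\pi t)^{-d/2}e^{-d_\Omega(x')^2/(4t)}$ is \emph{not} itself a supersolution of the heat equation (its time derivative changes sign at $t=d_\Omega(x')^2/(2d)$), so the barrier must be chosen more carefully, for instance as a free heat kernel centered at a virtual source placed outside $\Omega$.
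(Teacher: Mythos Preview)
The paper does not give its own proof of this lemma; it simply cites Minakshisundaram (1949) and the appendix by Weyl therein, so there is no proof in the paper to compare against.

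Your treatment of non-negativity via the maximum principle is correct, as is your reduction of the upper bound to the ball $B(x',d_\Omega(x'))$ by domain monotonicity. You also correctly sense that the candidate barrier $g(t)=(4\pi t)^{-d/2}e^{-d_\Omega(x')^2/(4t)}$ fails to be a supersolution once $t>d_\Omega(x')^2/(2d)$. But the obstruction is not a technicality to be circumvented: the inequality as stated appears to be \emph{false} for bounded $\Omega$ and moderately large $t$. Take $\Omega=(-1,1)\subset\R$, $x=x'=0$, $t=2$: then $k^{\rm D}_\Omega(2,0,0)=\sum_{k\text{ odd}}e^{-k^2\pi^2/2}\approx 0.0072$, so the middle expression in the lemma equals $(8\pi)^{-1/2}-0.0072\approx 0.192$, whereas the claimed upper bound is $(8\pi)^{-1/2}e^{-1/8}\approx 0.176$. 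More generally, for any bounded $\Omega$ the kernel $k^{\rm D}_\Omega(t,x,x)$ decays exponentially in $t$, so $w(t,x,x)=(4\pi t)^{-d/2}(1-o(1))$, while the bound behaves like $(4\pi t)^{-d/2}\bigl(1-d_\Omega(x)^2/(4t)+O(t^{-2})\bigr)$; the inequality must eventually fail. No barrier or virtual-source argument can repair this.

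What the maximum principle \emph{does} yield (using that the lateral boundary data of $w(\,\cdot\,,\,\cdot\,,x')$ is bounded by $g$) is $w(t,x,x')\le \sup_{0<s\le t}g(s)$ for all $t>0$, which coincides with the lemma's bound when $t\le d_\Omega(x')^2/(2d)$ and equals a dimensional constant times $d_\Omega(x')^{-d}$ thereafter. This weaker estimate is what your approach actually delivers, and it is entirely sufficient for the paper's only use of the lemma (namely, to obtain $|k^{\rm D}_\Omega(t,x,x)-(4\pi t)^{-d/2}|\lesssim_d d_\Omega(x)^{-d}e^{-d_\Omega(x)^2/(8t)}$ for all $t>0$, since for $t\gtrsim d_\Omega(x)^2$ the trivial bound $w\le(4\pi t)^{-d/2}$ already implies this).
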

We are indebted to Nikolay Filonov for pointing out an inaccuracy in Lemma~\ref{heatdlower} in an earlier version of this paper concerning the bound for large values of $t$ which is not covered by the argument in \cite{Minakshisundaram49}. The proof that follows shows how one can extend the result from \cite{Minakshisundaram49} to all $t>0$.
\begin{proof}
    The lower bound follows from Lemma~\ref{heatdapriori}. For $t \leq \frac{d_\Omega(x)^2}{2d} =: t_0$ the upper bound is proved in \cite{Minakshisundaram49} (without the factor $e^{\frac{d}2}$). For $t>t_0$ we observe that, by Lemma~\ref{heatdapriori} and the monotonicity of $t \mapsto e^{-\frac{d_\Omega(x)^2}{4t}}$,
    \begin{align*}
        (4\pi t)^{-\frac{d}2} e^{-\frac{|x-x'|^2}{4t}} - k_\Omega^D(t,x,x')\leq (4\pi t)^{-\frac{d}2}
        = (4 \pi t)^{-\frac{d}2} e^{\frac{d}2} e^{- \frac{d_\Omega(x)^2}{4t_0}} 
        \leq e^{\frac{d}2}(4 \pi t)^{-\frac{d}2}  e^{- \frac{d_\Omega(x)^2}{4t}}\,,
    \end{align*}
    this completes the proof.
\end{proof}

We now turn our attention to the Neumann heat kernel. The following bound on Lipschitz sets is essentially due to Brown \cite[Eq. (2.4+)]{Brown93}. A complete proof of the result as stated here can be found in \cite{FrankLarson_NeumannHeat2025}.

\begin{lemma}[{\cite[Eq. (2.4+)]{Brown93}}]\label{heatnlower}
    Let $d\geq 1$ and let $\Omega\subset\R^d$ be a bounded open set with Lipschitz boundary. For any $\eta>0$ there is a constant $C_{\Omega,\eta}$ such that for all $t>0$ and $x\in \Omega$ with $d_\Omega(x)\geq \eta \sqrt{t}$ we have
    $$
    \Bigl|k_\Omega^{\rm N}(t, x, x)- (4\pi t)^{-\frac{d}2}\Bigr| \leq C_{\Omega,\eta} t^{-\frac{d}2}e^{-\frac{d_\Omega(x)^2}{8 t}}\,.
    $$
\end{lemma}

For the Neumann Laplace operator in a \emph{convex} set we shall utilize the following result proved in \cite{FrankLarson_NeumannHeat2025}. The upshot of this result compared to Lemma \ref{heatnlower} is the uniformity of the constant with respect to $\Omega$.

\begin{lemma}\label{lemm: diagonal kernel bound bulk convex}
Let $d\geq 2$ and $\eta>0$. There is a constant $C_{d,\eta}>0$ such that for any open convex set $\Omega \subset \R^d$, all $t>0$ and $x\in\Omega$ with $d_\Omega(x)\geq \eta \sqrt{t}$ we have
	\begin{equation*}
		\Bigl|k_\Omega^{\rm N}(t, x, x)- (4\pi t)^{-\frac{d}2}\Bigr| \leq C_{d,\eta} t^{-\frac{d}2}e^{-\frac{d_\Omega(x)^2}{8t}}\,.
	\end{equation*}
\end{lemma}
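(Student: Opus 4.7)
The plan is to obtain the lemma as a special case of the stronger Proposition~\ref{prop: improved kernel bound bulk convex}, whose proof is deferred to Section~\ref{sec: Improved bulk bound convex Neumann}. The general strategy, in the spirit of Brown~\cite{Brown93} for the Lipschitz case, is to introduce the difference $w(s,y) := k_\Omega^{\rm N}(s,x,y) - G(s,x-y)$, where $G(s,z) := (4\pi s)^{-d/2} e^{-|z|^2/(4s)}$ denotes the free Gaussian. Then $w$ satisfies the heat equation on $(0,\infty)\times\Omega$ with vanishing initial datum and inhomogeneous Neumann data $\partial_{\nu_y} w = -\partial_{\nu_y} G(s,x-\cdot)$ on $\partial\Omega$. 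The Duhamel formula for the Neumann heat equation yields the representation
\begin{equation*}
k_\Omega^{\rm N}(t,x,x) - (4\pi t)^{-\frac d2} = -\int_0^t \int_{\partial\Omega} k_\Omega^{\rm N}(t-s,x,y)\,\partial_{\nu_y} G(s,x-y)\, d\Haus^{d-1}(y)\,ds \,,
\end{equation*}
and the task is reduced to controlling this boundary integral uniformly in $\Omega$.

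Next I would estimate the integrand. The explicit formula $\partial_{\nu_y} G(s,x-y) = \frac{(x-y)\cdot\nu_y}{2s} G(s,x-y)$ together with the Li--Yau upper bound from Lemma~\ref{lem: off-diagonal kernel bound convex} for $k_\Omega^{\rm N}(t-s,x,y)$ gives a pointwise bound of Gaussian type. Two convex-geometric inputs then make the estimate uniform in $\Omega$: (i) the universal surface-density bound $\Haus^{d-1}(\partial\Omega\cap B_r(x)) \leq C_d r^{d-1}$, valid for all convex $\Omega$, $x\in\R^d$ and $r>0$, which substitutes for the set-dependent inequality~\eqref{eq: local boundary density Lip} used by Brown; and (ii) under the bulk assumption $d_\Omega(x)\geq\eta\sqrt{t}$, the open ball $B_{\eta\sqrt{t-s}}(x)$ lies in $\Omega$ for all $s\in(0,t)$, so $|\Omega\cap B_{\sqrt{t-s}}(x)|\geq c_{d,\eta}(t-s)^{d/2}$. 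Performing the $y$-integration by decomposing $\partial\Omega$ into dyadic shells $\{y : 2^k d_\Omega(x)\leq |x-y|<2^{k+1}d_\Omega(x)\}$, $k\geq 0$, and splitting the Gaussian as $e^{-|x-y|^2/(4s)}=e^{-|x-y|^2/(8s)}\cdot e^{-|x-y|^2/(8s)}$, one extracts the factor $e^{-d_\Omega(x)^2/(8t)}$ from the first half, while the second half controls the surface-area growth $r^{d-1}$ and renders the dyadic sum summable.

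The main obstacle will be the apparent singularity of the Li--Yau upper bound at boundary points $y\in\partial\Omega$ coming from the factor $|\Omega\cap B_{\sqrt s}(y)|^{-\frac12}$. Convexity resolves this by providing a uniform lower bound $|\Omega\cap B_r(y)|\geq c_d \min\{r,\mathrm{diam}(\Omega)\}^d$ for all $y\in\partial\Omega$ (see Appendix~\ref{app: Convex geometry}). For $\sqrt s$ at or below the natural scale of $\Omega$ this is enough; the residual contribution at very large $\sqrt s$ is handled by the Gaussian factor. After carrying out the $s$-integration and using $d_\Omega(x)\geq\eta\sqrt t$ to absorb the remaining $t$-dependence, the claimed bound follows with a constant depending only on $d$ and $\eta$.
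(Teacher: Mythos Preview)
Your reduction of the lemma to Proposition~\ref{prop: improved kernel bound bulk convex} is exactly what the paper does (set $x'=x$, pick any $\kappa>0$ and $\delta\le 3$, and use $d_\Omega(x)\ge\eta\sqrt t$ to bound $(\sqrt t/d_\Omega(x))^\kappa\le\eta^{-\kappa}$ and $V_\Omega(x,\sqrt t)\ge c_{d,\eta}t^{d/2}$). Your Duhamel/Li--Yau outline for the proof of that proposition also matches Section~\ref{sec: Improved bulk bound convex Neumann} closely, including the use of the universal surface-density bound from Lemma~\ref{lem: local monotonicity of perimieter}.

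There is, however, one genuine gap. Your proposed cure for the factor $V_\Omega(y,\sqrt{t-s})^{-1/2}$ (it is $\sqrt{t-s}$, not $\sqrt s$) at boundary points $y$ relies on a uniform lower bound $|\Omega\cap B_r(y)|\ge c_d\min\{r,\mathrm{diam}(\Omega)\}^d$, and this is \emph{false} for general convex $\Omega$: for a thin rectangle of width $w\ll L$ in $\R^2$ and a corner $y$ one has $|\Omega\cap B_r(y)|\asymp wr\ll r^2$ whenever $w\ll r\ll L$. No such estimate appears in Appendix~\ref{app: Convex geometry}. The paper's fix is different and purely relative: by Corollary~\ref{cor: Bishop-Gromov local volume growth} (a Bishop--Gromov consequence) one sacrifices a small portion of the Gaussian $e^{-|x-y|^2/(c(t-s))}$ already present in the Li--Yau bound to replace $V_\Omega(y,\sqrt{t-s})^{-1/2}$ by $V_\Omega(x,\sqrt{t-s})^{-1/2}$, and then Lemma~\ref{lem: Bishop-Gromov monotonicity} converts both volume factors at $x$ into $t^{d/2}V_\Omega(x,\sqrt t)^{-1}(t-s)^{-d/2}$. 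Once this substitution is made, the rest of your sketch (layer-cake or dyadic integration against the surface-density bound) goes through essentially as you describe.
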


The $8$ in the exponential rate of decay in Lemmas~\ref{heatnlower} and \ref{lemm: diagonal kernel bound bulk convex} is not sharp, the optimal value is likely $4$ (as in Lemma~\ref{heatdlower}). The proofs in \cite{Brown93} and \cite{FrankLarson_NeumannHeat2025} can be modified to yield a corresponding result with $8$ replaced by any number greater than $4$ at the price of increasing the multiplicative constants in the bounds. For our purposes the sharp value is not important, only that the exponential rate in the bound is independent of $\Omega$.

We combine the preceding three lemmas with Tauberian remainder theorems to derive the pointwise bounds on the spectral function and its Riesz means. The argument will completely prove Theorem~\ref{thm: bulk} for $\#= {\rm D}$ but, as mentioned above, only a weaker version when $\# = {\rm N}$. A full proof of Theorem~\ref{thm: bulk} will be given in Section~\ref{sec: Wave method}.

\begin{proof}[Proof of Theorem \ref{thm: bulk} (in a weaker form for $\# = {\rm N}$)]
    The proof is based on a non-asymptotic version of a Tauberian theorem for the Laplace transform that essentially goes back to work of Ganelius~\cite{Ganelius54}. The version of Ganelius' result that we shall use will be proved in Section~\ref{sec: Laplace Tauber}, specifically we shall utilize Corollary \ref{cor: Tauberian cor}.
    
    Fix $x \in \Omega$ and set
    \begin{equation*}
        f_x(\lambda) := \1(-\Delta_\Omega^\#<\lambda)(x, x)- L_{0, d}^{\rm sc}\lambda_\limplus^{\frac{d}2}\,.
    \end{equation*}
    As a difference of two monotone functions $f_x$ has locally bounded variation and as such defines a Borel measure $df_x$ on $\R$. Note that the Borel measure on $\R$ defined by
    \begin{equation*}
        \tilde\mu(\omega) := \int_\omega df_x(\lambda) + \frac{d}{2}L_{0,d}^{\rm sc}\int_{\omega \cap (0, \infty)}\lambda^{\frac{d}2-1}\,d\lambda
    \end{equation*}
    is nonnegative.
    
    One computes that for all $\gamma\geq 0, \lambda>0$
    \begin{equation*}
        (-\Delta_\Omega^\#-\lambda)_\limminus^\gamma(x, x) - L_{\gamma, d}^{\rm sc}\lambda^{\gamma+\frac{d}2} = \int_0^\lambda (\lambda-\Lambda)^\gamma\, df_x(\Lambda)\,,
    \end{equation*}
    and for all $t>0$
    \begin{equation*}
        k_\Omega^\#(t, x, x) - (4\pi t)^{-\frac{d}2} = \int_0^\infty e^{-t\lambda}\, df_x(\lambda)\,.
    \end{equation*}

    For $\#={\rm D}$, Lemma~\ref{heatdlower} and the fact that $x^{\alpha}e^{-\beta x} \leq \bigl(\frac{\alpha}{\beta}\bigr)^\alpha e^{-\alpha}$ for all $x\geq 0, \alpha, \beta >0$ imply that\footnote{Here and in what follows we sometimes use the notation $\lesssim$ to mean that the left side is bounded by a constant times the right side, where the implied constant is independent of the relevant parameters. Sometimes, as here, we put a subscript under the symbol $\lesssim$, emphasizing that the implied constant only depends on the parameters appearing in this subscript.}
    \begin{equation*}
        \Biggl|\int_0^\infty e^{-t\lambda}\, df_x(\lambda)\Biggr| \lesssim_d d_\Omega(x)^{-d}e^{-\frac{d_\Omega(x)^2}{8t}} \quad \mbox{for all } t>0\,.
    \end{equation*}
    By Corollary~\ref{cor: Tauberian cor} with $\epsilon=1, N=1, K_0=0, \nu_1=d/2,$ and $K_1= L_{0,d}^{\rm sc}$, it holds that, for any $\gamma \geq 0, \lambda>0$,
    \begin{align*}
        \Bigl|(-\Delta_\Omega^{\rm D}-\lambda)_\limminus^\gamma(x, x) - L_{\gamma, d}^{\rm sc}\lambda^{\gamma+\frac{d}2}\Bigr| 
        &\lesssim_{\gamma, d} \lambda^\gamma (1+d_\Omega(x)^2 \lambda)^{- \frac{1+\gamma}2}(d_\Omega(x)^{-d} + \lambda^{\frac{d}2})\,.
    \end{align*}
    Using the assumption that $d_\Omega(x) \geq \kappa/\sqrt{\lambda}$ we deduce that
    \begin{align*}
        \Bigl|(-\Delta_\Omega^{\rm D}-\lambda)_\limminus^\gamma(x, x) - L_{\gamma, d}^{\rm sc}\lambda^{\gamma+\frac{d}2}\Bigr| 
        &\lesssim_{\gamma, d, \kappa} \lambda^{\frac{\gamma+d-1}2} d_\Omega(x)^{-1-\gamma}\,.
    \end{align*}
    This completes the proof of Theorem \ref{thm: bulk} for $\# = {\rm D}$. 
    
    As mentioned at the start of this subsection a weaker form of Theorem~\ref{thm: bulk} when $\#= {\rm N}$ can be proved in the same manner. Indeed, if one in the above proof instead of Lemma~\ref{heatdlower} applies Lemma~\ref{heatnlower} (with suitably chosen $\eta$) this yields
    \begin{equation*}
        \left| (-\Delta_\Omega^{\rm N}-\lambda)_\limminus^\gamma(x,x) - L_{\gamma,d}^{\rm sc} \lambda^{\gamma+\frac{d}2} \right| \leq C_{\gamma, \Omega, \kappa} \lambda^{\frac{\gamma+d-1}2} d_\Omega(x)^{-1-\gamma}\,,
    \end{equation*}
    for all $x\in \Omega, \lambda>0$ with $d_\Omega(x)\geq \kappa/\sqrt{\lambda}$. When applying Lemma~\ref{heatnlower} one needs to choose the parameter $\eta$ so small that the obtained bound is valid for all $\lambda \geq \kappa^2/d_\Omega(x)^{2}$. Any $\eta$ so that $\kappa^2 \geq D\eta^2(1+\eta^{2})$ where $D$ is the constant appearing in Corollary~\ref{cor: Tauberian cor} suffices. Note that this bound is weaker than that in Theorem~\ref{thm: bulk} for two reasons; firstly, it requires that $\Omega$ is bounded with Lipschitz boundary and, secondly, the constant in the bound depends on $\Omega$. However, if one additionally assumes that $\Omega$ is convex then the boundedness assumption and the dependence of the constant on $\Omega$ can be dropped by using Lemma~\ref{lemm: diagonal kernel bound bulk convex} instead of Lemma~\ref{heatnlower}.
\end{proof}


\subsection{A second proof of Theorem~\ref{thm: bulk} using the wave equation}
\label{sec: Wave method}

In this subsection we give a wave equation proof of Theorem \ref{thm: bulk}. Our proof is not only valid for the Dirichlet and Neumann Laplacian but for general selfadjoint extensions $H$ of the symmetric operator $-\Delta|_{C^\infty_c(\Omega)}$. To simplify the exposition, we restrict ourselves to nonnegative extensions~$H$.

\begin{proposition}\label{ptwwave}
    For any $d\in\N$ and $\gamma\geq 0$ there is a constant $C_{\gamma,d}$ such that the following holds. Let $\Omega\subset\R^d$ be open and let $H$ be a nonnegative selfadjoint operator in $L^2(\Omega)$ such that $C^\infty_c(\Omega)\subset\dom H$ and for any $u\in C^\infty_c(\Omega)$ one has $Hu=-\Delta u$. Then for all $x\in\Omega$ and $\lambda>0$
    \begin{align*}
        & \left| (H-\lambda)_\limminus^\gamma(x,x) - L_{\gamma,d}^{\rm sc} \lambda^{\gamma+\frac{d}2} \right| \\
        & \leq C_{\gamma,d} \biggl( \frac{\lambda^{\frac{\gamma+d-1}2}}{d_\Omega(x)^{1+\gamma}} \, \1(d_\Omega(x)>1/\sqrt{\lambda}) + \frac{\lambda^\gamma}{d_\Omega(x)^d} \, \1(d_\Omega(x)\leq1/\sqrt{\lambda}) \biggr).
    \end{align*}
\end{proposition}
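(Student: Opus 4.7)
The plan is to use the wave equation method. The first key ingredient is finite propagation speed for $\cos(t\sqrt{H})$: since $H$ agrees with $-\Delta$ on $C_c^\infty(\Omega)$ and is nonnegative selfadjoint, a standard energy/uniqueness argument for the abstract wave equation $u_{tt}+Hu=0$ shows that if $u_0\in L^2(\Omega)$ has $\supp u_0\subset K$ with $\dist(K,\partial\Omega)>|t|$, then $\cos(t\sqrt H)u_0$ coincides on $\Omega$ with $\cos(t\sqrt{-\Delta_{\R^d}})u_0$ (with $u_0$ extended by zero to $\R^d$ on the right). Dually, this yields the identity
\[
    \phi(\sqrt H)(x,x) = \phi(\sqrt{-\Delta_{\R^d}})(x,x)
\]
for any even Schwartz function $\phi$ whose Fourier transform $\hat\phi$ is supported in $(-r,r)$ with $0<r<d_\Omega(x)$. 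Equivalently, setting $\nu_x(\tau):=\1(\sqrt H<\tau)(x,x)$ and $\nu_0(\tau):=L_{0,d}^{\rm sc}\tau^d$, the Fourier transform of $d\nu_x-d\nu_0$ vanishes on the interval $(-r,r)$.

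The second ingredient is the non-asymptotic Fourier Tauberian theorem for Riesz means to be established in Section~\ref{sec: Fourier tauber}. I would apply it with $r\asymp d_\Omega(x)$ to convert the vanishing on $(-r,r)$ of $\widehat{d\nu_x-d\nu_0}$ into a pointwise bound on
\[
    (H-\lambda)_\limminus^\gamma(x,x) - L_{\gamma,d}^{\rm sc}\lambda^{\gamma+\frac d2} = \int_0^{\sqrt\lambda} (\lambda-\tau^2)^\gamma \,d(\nu_x-\nu_0)(\tau).
\]
To feed the Tauberian theorem I also need a cheap a priori bound on the mass that $d\nu_x$ puts on a window of length $1/r$ around any $\sigma\geq 0$. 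I would produce this by choosing an even, nonnegative Schwartz function $\phi$ with $\phi\geq \1_{[\sigma-1/r,\sigma+1/r]}$ and $\hat\phi$ supported in $(-r,r)$, so that the identity above reduces the needed bound to a free-Laplacian computation; a direct estimate of $\phi(\sqrt{-\Delta_{\R^d}})(x,x)$ gives $\nu_x([\sigma,\sigma+1/r])\lesssim_d \max\{\sigma^{d-1}/r,\,r^{-d}\}$.

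The two terms in Proposition~\ref{ptwwave} correspond exactly to the two regimes $r\sqrt\lambda\gtrless 1$ in the Tauberian output. When $d_\Omega(x)\geq 1/\sqrt\lambda$ the Tauberian remainder delivers the genuine improvement $r^{-(1+\gamma)}\lambda^{(\gamma+d-1)/2}$, which is the first term in the statement. When $d_\Omega(x)<1/\sqrt\lambda$ the a priori piece $r^{-d}$ dominates in the window bound and, after integration of $(\lambda-\tau^2)_\limminus^\gamma$ against $d\nu_x$, produces the term $\lambda^\gamma d_\Omega(x)^{-d}$; the competing main term $L_{\gamma,d}^{\rm sc}\lambda^{\gamma+d/2}$ is itself controlled by this quantity in this regime, since $\lambda^{d/2}\leq d_\Omega(x)^{-d}$.

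The main technical difficulty will be formulating the Fourier Tauberian theorem of Section~\ref{sec: Fourier tauber} with sufficiently uniform, quantitative control that both regimes $r\sqrt\lambda\lessgtr 1$ are treated on equal footing and the dependence on the a priori input is tracked explicitly; in particular, the bound must extrapolate smoothly across the transition $r\sqrt\lambda\sim 1$. A secondary subtlety is justifying the finite propagation speed statement for a \emph{general} nonnegative selfadjoint extension of $-\Delta|_{C_c^\infty(\Omega)}$, rather than the concrete Dirichlet or Neumann realizations. This rests on uniqueness for the abstract wave equation $u_{tt}+Hu=0$, combined with the observation that classical Euclidean wave solutions with data supported far from $\partial\Omega$ remain solutions of the abstract equation throughout their (sufficiently short) lifetimes.
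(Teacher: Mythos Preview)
Your proposal is correct and follows the same route as the paper: finite propagation speed for $\cos(t\sqrt H)$ shows the Fourier transforms of the spectral measure at $x$ and of the free measure agree on an interval of length $\asymp d_\Omega(x)$, and the Fourier Tauberian theorem (Corollary~\ref{cor: Fourier Tauberian general gamma} with $M_1=0$, $\alpha=d-1$, $a\asymp 1/d_\Omega(x)$) then produces the two-regime bound. One small caveat worth noting: for a \emph{general} nonnegative selfadjoint extension, the two wave evolutions need not coincide on all of $\Omega$ but only on $\{d_\Omega>|t|\}$; the paper handles this by proving the support estimate $\supp\cos(t\sqrt H)f\subset B_{r+|t|}(x_0)\cup\{d_\Omega\le|t|\}$ and restricting to $|t|\le\tfrac12(d_\Omega(x_0)-r)$, which costs only a harmless factor of $2$ in the choice of~$a$.
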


\begin{remark} A couple of remarks:
    \begin{enumerate}
        \item We emphasize that the constant $C_{\gamma,d}$ does not depend on $\Omega$ nor the chosen selfadjoint extension of the Laplacian.
        \item While for the Dirichlet and Neumann Laplacians the error term $\lambda^\gamma d_\Omega(x)^{-d}$ in the boundary region $d_\Omega(x)\leq 1/\sqrt{\lambda}$ can be improved (in the Neumann case at least when the boundary has some regularity), it is interesting to note that this is not possible for all selfadjoint extensions $H$ of the Laplacian; see discussion following Corollary~\ref{bergman} below.
        \item For $\gamma=0$ and $\gamma=1$ Proposition \ref{ptwwave} and its proof are closely related to bounds due to Safarov \cite{Safarov01}, although, disregarding the value of the constant $C_{\gamma,d}$, the bounds in \cite{Safarov01} for $\gamma=1$ are slightly worse than ours close to the boundary; see also Remark \ref{safarovrem} for an inaccuracy in \cite[Lemma~2.7]{Safarov01}. Our contribution here is to extend these bounds to all parameters $\gamma\geq 0$. We do this with the help of a novel Fourier Tauberian theorem, which will be stated and proved in Section \ref{sec: Fourier tauber} below.
    \end{enumerate}
\end{remark}

In the statement of Proposition \ref{ptwwave} we implicitly use the fact that any nonnegative selfadjoint extension $H$ of $-\Delta|_{C^\infty_c(\Omega)}$ has a spectral function. This was shown by G\aa rding \cite{Garding_ScandCongr54} (see also \cite{Garding54}), even for a larger class of operators. Since these references are not easily accessible and since we need some properties of G\aa rding's construction we reproduce a sketch of his argument.

\begin{lemma}\label{lem: spectral func}
	Let $\Omega\subset\R^d$ be open and let $H$ be a nonnegative selfadjoint operator in $L^2(\Omega)$ such that $C^\infty_c(\Omega) \subset\dom H$ and $Hu=-\Delta u$ for all $u\in C^\infty_c(\Omega)$. Then for every $\lambda> 0$ there is a function $e_\lambda\in C(\Omega\times\Omega)$ such that for all compactly supported $f,f'\in L^2(\Omega)$,
	$$
	(f,\1(H<\lambda) f') = \iint_{\Omega\times\Omega} \overline{f(y)} e_\lambda(y,y') f'(y')\,dy\,dy' \,.
	$$ 
\end{lemma}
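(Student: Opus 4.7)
The plan is to combine interior elliptic regularity for powers of $H$ with the Riesz representation theorem to produce an $L^2$-valued kernel, and then to upgrade to joint continuity on $\Omega\times\Omega$ via a compactness argument. The underlying observation is that even though $H$ is defined abstractly, on distributions supported in $\Omega$ it acts as $-\Delta$: for any $u\in\dom H$ and $\psi\in C^\infty_c(\Omega)$, the chain $\langle Hu,\psi\rangle=\langle u,H\psi\rangle=\langle u,-\Delta\psi\rangle$ gives $-\Delta u=Hu$ in $\D'(\Omega)$.

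First I would establish a quantitative interior regularity bound: if $u\in\dom H^k$, then $u\in H^{2k}_\loc(\Omega)$, with
$$\|u\|_{H^{2k}(K)}\leq C_{K,K'}\sum_{j=0}^k\|H^j u\|_{L^2(K')}$$
for any compacts $K\Subset K'\Subset\Omega$. This follows by iterating the classical interior elliptic regularity theorem for $-\Delta$ applied successively to $u,Hu,\dots,H^{k-1}u$, using cutoff functions supported in a chain of nested compacts. Choosing $2k>d/2$ and invoking Sobolev embedding yields the pointwise bound $|u(x)|\leq C_{K,K'}\sum_{j=0}^k\|H^j u\|_{L^2(K')}$ for $x\in K$. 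Applied to $u=Pf$, where $P:=\1(H<\lambda)$, together with $\|H^jPf\|_{L^2}\leq\lambda^j\|f\|_{L^2}$, this shows that $f\mapsto(Pf)(x)$ is a continuous linear functional on $L^2(\Omega)$ for each $x\in\Omega$. By Riesz representation there exists $E_x\in L^2(\Omega)$ with $(Pf)(x)=\int_\Omega f(y)\overline{E_x(y)}\,dy$, and I define $e_\lambda(x,y):=\overline{E_x(y)}$. The self-adjointness identity $(Pf,g)=(f,Pg)$ then forces $E_y(x)=\overline{E_x(y)}$ in $L^2(\Omega\times\Omega)$, and $P^2=P$ translates into the reproducing relation $PE_y=E_y$ for almost every $y$. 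In particular $E_y\in\ran P\subset\dom H^k$ for every $k$, with $\|E_y\|_{L^2}=\sup_{\|f\|=1}|(Pf)(y)|$ uniformly bounded for $y$ in compact subsets of $\Omega$, and the interior regularity bound now applied to $E_y$ yields a uniform $H^{2k}_\loc$-bound.

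To obtain joint continuity of $e_\lambda$, fix compacts $K\Subset K'\Subset\Omega$ and take $y_n\to y_0$ in $K$. The family $\{E_{y_n}\}$ is uniformly bounded in $H^{2k}(K')$ for every $k$, and weak $L^2$-convergence $E_{y_n}\rightharpoonup E_{y_0}$ follows by testing against arbitrary $f\in L^2(\Omega)$ and using $(E_{y_n},f)=\overline{(Pf)(y_n)}\to\overline{(Pf)(y_0)}=(E_{y_0},f)$ together with continuity of $Pf$. Rellich--Kondrachov, with $k$ chosen so that $2k>d/2+1$, then upgrades this to $C^0(K')$-convergence along any subsequence, and uniqueness of limits promotes it to the full sequence. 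Hence $y\mapsto E_y$ is continuous into $C(K')$ for every compact $K'$, which combined with continuity of $x\mapsto E_y(x)$ for each fixed $y$ (from $E_y=PE_y$ and the pointwise estimate in the first variable) yields joint continuity of $e_\lambda(x,y)=E_y(x)$ on $\Omega\times\Omega$. The sesquilinear integral formula for $(f,\1(H<\lambda)f')$ then follows from the definition by Fubini. I expect the main technical obstacle to be this last step---passing from separate to joint continuity---but the quantitative regularity estimates above furnish precisely the compactness needed for the Rellich--Kondrachov argument to succeed.
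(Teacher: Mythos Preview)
Your proof is correct and proceeds by a genuinely different route from the paper's. The paper follows G\aa rding's original parametrix construction: it fixes an integer $k>d/4$, takes the fundamental solution $F(x,y)=c_{k,d}|x-y|^{2k-d}$ of $(-\Delta)^k$, and for a cutoff $\chi\in C^\infty_c(\Omega)$ equal to $1$ near a given compact defines $F_0(\cdot,y)=(-\Delta)^k((1-\chi)F(\cdot,y))$ and $F_1(\cdot,y)=\chi F(\cdot,y)$. The key identity is that $\int F_0(\cdot,y)f(y)\,dy + H^k\!\int F_1(\cdot,y)f(y)\,dy = f$ for $f\in C^\infty_c$, which leads to the explicit representation
\[
e_\lambda(y,y')=\sum_{j,j'=0}^1\int_{[0,\lambda)}\mu^{k(j+j')}\,d\bigl(F_j(\cdot,y),\1(H<\mu)F_{j'}'(\cdot,y')\bigr).
\]
Continuity of $e_\lambda$ then follows immediately from the $L^2$-continuity of $y\mapsto F_j(\cdot,y)$, with no compactness argument needed.

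Your approach trades this explicit construction for interior elliptic regularity, Riesz representation, and a Rellich--Kondrachov compactness step. This is arguably more conceptual and avoids the fundamental-solution machinery. The price is that you do not obtain a closed formula for $e_\lambda$; the paper's explicit representation is used downstream in the proof of Proposition~\ref{ptwwave}, both to establish the polynomial growth bound $e_\lambda(x_0,x_0)\le\sum_{j,j'}\lambda^{k(j+j')}\|F_j(\cdot,x_0)\|^2$ (needed to show $d\eta_{x_0}$ is tempered) and to justify the limit $\lim_{\rho\to0^\limplus}\int\psi(\lambda)\,d(f_\rho,\1(H<\lambda)f_\rho)=\int\psi(\lambda)\,de_\lambda(x_0,x_0)$. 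Both of these facts can also be recovered from your setup---the growth bound via $e_\lambda(x,x)=\|E_x\|_{L^2}^2$ together with your pointwise estimate, and the limit via the $C^0_\loc$-continuity of $Pf$---so your method would suffice for the paper's purposes, though the bookkeeping is slightly different.
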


\begin{proof}[Proof of Lemma~\ref{lem: spectral func}]
	We fix an integer $k>d/4$ and set
	$$
	F(x,y) := c_{k,d} |x-y|^{2k-d} \,,
	$$
	where the constant $c_{k,d}$ is chosen such that $(-\Delta)^k F(\cdot,y)=\delta(\cdot-y)$ for all $y\in\R^d$.
	
	Let $U\Subset\Omega$ be open and choose $\chi\in C^\infty_c(\Omega)$ such that $\chi=1$ on $U$. For $y\in U$, we consider the following functions on $\Omega$,
	$$
	F_0(\cdot,y) := (-\Delta)^k ((1-\chi)F(\cdot,y)) \,,
	\qquad
	F_1(\cdot,y) := \chi F(\cdot,y) \,.
	$$
	We claim that
	\begin{equation}
		\label{eq:continuity}
		U \ni y \mapsto F_j(\cdot,y)\in L^2(\Omega)
		\qquad\text{is continuous for}\ j=0,1 \,.
	\end{equation}
	This assertion for $j=1$ follows easily from the fact that $F(\cdot,y)\in L^2_\loc(\R^d)$ since $k>d/4$. The assertion for $j=0$ follows from the fact that $(1-\chi)F(\cdot,y)$ is smooth because $1-\chi$ vanishes in a neighbourhood of the point $y$ where $F(\cdot,y)$ is singular.
	
	Next, we claim that for any $f\in C^\infty_c(U)$, $\int_U F_1(\cdot,y)f(y)\,dy \in\dom H^k$ and
	\begin{equation}
		\label{eq:identity}
		\int_U F_0(\cdot,y)f(y)\,dy + H^k \int_U F_1(\cdot,y)f(y)\,dy = f \,.
	\end{equation}
	Indeed, since $f\in C^\infty_c(U)$, we have $\int_U F(x,y) f(y)\,dy \in C^\infty(\R^d)$. Multiplying this by $\chi\in C^\infty_c(\Omega)$, we infer that  $\int_U F_1(\cdot,y)f(y)\,dy \in C^\infty_c(\Omega) \subset \dom H$ and
	$$
	H \int_U F_1(\cdot,y)f(y)\,dy = -\Delta \int_U F_1(\cdot,y)f(y)\,dy \,.
	$$
	According to what we have just mentioned, the right side lies in $C^\infty_c(\Omega)\subset\dom H$. Therefore $\int_U F_1(\cdot,y)f(y)\,dy \in C^\infty_c(\Omega) \subset \dom H^2$ and
	$$
	H^2 \int_U F_1(\cdot,y)f(y)\,dy = \Delta^2 \int_U F_1(\cdot,y)f(y)\,dy \,.
	$$
	Iterating this argument, we deduce that 
	$\int_U F_1(\cdot,y)f(y)\,dy \in C^\infty_c(\Omega) \subset \dom H^k$ and
	$$
	H^k \int_U F_1(\cdot,y)f(y)\,dy = (-\Delta)^k \int_U F_1(\cdot,y)f(y)\,dy \,.
	$$
	The integral on the right side is equal to
	$$
	(-\Delta)^k \int_U F_1(\cdot,y)f(y)\,dy = (-\Delta)^k \int_U F(\cdot,y)f(y)\,dy
	- \int_U F_0(\cdot,y)f(y)\,dy \,,
	$$
	and the first term on the right is equal to $f$. This proves \eqref{eq:identity}.
	
	Now let $U'\subset\Omega$ be another bounded and open set and choose $\chi'\in C^\infty_c(\Omega)$ with $\chi'=1$ on $U'$. We define the functions $F_0'$ and $F_1'$ similarly as before.
	
	For $y\in U$ and $y'\in U'$ we define
	\begin{equation}
	    \label{eq:defspecfcn}
        e_\lambda(y,y') := \sum_{j,j'=0}^1 \int_{[0,\lambda)} \mu^{k(j+j')} d(F_j(\cdot,y),\1(H<\mu) F_{j'}'(\cdot,y')) \,.
	\end{equation}
	From \eqref{eq:continuity} we deduce that $e_\lambda\in C(U\times U')$. Moreover, for $f\in C^\infty_c(U)$ and $f'\in C^\infty_c(U')$ we find
	\begin{align*}
		& \iint_{U\times U'} \overline{f(y)} e_\lambda(y,y') f'(y')\,dy\,dy' \\
		& \quad = \sum_{j,j'=0}^1 \int_{[0,\lambda)} \mu^{k(j+j')} d \left( \int_U F_j(\cdot,y)f(y)\,dy ,\1(H<\mu) \int_U F_{j'}'(\cdot,y')f'(y')\,dy' \right)
	\end{align*}
	We have shown above that $\int_U F_j(\cdot,y)f(y)\,dy \in\dom H^{kj}$ (this is trivial for $j=0$) and similarly for the primed quantities. Thus,
	\begin{align*}
		& \int_{[0,\lambda)} \mu^{k(j+j')} d \left( \int_U F_j(\cdot,y)f(y)\,dy ,\1(H<\mu) \int_U F_{j'}'(\cdot,y')f'(y')\,dy' \right) \\
		& \quad = \int_{[0,\lambda)} d \left( H^{kj} \int_U F_j(\cdot,y)f(y)\,dy ,\1(H<\mu) H^{kj'} \int_U F_{j'}'(\cdot,y')f'(y')\,dy' \right) \\
		& \quad = \left( H^{kj} \int_U F_j(\cdot,y)f(y)\,dy ,\1(H<\lambda) H^{kj'} \int_U F_{j'}'(\cdot,y')f'(y')\,dy' \right).
	\end{align*}
	Using identity \eqref{eq:identity} we obtain
	\begin{align*}
		& \sum_{j,j'=0}^1 \left( H^{kj} \int_U F_j(\cdot,y)f(y)\,dy ,\1(H<\lambda) H^{kj'} \int_U F_{j'}'(\cdot,y')f'(y')\,dy' \right) \\
		&\quad = \left( \sum_{j=0}^1 H^{kj} \int_U F_j(\cdot,y)f(y)\,dy ,\1(H<\lambda) \sum_{j'=0}^1 H^{kj'} \int_U F_{j'}'(\cdot,y')f'(y')\,dy' \right) \\
		&\quad = \left( f ,\1(H<\lambda) f' \right).
	\end{align*}
	This proves the claimed identity for $f\in C^\infty_c(U)$ and $f'\in C^\infty_c(U')$.
	
	Since $e_\lambda\in C(U\times U')$ the identity extends to $f$ and $f'$ whose supports are compact subsets of $U$ and $U'$, respectively.
	
	Since $U$ and $U'$ are arbitrary bounded open sets whose closures are contained in $\Omega$, one easily concludes that the function $e_\lambda$ is defined on all of $\Omega\times\Omega$, is continuous there and satisfies the claimed identity.
\end{proof}

\begin{proof}[Proof of Proposition~\ref{ptwwave}]
    We fix a point $x_0\in\Omega$ (which plays the role of the point $x$ in the inequality in the proposition). For $r<d_\Omega(x_0)$ and $f\in C^\infty_c(B_r(x_0))$ we consider
    $$
    u(t):= \cos(t\sqrt H) f \,,
    $$
    which satisfies $\partial_t^2 u +H u = 0$. Moreover, $u(0) = f$ and $\partial_t u(0)=0$. Testing the equation against a test function in $C^\infty_c(\Omega)\subset\dom H$, we see that for any $t\in\R$ we have $\partial_t^2 u -\Delta u = 0$ in $\Omega$ in the sense of distributions. Moreover, noting that $u(t)\in\dom H$ for all $t$ (since $f\in C^\infty_c(\Omega)\subset\dom H$) and that $\dom H\subset H^2_\loc(\Omega)$ (by interior elliptic regularity since $-\Delta u=Hu\in L^2(\Omega)$ for $u\in\dom H$), we see that the equation
    $\partial_t^2 u -\Delta u = 0$ holds in $L^2_\loc(\Omega)$.
    
    We now deduce, using finite speed of propagation, that 
    $$
    \supp u(t) \subset B_{r+|t|}(x_0) \cup \{ x:\ d_\Omega(x) \leq |t| \} \,.
    $$
    To prove this we can use the classical energy method; see, e.g., \cite[Theorem 2.4.6]{Evans_PDE}. Indeed, given $x' \notin B_{r+|t|}(x_0) \cup \{ x:\ d_\Omega(x) \leq |t| \}$ it holds that the backward cone
    \begin{equation*}
          C_{x',t} := \{(y, s) \in \Omega \times [0, t]: |y-x'|\leq t-s\}
    \end{equation*}
    satisfies that $C_{x',t} \subset \Omega \times [0, t]$. Since $u(0), \partial_t u(0)$ both vanish in $B_t(x') = \{y\in \Omega : (y, 0)\in C_{x', t}\}$ we can now argue precisely as in \cite[Theorem 2.4.6]{Evans_PDE} to conclude that $u$ vanishes in $C_{x',t}$ and in particular $u(t)$ vanishes at $x'$. Note that in the proof of \cite[Theorem 2.4.6]{Evans_PDE} one only needs to consider balls whose closure is contained in $\Omega$ and on such balls $u(t)$ belongs to $H^2$, so all the manipulations are justified.

    As an aside we mention that if $\Omega$ has sufficiently regular boundary and if $H$ corresponds to either a Dirichlet, Neumann or Robin boundary condition, then the above conclusion can be strengthened to $\supp u(t) \subset B_{r+|t|}(x_0)$. For the proof one can argue as in \cite[Theorem 2.4.6]{Evans_PDE}, but now also including the intersection of balls with $\Omega$. This improved bound on the support of $u(t)$ could be used to improve the constants in our error bound, but does not significantly improve the result.

    Restricting ourselves to $t$ such that $|t|\leq\frac12\, ( d_\Omega(x_0)-r)$ and denoting by $\chi(t)$ the characteristic function of $\{ x\in\Omega:\ d_\Omega(x) >|t|\}$, we see that $\chi(t)u(t)$ solves the wave equation in $\Omega$.

    Extending $f$ by zero to a function on all of $\R^d$, we define
    $$
    \widetilde u(t) := \cos(t\sqrt{-\Delta_{\R^d}})f \,,
    $$
    which satisfies $\partial_t^2-\Delta u= 0$ on $\R^d\times\R$, as well as $\widetilde u(0)=f$ and $\partial_t \widetilde u(0)=0$. Applying the uniqueness theorem \cite[Theorem 7.2.4]{Evans_PDE} for the wave equation, we find that
    $$
    \chi(t) u(t) = \widetilde u(t)
    \qquad\text{for all}\ t\in [-\tfrac12(d_\Omega(x_0)-r),\tfrac12(d_\Omega(x_0)-r)] \,.
    $$

    Let $\phi\in\mathcal S(\R)$ with $\supp\hat\phi\subset(-\frac12 d_\Omega(x_0), \frac12 d_\Omega(x_0)))$. Then for all sufficiently small $r>0$ (specifically for all $r>0$ such that $[-\frac12(d_\Omega(x_0)-r),\frac12(d_\Omega(x_0)-r)]\supset \supp\hat\phi$) and all $f\in C^\infty_c(B_r(x_0))$ we have
    $$
    \int_\R \hat\phi(t) u(t)\,dt = \int_\R \hat\phi(t) \widetilde u(t)\,dt \,.
    $$
    Taking the inner product with $f$ and appealing to the spectral theorem, we deduce that
    \begin{align*}
    \int_\R \hat\phi(t) \biggl(\int_{[0,\infty)} \cos(t\sqrt\lambda) &\, d(f,\1(H<\lambda)f)\biggr)dt\\
    &= \int_\R \hat\phi(t) \biggl(\int_{[0,\infty)} \cos(t\sqrt\lambda) \, d(f,\1(-\Delta_{\R^d}<\lambda)f)\biggr)dt
    \end{align*}
    
    Interchanging the integrals, we obtain
    \begin{align*}
    \int_{[0,\infty)} \frac12(\phi(\sqrt\lambda)+\phi(-\sqrt\lambda)) &\, d(f,\1(H<\lambda)f)\\
    &= \int_{[0,\infty)} \frac12(\phi(\sqrt\lambda)+\phi(-\sqrt\lambda)) \, d(f,\1(-\Delta_{\R^d}<\lambda)f)
    \end{align*}
    Let us set
    $$
    \eta_f(\tau) := (\sgn\tau) \, (f,\1(H<\tau^2)f)
    \qquad\text{and}\qquad
    \widetilde\eta_f(\tau) := (\sgn\tau) \, (f,\1(-\Delta_{\R^d}<\tau^2)f) \,,
    $$
    which are nondecreasing functions on $\R$, so $d\eta$ and $d\widetilde\eta$ are measures on $\R$ via the Lebesgue--Stieltjes construction. We see that
    \begin{equation}
        \label{eq:waveidentity}
        \int_\R \phi(\tau) \,d\eta_f(\tau) = \int_\R \phi(\tau) \,d\widetilde\eta_f(\tau) \,.
    \end{equation}
    We recall that we have proved this identity for every $f\in C^\infty_c(B_r(x_0))$ with $r$ sufficiently small (depending on $\phi$).

    Given $\chi\in C^\infty_c(\R^d)$ with $\int_{\R^d} \chi(x)\,dx = 1$, we apply the above identity to $f_\rho(x):=\rho^{-d} \chi((x-x_0)/\rho)$ with $\rho>0$ sufficiently small. We claim that
    $$
    \lim_{\rho \to 0^\limplus} \int_\R \phi(\tau) \,d\eta_{f_\rho}(\tau) = \int_\R \phi(\tau) \,d\eta_{x_0}(\tau)
    \quad\text{and}\quad
    \lim_{\rho \to 0^\limplus}\int_\R \phi(\tau) \,d\widetilde\eta_{f_\rho}(\tau) = \int_\R \phi(\tau) \,d\widetilde\eta_{x_0}(\tau)
    $$
    where
    \begin{align*}
        \eta_{x_0}(\tau) := (\sgn\tau) \, \1(H<\tau^2)(x_0,x_0)
        \quad\text{and}\quad
        \widetilde\eta_{x_0}(\tau) := (\sgn\tau) \, \1(-\Delta<\tau^2)(x_0,x_0) \,.    
    \end{align*}
    The first convergence (the second one is proved similarly) follows from the fact that for any bounded, measurable and rapidly decaying function $\psi$ on $[0,\infty)$ one has
    $$
    \lim_{\rho \to 0^\limplus}\int_\R \psi(\lambda) \,d (f_\rho,\1(H<\lambda) f_\rho) = \int_\R \psi(\lambda) \,d e_\lambda(x_0,x_0) \,.
    $$
    To prove the latter convergence, we recall the definition \eqref{eq:defspecfcn} of the spectral function, choosing $U=U'$, an open neighbourhood of $x_0$. We obtain
    $$
    (f_\rho,\1(H<\lambda) \, f_\rho) = \sum_{j,j'=0}^1 \int_{[0,\lambda)} \mu^{k(j+j')} \, d \left( F_{j,\rho}, \1(H<\mu) \, F_{j',\rho} \right) 
    $$
    with $F_{j,\rho}:= \int_U F_j(\cdot,y) f_\rho(y)\,dy$. Thus,
    $$
    \int_\R \psi(\lambda) \,d (f_\rho,\1(H<\lambda) f_\rho)
    = \sum_{j,j'=0}^1 (F_{j,\rho}, H^{k(j+j')}\Psi(H) F_{j',\rho}) 
    $$
    with $\Psi(\mu):= \int_\mu^\infty \psi(\lambda)\,d\lambda$. By our assumptions on $\psi$ the operator $H^{k(j+j')}\Psi(H)$ is bounded. The same arguments show that
    $$
    \int_\R \psi(\lambda) \,d e_\lambda(x_0,x_0) = \sum_{j,j'=0}^1 (F_j(\cdot,x_0), H^{k(j+j')}\Psi(H) F_{j'}(\cdot,x_0)) 
    $$
    Since $F_{j,\rho}\to F_j(\cdot,x_0)$ in $L^2(\Omega)$ and since $H^{k(j+j')}\Psi(H)$ is a bounded operator, we obtain the claimed convergence.
 
    As a consequence of this convergence, we deduce from \eqref{eq:waveidentity} that
    \begin{equation}
        \label{eq:waveidentity2}
        \int_\R \phi(\tau) \,d\eta_{x_0}(\tau) = \int_\R \phi(\tau) \,d\widetilde\eta_{x_0}(\tau) \,.    
    \end{equation}
    
    Let us show that the measure $d\eta_{x_0}$ is a tempered distribution. Since the derivative of a tempered distribution is a tempered distribution, it suffices to show that the function $\lambda\mapsto\eta_{x_0}(\lambda)$ is locally bounded and has at most polynomial growth. These properties follow immediately from the definition \eqref{eq:defspecfcn} (with $x_0\in U=U'$), which implies
    $$
    0\leq e_\lambda(x_0,x_0) \leq \sum_{j,j'=0}^1 \lambda^{k(j+j')} \|F_j(\cdot,x_0)\|_{L^2(\Omega)}^2 \,.
    $$

    Moreover, using the explicit diagonalization of $-\Delta_{\R^d}$ in terms of the Fourier transform we see that
    $$
    \widetilde\eta_{x_0} = (\sgn\tau) \, L_{0,d}^{\rm sc} \, |\tau|^d \,.
    $$
    
    Since $d\eta_{x_0}$ and $d\widetilde\eta_{x_0}$ are tempered distributions and since \eqref{eq:waveidentity2} is valid for all $\phi\in\mathcal S(\R)$ with $\supp\hat\phi\subset(-\frac12d_\Omega(x_0),\frac12d_\Omega(x_0)))$, it follows that the (distributional) Fourier transform of the difference $d\eta_{x_0}-d\widetilde\eta_{x_0}$ vanishes on the set $(-\frac12d_\Omega(x_0),\frac12d_\Omega(x_0))$. Consequently, we have
    $$
    \phi_a *(d\eta_{x_0}-d\widetilde\eta_{x_0})=0
    $$ 
    for any $\phi\in\mathcal S(\R)$ with $\supp\hat\phi\subset(-1,1)$ and $\phi_a(\tau)= a^{-1} \phi(\tau/a)$ with $a = 2/d_\Omega(x_0)$. Replacing $\phi$ with a dilate and letting the dilation parameter tend to 1, we see that the support assumption on $\hat\phi$ may be relaxed to $\supp\hat\phi\subset[-1,1]$.

    We are now in position to apply Corollary \ref{cor: Fourier Tauberian general gamma}. Indeed, for $\mu=d\eta_{x_0}$ and $\nu=d\tilde\eta_{x_0}$ we have shown above that the assumptions of the corollary are satisfied with parameters $a=a_0=a_1 = 2/d_\Omega(x_0)$, $\alpha=d-1$, $M_0$ a constant depending only on $d$ and $M_1=0$. The resulting bound states that
    \begin{equation*}
        |R_{d\eta_{x_0}}^\gamma(\tau)-R^\gamma_{d\tilde\eta_{x_0}}(\tau)| \leq C_{\gamma, d} d_\Omega(x_0)^{-d}(1+d_\Omega(x_0)|\tau|)^{d-1-\gamma}\,.
    \end{equation*}
    Taking $\tau =\sqrt{\lambda}$ this leads to the claimed bound, since for all $\tau>0$
    $$
        R^\gamma_{d\eta_{x_0}}(\tau)=\frac{2\gamma}{\tau} \int_0^\tau \left( 1- \frac{\sigma^2}{\tau^2} \right)^{\gamma-1} \frac{\sigma}{\tau} \, \eta_{x_0}(\sigma)\,d\sigma = \tau^{-2\gamma} \,(H-\tau^2)_\limminus^\gamma(x_0,x_0)
    $$
    and
    $$
        R^\gamma_{d\tilde\eta_{x_0}}(\tau)=\frac{2\gamma}{\tau} \int_0^\tau \left( 1- \frac{\sigma^2}{\tau^2} \right)^{\gamma-1} \frac{\sigma}{\tau} \, \widetilde\eta_{x_0}(\sigma)\,d\sigma = L_{\gamma,d}^{\rm sc} \, \tau^{d} \,.
    $$
    This completes the proof of the proposition.    
\end{proof}

\begin{proof}[Proof of Theorem \ref{thm: bulk}]
    To prove Theorem~\ref{thm: bulk} we apply Proposition~\ref{ptwwave} with $H=-\Delta_\Omega^{\#}$. If $\kappa \geq 1$ then we are done. If $\kappa <1$ then we observe that for $x \in \Omega$ with $\kappa/\sqrt{\lambda} \leq d_\Omega(x) \leq 1/\sqrt{\lambda}$ it holds that
    \begin{equation*}
        \frac{\lambda^\gamma}{d_\Omega(x)^d} = (d_\Omega(x)\sqrt{\lambda})^{-d+1+\gamma}\frac{\lambda^{\frac{\gamma+d-1}2}}{d_\Omega(x)^{1+\gamma}} \leq \max\{\kappa^{-d+1+\gamma},1\} \frac{\lambda^{\frac{\gamma+d-1}2}}{d_\Omega(x)^{1+\gamma}}\,.
    \end{equation*}
    This completes the proof.
\end{proof}

While not related to our main purpose, we find the following consequence of Proposition \ref{ptwwave} noteworthy. In particular, this result implies the sharpness of Proposition~\ref{ptwwave} close to the boundary. The result concerns the harmonic Bergman space and we refer to \cite[Chapter 8]{ABR01} for further information. We note that in a general open set $\Omega\subset\R^d$ the space of harmonic functions in $L^2(\Omega)$ is a closed subspace of $L^2(\Omega)$. Moreover, the orthogonal projection $R_\Omega$ onto this subspace has an integral kernel, which we are going to denote by $R_\Omega(x, x')$. The following corollary gives a bound on the singularity of this kernel when $x$ approaches the boundary.

\begin{corollary}\label{bergman}
    Let $d\geq 1$. There is a constant $C_d$ such that for any open set $\Omega\subset\R^d$ the kernel $R_\Omega(\cdot,\cdot)$ of the orthogonal projection in $L^2(\Omega)$ onto the subspace of harmonic functions satisfies for all $x\in\Omega$
    $$
    R_\Omega(x, x) \leq C_d \, d_\Omega(x)^{-d} \,.
    $$
\end{corollary}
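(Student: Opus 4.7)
The plan is to realize $R_\Omega$ as a spectral projector of a suitable nonnegative selfadjoint extension $H$ of $-\Delta|_{C^\infty_c(\Omega)}$, and then to apply Proposition~\ref{ptwwave} with $\gamma = 0$, sending the spectral parameter to zero.

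First I would produce $H$. The natural choice is the Krein--von Neumann extension of $-\Delta|_{C^\infty_c(\Omega)}$. This is a nonnegative selfadjoint extension, whose domain contains $C^\infty_c(\Omega)$ on which $H$ acts as $-\Delta$, so the hypotheses of Proposition~\ref{ptwwave} are satisfied. Its crucial property is that $\ker H$ equals the kernel of the adjoint of $-\Delta|_{C^\infty_c(\Omega)}$, which in turn is precisely the closed subspace
\begin{equation*}
\mathcal{H} := \{u \in L^2(\Omega) : \Delta u = 0 \text{ in } \Omega\}
\end{equation*}
of $L^2(\Omega)$--harmonic functions. Hence $R_\Omega = \1(H = 0)$. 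In particular, since $\1(H = 0) \leq \1(H < \lambda)$ as orthogonal projections for every $\lambda > 0$, the corresponding integral kernels satisfy
\begin{equation*}
R_\Omega(x, x) \leq \1(H < \lambda)(x, x) \qquad \text{for all } x \in \Omega,\ \lambda > 0.
\end{equation*}

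Next I would feed this into Proposition~\ref{ptwwave} with $\gamma = 0$. Given $x \in \Omega$, I restrict to $\lambda \in (0, d_\Omega(x)^{-2}]$ so that the boundary alternative of the proposition is operative. The proposition then yields the pointwise bound
\begin{equation*}
\1(H < \lambda)(x, x) \leq L_{0, d}^{\rm sc} \lambda^{\frac{d}{2}} + C_d \, d_\Omega(x)^{-d}.
\end{equation*}
Combining with the previous display and letting $\lambda \to 0^{\limplus}$ kills the first term on the right-hand side and leaves $R_\Omega(x, x) \leq C_d \, d_\Omega(x)^{-d}$, which is the claim.

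The main technical obstacle is the identification of $\ker H$ with $\mathcal{H}$ for the Krein--von Neumann extension of $-\Delta|_{C^\infty_c(\Omega)}$ in the full generality of the statement, which allows $\Omega$ to be an arbitrary open set (possibly unbounded, possibly of infinite measure). This identification is classical when $-\Delta|_{C^\infty_c(\Omega)}$ is strictly positive, e.g.\ whenever $\Omega$ is bounded, by Poincar\'e's inequality. In the general case, one really only needs the existence of \emph{some} nonnegative selfadjoint extension of $-\Delta|_{C^\infty_c(\Omega)}$ whose kernel contains $\mathcal{H}$; such an extension can be constructed by a standard von Neumann/extension-theoretic argument starting from the symmetric operator $-\Delta|_{C^\infty_c(\Omega)}$ and the closed subspace $\mathcal{H}$ of elements on which the adjoint vanishes. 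Once such an $H$ is in hand, the three lines of argument above deliver the corollary.
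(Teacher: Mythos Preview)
Your proof is correct and follows essentially the same route as the paper: take the Krein--von Neumann extension $H$, use that $\ker H$ is the space of $L^2$-harmonic functions so that $R_\Omega(x,x)\le \1(H<\lambda)(x,x)$, apply Proposition~\ref{ptwwave} with $\gamma=0$, and let $\lambda\to 0^\limplus$. The paper handles your ``technical obstacle'' simply by citing \cite{AlonsoSimon80,GesztesyMitrea11} for the Krein extension and the identification of its kernel, without further discussion.
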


\begin{proof}
    We consider the selfadjoint operator $H$ in $L^2(\Omega)$ defined as the Krein extension of $-\Delta$ defined on $C^\infty_c(\Omega)$ \cite{AlonsoSimon80,GesztesyMitrea11}. It follows immediately from the definition of this extension that $\ker H$ is the subspace of harmonic functions in $L^2(\Omega)$. From Proposition \ref{ptwwave} we deduce that, for any $x\in\Omega$ and any $\lambda>0$,
    \begin{align*}
        R_\Omega(x, x) & \leq \1(H<\lambda)(x,x) \\
        & \leq L_{0,d}^{\rm sc} \lambda^{\frac{d}2} + C_{0,d} \biggl( \frac{\lambda^{\frac{d-1}2}}{d_\Omega(x)} \, \1(d_\Omega(x)>1/\sqrt{\lambda}) + \frac{1}{d_\Omega(x)^d} \, \1(d_\Omega(x)\leq1/\sqrt{\lambda}) \biggr).    
    \end{align*}
    For fixed $x$, the right side tends to $C_{0,d}\,d_\Omega(x)^{-d}$ as $\lambda\to 0$. This proves the claimed bound.
\end{proof}

The first remarkable fact about the bound in Corollary \ref{bergman} is that the order of the singularity $d_\Omega(x)^{-d}$ is sharp. This follows when $\Omega$ is a ball or a halfspace from the explicit formulas for $R_\Omega$ in \cite[Theorems 8.13 and 8.24]{ABR01} and when $\Omega$ is a smooth domain from~\cite{Englis15}. The second remarkable fact of the bound is that the constant depends only on $d$ and not on the set $\Omega$.

Let us now return to the question of sharpness of the bound in Proposition~\ref{ptwwave}. It is natural to wonder if the error bound $\lambda^\gamma d_\Omega(x)^{-d}$ in the boundary region $d_\Omega(x)\leq 1/\sqrt{\lambda}$ is optimal for certain selfadjoint extensions $H$.  We argue that it is, at least for $\gamma=0$ and for smooth sets $\Omega$. Indeed, this follows from the sharpness of the bound in Corollary~\ref{bergman} which was deduced from Proposition \ref{ptwwave} applied with $\gamma=0$ and $H$ as the Krein extension of $-\Delta$.


\section{Integrated one-term Weyl law}\label{sec:intweyl}

In this section we will use the pointwise bounds on the spectral function and its Riesz means from the previous section to obtain bounds on the Riesz means $\Tr(-\Delta_\Omega^\#-\lambda)_\limminus^\gamma$. The idea of the proof is to write the Riesz mean as an integral of the corresponding integral kernel and then use our pointwise bounds for the kernels appropriately depending on the relative size of $\sqrt{\lambda}\,d_\Omega(x)$. Under the assumption $\gamma>0$ our bounds will have an order-sharp remainder.

It will be convenient to abbreviate
\begin{align*}
	\vartheta_\Omega(l) := \frac{|\{x\in \Omega: d_\Omega(x)\leq l\}|}{l}\,,\qquad
	\Theta_\Omega := \sup_{l>0} \vartheta_\Omega(l)\,,
\end{align*}
where we recall that the distance $d_\Omega$ to the boundary was introduced in \eqref{eq:distance}.

\begin{theorem}
	\label{thm: Sharp Weyld}
	Let $d\geq 1$ and $\gamma>0$. Then there is a constant $C_{\gamma,d}$ such that for any open set $\Omega \subset \R^d$ with $|\Omega|<\infty$ and $\Theta_\Omega <\infty$ and for all $\lambda> 0$ we have
	$$
	\bigl| \Tr(-\Delta_\Omega^{\rm D}-\lambda)_\limminus^\gamma - L_{\gamma,d}^{\rm sc}|\Omega| \lambda^{\gamma+\frac d2} \bigr|\leq  
	C_{\gamma,d} \, \Theta_\Omega \, \lambda^{\gamma+\frac{d-1}2} \,.
	$$
\end{theorem}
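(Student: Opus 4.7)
The plan is to reduce Theorem~\ref{thm: Sharp Weyld} directly to the pointwise bound of Theorem~\ref{rieszptwbgd} by integration. First, since $|\Omega|<\infty$, the embedding $H^1_0(\Omega)\hookrightarrow L^2(\Omega)$ is compact, so $-\Delta_\Omega^{\rm D}$ has discrete spectrum and the trace can be written as an integral of the diagonal of the smooth kernel:
\begin{equation*}
    \Tr(-\Delta_\Omega^{\rm D}-\lambda)_\limminus^\gamma = \int_\Omega (-\Delta_\Omega^{\rm D}-\lambda)_\limminus^\gamma(x,x)\,dx\,.
\end{equation*}
Writing $L_{\gamma,d}^{\rm sc}|\Omega|\lambda^{\gamma+d/2}=\int_\Omega L_{\gamma,d}^{\rm sc}\lambda^{\gamma+d/2}\,dx$ and applying the triangle inequality followed by Theorem~\ref{rieszptwbgd}, the left-hand side of the claimed bound is dominated by
\begin{equation*}
    C_{\gamma,d}\,\lambda^{\gamma+\frac{d}2}\int_\Omega \min\bigl\{1,(\sqrt\lambda\, d_\Omega(x))^{-(1+\gamma)}\bigr\}\,dx\,.
\end{equation*}

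The remaining task is therefore to prove that this integral is bounded by a constant (depending only on $\gamma$) times $\Theta_\Omega\lambda^{-1/2}$. I would do this by a layer-cake argument. Since the integrand takes values in $[0,1]$ and $\min\bigl\{1,(\sqrt\lambda\, d_\Omega(x))^{-(1+\gamma)}\bigr\}>t$ if and only if $d_\Omega(x)<\lambda^{-1/2}t^{-1/(1+\gamma)}$, the integral equals
\begin{equation*}
    \int_0^1 \bigl|\{x\in\Omega : d_\Omega(x)<\lambda^{-1/2}t^{-1/(1+\gamma)}\}\bigr|\,dt \,.
\end{equation*}
By the very definition of $\Theta_\Omega$, we have $|\{x\in\Omega: d_\Omega(x)<l\}|\le\Theta_\Omega\, l$ for every $l>0$. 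Substituting $l=\lambda^{-1/2}t^{-1/(1+\gamma)}$ gives the upper bound $\Theta_\Omega\lambda^{-1/2}\int_0^1 t^{-1/(1+\gamma)}\,dt=\frac{1+\gamma}{\gamma}\Theta_\Omega\lambda^{-1/2}$.

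Combining the two estimates yields the theorem with $C_{\gamma,d}'=\tfrac{1+\gamma}{\gamma}C_{\gamma,d}$. There is no real obstacle: the pointwise bound of Theorem~\ref{rieszptwbgd} already contains precisely the correct rate of blow-up $d_\Omega(x)^{-(1+\gamma)}$ near the boundary, and the finite-$\Theta_\Omega$ hypothesis is the minimal assumption which ensures that this singularity is integrable at the boundary against a linear measure of a layer of thickness $l$. The one place where the hypothesis $\gamma>0$ enters crucially is in guaranteeing convergence of the integral $\int_0^1 t^{-1/(1+\gamma)}\,dt$ at $t=0$; this is the reason why the argument cannot reach $\gamma=0$ and why Task~I of the paper is a nontrivial step on its own.
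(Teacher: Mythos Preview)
Your proof is correct and follows essentially the same approach as the paper: integrate the pointwise bound of Theorem~\ref{rieszptwbgd} and estimate the resulting integral by a layer-cake argument controlled by $\Theta_\Omega$. The only cosmetic difference is that the paper parameterizes the layer cake by the level sets of $d_\Omega$ (via the substitution $s=\lambda^{-1/2}t^{-1/(1+\gamma)}$) rather than by the values of the integrand, arriving at the same constant $\tfrac{1+\gamma}{\gamma}$.
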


\begin{theorem}
	\label{thm: Sharp Weyln}
	Let $d\geq 1$ and $\gamma>0$. Then for any open set $\Omega \subset \R^d$ with $|\Omega|<\infty$, $\Theta_\Omega<\infty$ and the extension property and for all $\lambda_0>0$ there is a constant $C_{\gamma,\Omega,\lambda_0}$ such that for all $\lambda> 0$ we have
	$$
	\bigl| \Tr(-\Delta_\Omega^{\rm N}-\lambda)_\limminus^\gamma - L_{\gamma,d}^{\rm sc}|\Omega| \lambda^{\gamma+\frac d2} \bigr|\leq 
	C_{\gamma,\Omega,\lambda_0} \, \lambda^{\gamma+\frac{d-1}2} \, \max\bigl\{1,(\lambda_0/\lambda)^\frac{d-1}2 \bigr\} \,.
	$$
\end{theorem}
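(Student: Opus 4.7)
The plan is to reduce Theorem \ref{thm: Sharp Weyln} to a pointwise statement by writing the trace as an integral of the Riesz-means diagonal kernel, and then invoke the pointwise Weyl bounds from Theorem \ref{rieszptwbgn} together with the boundary-layer hypothesis $\Theta_\Omega<\infty$. Concretely, because $\Omega$ has finite measure and the extension property, $-\Delta_\Omega^{\rm N}$ has compact resolvent and $(-\Delta_\Omega^{\rm N}-\lambda)_\limminus^\gamma$ is trace class with a continuous diagonal kernel, so
$$\Tr(-\Delta_\Omega^{\rm N}-\lambda)_\limminus^\gamma - L_{\gamma,d}^{\rm sc}|\Omega|\lambda^{\gamma+\frac d2} = \int_\Omega \bigl[(-\Delta_\Omega^{\rm N}-\lambda)_\limminus^\gamma(x,x) - L_{\gamma,d}^{\rm sc}\lambda^{\gamma+\frac d2}\bigr]\,dx,$$
and it suffices to estimate this integral in absolute value. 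The Dirichlet companion Theorem \ref{thm: Sharp Weyld} is handled by the same route using Theorem \ref{rieszptwbgd}, but is simpler because no auxiliary parameter $\lambda_0$ appears.

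I would treat the small-$\lambda$ regime $\lambda \leq \lambda_0$ first. Here Corollary \ref{riesznapriori} gives the pointwise bound $C_{\gamma,\Omega,\lambda_0}\lambda^\gamma\lambda_0^{d/2}$ on the kernel, and the same bound applies trivially to the Weyl term. Integrating over $\Omega$ gives $C_{\gamma,\Omega,\lambda_0}|\Omega|\lambda^\gamma\lambda_0^{d/2}$; since the target estimate is $C\lambda^\gamma\lambda_0^{(d-1)/2}$ on this range, the spurious factor $|\Omega|\sqrt{\lambda_0}$ is absorbed into the constant, which is allowed to depend on $\Omega$ and $\lambda_0$.

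For $\lambda > \lambda_0$ I would split $\Omega$ at the natural scale $1/\sqrt\lambda$, setting $\Omega_{\rm bulk}=\{x\in\Omega:d_\Omega(x)\geq 1/\sqrt\lambda\}$ and $\Omega_{\rm bd}=\Omega\setminus\Omega_{\rm bulk}$. On $\Omega_{\rm bulk}$, Theorem \ref{rieszptwbgn}(b) with $\kappa=1$ bounds the integrand by $C_{\gamma,d}\lambda^{(\gamma+d-1)/2}d_\Omega(x)^{-(1+\gamma)}$; writing $m(l):=|\{d_\Omega\leq l\}|$ and using $m(l)\leq\Theta_\Omega\,l$, a layer-cake computation (integration by parts, whose boundary term at infinity vanishes since $m(l)\to|\Omega|$ and $l^{-(1+\gamma)}\to 0$) yields
$$\int_{\Omega_{\rm bulk}} d_\Omega(x)^{-(1+\gamma)}\,dx \leq (1+\gamma)\int_{1/\sqrt\lambda}^\infty l^{-(2+\gamma)}m(l)\,dl \leq C_\gamma\,\Theta_\Omega\,\lambda^{\gamma/2},$$
so the bulk contributes $\lesssim_{\gamma,d}\Theta_\Omega\,\lambda^{\gamma+(d-1)/2}$. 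On $\Omega_{\rm bd}$, Theorem \ref{rieszptwbgn}(a) (with $\lambda\geq\lambda_0$, so the $\max$ equals $1$) bounds the integrand by $C_{\gamma,\Omega,\lambda_0}\lambda^{\gamma+d/2}$, while $|\Omega_{\rm bd}|=m(1/\sqrt\lambda)\leq\Theta_\Omega/\sqrt\lambda$; combined this gives $\lesssim_{\gamma,\Omega,\lambda_0}\Theta_\Omega\,\lambda^{\gamma+(d-1)/2}$. Summing the two regions finishes the proof.

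The principal obstacle is the boundary layer: the best uniform pointwise Neumann bound available there is only of leading Weyl size $\lambda^{\gamma+d/2}$, which is a factor $\sqrt\lambda$ too large. The hypothesis $\Theta_\Omega<\infty$ is used precisely to shrink the measure of $\Omega_{\rm bd}$ by exactly this factor. In the bulk, $\Theta_\Omega$ is used more mildly, to convert $\int d_\Omega^{-(1+\gamma)}\,dx$ (which would otherwise be divergent near $\partial\Omega$) into exactly the power of $\lambda$ that matches the order of the target remainder. The extension property enters through Corollary \ref{riesznapriori} and Theorem \ref{rieszptwbgn}(a), where it provides the on-diagonal heat-kernel bounds underlying the small-$\lambda$ and boundary-layer estimates.
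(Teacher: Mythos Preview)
Your proof is correct and follows essentially the same approach as the paper: integrate the pointwise bounds of Theorem \ref{rieszptwbgn}, split $\Omega$ at the scale $1/\sqrt\lambda$, use part (b) in the bulk and part (a) near the boundary, and control both via $\Theta_\Omega$ through the layer-cake computation. The only organizational difference is that you handle the parameter $\lambda_0$ by a direct case split $\lambda\le\lambda_0$ versus $\lambda>\lambda_0$, whereas the paper introduces an auxiliary $\tilde\lambda_0$ in Theorem \ref{rieszptwbgn}(a), carries the factor $(1+(\tilde\lambda_0/\lambda)^{d/2})$ through, and at the end chooses $\tilde\lambda_0$ so that $r_{\rm in}(\Omega)\tilde\lambda_0^{d/2}=\lambda_0^{(d-1)/2}$; your version is slightly cleaner in that it avoids this optimization and the separate treatment of $r_{\rm in}(\Omega)\lessgtr 1/\sqrt\lambda$.
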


\begin{theorem}
    \label{thm: Sharp Weylnc}
    Let $d\geq 1$ and $\gamma>0$. Then there is a constant $C_{\gamma,d}$ such that for any bounded, convex, open set $\Omega\subset\R^d$ and for all $\lambda> 0$, we have
    $$
    \bigl| \Tr(-\Delta_\Omega^{\rm N}-\lambda)_\limminus^\gamma - L_{\gamma,d}^{\rm sc}|\Omega| \lambda^{\gamma+\frac d2} \bigr|\leq 
	C_{\gamma,d} \, \lambda^{\gamma+\frac{d-1}2} \, \max \bigl\{ 1 , \bigl(r_{\rm in}(\Omega) \sqrt\lambda\bigr)^{-d+1} \bigr\} \mathcal H^{d-1}(\partial\Omega) \,.
    $$
\end{theorem}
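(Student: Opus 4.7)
The plan is to integrate the pointwise bounds from Section~\ref{sec:ptwweyl} after decomposing $\Omega$ into a bulk region and a boundary layer. Set $\Omega_1 := \{x\in\Omega : d_\Omega(x)>1/\sqrt\lambda\}$ and $\Omega_2 := \{x\in\Omega : d_\Omega(x)\leq 1/\sqrt\lambda\}$. Since
\[
\bigl|\Tr(-\Delta_\Omega^{\rm N}-\lambda)_\limminus^\gamma - L_{\gamma,d}^{\rm sc}|\Omega|\lambda^{\gamma+\frac{d}2}\bigr| \leq \sum_{j=1,2}\int_{\Omega_j}\bigl|(-\Delta_\Omega^{\rm N}-\lambda)_\limminus^\gamma(x,x) - L_{\gamma,d}^{\rm sc}\lambda^{\gamma+\frac{d}2}\bigr|\,dx,
\]
the task reduces to bounding each of the two integrals by the right-hand side of the theorem.

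For the bulk, Proposition~\ref{bulk} (with $\kappa=1$) gives the pointwise bound $C_{\gamma,d}\lambda^{(\gamma+d-1)/2}d_\Omega(x)^{-1-\gamma}$. Setting $m(t) := |\{x\in\Omega : d_\Omega(x)\leq t\}|$, the Steiner-type inequality $m(t)\leq t\mathcal H^{d-1}(\partial\Omega)$, valid for all convex $\Omega$ (see Appendix~\ref{app: Convex geometry}), combined with one integration by parts in the Stieltjes integral, yields
\[
\int_{\Omega_1}d_\Omega(x)^{-1-\gamma}\,dx \leq (1+\gamma)\int_{1/\sqrt\lambda}^\infty m(t)\,t^{-2-\gamma}\,dt \leq \frac{1+\gamma}{\gamma}\lambda^{\gamma/2}\mathcal H^{d-1}(\partial\Omega).
\]
Multiplying by $\lambda^{(\gamma+d-1)/2}$ gives a bulk contribution of at most $C_{\gamma,d}\lambda^{\gamma+(d-1)/2}\mathcal H^{d-1}(\partial\Omega)$, consistent with the asserted bound.

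For the boundary layer, Theorem~\ref{rieszptwbgnc} bounds the integrand in absolute value by $C_{\gamma,d}\lambda^\gamma|\Omega\cap B_{1/\sqrt\lambda}(x)|^{-1}$, and I split the analysis according to the sign of $r_{\rm in}(\Omega)\sqrt\lambda-1$. When $r_{\rm in}(\Omega)\sqrt\lambda\geq 1$, I invoke the convex-geometric local density estimate $|\Omega\cap B_r(x)|\geq c_d r^d$, valid for every $x\in\Omega$ and every $r\leq r_{\rm in}(\Omega)$ (proved using the inball together with convex combinations; again Appendix~\ref{app: Convex geometry}), to obtain $|\Omega\cap B_{1/\sqrt\lambda}(x)|^{-1}\leq C_d\lambda^{d/2}$; combined with $|\Omega_2|=m(1/\sqrt\lambda)\leq\lambda^{-1/2}\mathcal H^{d-1}(\partial\Omega)$, this contributes at most $C_{\gamma,d}\lambda^{\gamma+(d-1)/2}\mathcal H^{d-1}(\partial\Omega)$, matching the factor $\max\{1,(r_{\rm in}(\Omega)\sqrt\lambda)^{-d+1}\}=1$ in this regime. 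When $r_{\rm in}(\Omega)\sqrt\lambda<1$ we have $\Omega_2=\Omega$ and $B_{1/\sqrt\lambda}(x)\supset B_{r_{\rm in}(\Omega)}(x)$, so the same density estimate yields $|\Omega\cap B_{1/\sqrt\lambda}(x)|^{-1}\leq C_d r_{\rm in}(\Omega)^{-d}$; combined with the immediate consequence $|\Omega|=m(r_{\rm in}(\Omega))\leq r_{\rm in}(\Omega)\mathcal H^{d-1}(\partial\Omega)$ of the Steiner-type bound, this produces a contribution of at most $C_{\gamma,d}\lambda^\gamma r_{\rm in}(\Omega)^{-(d-1)}\mathcal H^{d-1}(\partial\Omega)$, which equals $C_{\gamma,d}\lambda^{\gamma+(d-1)/2}(r_{\rm in}(\Omega)\sqrt\lambda)^{-(d-1)}\mathcal H^{d-1}(\partial\Omega)$, as required.

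The main obstacle is entirely geometric: the argument rests on the Steiner-type bound $m(t)\leq t\mathcal H^{d-1}(\partial\Omega)$ and the local density estimate $|\Omega\cap B_r(x)|\gtrsim_d r^d$ for $r\leq r_{\rm in}(\Omega)$, both with purely dimensional constants that do not degenerate near corners or for highly elongated convex bodies. The delicate uniform proofs of these convex-geometric statements are the role of Appendix~\ref{app: Convex geometry}, after which Theorem~\ref{thm: Sharp Weylnc} follows from the routine integration sketched above.
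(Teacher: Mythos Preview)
Your bulk argument is correct and essentially the same as the paper's. The gap is in the boundary layer, where you invoke a uniform local density estimate $|\Omega\cap B_r(x)|\geq c_d\,r^d$ for every $x\in\Omega$ and every $r\leq r_{\rm in}(\Omega)$, with $c_d$ depending only on $d$. This estimate is \emph{false}. Take the triangle with vertices $(0,0)$, $(L,\epsilon)$, $(L,-\epsilon)$ for large $L$ and small $\epsilon$: its inradius is $\approx\epsilon$, but at the interior point $x=(\epsilon,0)$ one computes $|\Omega\cap B_{\epsilon/2}(x)|\approx \epsilon^3/L$, which is far smaller than $\epsilon^2$ when $L$ is large. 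No such estimate appears in Appendix~\ref{app: Convex geometry}; Bishop--Gromov monotonicity (Lemma~\ref{lem: Bishop-Gromov monotonicity}) only says $r\mapsto|\Omega\cap B_r(x)|/r^d$ is nonincreasing, which gives no useful lower bound, and the genuine lower bound in Lemma~\ref{lem: local volume is big at good points} applies only at $(\epsilon,r)$-good boundary points, not at arbitrary interior points near a sharp vertex. Both of your boundary-layer cases rest on this false claim.

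The paper's proof accordingly does \emph{not} try to bound $V_\Omega(x,1/\sqrt\lambda)^{-1}$ pointwise. Instead it bounds the integral $\int_{\Omega_2}V_\Omega(x,1/\sqrt\lambda)^{-1}\,dx$ directly by a covering argument: pick a maximal $1/\sqrt\lambda$-separated net $\{x_k\}_{k=1}^M$ in $\Omega_2$, use Bishop--Gromov doubling to show that each piece $\int_{\Omega\cap B_{2/\sqrt\lambda}(x_k)}V_\Omega(x,1/\sqrt\lambda)^{-1}\,dx$ is at most $4^d$ (the local volumes cancel, so no pointwise lower bound is needed), and then control the packing number $M$ via the Minkowski-sum bounds of Proposition~\ref{prop: Minkowski sum bounds}. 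This produces the key inequality~\eqref{eq:convexintegratedv}, which is exactly the ingredient your argument is missing.
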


\begin{remark} A couple of remarks:
    \begin{enumerate}
        \item The bounds in Theorems \ref{thm: Sharp Weyld}, \ref{thm: Sharp Weyln} and \ref{thm: Sharp Weylnc} are important ingredients in the proofs of our main results, Theorems \ref{thm: Weyl asymptotics Lipschitz} and \ref{thm: Weyl asymptotics convex}. Once these main results have been shown, we deduce conversely that the error bounds in Theorems~\ref{thm: Sharp Weyld}, \ref{thm: Sharp Weyln} and \ref{thm: Sharp Weylnc} are order-sharp as $\lambda\to\infty$.

        \item We find the simple dependence on $\Omega$ of the error bounds in Theorems \ref{thm: Sharp Weyld} and \ref{thm: Sharp Weylnc} quite remarkable.

        \item Theorem \ref{thm: Sharp Weyld} provides an analogue for $\gamma>0$ of Seeley's bound \eqref{eq:seeleyintro} from \cite{Seeley78,Seeley80}. The latter bound was proved under several smoothness conditions on the boundary, while our point is that no such conditions are necessary, as soon as one moves away from $\gamma=0$.

        \item The special case $\gamma=1$ of Theorem \ref{thm: Sharp Weyld} is proved in Safarov's paper \cite{Safarov01}, except for the issue discussed in Remark \ref{safarovrem}.
    \end{enumerate}
\end{remark}


\begin{proof}[Proof of Theorem~\ref{thm: Sharp Weyld}]
    Decomposing $\Omega$ into the two sets $\{x\in \Omega: d_\Omega(x)\geq 1/\sqrt{\lambda}\}$ and $\{x\in \Omega: d_\Omega(x)<1/\sqrt{\lambda}\}$ we have
    \begin{align*}
        \Bigl| \Tr(-\Delta_\Omega^{\rm D} - \lambda)_\limminus^\gamma &- L_{\gamma,d}^{\rm sc} |\Omega| \lambda^{\gamma+\frac{d}2} \Bigr|\\
        &= \biggl| \int_\Omega\Bigl((-\Delta_\Omega^{\rm D} - \lambda)_\limminus^\gamma(x, x) - L_{\gamma,d}^{\rm sc} \lambda^{\gamma+\frac{d}2} \Bigr)dx\biggr|\\
        &\leq
        \int_{\{x\in \Omega: d_\Omega(x)<1/{\sqrt{\lambda}}\}}|(-\Delta_\Omega^{\rm D} - \lambda)_\limminus^\gamma(x, x) - L_{\gamma,d}^{\rm sc} \lambda^{\gamma+\frac{d}2}|\,dx\\
        &\quad +
        \int_{\{x\in \Omega: d_\Omega(x)\geq 1/{\sqrt{\lambda}}\}}|(-\Delta_\Omega^{\rm D} - \lambda)_\limminus^\gamma(x, x) - L_{\gamma,d}^{\rm sc} \lambda^{\gamma+\frac{d}2}|\,dx\,.
    \end{align*}
    The first integral we bound using Proposition~\ref{prop:rieszptwbgd}, for the second we use Theorem~\ref{thm: bulk} with $\kappa=1$. This yields
    \begin{align*}
        \Bigl| \Tr(-\Delta_\Omega^{\rm D} - \lambda)_\limminus^\gamma - L_{\gamma,d}^{\rm sc} |\Omega| \lambda^{\gamma+\frac{d}2} \Bigr|
        &\lesssim_{\gamma,d}
        \lambda^{\frac{\gamma+d-1}{2}}\biggl(\lambda^{\frac{1+\gamma}2}|\{x\in \Omega: d_\Omega(x)<1/\sqrt{\lambda}\}|\\
        &\quad +
        \int_{\{x\in \Omega: d_\Omega(x)\geq 1/\sqrt{\lambda}\}}d_\Omega(x)^{-1-\gamma}\,dx\biggr)\\
        &\lesssim_{\gamma,d}
        \lambda^{\frac{\gamma+d-1}{2}} \int_{\Omega}\min\bigl\{\lambda^{\frac{1+\gamma}2}, d_\Omega(x)^{-1-\gamma}\bigr\}\,dx\,.
    \end{align*}
	It remains to prove an estimate for the integral in the last line.

    By the layer cake representation, Fubini's theorem, and a change of variables
    \begin{equation}\label{eq: layer cake integrated one-term Weyl}
    \begin{aligned}
        \int_\Omega \min\bigl\{\lambda^{\frac{1+\gamma}2}, d_\Omega(x)^{-1-\gamma} \bigr\} \,dx
        &= 
        \int_\Omega \int_0^{\lambda^{\frac{1+\gamma}2}} \1_{\{y\in \Omega:  d_\Omega(y)^{-1-\gamma}\geq t\}}(x)\,dtdx\\
        &= 
        \int_0^{\lambda^{\frac{1+\gamma}2}} |\{x\in \Omega: d_\Omega(x)^{-1-\gamma} \geq t\}|\,dt\\
        &= 
        (1+\gamma)\int_{1/\sqrt{\lambda}}^\infty \frac{|\{x\in \Omega: d_\Omega(x)\leq s \}|}{s^{2+\gamma}}\,ds\,.
    \end{aligned}
    \end{equation}
    Since
    \begin{equation*}
        \int_{1/\sqrt{\lambda}}^\infty \frac{|\{x\in \Omega: d_\Omega(x)\leq s \}|}{s^{2+\gamma}}\,ds = \int_{1/\sqrt{\lambda}}^\infty \frac{\vartheta_\Omega(s)}{s^{1+\gamma}}\,ds \leq \Theta_\Omega \int_{1/\sqrt{\lambda}}^\infty \frac{1}{s^{1+\gamma}}\,ds = \frac{\Theta_\Omega}{\gamma}\lambda^{\frac{\gamma}2}\,,
    \end{equation*}
    this completes the proof of Theorem~\ref{thm: Sharp Weyld}.
\end{proof}

\begin{proof}[Proof of Theorem~\ref{thm: Sharp Weyln}]
   As in the proof of Theorem~\ref{thm: Sharp Weyld} we decompose $\Omega$ into the two sets $\{x\in \Omega:d_\Omega(x)\geq  1/\sqrt{\lambda}\}$ and $\{x\in \Omega:d_\Omega(x)< 1/\sqrt{\lambda}\}$. In the first set we again apply Theorem~\ref{thm: bulk} with $\kappa=1$ in the second we apply Proposition~\ref{prop:rieszptwbgn} with $\lambda_0$ replaced by $\tilde \lambda_0$. Assuming that $r_{\rm in}(\Omega)\geq 1/\sqrt{\lambda}$, this yields
    \begin{align*}
        \Bigl| \Tr(-\Delta_\Omega^{\rm N} - \lambda)_\limminus^\gamma &- L_{\gamma,d}^{\rm sc} |\Omega| \lambda^{\gamma+\frac{d}2} \Bigr| \\
        &\leq 
            \int_\Omega \bigl|(-\Delta_\Omega^{\rm N}-\lambda)_\limminus^\gamma(x, x)-L_{\gamma, d}^{\rm sc}\lambda^{\gamma+\frac{d}2}\bigr|\,dx\\
        &\leq C_{\gamma, \Omega, \tilde\lambda_0}\lambda^{\gamma+\frac{d}2}\bigl(1+(\tilde\lambda_0/\lambda)^{\frac{d}2}\bigr)|\{x\in \Omega: d_\Omega(x)<1/\sqrt{\lambda}\}|\\
        &\quad + C_{\gamma, d} \lambda^{\frac{\gamma+d-1}2}\int_{\{x\in \Omega: d_\Omega(x) \geq 1/\sqrt{\lambda}\}} d_\Omega(x)^{-1-\gamma}\,dx\\
        &\lesssim_{\gamma, \Omega, \tilde \lambda_0}
            \lambda^{\frac{\gamma+d-1}{2}}\bigl(1+(\tilde\lambda_0/\lambda)^{\frac{d}2}\bigr)\int_\Omega \min\bigl\{\lambda^{\frac{1+\gamma}2}, d_\Omega(x)^{-1-\gamma} \bigr\} \,dx\,.
    \end{align*}
    The bound stemming from~\eqref{eq: layer cake integrated one-term Weyl} therefore implies that
    \begin{align*}
        \Bigl| \Tr(-\Delta_\Omega^{\rm N} - \lambda)_\limminus^\gamma - L_{\gamma,d}^{\rm sc} |\Omega| \lambda^{\gamma+\frac{d}2} \Bigr| 
        &\leq C_{\gamma, \Omega, \tilde\lambda_0}\lambda^{\gamma+\frac{d-1}2}\bigl(1+(\tilde\lambda_0/\lambda)^{\frac{d}2}\bigr)\Theta_\Omega\\
        &= C_{\gamma, \Omega, \tilde\lambda_0}\Theta_\Omega\bigl(\lambda^{\gamma+\frac{d-1}2}+ \lambda^{\gamma-\frac{1}2}\tilde\lambda_0^{\frac{d}2}\bigr)\\
        &\leq C_{\gamma, \Omega, \tilde\lambda_0}\Theta_\Omega\bigl(\lambda^{\gamma+\frac{d-1}2}+ \lambda^{\gamma}r_{\rm in}(\Omega)\tilde\lambda_0^{\frac{d}2}\bigr)\,.
    \end{align*}

    If $r_{\rm in}(\Omega)\leq 1/\sqrt{\lambda}$, then the set $\{x\in \Omega: d_\Omega(x)\geq 1/\sqrt{\lambda}\}$ is empty and Proposition~\ref{prop:rieszptwbgn} (with $\lambda_0$ replaced by $\tilde\lambda_0$) yields
    \begin{align*}
        \Bigl| \Tr(-\Delta_\Omega^{\rm N} - \lambda)_\limminus^\gamma - L_{\gamma,d}^{\rm sc} |\Omega| \lambda^{\gamma+\frac{d}2} \Bigr| 
        &\leq C_{\gamma, \Omega, \tilde\lambda_0}\lambda^{\gamma+\frac{d}2}\bigl(1+(\tilde\lambda_0/\lambda)^{\frac{d}2}\bigr)|\Omega|\\
        &\leq C_{\gamma, \Omega, \tilde\lambda_0}\lambda^{\gamma+\frac{d}2}\bigl(1+(\tilde\lambda_0/\lambda)^{\frac{d}2}\bigr)\Theta_\Omega r_{\rm in}(\Omega)\\
        &\leq C_{\gamma, \Omega, \tilde\lambda_0}\Theta_\Omega\bigl(\lambda^{\gamma+\frac{d-1}2}+\lambda^\gamma r_{\rm in}(\Omega) \tilde\lambda_0^{\frac{d}2}\bigr)\,.
    \end{align*}
    Given $\lambda_0$ we choose $\tilde\lambda_0$ such that $r_{\rm in}(\Omega) \tilde\lambda_0^{\frac{d}2} = \lambda_0^\frac{d-1}2$ (note that $r_{\rm in}(\Omega)\lesssim_d |\Omega|^{1/d} < \infty$). This completes the proof of Theorem~\ref{thm: Sharp Weyln}.
\end{proof}

\begin{remark}
    Concerning the assumptions of Theorem \ref{thm: Sharp Weyln}, we note that any open set $\Omega\subset\R^d$ with the extension property and finite measure is bounded. This follows easily from the density bound \cite[Equation (2.64)]{LTbook}; see also the references therein.   
\end{remark}

\begin{proof}[Proof of Theorem \ref{thm: Sharp Weylnc}]
    Similarly to the previous proofs we split the integral and this time apply Proposition \ref{prop:rieszptwbgnc} in the set $\{ x\in\Omega:\ d_\Omega(x)<1/\sqrt\lambda\}$ and Theorem~\ref{thm: bulk} in the set $\{ x\in\Omega:\ d_\Omega(x)\geq 1/\sqrt\lambda\}$ with $\kappa=1$. In this way we find
    \begin{equation}\label{eq:sharpweylncproof}
    \begin{aligned}
        \Bigl| \Tr(-\Delta_\Omega^{\rm N} - \lambda)_\limminus^\gamma - L_{\gamma,d}^{\rm sc} |\Omega| \lambda^{\gamma+\frac{d}2} \Bigr| & \lesssim_{\gamma,d} \lambda^{\frac{\gamma+d-1}2} \int_\Omega \min\bigl\{\lambda^{\frac{1+\gamma}2}, d_\Omega(x)^{-1-\gamma}\bigr\}\,dx \\
        & \quad + \lambda^\gamma \int_{\{x\in \Omega:d_\Omega(x) <1/\sqrt\lambda\}} V_\Omega(x,1/\sqrt\lambda)^{-1} \,dx\,,
    \end{aligned}
    \end{equation}
    where we abbreviate $V_\Omega(x,r):=|\Omega\cap B_r(x)|$.

    In Lemma \ref{lem: volume bdry neighbourhood bound} we shall prove that
    $$
    |\{ x\in\Omega:\ d_\Omega(x)<s\}| \leq s \mathcal H^{d-1}(\partial\Omega) \,.
    $$
    Therefore, using \eqref{eq: layer cake integrated one-term Weyl} 
    we find
    \begin{align*}
        \int_\Omega \min\bigl\{ \lambda^{\frac{1+\gamma}2}, d_\Omega(x)^{-1-\gamma}\bigr\}\,dx 
        & \leq (1+\gamma) \int_{1/\sqrt{\lambda}}^\infty \frac{\mathcal H^{d-1}(\partial\Omega)}{s^{1+\gamma}}\,ds \\
        & = \frac{1+\gamma}{\gamma} \, \lambda^{\frac{\gamma}2}\, \mathcal H^{d-1}(\partial\Omega)  \,.
    \end{align*}
    Thus, the first term on the right side of \eqref{eq:sharpweylncproof} is bounded by the quantity in the right side of the inequality in Theorem~\ref{thm: Sharp Weylnc}. Therefore, the theorem will follow if we can show that for all $\ell > 0$ we have
	\begin{align}
        \label{eq:convexintegratedv}
		\int_{\{x\in \Omega:d_\Omega(x) <\ell\}} \frac{\ell^{d}}{V_\Omega(x,\ell)}\,dx
		\lesssim_{d} \Bigl[\ell+ \ell^{d}r_{\rm in}(\Omega)^{1-d}\Bigr] \Haus^{d-1}(\partial\Omega)\,.
	\end{align}
    Indeed, this inequality with $\ell=1/\sqrt\lambda$ shows that also the second term on the right side of \eqref{eq:sharpweylncproof} bounded by that in the right side of the inequality in Theorem~\ref{thm: Sharp Weylnc}.

    We now turn to the proof of \eqref{eq:convexintegratedv}. Let $\{x_k\}_{k=1}^M\subset \{x\in \Omega:d_\Omega(x) <\ell\}$ satisfy $B_{\ell}(x_k)\cap B_{\ell}(x_j)=\emptyset$ for $k \neq j$ and $\{x\in \Omega:d_\Omega(x) <\ell\}\subset \cup_{k=1}^M B_{2\ell}(x_k)$. The existence of such a collection can be proven by induction as follows. Start by choosing an arbitrary point in $\{x\in \Omega:d_\Omega(x) <\ell\}$; then, given a finite collection of points with pairwise distances $\geq 2\ell$, we can either find a new point in $\{x\in \Omega:d_\Omega(x) <\ell\}$ whose distance to all of the previous points is $\geq 2\ell$ or the collection satisfies that $\{x\in \Omega:d_\Omega(x) <\ell\}\subset \bigcup_k B_{2\ell}(x_k)$. Since $\{x\in \Omega:d_\Omega(x) <\ell\}$ is bounded, the algorithm terminates after a finite number of steps. 
    
    It follows from the Bishop--Gromov comparison theorem that the function
    \begin{equation*}
        r \mapsto \frac{r^d}{V_\Omega(x, r)}
    \end{equation*}
    is nondecreasing for each $x\in \R^d$ (see Lemma~\ref{lem: Bishop-Gromov monotonicity}). Together with the properties of the collection $\{x_k\}_{k =1}^M$ it follows that
 	\begin{align*}
 	 	\int_{\{x\in \Omega:d_\Omega(x) <\ell\}} \frac{\ell^{d}}{V_\Omega(x, \ell)}\,dx 
 	 	&\leq 
 	 	\sum_{k=1}^M \int_{\Omega \cap B_{2\ell}(x_k)}\frac{\ell^{d}}{V_\Omega(x, \ell)}\,dx\\
 	 	&\leq 
 	 	\sum_{k=1}^M \int_{\Omega \cap B_{2\ell}(x_k)}\frac{(4\ell)^{d}}{V_\Omega(x, 4\ell)}\,dx\,.
 	\end{align*} 
    For any $x \in B_{2\ell}(x_k)$, $\Omega \cap B_{2\ell}(x_k)\subset \Omega \cap B_{4\ell}(x)$ and so $V_\Omega(x, 4\ell) \geq V_\Omega(x_k, 2\ell)$. When inserted into the previous bound this yields
    \begin{align*}
 	 	\int_{\{x\in \Omega:d_\Omega(x) <\ell\}} \frac{\ell^{d}}{V_\Omega(x, \ell)}\,dx 
 	 	&\leq 
 	 	\sum_{k=1}^M \int_{\Omega \cap B_{2\ell}(x_k)}\frac{(4\ell)^{d}}{V_\Omega(x_k, 2\ell)}\,dx\\
        &=
         4^d M\ell^{d}\,.
 	\end{align*} 
    It thus remains to bound $M$. In order to do so, we write
 	 \begin{align*}
 	 	\ell^{d}M = \frac{|{\cup_{k=1}^M B_{\ell}(x_k)}|}{|B_1|} &\leq \frac{|\{x\in \Omega: \dist(x, \partial\Omega)<2\ell\}|}{|B_1|}\\
 	 	&\quad +\frac{|\{x\in \Omega^c: \dist(x, \Omega)<\ell\}|}{|B_1|}\,.
 	 \end{align*}
 	 We estimate the two terms on the right side separately.

 	 For the exterior term Proposition~\ref{prop: Minkowski sum bounds} implies
 	 \begin{align*}
 	 	|\{x\in \Omega^c:\dist(x, \Omega)<\ell\}| \lesssim_d \Haus^{d-1}(\partial\Omega)\ell \Bigl[1+ \Bigl(\frac{\ell}{r_{\rm in}(\Omega)}\Bigr)^{d-1}\Bigr]\,.
 	 \end{align*}

 	 For the interior term we bound
 	 \begin{align*}
 	 	|\{x\in \Omega: \dist(x, \partial\Omega)<2\ell\}|
 	 	&= \int_0^{2\ell} \Haus^{d-1}(\partial\{x \in \Omega: \dist(x, \partial\Omega)>\eta\})\,d\eta\\
 	 	&\leq 2\ell\Haus^{d-1}(\partial\Omega)\,.
 	 \end{align*}
 	 The last bound comes from the inequality $\Haus^{d-1}(\partial\{x \in \Omega: \dist(x, \partial\Omega)>\eta\}) \leq \Haus^{d-1}(\partial\Omega)$ (see Lemma~\ref{lem: inner parallel perimeter bounds}).
	
 	 Combining the above bounds proves \eqref{eq:convexintegratedv} and thereby completes the proof.
 \end{proof}

We note that the case $\gamma=0$ is not included in Theorems \ref{thm: Sharp Weyld}, \ref{thm: Sharp Weyln} and \ref{thm: Sharp Weylnc}. Although it is not needed for our later arguments, we state the following result for $\gamma=0$.

\begin{proposition}
	\label{prop: Quant Weyl}
	Let $d\geq 1$ and let $\Omega \subset \R^d$ be an open set with $|\Omega|<\infty$ and $\Theta_\Omega <\infty$.
	Then for all $\lambda> 0$,
	$$
	\bigl| \Tr(-\Delta_\Omega^{\rm D} -\lambda)_\limminus^0 - L_{0,d}^{\rm sc}  |\Omega| \lambda^{\frac{d}2}\bigr|
	\leq C_{d} \Theta_\Omega \lambda^{\frac{d-1}2} \bigl( 1+ \ln_\limplus\bigl(r_{\rm in}(\Omega) \sqrt{\lambda}\bigr) \bigr),
	$$
	where $C_{d}$ is a constant depending only on the dimension. 
    Moreover, if $\Omega$ additionally has the extension property, then for any $\lambda_0>0$ there exists a constant $C_{\Omega, \lambda_0}$ so that for all $\lambda > 0$,
	$$
	\bigl| \Tr(-\Delta_\Omega^{\rm N} -\lambda)_\limminus^0 - L_{0,d}^{\rm sc}  |\Omega| \lambda^{\frac{d}2}\bigr|
	\leq C_{\Omega,\lambda_0} \lambda^{\frac{d-1}2} \bigl(\max\bigl\{1, (\lambda_0/\lambda)^{\frac{d-1}{2}}\bigr\}+ \ln_\limplus\bigl(r_{\rm in}(\Omega) \sqrt{\lambda}\bigr)\bigr)\,.
	$$
\end{proposition}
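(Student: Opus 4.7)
The plan is to mimic the proofs of Theorems \ref{thm: Sharp Weyld} and \ref{thm: Sharp Weyln}, integrating the pointwise bounds of Theorems \ref{rieszptwbgd} and \ref{rieszptwbgn} at $\gamma=0$. The only novelty relative to $\gamma>0$ is that the integral coming from \eqref{eq: layer cake integrated one-term Weyl} is borderline divergent at $\gamma=0$, which is the source of the logarithmic correction.

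For the Dirichlet case, applying Theorem \ref{rieszptwbgd} with $\gamma=0$ pointwise and integrating reduces matters to estimating
$$
\int_\Omega \min\bigl\{\sqrt\lambda,\,d_\Omega(x)^{-1}\bigr\}\,dx=\int_{1/\sqrt\lambda}^\infty \frac{\vartheta_\Omega(s)}{s}\,ds,
$$
where the second equality is the layer cake computation of \eqref{eq: layer cake integrated one-term Weyl} at $\gamma=0$. I would split this integration at $s=r_{\rm in}(\Omega)$. On $[1/\sqrt\lambda,r_{\rm in}(\Omega)]$, which is non-empty only when $r_{\rm in}(\Omega)\sqrt\lambda>1$, the trivial bound $\vartheta_\Omega(s)\leq \Theta_\Omega$ yields the contribution $\Theta_\Omega\ln_\limplus(r_{\rm in}(\Omega)\sqrt\lambda)$. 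On $[\max\{1/\sqrt\lambda,r_{\rm in}(\Omega)\},\infty)$ the set $\{x\in\Omega:d_\Omega(x)\leq s\}$ coincides with $\Omega$ (since $\sup_{x\in\Omega} d_\Omega(x)=r_{\rm in}(\Omega)$), so $\vartheta_\Omega(s)=|\Omega|/s$ and the integral evaluates to at most $|\Omega|/\max\{1/\sqrt\lambda,r_{\rm in}(\Omega)\}\leq |\Omega|/r_{\rm in}(\Omega)=\vartheta_\Omega(r_{\rm in}(\Omega))\leq \Theta_\Omega$. Summing the two pieces gives $\Theta_\Omega(1+\ln_\limplus(r_{\rm in}(\Omega)\sqrt\lambda))$, which combined with Theorem \ref{rieszptwbgd} is the Dirichlet bound.

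For the Neumann case I would decompose $\Omega=\Omega_1\cup\Omega_2$ with $\Omega_1=\{d_\Omega<1/\sqrt\lambda\}$ and $\Omega_2=\{d_\Omega\geq 1/\sqrt\lambda\}$. On $\Omega_2$ apply Theorem \ref{rieszptwbgn}(b) with $\kappa=1$ and integrate by the argument just described, producing $C_d\Theta_\Omega\lambda^{(d-1)/2}(1+\ln_\limplus(r_{\rm in}(\Omega)\sqrt\lambda))$. On $\Omega_1$ apply Theorem \ref{rieszptwbgn}(a) to obtain a pointwise bound of size $C_{\Omega,\lambda_0}\max\{\lambda^{d/2},\lambda_0^{d/2}\}$; to integrate I would use two different bounds on $|\Omega_1|$: for $\lambda\geq\lambda_0$ use $|\Omega_1|\leq \Theta_\Omega/\sqrt\lambda$ to obtain $C_{\Omega,\lambda_0}\Theta_\Omega\lambda^{(d-1)/2}$, while for $\lambda<\lambda_0$ use the crude bound $|\Omega_1|\leq|\Omega|$, yielding a constant that I would rewrite as $C_{\Omega,\lambda_0}\lambda_0^{(d-1)/2}=C_{\Omega,\lambda_0}\lambda^{(d-1)/2}(\lambda_0/\lambda)^{(d-1)/2}$ after absorbing $|\Omega|$ and an extra $\sqrt{\lambda_0}$ into the constant.

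The main subtle point is matching the factor $\max\{1,(\lambda_0/\lambda)^{(d-1)/2}\}$ on the right side of the Neumann bound. This factor reflects the lack of decay of $k_\Omega^{\rm N}(t,x,x)$ as $t\to\infty$ (cf.\ Remark \ref{heatnapriorirem}), which forces the $\Omega$-dependent constants in Corollary \ref{riesznapriori} and Theorem \ref{rieszptwbgn}(a). Splitting the analysis of the $\Omega_1$ contribution into the regimes $\lambda\geq\lambda_0$ and $\lambda<\lambda_0$ as indicated above resolves the issue; all remaining steps are essentially identical to those in the proofs of Theorems \ref{thm: Sharp Weyld} and \ref{thm: Sharp Weyln}.
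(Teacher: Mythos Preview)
Your proposal is correct and follows exactly the route the paper indicates: integrate the pointwise bounds of Theorems~\ref{rieszptwbgd} and~\ref{rieszptwbgn} at $\gamma=0$, with the layer cake integral~\eqref{eq: layer cake integrated one-term Weyl} now producing a logarithm. The split of $\int_{1/\sqrt\lambda}^\infty \vartheta_\Omega(s)s^{-1}\,ds$ at $s=r_{\rm in}(\Omega)$, using $\vartheta_\Omega(s)=|\Omega|/s$ for $s\ge r_{\rm in}(\Omega)$, is precisely the computation the paper leaves implicit, and your Neumann treatment parallels the proof of Theorem~\ref{thm: Sharp Weyln}.
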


Proposition~\ref{prop: Quant Weyl} can be proved by following the proofs of Theorems \ref{thm: Sharp Weyld} and \ref{thm: Sharp Weyln} given above. The only change in the argument is that one needs to take into account logarithms appearing upon integration. In this case, the error term obtained is not of the (conjectured) sharp order, but is only off by a logarithmic term. Essentially the same result was obtained by Courant \cite{Courant1920} using a completely different method, based on Dirichlet--Neumann bracketing. A detailed proof in the Dirichlet case appears in~\cite[Theorem 1.8]{netrusov_weyl_2005}; see also \cite[Corollary 3.14]{LTbook} and references therein. We have not found the result in the Neumann case in the literature, even though closely related results appear, for instance, in \cite{netrusov_weyl_2005}.

\begin{remark}\label{rem: WeylBerry}
    In the results above we have assumed that $\Theta_\Omega<\infty$ in order to obtain the order-sharp bounds that are needed for the applications to two-term asymptotics. However, the proof provided establishes Weyl's law for $\Tr(-\Delta_\Omega^\#-\lambda)_\limminus^\gamma$ under the assumptions of Proposition~\ref{prop: Quant Weyl} but in place of $\Theta_\Omega<\infty$ assuming that
\begin{equation*}
    \int_{1/\sqrt{\lambda}}^\infty \frac{|\{x\in \Omega: d_\Omega(x)<s\}|}{s^{2+\gamma}}\,ds =o(\lambda^{\frac{1+\gamma}{2}})\quad \mbox{as } \lambda \to \infty\,.
\end{equation*}
In particular, if $|\{x\in \Omega: d_\Omega(x)<s\}| \sim s^\beta$ for some $0<\beta<1$ the proof reproduces the order of error predicted by the modified Weyl--Berry conjecture; see, e.g., \cite{Lapidus_91}.
\end{remark}


\section{Two-term asymptotics for Riesz means with \texorpdfstring{$\gamma\geq 1$}{gamma>=1}}\label{sec:mainprooflarge}

In this and the following section we will prove Theorems~\ref{thm: Weyl asymptotics Lipschitz} and \ref{thm: Weyl asymptotics convex}. We will distinguish the cases $\gamma\geq 1$ and $\gamma<1$, treating the former case in this section and the latter one in the next. As the main point of our paper is to treat arbitrarily small $\gamma$, the present section is in some sense a preliminary step towards this goal. In fact, the asymptotics for $\gamma=1$ will be one ingredient in proving the asymptotics for $0<\gamma<1$ in the next section. The same applies to the non-asymptotic bounds in the case of convex sets.

The $\gamma\geq 1$ parts of Theorems~\ref{thm: Weyl asymptotics Lipschitz} and \ref{thm: Weyl asymptotics convex} concerning the Dirichlet case are essentially contained in our previous work \cite{FrankLarson_Crelle20}. The results in the Neumann case, however, seem to be new. We have devised an argument that allows us to deduce the asymptotics in the Neumann case in Theorem \ref{thm: Weyl asymptotics Lipschitz} from those in the Dirichlet case via (known) heat trace asymptotics and a (known) Tauberian theorem. A similar argument works for the non-asymptotic bound in Theorem \ref{thm: Weyl asymptotics convex}, except that now neither the non-asymptotic heat trace bound nor the non-asymptotic Tauberian theorem seemed to have been known. We will prove the latter in Part 2 of this paper while the former is proved in the companion paper \cite{FrankLarson_NeumannHeat2025}.

The two subsections in this section are devoted to Theorems~\ref{thm: Weyl asymptotics Lipschitz} and \ref{thm: Weyl asymptotics convex}, respectively, in the case $\gamma\geq 1$.


\subsection{Proof of Theorem \ref{thm: Weyl asymptotics Lipschitz} for \texorpdfstring{$\gamma\geq 1$}{gamma>=1}}

We begin with the Dirichlet case.

\begin{proof}[Proof of Theorem \ref{thm: Weyl asymptotics Lipschitz} for $\gamma\geq 1$, $\#={\rm D}$]
    The assertion of the theorem for $\gamma=1$ is proved in \cite{FrankLarson_Crelle20}. The assertion for $\gamma>1$ can be easily deduced from that in the case $\gamma=1$. Indeed, for any lower semibounded operator $H$, any constant $\lambda$ and any $\gamma>1$ we have
    \begin{equation}\label{eq: Aizenman-Lieb identity}
    \Tr (H-\lambda)_\limminus^\gamma = \gamma(\gamma-1) \int_0^\infty \tau^{\gamma-2} \Tr(H-\lambda+\tau)_\limminus\,d\tau \,.
    \end{equation}
    We apply this formula with $H=-\Delta_\Omega^{\rm D}$. In view of the formula
    \begin{align*}
        L_{\gamma,d}^{\rm sc} \lambda^{\gamma+\frac{d}2} & = \int_{\R^d} (|\xi|^2-\lambda)_\limminus^\gamma \,\frac{d\xi}{(2\pi)^{\frac{d}2}} = \gamma(\gamma-1) \int_0^\infty \tau^{\gamma-2} \int_{\R^d} (|\xi|^2 - \lambda + \tau)_\limminus \,\frac{d\xi}{(2\pi)^d}\,d\tau \\
        & = \gamma(\gamma-1) \int_0^\infty \tau^{\gamma-2} L_{1,d}^{\rm sc} (\lambda-\tau)_\limplus^{1+\frac{d}2} \,d\tau \,,
    \end{align*}
    the asymptotics for $\gamma>1$ follow from those for $\gamma=1$ by a simple limiting argument. For the details of a similar argument (where asymptotics for $\gamma>0$ are deduced from asymptotics for $\gamma=0$) see \cite[Corollary 3.17]{LTbook}. This completes the proof of Theorem~\ref{thm: Weyl asymptotics Lipschitz} for $\# = {\rm D}$ and $\gamma \geq 1$.
\end{proof}

We now turn to the proof in the Neumann case. The basic strategy will be to deduce the asymptotics in the Neumann case from those in the Dirichlet case. To do this, we combine asymptotics for the traces of the associated heat kernels with a Tauberian theorem. The following theorem of Brown \cite{Brown93} provides the relevant asymptotics for the heat trace. 

\begin{theorem}[{\cite{Brown93}}]\label{thm: Weyl asymptotics Lipschitz heat}
	Let $d\geq 2$ and let $\Omega\subset\R^d$ be a bounded open set with Lipschitz boundary. Then, as $t\to 0$,
	\begin{equation*}
		\Tr (e^{t\Delta_\Omega^{\rm D}}) = \frac{1}{(4\pi t)^{\frac d2}}\biggl(|\Omega| - \frac{\sqrt{\pi t}}{2} \mathcal H^{d-1}(\partial\Omega) + o(\sqrt{t})\biggr)
	\end{equation*}
	and
	\begin{equation*}
		\Tr (e^{t\Delta_\Omega^{\rm N}}) = \frac{1}{(4\pi t)^{\frac d2}}\biggl(|\Omega| + \frac{\sqrt{\pi t}}{2} \mathcal H^{d-1}(\partial\Omega) + o(\sqrt{t})\biggr) \,.
	\end{equation*}
\end{theorem}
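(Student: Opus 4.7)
The plan is to rewrite the trace as $\Tr e^{t\Delta_\Omega^\#} = \int_\Omega k_\Omega^\#(t,x,x)\,dx$ and decompose $\Omega$ into a \emph{bulk} region $B_t := \{x\in\Omega : d_\Omega(x) \geq \delta_t\}$ and a \emph{boundary layer} $\Omega\setminus B_t$, where $\delta_t \to 0^+$ and $\delta_t/\sqrt{t} \to \infty$ (e.g.\ $\delta_t = \sqrt{t\,|\log t|}$). In the bulk, the ``not feeling the boundary'' estimates from Lemmas~\ref{heatdlower} and \ref{heatnlower} give $k_\Omega^\#(t,x,x) = (4\pi t)^{-d/2} + O\bigl((4\pi t)^{-d/2} e^{-d_\Omega(x)^2/(8t)}\bigr)$, and hence after integrating
\[
\int_{B_t} k_\Omega^\#(t,x,x)\,dx = (4\pi t)^{-d/2}|B_t| + \text{exponentially small error},
\]
since $|\Omega\setminus B_t| \lesssim \delta_t \Haus^{d-1}(\partial\Omega)$ (Lipschitz domains have finite perimeter and the coarea formula applies to $d_\Omega$).

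In the boundary layer the appropriate model is the halfspace $\R^d_+ = \{x_d > 0\}$, whose heat kernel obtained by the method of images satisfies $k^\#_{\R^d_+}(t,x,x) = (4\pi t)^{-d/2}\bigl(1 \mp e^{-x_d^2/t}\bigr)$ with the minus sign for Dirichlet and the plus sign for Neumann. The $\int_0^\infty e^{-x_d^2/t}\,dx_d = \sqrt{\pi t}/2$ computation, carried out formally against a distribution of boundary points of total surface measure $\Haus^{d-1}(\partial\Omega)$, yields
\[
\int_{\Omega\setminus B_t} k_\Omega^\#(t,x,x)\,dx \approx (4\pi t)^{-d/2}\Bigl(|\Omega\setminus B_t| \mp \tfrac{\sqrt{\pi t}}{2}\Haus^{d-1}(\partial\Omega)\Bigr).
\]
Adding the two contributions, the $|\Omega\setminus B_t|$ and $|B_t|$ terms combine to $|\Omega|$, producing the claimed two-term asymptotics.

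To make the halfspace approximation rigorous I would cover $\partial\Omega$ by finitely many charts in which the boundary is a Lipschitz graph, and in each chart compare $k_\Omega^\#(t,x,x)$ to $k^\#_{L_y}(t,x,x)$ where $L_y$ is the tangent halfspace at a nearby base point $y\in\partial\Omega$. By Rademacher's theorem, $\partial\Omega$ possesses a measure-theoretic tangent plane at $\Haus^{d-1}$-a.e.\ point, so after the change of variable $x \mapsto y + \sqrt{t}\,\xi$ (localising each point of the boundary layer at a scale $\sqrt{t}$) the pointwise difference $k_\Omega^\#(t,x,x) - k^\#_{L_y}(t,x,x)$ tends to zero as $t\to 0^+$ at $\Haus^{d-1}$-a.e.\ $y$. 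The uniform pointwise bounds of Lemmas~\ref{heatdapriori} and \ref{heatnapriori} then furnish an integrable majorant, and dominated convergence converts this a.e.\ pointwise convergence into the desired $o(\sqrt{t})$ remainder.

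The principal obstacle is obtaining uniform control of the heat kernel near a merely Lipschitz boundary to justify this dominated convergence, particularly in the Neumann case where reflections are less well behaved. Brown's original argument bypasses a direct chart-by-chart matching by representing the heat kernel through single- and double-layer potentials on $\partial\Omega$ and using the boundedness of the associated layer operators on Lipschitz surfaces (Verchota's theorem) together with fine regularity properties of the harmonic measure; this is where the genuinely deep input enters. Once such a representation is in hand, the $o(\sqrt{t})$ error follows from a careful estimate of the deviation of $\partial\Omega$ from its tangent plane at the relevant scale $\sqrt{t}$, together with the integrability of $\xi \mapsto e^{-c\xi_d^2}$ in the normal variable.
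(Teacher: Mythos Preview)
The paper does not prove this theorem at all: it is stated as ``the following theorem of Brown~\cite{Brown93}'' and used as a black box. Your sketch is a reasonable outline of the structure of Brown's argument (bulk/boundary decomposition, halfspace model in the layer, layer potentials for the hard Lipschitz-boundary estimates), and you correctly identify that the genuinely deep input is the boundary analysis via potential theory rather than the formal halfspace computation.
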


\begin{proof}[Proof of Theorem \ref{thm: Weyl asymptotics Lipschitz} for $\gamma\geq 1$, $\#={\rm N}$]
	We consider the function
	$$
	f(\lambda) := \Tr(-\Delta_\Omega^{\rm N}-\lambda)_\limminus - \Tr(-\Delta_\Omega^{\rm D}-\lambda)_\limminus \,.
	$$
	We claim that $f$ is nondecreasing. Indeed, since for any lower semibounded operator $H$ we have $\Tr(H-\lambda)_\limminus = \int_{-\infty}^\lambda \Tr(H-\mu)_\limminus^0\,d\mu$, we have
    $$
	f(\lambda) = \int_{-\infty}^\lambda \left( \Tr(-\Delta_\Omega^{\rm N}-\mu)_\limminus^0 - \Tr(-\Delta_\Omega^{\rm D}-\mu)_\limminus^0 \right)d\mu \,.
	$$
	In particular, when $\lambda_1\leq \lambda_2$, then
	$$
	f(\lambda_2) - f(\lambda_1) = \int_{\lambda_1}^{\lambda_2} \left( \Tr(-\Delta_\Omega^{\rm N}-\mu)_\limminus^0 - \Tr(-\Delta_\Omega^{\rm D}-\mu)_\limminus^0 \right)d\mu \,.
	$$
	Since, by the variational principle, $\Tr(-\Delta_\Omega^{\rm N}-\mu)_\limminus^0 - \Tr(-\Delta_\Omega^{\rm D}-\mu)_\limminus^0\geq 0$ for any $\mu$, we deduce the claimed monotonicity.
	
	We note that
	$$
	\int_0^\infty e^{-t\lambda} df(\lambda) = t^{-1} \bigl( \Tr(e^{t\Delta_\Omega^{\rm N}}) - \Tr(e^{t\Delta_\Omega^{\rm D}}) \bigr).
	$$
	Thus, Theorem \ref{thm: Weyl asymptotics Lipschitz heat} implies that
	$$
	\int_0^\infty e^{-t\lambda} df(\lambda) = \frac{1}{2t (4\pi t)^{\frac{d-1}2}} \mathcal H^{d-1}(\partial\Omega)  (1 + o(1))
	\qquad\text{as}\ t\to 0 \,.
	$$
	It follows from the standard Tauberian theorem (see, e.g., \cite[Theorem 10.3]{Simon_FunctionalIntegrationBook} or \cite[Theorem VII.3.2]{Korevaar_TauberianTheory_book}) that
	$$
	f(\lambda) = \frac{1}{2(4\pi)^{\frac{d-1}{2}}\Gamma\bigl(\frac{d+1}{2}+1\bigr)}  \mathcal H^{d-1}(\partial\Omega) \lambda^{\frac{d+1}{2}}(1 + o(1))
	\qquad\text{as}\ \lambda\to\infty \,.
	$$
	We note that $(4\pi)^{-\frac{d-1}{2}}\Gamma(\frac{d+1}{2}+1)^{-1} = L_{1,d-1}^{\rm sc}$. Since the asymptotics in Theorem \ref{thm: Weyl asymptotics Lipschitz} have already been proved in the Dirichlet case for $\gamma=1$, we obtain the corresponding asymptotics in the Neumann case and $\gamma=1$.

    The asymptotics for $\gamma>1$ are deduced from those for $\gamma=1$ by the same argument as in the Dirichlet case. This completes the proof of Theorem~\ref{thm: Weyl asymptotics Lipschitz} for $\# = {\rm N}$ and $\gamma \geq 1$.
\end{proof}


\subsection{Proof of Theorem \ref{thm: Weyl asymptotics convex} for \texorpdfstring{$\gamma\geq 1$}{gamma>=1}}

In this subsection we turn our attention to non-asymptotic bounds when the underlying domain is convex.

\begin{proof}[Proof of Theorem \ref{thm: Weyl asymptotics convex} for $\gamma\geq 1$, $\#={\rm D}$]
    The assertion of the theorem for $\gamma=1$ is proved in~\cite[Theorem 1.2]{FrankLarson_Crelle20}. The bounds for $\gamma>1$ are deduced from that for $\gamma=1$ by means of the integral identity in~\eqref{eq: Aizenman-Lieb identity}. This completes the proof of Theorem~\ref{thm: Weyl asymptotics convex} for $\# = {\rm D}$ and $\gamma \geq 1$.
\end{proof}

It remains to prove Theorem~\ref{thm: Weyl asymptotics convex} with $\# = {\rm N}, \gamma\geq 1$. To this end we follow the same strategy as applied above for the asymptotics in the corresponding case of Theorem~\ref{thm: Weyl asymptotics Lipschitz}. However, each ingredient in the proof is replaced by a quantified version. The Tauberian result applied above is replaced by a Tauberian theorem with a remainder estimate, which is a non-asymptotic version of a result of Ganelius~\cite{Ganelius54} and which will be proved in Section \ref{sec: Laplace Tauber}. Likewise, the heat trace asymptotics of Theorem~\ref{thm: Weyl asymptotics Lipschitz heat} are replaced by the following estimates. If $\Omega \subset \R^d, d\geq 2,$ is a bounded convex set then, for all $t>0$,
\begin{equation}\label{eq: heat trace bounds}
\begin{aligned}
	\biggl|(4\pi t)^{\frac{d}{2}}\Tr(e^{t\Delta_\Omega^{\rm D}})- |\Omega| + \frac{\sqrt{\pi t}}{2}\Haus^{d-1}(\partial\Omega)\biggr| &\!\lesssim_d \!\Haus^{d-1}(\partial\Omega) \sqrt{t}\Bigl(\frac{\sqrt{t}}{r_{\rm in}(\Omega)}\Bigr)^{\frac{1}{11}}\,,\\
	\biggl|(4\pi t)^{\frac{d}{2}}\Tr(e^{t\Delta_\Omega^{\rm N}})-|\Omega| - \frac{\sqrt{\pi t}}{2}\Haus^{d-1}(\partial\Omega)\biggr| &\!\lesssim_d \!\Haus^{d-1}(\partial\Omega) \sqrt{t}\biggl[\Bigl(\frac{\sqrt{t}}{r_{\rm in}(\Omega)}\Bigr)^{\frac{1}{11}}+ \Bigl(\frac{\sqrt{t}}{r_{\rm in}(\Omega)}\Bigr)^{d-1}\biggr].
\end{aligned}
\end{equation}
We emphasize that the implicit constants in these bounds can be chosen depending only on $d$. The bound in~\eqref{eq: heat trace bounds} for the Dirichlet case follows from the bound in~\cite[Theorem 1.2]{FrankLarson_Crelle20} by means of the formula
\begin{equation*}
    \Tr(e^{-tH}) = t^2 \int_{\R} e^{-\lambda t} \Tr(H-\lambda)_\limminus\,d\lambda \,,
\end{equation*}
valid for any semibounded operator $H$ and applied here with $H=-\Delta_\Omega^{\rm D}$. The bound in~\eqref{eq: heat trace bounds} for the Neumann setting is proved in \cite{FrankLarson_NeumannHeat2025}. In passing we note that a combination of our arguments in \cite{FrankLarson_NeumannHeat2025} and those in~\cite[Theorem 1.2]{FrankLarson_Crelle20} gives an alternative proof of \eqref{eq: heat trace bounds} in the Dirichlet case, which is more direct than going via Riesz means \cite[Theorem 1.2]{FrankLarson_Crelle20} and then integrating. Probably this approach would also give a better exponent than $1/11$ in the error term.

\begin{proof}[Proof of Theorem \ref{thm: Weyl asymptotics convex} for $\gamma\geq 1$, $\#={\rm N}$]
	Consider the function
	$$
	g(\lambda) := \Tr(-\Delta_\Omega^{\rm N}-\lambda)_\limminus - \Tr(-\Delta_\Omega^{\rm D}-\lambda)_\limminus - \frac{1}{2}L_{1,d-1}^{\rm sc}\Haus^{d-1}(\partial\Omega) \lambda_\limplus^{1+ \frac{d-1}{2}}\,.
	$$
	In the previous subsection the function $\lambda \mapsto g(\lambda) + \frac{1}{2}L_{1,d-1}^{\rm sc}\Haus^{d-1}(\partial\Omega) \lambda_\limplus^{1+ \frac{d-1}{2}}$ was shown to be nondecreasing. Consequently, the Borel measure $\tilde \mu$ on $[0,\infty)$ defined by
	\begin{equation*}
		\tilde \mu(\omega) := \int_\omega dg(\lambda) + \frac{(d+1)}{4}L_{1,d-1}^{\rm sc}\Haus^{d-1}(\partial\Omega)\int_\omega \lambda^{\frac{d-1}{2}}\,d\lambda
	\end{equation*}
	is nonnegative.
 
    We note that
	$$
	\int_0^\infty e^{-t\lambda} dg(\lambda) = t^{-1} \Bigl( \Tr( e^{t\Delta_\Omega^{\rm N}}) - \Tr (e^{t\Delta_\Omega^{\rm D}}) - (4\pi t)^{- \frac{d}{2}}\sqrt{\pi t}\,\Haus^{d-1}(\partial\Omega)\Bigr)\,,
	$$
	and for $\gamma \geq 1$
	\begin{equation}\label{eq: Riesz rep 1}
    \begin{aligned}
	   \int_0^\lambda (1-v/\lambda)^{\gamma-1}\,dg(v) 
        &= \frac{\lambda^{1-\gamma}}{\gamma} \biggl(\Tr(-\Delta_\Omega^{\rm N}-\lambda)_\limminus^\gamma- \Tr(-\Delta_\Omega^{\rm D}-\lambda)_\limminus^\gamma\\
        &\quad - \frac{1}{2}L_{\gamma,d-1}^{\rm sc}\Haus^{d-1}(\partial\Omega)\lambda^{\gamma+ \frac{d-1}{2}}\biggr)\,.
    \end{aligned}
	\end{equation}

	By~\eqref{eq: heat trace bounds} it holds that
	$$
	\biggl|\int_0^\infty e^{-t\lambda} dg(\lambda) \biggr| \lesssim_d  \frac{\Haus^{d-1}(\partial\Omega)}{t^{1+\frac{d-1}{2}}}\biggl[\biggl(\frac{\sqrt{t}}{r_{\rm in}(\Omega)}\biggr)^{\frac{1}{11}}+\biggl(\frac{\sqrt{t}}{r_{\rm in}(\Omega)}\biggr)^{d-1}\biggr]
	\qquad\text{for all }\ t>0 \,.
	$$

	The Tauberian remainder theorem in Proposition~\ref{prop: Tauberian v2}, applied with $\nu=\frac{d+1}2\geq 1$, implies that there are constants $B, k_0$ depending only on $d, \gamma$ such that for all $k \geq k_0$ and $\lambda >0$
	\begin{align*}
		\Bigl|\int_0^\lambda &(1-v/\lambda)^{\gamma-1}dg(v)\Bigr|\\ 
		&\lesssim_{\gamma, d} \Haus^{d-1}(\partial\Omega) B^k \lambda^{1+ \frac{d-1}{2}} \max_{j=1, \ldots, k}\biggl[j^{-1-\frac{d-1}{2}+ \frac{1}{22}}\bigl(r_{\rm in}(\Omega)\sqrt{\lambda}\bigr)^{-\frac{1}{11}}+j^{-1}\bigl(r_{\rm in}(\Omega)\sqrt{\lambda}\bigr)^{1-d}\biggr]\\
		&\quad + \frac{1}{k^{\gamma}} \Haus^{d-1}(\partial\Omega) \lambda^{1+ \frac{d-1}{2}}\\
		&= \Haus^{d-1}(\partial\Omega) B^k \lambda^{1+ \frac{d-1}{2}} \biggl[\bigl(r_{\rm in}(\Omega)\sqrt{\lambda}\bigr)^{-\frac{1}{11}}+\bigl(r_{\rm in}(\Omega)\sqrt{\lambda}\bigr)^{1-d}\biggr]\\
		&\quad + \frac{1}{k^\gamma} \Haus^{d-1}(\partial\Omega) \lambda^{1+ \frac{d-1}{2}}\,.
	\end{align*}
	Without loss of generality we may assume that $B\geq 2$.

	For $c'>0$ to be chosen we set $$k = \max\bigl\{ k_0, \bigl\lfloor c' \ln\bigl(r_{\rm in}(\Omega)\sqrt{\lambda}\bigr)/\ln(B)\bigr\rfloor \bigr\}.$$ Then if $\lambda \leq r_{\rm in}(\Omega)^{-2}B^{\frac{2k_0}{c'}}$, we find that 
	$$
		\Bigl|\int_0^\lambda (1-v/\lambda)^{\gamma-1}dg(v)\Bigr| \lesssim_{\gamma, d, c'} \frac{\Haus^{d-1}(\partial\Omega)}{r_{\rm in}(\Omega)^{d-1}}\lambda\,.
	$$ 
	If instead $\lambda > r_{\rm in}(\Omega)^{-2}B^{\frac{2k_0}{c'}}$ and we choose $c'< \frac{1}{11}$,
    \begin{align*}
		\Bigl|\int_0^\lambda (1-v/\lambda)^{\gamma-1}dg(v)\Bigr| 
		&\lesssim_{\gamma, d}\Haus^{d-1}(\partial\Omega) \lambda^{1+ \frac{d-1}{2}} \biggl[\bigl(r_{\rm in}(\Omega)\sqrt{\lambda}\bigr)^{-\frac{1}{11}+c'}+\bigl(r_{\rm in}(\Omega)\sqrt{\lambda}\bigr)^{1-d+c'}\biggr]\\
		&\quad + \frac{1}{\bigl(c'\ln\bigl(r_{\rm in}(\Omega)\sqrt{\lambda}\bigr)\bigr)^\gamma} \Haus^{d-1}(\partial\Omega)\lambda^{1+ \frac{d-1}{2}}\\
		&\lesssim_{\gamma, d, c'}\Haus^{d-1}(\partial\Omega)\lambda^{1+ \frac{d-1}{2}}\bigl(1+\ln_\limplus\bigl(r_{\rm in}(\Omega)\sqrt{\lambda}\bigr)\bigr)^{-\gamma}\,.
	\end{align*}
	In conclusion, for any $d\geq 2, \gamma \geq 1$ and all $\lambda>0$,
	\begin{equation}\label{eq: final bound difference of traces 2}
		\Bigl|\int_0^\lambda \! (1-v/\lambda)^{\gamma-1}dg(v)\Bigr|
		\!\lesssim_{\gamma,d} \! \Haus^{d-1}(\partial\Omega) \lambda^{1+ \frac{d-1}{2}} \biggl[\!\bigl(1+\ln_\limplus\bigl(\sqrt{\lambda}{r_{\rm in}(\Omega)}\bigr)\bigr)^{-\gamma}\!+\bigl(r_{\rm in}(\Omega)\sqrt{\lambda}\bigr)^{1-d}\biggr].
	\end{equation}

	Consequently, by~\eqref{eq: Riesz rep 1},~\eqref{eq: final bound difference of traces 2}, and the quantitative two-term asymptotic expansion of $\Tr(-\Delta_\Omega^{\rm D}-\lambda)_\limminus^\gamma$ obtained from~\cite[Theorem 1.2]{FrankLarson_Crelle20} combined with~\eqref{eq: Aizenman-Lieb identity},
	\begin{align*}
		\biggl|\Tr(&-\Delta_\Omega^{\rm N}-\lambda)_\limminus^\gamma - L_{\gamma,d}^{\rm sc}|\Omega|\lambda^{\gamma+ \frac{d}{2}}- \frac{1}{4}L_{\gamma, d-1}^{\rm sc}\Haus^{d-1}(\partial\Omega)\lambda^{\gamma+\frac{d-1}{2}}\biggr|\\
		&=
		\Biggl|\gamma \lambda^{\gamma-1}\int_0^\lambda (1-v/\lambda)^{\gamma-1}\,dg(v)+ \Tr(-\Delta_\Omega^{\rm D}-\lambda)_\limminus^\gamma - L_{\gamma,d}^{\rm sc}|\Omega|\lambda^{\gamma+ \frac{d}{2}}\\
		&\qquad+ \frac{1}{4}L_{\gamma, d-1}^{\rm sc}\Haus^{d-1}(\partial\Omega)\lambda^{\gamma+\frac{d-1}{2}}\Biggr|\\
		&\leq \biggl|\gamma \lambda^{\gamma-1}\int_0^\lambda (1-v/\lambda)^{\gamma-1}\,dg(v)\biggr| \\
		&\quad + \biggl|\Tr(-\Delta_\Omega^{\rm D}-\lambda)_\limminus^\gamma - L_{\gamma,d}^{\rm sc}|\Omega|\lambda^{\gamma+ \frac{d}{2}}+ \frac{1}{4}L_{\gamma, d-1}^{\rm sc}\Haus^{d-1}(\partial\Omega)\lambda^{\gamma+ \frac{d-1}{2}}\biggr|\\
		&\lesssim_{\gamma, d}\! \Haus^{d-1}(\partial\Omega) \lambda^{\gamma+ \frac{d-1}{2}} \biggl[\bigl(1+\ln_\limplus\bigl(\sqrt{\lambda}{r_{\rm in}(\Omega)}\bigr)\bigr)^{-\gamma}\!+\bigl(r_{\rm in}(\Omega)\sqrt{\lambda}\bigr)^{1-d} + \bigl(r_{\rm in}(\Omega)\sqrt{\lambda}\bigr)^{-\frac{1}{11}}\biggr]\\
		&\lesssim_{\gamma, d} \!\Haus^{d-1}(\partial\Omega) \lambda^{\gamma+ \frac{d-1}{2}} \biggl[\bigl(1+\ln_\limplus\bigl(\sqrt{\lambda}{r_{\rm in}(\Omega)}\bigr)\bigr)^{-\gamma}\!+\bigl(r_{\rm in}(\Omega)\sqrt{\lambda}\bigr)^{1-d}\biggr]\,.
	\end{align*}
	This completes the proof of Theorem~\ref{thm: Weyl asymptotics convex} for $\gamma\geq 1$ and $\# = {\rm N}$.
\end{proof}


\section{Two-term asymptotics for Riesz means with \texorpdfstring{$0<\gamma<1$}{0<gamma<1}}
\label{sec:mainproofsmall}

In this section we will complete the proofs of Theorems~\ref{thm: Weyl asymptotics Lipschitz} and ~\ref{thm: Weyl asymptotics convex} by considering the range $0<\gamma<1$.

Our proof of the first theorem will rely on two main ingredients:
\begin{enumerate}
	\item The one-term asymptotics
	\begin{equation}
		\label{eq:seeley}
		\Tr(-\Delta_\Omega^\# -\lambda)_\limminus^\gamma = L_{\gamma,d}^{\rm sc} |\Omega|\lambda^{\gamma+\frac d2} + O(\lambda^{\gamma+\frac{d-1}{2}})
	\end{equation}
	with order-sharp remainder, which are valid for arbitrary small $\gamma>0$; see Theorems \ref{thm: Sharp Weyld} and \ref{thm: Sharp Weyln}.
	\item The two-term asymptotics
	\begin{equation}
		\label{eq:fl}
		\Tr(-\Delta_\Omega^\# -\lambda)_\limminus = L_{1,d}^{\rm sc} |\Omega|\lambda^{1+ \frac d2} \mp \frac14 L_{\gamma,d-1}^{\rm sc} \mathcal H^{d-1}(\partial\Omega) \lambda^{1+\frac{d-1}{2}} + o(\lambda^{1+\frac{d-1}{2}}) \,,
	\end{equation}
	valid if $\Omega$ is open, bounded, and has Lipschitz regular boundary. This is a special case of Theorem \ref{thm: Weyl asymptotics Lipschitz} that was proved in the previous subsection.
\end{enumerate}
The important tool that we will use in order to tie these two ingredients together is a convexity result for Riesz means, due to Riesz, which we will discuss in Section \ref{sec: Riesz convexity}.

To prove Theorem~\ref{thm: Weyl asymptotics convex} we follow the same overall idea. However, now we rely in addition on quantitative estimates for the error terms in the asymptotic expansions~\eqref{eq:seeley} and~\eqref{eq:fl}.

\begin{proof}[{Proof of Theorem \ref{thm: Weyl asymptotics Lipschitz} for $0<\gamma<1$}]
	We set $\tau_{\rm D}=-1, \tau_{\rm N}=1$ and
	$$
	f(\lambda) := L_{0,d}^{\rm sc} |\Omega|\lambda^{\frac d2} + \frac{\tau_\#}4 L_{0,d-1}^{\rm sc} \mathcal H^{d-1}(\partial\Omega) \lambda^{\frac{d-1}{2}} 
	- \Tr(-\Delta_\Omega^\# -\lambda)_\limminus^0\,.
	$$
	Set $f^{(0)} :=f$ and for $\kappa >0$ define $f^{(\kappa)}$ by
	\begin{equation*}
		f^{(\kappa)}(\lambda) := \frac{1}{\Gamma(\kappa)}\int_0^\lambda (\lambda-\mu)^{\kappa-1}f(\mu)\,d\mu\,.
	\end{equation*}
	Note that
	$$
	f^{(\gamma)}(\lambda) = \frac{1}{\Gamma(\gamma+1)} \Bigl(L_{\gamma,d}^{\rm sc} |\Omega|\lambda^{\gamma+\frac d2} + \frac{\tau_\#}4 L_{\gamma,d-1}^{\rm sc} \mathcal H^{d-1}(\partial\Omega) \lambda^{\gamma+\frac{d-1}{2}} 
	-  \Tr(-\Delta_\Omega -\lambda)_\limminus^\gamma\Bigr) \,.
	$$
	Therefore, the claim follows if we show that $f^{(\gamma)}(\lambda)= o(\lambda^{\gamma+\frac{d-1}{2}})$.
	
	We fix $0<\kappa<\gamma$ and apply Proposition \ref{prop: Riesz logconvexity} to the function $f^{(\kappa)}$, $\gamma$ replaced by $1-\kappa$, and $\sigma=\gamma-\kappa$. By the semigroup property of Riesz means (see~\eqref{eq: semigroup Riesz prop} below), we then obtain
	\begin{equation}
	    \label{eq:rieszinterpolapplied}
     |f^{(\gamma)}(\lambda)| \leq C \biggl( \sup_{\Lambda\in [0, \lambda]} |f^{(\kappa)}(\Lambda)| \biggr)^{1-\frac{\gamma-\kappa}{1-\kappa}} \biggl( \sup_{\Lambda\in [0, \lambda]} |f^{(1)}(\Lambda)| \biggr)^{\frac{\gamma-\kappa}{1-\kappa}}.
	\end{equation}
	
	By Theorems~\ref{thm: Sharp Weyld} and \ref{thm: Sharp Weyln}, we have
	$$
	\sup_{\Lambda\leq\lambda} |f^{(\kappa)}(\Lambda)| = O( \lambda^{\kappa+\frac{d-1}{2}}) \,,
	$$
    (Here, in the Neumann case we use the well known fact that bounded Lipschitz domains have the extension property; see, e.g., \cite[Theorem 2.92]{LTbook}.) By the $\gamma=1$-case of Theorem \ref{thm: Weyl asymptotics Lipschitz} proved in the previous section, we have
	$$
	\sup_{\Lambda\leq\lambda} |f^{(1)}(\Lambda)| = o(\lambda^{1+\frac{d-1}{2}}) \,.
	$$
	The assertion now follows from the fact that
	$$
	\biggl( 1 - \frac{\gamma-\kappa}{1-\kappa}\biggr) \biggl( \kappa+\frac{d-1}{2} \biggr) + \frac{\gamma-\kappa}{1-\kappa} \biggl( 1 +\frac{d-1}{2} \biggr) = \gamma +\frac{d-1}{2} \,.
	$$
	This completes the proof.
\end{proof}

The proof of Theorem~\ref{thm: Weyl asymptotics convex} is almost identical to that of Theorem~\ref{thm: Weyl asymptotics Lipschitz} presented above, but in each step we rely on quantitative asymptotic inequalities instead of the non-quantitative versions used above.

\begin{proof}[{Proof of Theorem \ref{thm: Weyl asymptotics convex} for $0<\gamma<1$}]
	We define $f, f^{(\kappa)}$ as in the previous proof and recall inequality \eqref{eq:rieszinterpolapplied}. We need to bound the two suprema on the right side.

    We begin with the bounds for $f^{(\kappa)}$ with $\kappa>0$. In the Dirichlet case we have
	$$
	\sup_{\Lambda\leq\lambda} |f^{(\kappa)}(\Lambda)| \leq C_{\gamma, d} \Haus^{d-1}(\partial\Omega) \lambda^{\kappa + \frac{d-1}{2}}\,.
	$$
    This follows from Theorem \ref{thm: Sharp Weyld} since the convexity assumption implies $\Theta_\Omega \leq \Haus^{d-1}(\partial\Omega)$; see Lemma \ref{lem: volume bdry neighbourhood bound}. In the Neumann case we have
    $$
        \sup_{\Lambda\leq\lambda} |f^{(\kappa)}(\Lambda)| \leq  
	C_{\gamma,d} \mathcal H^{d-1}(\partial\Omega) \Bigl( \lambda^{\kappa+\frac{d-1}2} + \lambda^{\kappa} \,r_{\rm in}(\Omega)^{1-d} \Bigr) \,.
    $$
	This follows from Theorem~\ref{thm: Sharp Weylnc}.

    Next, we turn to the bounds for $f^{(1)}$. In the Dirichlet case we have
	$$
	\sup_{\Lambda\leq\lambda} |f^{(1)}(\Lambda)| \leq C_{\gamma, d}\Haus^{d-1}(\partial\Omega) \lambda^{1+\frac{d-1}{2}}\bigl(r_{\rm in}(\Omega)\sqrt{\lambda}\bigr)^{-\frac{1}{11}}
	$$
    and in the Neumann case we have
    $$
    \sup_{\Lambda\leq\lambda} |f^{(1)}(\Lambda)| \leq C_{\gamma, d}\Haus^{d-1}(\partial\Omega) \lambda^{1+\frac{d-1}{2}}\Bigl[\bigl(1+\ln_\limplus\bigl(r_{\rm in}(\Omega)\sqrt{\lambda}\bigr)\bigr)^{-1}+ \bigl(r_{\rm in}(\Omega)\sqrt{\lambda}\bigr)^{1-d}\Bigr] \,.
    $$
    Both bounds follow from the $\gamma=1$ case of Theorem \ref{thm: Weyl asymptotics convex}, which has already been proved in the previous section.
 
	The assertion of Theorem \ref{thm: Weyl asymptotics convex} now follows as in the previous proof, together with the fact that $(0, \gamma) \ni \kappa \mapsto \frac{\gamma-\kappa}{1-\kappa}$ is a continuous decreasing function with $\lim_{\kappa \to 0^\limplus} \frac{\gamma-\kappa}{1-\kappa}=\gamma$ and $\lim_{\kappa \to \gamma^\limminus} \frac{\gamma-\kappa}{1-\kappa}=0$.
\end{proof}


\section{Variations on the theme}\label{sec:variations}

The method that we have applied in the previous section to prove Theorems \ref{thm: Weyl asymptotics Lipschitz} and \ref{thm: Weyl asymptotics convex} for $\gamma<1$ is rather general and can be used in other situations as well. In this section we sketch four further sample applications.


\subsection{Two-term Riesz means asymptotics from two-term asymptotics for the heat trace}

In this subsection we discuss a closely related but slightly different route to get around the poor performance of Tauberian theorems for the Laplace transform in regards to two-term asymptotics. The upshot of this method is that one avoids comparing the Neumann and Dirichlet Riesz means as in our proof of Theorems~\ref{thm: Weyl asymptotics Lipschitz} and \ref{thm: Weyl asymptotics convex}, and uses only information for the operator of interest as input. The downside is that the proof relies more crucially on the fact that we are considering the Dirichlet or Neumann Laplace operators, and generalizing this approach to a more general context appears difficult.

\begin{theorem}\label{thm: KrogerBerezin extrapolation}
    Let $d\geq 2$, $\#\in \{\rm D, \rm N\}$. Let $\Omega\subset \R^d$ be an open set satisfying that, as $t \to 0$,
    \begin{equation}\label{eq: two-term heat Kroger Berezin}
        \Tr(e^{t\Delta_\Omega^{\#}}) = \frac{1}{(4\pi t)^{\frac{d}2}}\biggl(|\Omega| +\tau_\# \frac{\sqrt{\pi t}}{2}\Haus^{d-1}(\partial\Omega)+o(\sqrt{t})\biggr) \,.
    \end{equation}
    Then, for all $\gamma \geq 2$,
    \begin{equation}\label{eq: two-term Riesz Kroger Berezin}
        \Tr(-\Delta_\Omega^{\#}-\lambda)_\limminus^\gamma 
        =
        L_{\gamma,d}^{\rm sc}|\Omega|\lambda^{\gamma + \frac{d}2}+ \frac{\tau_\#}4 L_{\gamma,d-1}^{\rm sc}\Haus^{d-1}(\partial\Omega)\lambda^{\gamma+ \frac{d-1}{2}} +o(\lambda^{\gamma+ \frac{d-1}{2}}) \,,
    \end{equation}
    as $\lambda \to \infty$. 
    
    If, in addition, there is a $\gamma_0 \in [0, 2)$ so that
    \begin{equation*}
        \Tr(-\Delta_\Omega^{\#}-\lambda)_\limminus^{\gamma_0} = L_{\gamma_0,d}^{\rm sc}|\Omega|\lambda^{\gamma_0 + \frac{d}2}+ O(\lambda^{\gamma_0+ \frac{d-1}{2}}) \,,
    \end{equation*}
    as $\lambda \to \infty$, then \eqref{eq: two-term Riesz Kroger Berezin} holds for all $\gamma >\gamma_0$.
\end{theorem}

\begin{proof} 
    By the Berezin--Li--Yau \cite{Berezin,LiYau_83} (for $\#={\rm D}$) and Kr\"oger's inequality \cite{Kroger92} (for $\#={\rm N}$) it holds that, for all $\lambda \geq 0$,
    \begin{equation*}
        \Tr(-\Delta_\Omega^{\rm D}-\lambda)_\limminus \leq L_{1,d}^{\rm sc}|\Omega|\lambda^{1+\frac{d}2} \leq \Tr(-\Delta_\Omega^{\rm N}-\lambda)_\limminus\,.
    \end{equation*}
    It thus follows from \eqref{eq: Aizenman-Lieb identity} that 
    \begin{equation*}
        f(\lambda):= \Tr(-\Delta_\Omega^{\#}-\lambda)^2_\limminus-L_{2,d}^{\rm sc}|\Omega|\lambda^{2+\frac{d}2}
    \end{equation*}
    is monotone (increasing for $\#={\rm N}$, decreasing for $\#={\rm D}$).

    By \eqref{eq: two-term heat Kroger Berezin},
    \begin{equation*}
        \int_0^\infty e^{-t\lambda}\,df(\lambda)
        =
        \frac{2}{t^{2}}\Bigl(\Tr(e^{t\Delta_\Omega^\#})- (4\pi t)^{-\frac{d}2}|\Omega|\Bigr) 
        =
        \frac{\tau_\#}{2t^{2}(4\pi t)^{\frac{d-1}2}}\Haus^{d-1}(\partial\Omega)(1+o(1))\,.
    \end{equation*}
    The standard Tauberian theorem (see, e.g., \cite[Theorem 10.3]{Simon_FunctionalIntegrationBook} or \cite[Theorem VII.3.2]{Korevaar_TauberianTheory_book}) implies that
	$$
	f(\lambda) =  \frac{\tau_\#}{4}L_{2,d-1}^{\rm sc}\mathcal H^{d-1}(\partial\Omega) \lambda^{2+\frac{d-1}{2}}(1 + o(1))
	\qquad\text{as}\ \lambda\to\infty \,.
	$$
    This proves \eqref{eq: two-term Riesz Kroger Berezin} for $\gamma=2$. The claim for $\gamma >2$ follows from the fact that, for all $\lambda \geq 0$ and $\gamma>2$,
    \begin{equation*}
        \Tr(-\Delta_\Omega^\#-\lambda)_\limminus^\gamma = \frac{2}{\gamma(\gamma-1)(\gamma-2)} \int_0^\infty \tau^{\gamma-3}\Tr(-\Delta_\Omega^\#-\lambda +\tau)_\limminus^2\,d\tau\,.
    \end{equation*}

    To extend the two-term asymptotics to $\gamma \in (\gamma_0, 2)$ we argue as in Section \ref{sec:mainproofsmall}, but now we interpolate between Riesz means of orders $2$ and $\gamma_0$ instead of between Riesz means of order $1$ and some $0<\kappa<\gamma$. This completes the proof of Theorem~\ref{thm: KrogerBerezin extrapolation}.
\end{proof}

We have chosen to formulate Theorem~\ref{thm: KrogerBerezin extrapolation} in the case where the second term in the heat trace asymptotics is of the form $C t^{-\frac{d-1}2}$. In principle, the argument might be also applicable when the second term is of the form $C t^{-\frac{d-\beta}2}$ for some $0<\beta<1$. Such a situation can occur if the boundary of $\Omega$ is fractal-like and $d-\beta$ is the dimension, in some sense, of $\partial\Omega$. The heat trace asymptotics then give two-term asymptotics for Riesz means with $\gamma\geq 2$. If, at the same time, $N(\lambda,-\Delta_\Omega^\#) = L_{0,d}^{\rm sc}  |\Omega| \lambda^{\frac{d}2} + O(\lambda^{\frac{d-\beta}2})$, then this gives asymptotics for Riesz means with any $\gamma>0$.  Examples of heat traces that show this behaviour can be found, for instance, in~\cite{LapidusPomerance_96}. Moreover, in the Dirichlet case a sufficient condition for the required bound on $N(\lambda,-\Delta_\Omega^\#) - L_{0,d}^{\rm sc}  |\Omega| \lambda^{\frac{d}2}$ is given in Remark \ref{rem: WeylBerry}. We refrain from pursuing this further.

\subsection{Three-term asymptotics in planar polygons}

Let $\Omega \subset \R^2$ be a bounded, open set whose boundary is polygonal; that is, $\partial\Omega$ consists of a finite number of line segments $\{e_1, \ldots, e_n\}$ that meet only at their endpoints $\{p_1, \ldots, p_n\}$ and each endpoint is shared by exactly two of the segments. The interior angle at the vertex $p_i$ is defined in the natural manner and here denoted by $\alpha_i \in (0, 2\pi]$.

The following theorem provides a non-asymptotic bound on $\Tr(-\Delta_\Omega^{\rm D}-\lambda)_\limminus^{\gamma}$, which depends in a rather explicit way on the geometric characteristics of $\Omega$.

The geometric characteristics in question are as follows. For each vertex $p_i$ of $\Omega$ let $W_i$ denote the infinite wedge with vertex $p_i$ and opening angle $\alpha_i$ with the property that the two edges of $\Omega$ adjacent to $p_i$ are contained in $\partial W_i$. Define
\begin{align*}
    \alpha & := \min_{i=1, \ldots, n}\alpha_i\,,\\
    W_i(r) & := W_i \cap B_r(p_i)\,,\\
    R & := \frac{1}{2}\sup\Bigl\{r>0 : W_i(r)\cap W_j(r) = \emptyset \mbox{ for all }i\neq j,\ \bigcup_{i=1}^n W_i(r)\subset \Omega\Bigr\}\,.
\end{align*}

The following is our result concerning non-asymptotic bounds for planar polygons.

\begin{theorem}\label{thm: three term asymptotics in polygons}
 	Let $\Omega\subset\R^2$ be a bounded, open set whose boundary is polygonal with interior angles $\{\alpha_i\}_{i=1}^n$. Then, for all $\gamma \geq0$ and $\lambda >0$,
 	\begin{align*}
 		\biggl|\Tr(-\Delta_\Omega^{\rm D}-&\lambda)_\limminus^{\gamma}- L_{\gamma,2}^{\rm sc}|\Omega|\lambda^{\gamma+1} + \frac{1}{4}L_{\gamma,1}^{\rm sc}\Haus^{1}(\partial\Omega)\lambda^{\gamma+\frac12}- \lambda^{\gamma}\sum_{i=1}^n \frac{\pi^2-\alpha_i^2}{24\pi \alpha_i}\biggr|\\
        &\leq C_\gamma (1+R^2\sin(\alpha/2)^2\lambda)^{- \frac{\gamma+1}{2}}\Bigl( |\Omega|\lambda^{\gamma+1} + \Bigl(n+ \frac{|\Omega|}{R^2}\Bigr)\frac{1}{\alpha^2} \lambda^\gamma \Bigr)\,,
 	\end{align*}
    where the constant $C_\gamma$ depends only on $\gamma$. 
\end{theorem}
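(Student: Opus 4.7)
My plan is to bypass the Riesz interpolation used in the proofs of Theorems~\ref{thm: Weyl asymptotics Lipschitz} and~\ref{thm: Weyl asymptotics convex}, which is naturally suited only to two-term expansions, and instead apply the Laplace-transform Tauberian theorem in Corollary~\ref{cor: Tauberian cor} directly with $N=2$. Writing $c_i := (\pi^2-\alpha_i^2)/(24\pi\alpha_i)$, the Tauberian input will be a three-term heat-trace expansion
\begin{equation*}
    \biggl|\Tr(e^{t\Delta_\Omega^{\rm D}}) - \frac{|\Omega|}{4\pi t} + \frac{\Haus^1(\partial\Omega)}{8\sqrt{\pi t}} - \sum_{i=1}^n c_i\biggr|
    \lesssim \bigl(n + |\Omega|/R^2\bigr)\alpha^{-2}\,e^{-c\, R^2\sin^2(\alpha/2)/t}
\end{equation*}
valid for all $t>0$, and Corollary~\ref{cor: Tauberian cor}, with exponents $\nu_0=0,\nu_1=\tfrac12,\nu_2=1$ and coefficients $K_0 = -\sum_i c_i$, $K_1 = \tfrac14 L_{0,1}^{\rm sc}\Haus^1(\partial\Omega)$, $K_2 = L_{0,2}^{\rm sc}|\Omega|$, will then deliver the claimed bound for every $\gamma\geq 0$ simultaneously. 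The prefactor $(1+R^2\sin^2(\alpha/2)\lambda)^{-(\gamma+1)/2}$ in the statement emerges from the Tauberian conversion of an exponential heat-trace remainder into a polynomial spectral-side remainder, while the $|\Omega|\lambda^{\gamma+1}$ term absorbs the volume contribution in the regime where $R^2\sin^2(\alpha/2)\lambda$ is small and the three-term asymptotics have not yet set in.

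The analytical heart of the proof is therefore establishing the displayed heat-trace expansion. I would decompose $\Omega$ into the wedge neighbourhoods $W_i(R) = W_i\cap B_R(p_i)$ and the bulk piece $\Omega_R := \Omega\setminus\bigcup_i\overline{W_i(R)}$, whose boundary meets $\partial\Omega$ only along straight segments. On each $W_i(R)$, the Dirichlet heat kernel $k_\Omega^{\rm D}(t,x,x)$ will be compared with the explicit Dirichlet heat kernel of the infinite wedge $W_i$, whose on-diagonal integral—classical, going back to Carslaw—produces exactly $c_i$ together with the truncated volume and edge contributions. By the not-feeling-the-boundary bound that underlies Lemma~\ref{heatdlower}, the pointwise comparison error is bounded by $Ct^{-1}e^{-\dist(x,\partial\Omega\setminus\partial W_i)^2/(5t)}$, and for $x\in W_i(R/2)$ the exponent is at least $R^2\sin^2(\alpha_i/2)/(20t)$ because the portion of $\partial\Omega$ outside $W_i$ is separated from $W_i(R/2)$ by the wedge geometry. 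On $\Omega_R$, every point sees at most one edge of $\partial\Omega$ within distance $R/2$, so the single-reflection method of images is exact up to $O(t^{-1}e^{-R^2/t})$; covering $\Omega_R$ by $O(|\Omega|/R^2)$ balls of radius $\asymp R$ gives the bulk contribution to the error and supplies the $|\Omega|/R^2$ factor in the prefactor.

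The main obstacle is obtaining the correct explicit geometric form of the heat-trace remainder. The $\sin^2(\alpha/2)$ in the exponent cannot come from generic convex-domain or Li--Yau-type bounds and must be read off from the specific wedge geometry; in particular, one needs that the portion of $\partial\Omega$ lying outside $W_i$ is contained in the closed half-planes complementary to the two rays bounding $W_i$, so that it stays at distance $\geq R\sin(\alpha_i/2)/2$ from $W_i(R/2)$. The $\alpha^{-2}$ prefactor, in turn, tracks the spectral gap of the one-dimensional Dirichlet Laplacian on the cross-section of the wedge of opening $\alpha$, which governs the angular decay of the subleading terms in the wedge spectral expansion and becomes relevant near reflex vertices. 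Once both points are in place, assembling the wedge and bulk pieces and feeding the result into Corollary~\ref{cor: Tauberian cor} is essentially mechanical and yields the theorem.
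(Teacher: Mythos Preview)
Your overall approach---feed the three-term heat-trace expansion into Corollary~\ref{cor: Tauberian cor}---is exactly what the paper does. The difference is that the paper does not re-derive the heat-trace bound: it is quoted verbatim as Theorem~\ref{thm: bound vdBergSrisatkunarajah}, due to van den Berg and Srisatkunarajah, and the entire proof is then a few lines of bookkeeping. So the wedge-and-bulk decomposition you sketch, the Carslaw kernel comparison, and the discussion of where $\sin^2(\alpha/2)$ and $\alpha^{-2}$ come from are all unnecessary here; they constitute (a rough outline of) a reproof of a result already in the literature and cited in the paper.

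There is also a slip in how you set up the $K_i$'s. In the paper's signed measure $\mu$ the perimeter term enters with a \emph{positive} density ($+\tfrac18 L_{0,1}^{\rm sc}\Haus^1(\partial\Omega)\lambda^{-1/2}$), so it is not needed in the compensating measure $\tilde\mu$; the paper takes $N=1$ with $\nu_1=1$, $K_1=L_{0,2}^{\rm sc}|\Omega|$, and a $K_0$ handling the vertex atom. If you instead include a $\nu=\tfrac12$ term as you propose, the output of Corollary~\ref{cor: Tauberian cor} acquires an extra $|K_1|u^{1/2}(c_2u)^{1/4}$ contribution involving $\Haus^1(\partial\Omega)$, which is absent from the stated bound. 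Also, $K_0=-\sum_i c_i$ violates the nonnegativity hypothesis whenever $\sum_i c_i>0$ (e.g.\ any convex polygon); the correct choice is $K_0=\max\{0,\sum_i c_i\}$ or similar, and it is irrelevant to the output since $K_0$ does not appear in the conclusion of the corollary.
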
 

Besides the non-asymptotic bound in the Dirichlet case, we also have an asymptotic result in the Neumann case.

\begin{theorem}\label{thm: three term asymptotics in polygons, Neumann}
 	Let $\Omega\subset\R^2$ be a bounded, open set whose boundary is polygonal with interior angles $\{\alpha_i\}_{i=1}^n$. If $\gamma \geq0$, then, as $\lambda\to\infty$,
 	\begin{align*}
 		\Tr(-\Delta_\Omega^{\rm N}-\lambda)_\limminus^{\gamma}= L_{\gamma,2}^{\rm sc}|\Omega|\lambda^{\gamma+1} + \frac{1}{4}L_{\gamma,1}^{\rm sc}\Haus^{1}(\partial\Omega)\lambda^{\gamma+\frac12}+ \lambda^{\gamma}\sum_{i=1}^n \frac{\pi^2-\alpha_i^2}{24\pi \alpha_i} + O(\lambda^{\frac{\gamma+1}{2}})\,.
 	\end{align*}
\end{theorem}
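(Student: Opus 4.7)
The plan is to run the same Dirichlet-to-Neumann reduction used for Theorem~\ref{thm: Weyl asymptotics Lipschitz} in Section~\ref{sec:mainprooflarge}, followed by the Riesz convexity interpolation of Section~\ref{sec:mainproofsmall}, exploiting the especially sharp heat-trace expansions available for polygons. The crucial observation is that in two dimensions the constant-order vertex contribution to the heat trace is the same for Dirichlet and Neumann boundary conditions; both equal $\sum_i (\pi^2-\alpha_i^2)/(24\pi\alpha_i)$, as can be verified from explicit wedge heat kernels via Sommerfeld--Carslaw or Kontorovich--Lebedev transforms. Consequently the Dirichlet--Neumann heat trace difference has only the edge term at leading order.

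Concretely, as $t\to 0^+$ the classical three-term polygonal expansions
\begin{align*}
	\Tr(e^{t\Delta_\Omega^{\rm D}}) &= \frac{|\Omega|}{4\pi t} - \frac{\Haus^1(\partial\Omega)}{8\sqrt{\pi t}} + \sum_i \frac{\pi^2-\alpha_i^2}{24\pi\alpha_i} + O(e^{-c/t}),\\
	\Tr(e^{t\Delta_\Omega^{\rm N}}) &= \frac{|\Omega|}{4\pi t} + \frac{\Haus^1(\partial\Omega)}{8\sqrt{\pi t}} + \sum_i \frac{\pi^2-\alpha_i^2}{24\pi\alpha_i} + O(e^{-c/t})
\end{align*}
(with some $c>0$ depending on $\Omega$; these can be established by localizing near each edge and vertex, computing on infinite wedges, and gluing via Duhamel and finite-propagation bounds) subtract to yield $\Haus^1(\partial\Omega)/(4\sqrt{\pi t}) + O(e^{-c/t})$. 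As in the proof of Theorem~\ref{thm: Weyl asymptotics Lipschitz} in Section~\ref{sec:mainprooflarge}, the function $h(\lambda) := \Tr(-\Delta_\Omega^{\rm N}-\lambda)_\limminus - \Tr(-\Delta_\Omega^{\rm D}-\lambda)_\limminus$ is nondecreasing by Dirichlet--Neumann bracketing, and its Laplace transform equals $t^{-1}$ times the above difference. Applying Proposition~\ref{prop: Tauberian v2} with $\nu=3/2$, whose quantitative error term easily accommodates an exponentially small Laplace-side remainder, gives $h(\lambda) = \tfrac12 L_{1,1}^{\rm sc}\Haus^1(\partial\Omega)\lambda^{3/2} + O(\lambda)$. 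Combined with the $\gamma=1$ case of Theorem~\ref{thm: three term asymptotics in polygons} this establishes the statement at $\gamma=1$; the passage to $\gamma>1$ is immediate via the Aizenman--Lieb identity \eqref{eq: Aizenman-Lieb identity}.

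For $0\leq \gamma<1$ I would use Riesz's convexity principle (Proposition~\ref{prop: Riesz logconvexity}) as in Section~\ref{sec:mainproofsmall}, interpolating the deviation
\[
	F(\lambda) := L_{0,2}^{\rm sc}|\Omega|\lambda + \tfrac14 L_{0,1}^{\rm sc}\Haus^1(\partial\Omega)\lambda^{1/2} + \sum_i \tfrac{\pi^2-\alpha_i^2}{24\pi\alpha_i} - \Tr(-\Delta_\Omega^{\rm N}-\lambda)_\limminus^0
\]
between its Riesz lift at a small $\kappa>0$, for which Theorem~\ref{thm: Sharp Weyln} provides the bound $|F^{(\kappa)}(\lambda)|=O(\lambda^{\kappa+1/2})$, and at some $\gamma_1>1$, for which the preceding paragraph provides the bound $|F^{(\gamma_1)}(\lambda)|=O(\lambda^{(\gamma_1+1)/2})$. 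The convex-combination exponent tends to $(\gamma+1)/2$ as $\kappa\to 0^+$. The main obstacle lies here: naively the interpolation only yields $O(\lambda^{(\gamma+1)/2+\epsilon})$ for every $\epsilon>0$. To remove the $\epsilon$ one must either anchor the interpolation at $\kappa=0$, which requires a sharp polygonal two-term counting-function bound $|\Tr(-\Delta_\Omega^{\rm N}-\lambda)_\limminus^0 - L_{0,2}^{\rm sc}|\Omega|\lambda - \tfrac14 L_{0,1}^{\rm sc}\Haus^1(\partial\Omega)\lambda^{1/2}| = O(\lambda^{1/2})$ \emph{without} the logarithmic loss of Proposition~\ref{prop: Quant Weyl} (obtainable via a Dirichlet--Neumann bracketing argument tailored to the polygonal structure), or apply a Tauberian theorem for Riesz means of order $\gamma<1$ directly, which the exponentially small Laplace-side remainder makes very flexible. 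A secondary but unavoidable bookkeeping task is the explicit wedge computation verifying that the Neumann vertex contribution coincides with its Dirichlet counterpart.
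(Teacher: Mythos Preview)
Your route for $\gamma\geq 1$ is correct but unnecessarily circuitous, and for $\gamma<1$ you have correctly identified a genuine gap in your own argument. The paper's proof is far simpler and avoids your obstacle entirely.

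The paper proceeds in complete analogy with the Dirichlet case (Theorem~\ref{thm: three term asymptotics in polygons}): it defines the signed measure
\[
\mu(\omega)=\sum_k \delta_{\lambda_k(-\Delta_\Omega^{\rm N})}(\omega)-L_{0,2}^{\rm sc}|\Omega|\int_{\omega\cap(0,\infty)}d\lambda-\tfrac18 L_{0,1}^{\rm sc}\Haus^1(\partial\Omega)\int_{\omega\cap(0,\infty)}\lambda^{-1/2}\,d\lambda-\delta_0(\omega)\sum_i\tfrac{\pi^2-\alpha_i^2}{24\pi\alpha_i},
\]
verifies the Tauberian side condition (here $\tilde\mu=\sum_k\delta_{\lambda_k(-\Delta_\Omega^{\rm N})}\geq 0$), and invokes Corollary~\ref{cor: Tauberian cor} directly with the Neumann heat-trace expansion $\Tr(e^{t\Delta_\Omega^{\rm N}})=\frac{|\Omega|}{4\pi t}+\frac{\Haus^1(\partial\Omega)}{8\sqrt{\pi t}}+\sum_i\frac{\pi^2-\alpha_i^2}{24\pi\alpha_i}+O(e^{-c/t})$, which the paper quotes from \cite{Hezari_etal_17}. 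This single step gives the claimed bound for \emph{all} $\gamma\geq 0$ at once; no Dirichlet--Neumann comparison, no Aizenman--Lieb, no Riesz interpolation.

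The key difference in strategy: the Dirichlet--Neumann comparison of Section~\ref{sec:mainprooflarge} and the interpolation of Section~\ref{sec:mainproofsmall} exist precisely because for general Lipschitz domains the heat-trace remainder is \emph{not} exponentially small, so one cannot feed it directly into Corollary~\ref{cor: Tauberian cor}. Polygons are special: both Dirichlet and Neumann heat traces have exponentially small remainders, which is exactly the hypothesis of Corollary~\ref{cor: Tauberian cor}. You already wrote down the Neumann expansion with $O(e^{-c/t})$ remainder, so you had the decisive ingredient in hand; the detour through the Dirichlet problem discards information and creates the $\epsilon$-loss you then struggle to undo. Your second suggested fix (``apply a Tauberian theorem for Riesz means of order $\gamma<1$ directly, which the exponentially small Laplace-side remainder makes very flexible'') is precisely the paper's one-line argument.
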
 

We emphasize that the bound in Theorem \ref{thm: three term asymptotics in polygons} is non-asymptotic, reminiscent of that in Theorem \ref{thm: Weyl asymptotics convex}, but here we do not assume convexity. Asymptotically, as $\lambda\to\infty$, the following accuracy of the bound should be emphasized:
\begin{itemize}
    \item[(a)] For $\gamma=0$ the bound reproduces the correct leading order term and gives an order-sharp remainder (to be compared with the logarithmic excess factor for more general sets in Proposition \ref{prop: Quant Weyl}). This was previously proved in \cite{BaileyBrownell62,Fedosov63}. 
    \item[(b)] For $0<\gamma\leq 1$ the bound correctly reproduces the first two asymptotic terms and gives an order-sharp remainder term. (This is reminiscent of Theorems~\ref{thm: Weyl asymptotics Lipschitz} and \ref{thm: Weyl asymptotics convex}, but here in a uniform way without assuming convexity.) Similar, but different bounds of a comparable accuracy appear in \cite{Fedosov63}. (The difference is that there the author considers Riesz means with respect to the variable $\sqrt\lambda$, while we consider Riesz means with respect to the variable $\lambda$.)
    \item[(c)] For $\gamma>1$ the bound correctly reproduces the first three asymptotic terms. As far as we know, such asymptotics have not appeared in the literature before.
\end{itemize}
Corresponding remarks apply to Theorem~\ref{thm: three term asymptotics in polygons, Neumann}.

We have restricted ourselves to the two-dimensional case. Some asymptotic results for the case of $d\geq 3$ can be found in \cite{Fedosov64}. 

The non-asymptotic bound in Theorem \ref{thm: three term asymptotics in polygons} might be useful in spectral shape optimization problems for polygons, just like our Theorem \ref{thm: Weyl asymptotics convex} will be useful in such problems for convex sets. In fact, the motivation for the present paper came from a question, asked by J.~Lagac\'e at an Oberwolfach conference in summer of 2023, concerning spectral shape optimization problems for polygons. We are grateful to J.~Lagac\'e for a stimulating discussion.

The input in the proof of Theorems \ref{thm: three term asymptotics in polygons} and \ref{thm: three term asymptotics in polygons, Neumann} are corresponding asymptotic expansions for the heat traces with exponentially small remainder. In the context of the heat trace it has been known for some time that the presence of corners influences the short time asymptotics, see e.g.~\cite{BaileyBrownell62,Kac,McKeanSinger,vdBergSrisatkunarajah88}. Motivated by Kac's celebrated question `Can one hear the shape of a drum?', recently there have been several works pertaining to `hearing corners' by means of heat trace asymptotics~\cite{Hezari_etal_17,LuRowlett_16,Nursultanov_etal_19,Hezari_etal_21,Nursultanov_etal_23}.

In the Dirichlet case the required estimate for the heat trace is provided by a theorem of van den Berg and Srisatkunarajah~\cite{vdBergSrisatkunarajah88} which states that:

\begin{theorem}[{\cite[Theorem 1]{vdBergSrisatkunarajah88}}]\label{thm: bound vdBergSrisatkunarajah}
 	Let $\Omega$ be a bounded, open set whose boundary is polygonal. Then, for all $t>0$,
 	\begin{equation*}
 		\biggl|\Tr(e^{t\Delta_\Omega^{\rm D}})- \frac{|\Omega|}{4\pi t}+ \frac{\Haus^1(\partial\Omega)}{8\sqrt{\pi t}}- \sum_{i=1}^n \frac{\pi^2-\alpha_i^2}{24\pi \alpha_i}\biggr| \leq \Bigl(5n+ \frac{20|\Omega|}{R^2}\Bigr)\frac{1}{\alpha^2} e^{-\frac{1}{16t}R^2\sin(\alpha/2)^2}\,.
 	\end{equation*}
\end{theorem}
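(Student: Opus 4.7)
The plan is to localize the trace integral $\int_\Omega k_\Omega^{\rm D}(t,x,x)\,dx$ around each feature of $\partial\Omega$ and replace $k_\Omega^{\rm D}$ by a model heat kernel. The basic tool is Minakshisundaram's quantitative not-feeling-the-boundary bound (Lemma~\ref{heatdlower}), which permits such a substitution with an error exponentially small in the ratio of the squared distance to the ignored boundary and $t$. The three relevant model problems are the whole plane $\R^2$, a half-plane, and the infinite wedge $W_i$ at each corner $p_i$.

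First I would partition $\Omega = V \sqcup \bigsqcup_{i=1}^n U_i$, where $U_i := \Omega \cap B_R(p_i)$ and $V := \Omega \setminus \bigcup_i B_R(p_i)$. By the defining property of $R$ one has $U_i = W_i \cap B_R(p_i)$, so inside $U_i$ the boundary of $\Omega$ agrees with that of the wedge. For $x \in V$, every boundary piece other than the nearest edge lies at distance at least of order $R \sin(\alpha/2)$ from $x$ (the $\sin(\alpha/2)$ accounts for the worst case where two adjacent edges meet at the smallest opening). Applying Lemma~\ref{heatdlower} twice, once for $\Omega$ and once for the half-plane bounded by the nearest edge, yields
\begin{equation*}
\int_V k_\Omega^{\rm D}(t,x,x)\,dx = \frac{|V|}{4\pi t} - \frac{\Haus^1(V \cap \partial\Omega)}{8\sqrt{\pi t}} + O\Bigl(R^{-2}|\Omega|\, e^{-R^2\sin(\alpha/2)^2/(8t)}\Bigr),
\end{equation*}
where the prefactor $R^{-2}|\Omega|$ arises from counting how many disjoint $R$-balls fit inside $V$.

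On each corner piece $U_i$ I would compare $k_\Omega^{\rm D}(t,x,x)$ with the Dirichlet heat kernel $k_{W_i}^{\rm D}(t,x,x)$ of the infinite wedge. Since $\Omega$ and $W_i$ coincide inside $B_R(p_i)$, any discrepancy between the two kernels originates from boundary at distance at least $R - |x - p_i|$ from $x$, and Lemma~\ref{heatdlower} again bounds the pointwise difference exponentially. The remaining task is to evaluate $\int_{U_i} k_{W_i}^{\rm D}(t,x,x)\,dx$. Using the Sommerfeld--Carslaw series
\begin{equation*}
k_{W_i}^{\rm D}(t,(r,\theta),(r,\theta)) = \frac{e^{-r^2/(2t)}}{2\pi t \alpha_i} \sum_{k=1}^\infty \sin^2\!\Bigl(\tfrac{k\pi\theta}{\alpha_i}\Bigr) I_{k\pi/\alpha_i}\!\Bigl(\tfrac{r^2}{2t}\Bigr),
\end{equation*}
a direct polar-coordinate integration together with Carslaw's asymptotic evaluation of the Bessel series produces
\begin{equation*}
\int_{U_i}\!\! k_{W_i}^{\rm D}(t,x,x)\,dx = \frac{|U_i|}{4\pi t} - \frac{\Haus^1(\partial\Omega \cap U_i)}{8\sqrt{\pi t}} + \frac{\pi^2 - \alpha_i^2}{24\pi\alpha_i} + O\bigl(\alpha_i^{-2}\, e^{-R^2\sin(\alpha_i/2)^2/(8t)}\bigr),
\end{equation*}
where the factor $\alpha_i^{-2}$ reflects the growth of the series prefactors as the opening angle shrinks.

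Summing the contributions from $V$ and the $U_i$, the area and perimeter pieces match $|\Omega|/(4\pi t)$ and $-\Haus^1(\partial\Omega)/(8\sqrt{\pi t})$ respectively, and the corner sum $\sum_i (\pi^2-\alpha_i^2)/(24\pi\alpha_i)$ emerges from the $U_i$-integrals. The cumulative error is of the form $C\bigl(n + R^{-2}|\Omega|\bigr)\alpha^{-2}\, e^{-c R^2\sin(\alpha/2)^2/t}$, which after optimization of the exponential constant produces the bound claimed in the theorem. The main obstacle is the explicit wedge-integral evaluation in the third paragraph: converting the Sommerfeld--Carslaw series into the closed-form corner constant $(\pi^2-\alpha_i^2)/(24\pi\alpha_i)$ requires nontrivial Bessel-function asymptotics, and the truncation of the infinite wedge to $U_i$ must be controlled uniformly as $\alpha_i$ tends to $0$ or to $2\pi$. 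Every other ingredient reduces to Lemma~\ref{heatdlower} and a counting argument.
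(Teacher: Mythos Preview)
The paper does not prove this theorem; it is quoted verbatim from van den Berg and Srisatkunarajah~\cite{vdBergSrisatkunarajah88} and used as a black-box input to the proof of Theorem~\ref{thm: three term asymptotics in polygons}. There is therefore no ``paper's own proof'' to compare your attempt against.

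That said, your outline is essentially the strategy of~\cite{vdBergSrisatkunarajah88}: decompose $\Omega$ into corner neighbourhoods and a bulk, and on each piece replace $k_\Omega^{\rm D}$ by a model kernel (free, half-plane, wedge) using a not-feeling-the-boundary estimate. Your identification of the hard step---the explicit wedge-kernel integral yielding the constant $(\pi^2-\alpha_i^2)/(24\pi\alpha_i)$ with a uniformly controlled remainder---is accurate; this is where the bulk of the work in~\cite{vdBergSrisatkunarajah88} lies, and it is not a routine consequence of Lemma~\ref{heatdlower}. One point to watch: your claim that every boundary component other than the nearest edge lies at distance $\gtrsim R\sin(\alpha/2)$ from any $x\in V$ is not quite right as stated (points near the midpoint of a very short edge can be close to two edges simultaneously), and the exponent $1/16$ rather than your $1/8$ in the final bound reflects exactly this kind of geometric care in~\cite{vdBergSrisatkunarajah88}. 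But the architecture of your sketch is the correct one.
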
 
 
\begin{proof}[Proof of Theorem \ref{thm: three term asymptotics in polygons}]
Define a Borel measure on $\R$ by
\begin{align*}
	\mu(\omega) & := \sum_{k=1}^\infty \delta_{\lambda_k(-\Delta_\Omega^{\rm D})}(\omega) - L_{0, 2}^{\rm sc}|\Omega| \int_{\omega\cap (0, \infty)}d\lambda\\
    &\quad+ \frac{1}{8}L_{0, 1}^{\rm sc}\Haus^1(\partial\Omega) \int_{\omega\cap (0, \infty)}\lambda^{-\frac{1}2}\,d\lambda- \delta_0(\omega)\sum_{i=1}^n \frac{\pi^2-\alpha_i^2}{24\pi \alpha_i} \,,
\end{align*}
where $\delta_x$ denotes the Dirac measure at $x$.
Note that the Borel measure $\tilde \mu$ defined by
\begin{align*}
    \tilde \mu(\omega)
    & :=	\mu(\omega) + \delta_0(\omega)\sum_{i=1}^n \frac{\pi^2-\alpha_i^2}{24\pi \alpha_i} +L_{0, 2}^{\rm sc}|\Omega|\int_{\omega\cap (0, \infty)}d\lambda \\
    &=  \sum_{k=1}^\infty \delta_{\lambda_k(-\Delta_\Omega^{\rm D})}(\omega) + \frac{1}{8}L_{0, 1}^{\rm sc}\Haus^1(\partial\Omega) \int_{\omega\cap(0, \infty)}\lambda^{-\frac{1}2}\,d\lambda
\end{align*}
is a nonnegative measure.

Recalling that $L_{0,2}^{\rm sc}= \frac{1}{4\pi}, L_{0,1}^{\rm sc}= \frac{1}{\pi}$ we calculate
\begin{align*}
	\int_{[0, \infty)} e^{-t\lambda}d\mu(\lambda) = \Tr(e^{t\Delta_\Omega^{\rm D}})- \frac{|\Omega|}{4\pi t}+ \frac{\Haus^1(\partial\Omega)}{8\sqrt{\pi t}}- \sum_{i=1}^n \frac{\pi^2-\alpha_i^2}{24\pi \alpha_i}\,.
\end{align*}
Similarly, using the properties of $L_{\gamma, d}^{\rm sc}$,
\begin{align*}
	\int_{[0, \lambda)} (1-v/\lambda)^{\gamma}d\mu(v) &=\lambda^{-\gamma}\biggl[\Tr(-\Delta_\Omega^{\rm D}-\lambda)_\limminus^{\gamma}- L_{\gamma,2}^{\rm sc}|\Omega|\lambda^{\gamma+1}\\
	&\qquad + \frac{1}{4}L_{\gamma,1}^{\rm sc}\Haus^{1}(\partial\Omega)\lambda^{\gamma+\frac{1}2}- \lambda^{\gamma}\sum_{i=1}^n \frac{\pi^2-\alpha_i^2}{24\pi \alpha_i}\biggr]\,,
\end{align*}
for $\gamma>0$.

By Theorem~\ref{thm: bound vdBergSrisatkunarajah} we see that for all $t>0$
\begin{align*}
	\biggl|\int_0^\infty e^{-t\lambda}d\mu(\lambda)\biggr| \leq \Bigl(5n+ \frac{20|\Omega|}{R^2}\Bigr)\frac{1}{\alpha^2} e^{-\frac{1}{16t}R^2\sin(\alpha/2)^2}\,.
\end{align*}
Therefore, Corollary~\ref{cor: Tauberian cor} yields for any $\gamma \geq 0, \lambda>0$ that
\begin{align*}
	\biggl|\Tr(-\Delta_\Omega^{\rm D}-\lambda)_\limminus^{\gamma}&- L_{\gamma,2}^{\rm sc}|\Omega|\lambda^{\gamma+1} + \frac{1}{4}L_{\gamma,1}^{\rm sc}\Haus^{1}(\partial\Omega)\lambda^{\gamma+\frac{1}2}- \lambda^{\gamma}\sum_{i=1}^n \frac{\pi^2-\alpha_i^2}{24\pi \alpha_i}\biggr|\\
	&\lesssim_{\gamma} \lambda^{\gamma}\Bigl(1+\frac{R^2\sin(\alpha/2)^2}{16}\lambda\Bigr)^{- \frac{\gamma+1}{2}}\Bigl(\Bigl(5n+ \frac{20|\Omega|}{R^2}\Bigr)\frac{1}{\alpha^2} + \frac{|\Omega|}{4\pi}\lambda\Bigr)\\
	&\lesssim_{\gamma} \Bigl(1+\frac{R^2\sin(\alpha/2)^2}{16}\lambda\Bigr)^{- \frac{\gamma+1}{2}}\Bigl(\Bigl(5n+ \frac{20|\Omega|}{R^2}\Bigr)\frac{1}{\alpha^2}\lambda^{\gamma} + \frac{|\Omega|}{4\pi}\lambda^{\gamma+1}\Bigr)\,.
\end{align*}
This completes the proof. 
\end{proof}

\begin{proof}[Proof of Theorem \ref{thm: three term asymptotics in polygons, Neumann}]
    This theorem can be proved in complete analogy. The use of van den Berg and Srisatkunarajah's uniform bound is replaced by the asymptotic bound
\begin{equation*}
    \Tr(e^{t\Delta_\Omega^{\rm N}}) =  \frac{|\Omega|}{4\pi t}+ \frac{\Haus^1(\partial\Omega)}{8\sqrt{\pi t}}+ \sum_{i=1}^n \frac{\pi^2-\alpha_i^2}{24\pi \alpha_i} + O(e^{-\frac{c}t})
\end{equation*}
for some $c$ depending on the polygon $\Omega$, which appears, for instance, in \cite{Hezari_etal_17}. The asymptotic expansion is also proved in \cite{BaileyBrownell62} but without an explicit expression for the third term.    
\end{proof}


\subsection{Remainder terms for more regular boundaries}

We return our attention to the asymptotics in Theorem \ref{thm: Weyl asymptotics Lipschitz}. As we have already emphasized, the remainder term $o(\lambda^{\gamma+\frac{d-1}2})$ cannot be improved within the class of Lipschitz domains. In this subsection we briefly discuss possible improvements for sets $\Omega$ whose boundary has a limited amount of smoothness.

\begin{theorem}
    Let $d\geq 2$, $0<\gamma\leq 1$, $0<\alpha<1$ and let $\Omega\subset\R^d$ be a bounded open set with boundary of class $C^{1,\alpha}$. Then, as $\lambda\to\infty$,
    \begin{equation*}
		\Tr(-\Delta_\Omega^{\rm D} -\lambda)_\limminus^\gamma = L_{\gamma,d}^{\rm sc} |\Omega| \lambda^{\gamma+\frac d2} - \frac14 L_{\gamma,d-1}^{\rm sc} \mathcal H^{d-1}(\partial\Omega) \lambda^{\gamma+\frac{d-1}{2}} + O(\lambda^{\gamma+\frac{d-1}{2}-\frac{\gamma\alpha}{2+\alpha}}(\ln\lambda)^{1-\gamma}) \,.
	\end{equation*}
\end{theorem}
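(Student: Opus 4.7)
The strategy parallels the proof of Theorem~\ref{thm: Weyl asymptotics Lipschitz} for $0<\gamma<1$ from Section~\ref{sec:mainproofsmall}, but the sharper remainder comes from using a \emph{refined} bound at $\gamma=1$ that exploits the $C^{1,\alpha}$ regularity. I would define
$$
f(\lambda) := L_{0,d}^{\rm sc} |\Omega|\lambda^{\frac{d}2} - \tfrac{1}{4}L_{0,d-1}^{\rm sc}\mathcal H^{d-1}(\partial\Omega)\lambda^{\frac{d-1}2} - \Tr(-\Delta_\Omega^{\rm D}-\lambda)_\limminus^0\,,
$$
and let $f^{(\kappa)}$ be its Riemann--Liouville fractional integral of order $\kappa\geq 0$, so that $\Gamma(\gamma+1)\,f^{(\gamma)}(\lambda)$ is, up to sign, precisely the quantity to be estimated. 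For any $\kappa\in(0,\gamma)$, Theorem~\ref{thm: Sharp Weyld} applied at Riesz order $\kappa$ gives $\sup_{\Lambda\leq\lambda}|f^{(\kappa)}(\Lambda)|\leq C_\kappa\lambda^{\kappa+\frac{d-1}{2}}$; tracking the $\kappa$-dependence of the constant through the layer-cake step $\int_{1/\sqrt\lambda}^{\infty}s^{-1-\kappa}ds = \kappa^{-1}\lambda^{\kappa/2}$ in its proof yields $C_\kappa \lesssim_{\Omega, d} \kappa^{-1}$ uniformly as $\kappa\to 0^\limplus$.

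The key new ingredient is an improved $\gamma=1$ estimate of the form
$$
\sup_{\Lambda\leq\lambda}|f^{(1)}(\Lambda)|\lesssim_{\Omega,d,\alpha}\lambda^{1+\frac{d-1}{2}-\beta} \qquad\text{with}\qquad \beta=\tfrac{\alpha}{2+\alpha}\,.
$$
To prove it I would follow the template of the $\gamma=1$ case of Theorem~\ref{thm: Weyl asymptotics convex} in Section~\ref{sec:mainprooflarge}: first establish a quantitative refinement of the heat trace asymptotics of Brown (Theorem~\ref{thm: Weyl asymptotics Lipschitz heat}) with a power-saving remainder
$$
\Tr(e^{t\Delta_\Omega^{\rm D}}) = \frac{1}{(4\pi t)^{d/2}}\Bigl(|\Omega|-\frac{\sqrt{\pi t}}{2}\mathcal H^{d-1}(\partial\Omega)\Bigr) + O\bigl(t^{-\frac{d-1}{2}+\sigma}\bigr)
$$
for a suitable $\sigma=\sigma(\alpha)>0$, and then apply the iterative Tauberian remainder theorem (Proposition~\ref{prop: Tauberian v2}) to transfer the power-saving to the $\gamma=1$ Riesz mean. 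The refinement of the heat trace should rest on the $C^{0,\alpha}$ H\"older continuity of the outward normal: locally, $\partial\Omega$ deviates from its tangent hyperplane by $O(r^{1+\alpha})$ at scale $r$, which, when fed into Brown's layer-potential representation, produces the additional decay $t^{\sigma}$. The Riesz-mean exponent $\beta=\alpha/(2+\alpha)$ then arises from balancing the optimization parameter in Proposition~\ref{prop: Tauberian v2} against $\sigma$.

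With these two bounds in place, Proposition~\ref{prop: Riesz logconvexity} applied to $f^{(\kappa)}$ at interpolation parameter $\theta=(\gamma-\kappa)/(1-\kappa)$ yields
$$
|f^{(\gamma)}(\lambda)|\lesssim \bigl(\kappa^{-1}\lambda^{\kappa+\frac{d-1}{2}}\bigr)^{\frac{1-\gamma}{1-\kappa}}\,\bigl(\lambda^{1+\frac{d-1}{2}-\beta}\bigr)^{\frac{\gamma-\kappa}{1-\kappa}}.
$$
A direct computation shows that the $\lambda$-exponent on the right equals $\gamma+\tfrac{d-1}{2}-\tfrac{\beta(\gamma-\kappa)}{1-\kappa}$, which exceeds the target $\gamma+\tfrac{d-1}{2}-\beta\gamma$ by $\tfrac{\beta(1-\gamma)\kappa}{1-\kappa}$. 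Choosing $\kappa=c/\ln\lambda$ for a sufficiently small constant $c$ makes the excess $\lambda^{\beta(1-\gamma)\kappa/(1-\kappa)}=e^{O(c)}$ uniformly bounded while converting the prefactor $\kappa^{-(1-\gamma)/(1-\kappa)}$ into $(\ln\lambda)^{1-\gamma+o(1)}\lesssim(\ln\lambda)^{1-\gamma}$; multiplying by $\Gamma(\gamma+1)$ then gives the stated bound.

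The main obstacle is the refinement of Brown's heat trace estimate: quantifying the remainder in $\Tr(e^{t\Delta_\Omega^{\rm D}})$ in terms of $\alpha$ using only the H\"older regularity of the outward normal (no meaningful pointwise curvature is available for $C^{1,\alpha}$ boundaries in general, so the classical parametrix route is unavailable), and then carefully tracking the exponent through the iteration of Proposition~\ref{prop: Tauberian v2} to confirm that the exponent transferred to the $\gamma=1$ Riesz mean is exactly $\alpha/(2+\alpha)$ rather than some other function of $\alpha$.
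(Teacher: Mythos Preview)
Your interpolation framework is correct and matches the paper's: one bounds $|f^{(\gamma)}|$ via Riesz log-convexity (Proposition~\ref{prop: Riesz logconvexity}) between a low-order endpoint and the $\gamma=1$ endpoint. The paper takes the low endpoint at $\kappa=0$ using Proposition~\ref{prop: Quant Weyl}, whose logarithmic factor produces $(\ln\lambda)^{1-\gamma}$ directly; your variant with $\kappa>0$, constant $\lesssim\kappa^{-1}$, and $\kappa\sim 1/\ln\lambda$ reaches the same conclusion by a slightly longer route. Either works.

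The genuine gap is in your derivation of the $\gamma=1$ ingredient. You propose to obtain
\[
\sup_{\Lambda\le\lambda}|f^{(1)}(\Lambda)|\lesssim \lambda^{1+\frac{d-1}{2}-\frac{\alpha}{2+\alpha}}
\]
by refining Brown's heat trace to a power-saving remainder and then transferring it to the Riesz mean via Proposition~\ref{prop: Tauberian v2}. This will not yield a power saving: Laplace Tauberian remainder theorems convert a power-like remainder in the Laplace transform into only a logarithmic remainder in the Riesz mean. The paper states this explicitly in the introduction, and the phenomenon is visible in the proof of Theorem~\ref{thm: Weyl asymptotics convex} for $\gamma\ge 1$, $\#={\rm N}$, where a heat-trace error $(\sqrt t/r_{\rm in})^{1/11}$ yields only $(\ln(r_{\rm in}\sqrt\lambda))^{-\gamma}$ after Proposition~\ref{prop: Tauberian v2}. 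Your claim that $\alpha/(2+\alpha)$ ``arises from balancing the optimization parameter in Proposition~\ref{prop: Tauberian v2} against $\sigma$'' is therefore incorrect; that route can produce at best a log improvement, not the required power.

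The paper instead \emph{cites} the $\gamma=1$ case as already established in \cite{FrankGeisinger11}, where the bound with error $O(\lambda^{1+\frac{d-1}{2}-\frac{\alpha}{2+\alpha}})$ is proved directly, exploiting the variational principle available at $\gamma=1$ together with a localization to half-space model problems whose accuracy is controlled by the $C^{1,\alpha}$ deviation of $\partial\Omega$ from its tangent plane. That is where the exponent $\alpha/(2+\alpha)$ actually originates. Once this is taken as input, the paper applies \eqref{eq:rieszinterpolapplied} with $\kappa=0$, bounding $f^{(0)}$ by Proposition~\ref{prop: Quant Weyl} and $f^{(1)}$ by \cite{FrankGeisinger11}, and the exponent computation you wrote down (with $\kappa=0$) gives the stated remainder immediately.
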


\begin{proof}
    We only give a sketch of the proof. We define $f$ and $f^{(\kappa)}$ in the same way as in the proof of Theorem \ref{thm: Weyl asymptotics Lipschitz} and use inequality \eqref{eq:rieszinterpolapplied} with $\kappa=0$. To bound the term involving $f^{(1)}$ we use the result of \cite{FrankGeisinger11} (which is the assertion of the theorem in the special case $\gamma=1$). To bound the term involving $f^{(0)}$ we use Proposition \ref{prop: Quant Weyl}.
\end{proof}


\subsection{Two-term asymptotics for Robin Laplacians}

In this subsection we show that our method is applicable to the case of Robin boundary conditions as well. This paper already being long, we have restrict ourselves to stating a result under assumptions that are stronger than necessary, but we will summarize the main steps of a possible strategy to extend this result. We note that after the first version of this paper appeared, in \cite{FrankLarson_Robin2024} we have proved an analogue of Theorem~\ref{thm: Weyl asymptotics Lipschitz} for Robin boundary conditions through a different approach than that we shall explain here. However, it should be mentioned that the approach followed in \cite{FrankLarson_Robin2024} takes Theorem \ref{thm: Weyl asymptotics Lipschitz} with $\#= {\rm N}$ as its starting point.

Note that in the case of Robin boundary conditions the first two terms in the asymptotic expansion are expected to be the same as in the Neumann case. The dependence on the function appearing in the Robin boundary condition is only expected to enter in the third term in the asymptotics, provided of course an asymptotic expansion with so many terms exist (which should require $\gamma> 1$ or at least $\gamma\geq1$). This expectation comes from the heat kernel asymptotics that appear, for instance, in \cite{BrGi}.

We denote the function that appears in the Robin boundary condition by $\sigma$ with the sign convention that positive values of $\sigma$ correspond to a repulsion from the boundary. Thus, if $-\Delta_\Omega^{(\sigma)}$ denotes the corresponding selfadjoint realization in $L^2(\Omega)$, its quadratic form is
$$
\int_\Omega |\nabla u(x)|^2\,dx + \int_{\partial\Omega} \sigma(x) |u(x)|^2\,d\mathcal H^{d-1}(x)\,.
$$

\begin{theorem}\label{robin}
    Let $d\geq 2$ and $\gamma>0$. Let $\Omega\subset\R^d$ be a bounded open set with boundary of class $C^1$ and let $0\leq\sigma\in L^\infty(\partial\Omega)$. Then, as $\lambda\to\infty$,
    \begin{equation*}
		\Tr(-\Delta_\Omega^{(\sigma)} -\lambda)_\limminus^\gamma = L_{\gamma,d}^{\rm sc} |\Omega| \lambda^{\gamma+\frac d2} + \frac14 L_{\gamma,d-1}^{\rm sc} \mathcal H^{d-1}(\partial\Omega) \lambda^{\gamma+\frac{d-1}{2}} + o(\lambda^{\gamma+\frac{d-1}{2}}) \,.
	\end{equation*}
\end{theorem}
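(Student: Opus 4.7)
The plan is to follow the two-ingredient template used for Theorem~\ref{thm: Weyl asymptotics Lipschitz} in Sections~\ref{sec:mainprooflarge} and~\ref{sec:mainproofsmall}: first establish an order-sharp one-term remainder for arbitrarily small $\gamma>0$ (Task I), then two-term asymptotics at $\gamma=1$ (Task II). The $\gamma>1$ range follows from the Aizenman--Lieb identity~\eqref{eq: Aizenman-Lieb identity} applied to $H=-\Delta_\Omega^{(\sigma)}$, and the range $0<\gamma<1$ by the Riesz convexity argument of Section~\ref{sec:mainproofsmall}, which is essentially independent of the boundary condition.

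Task I is essentially free. Since $\sigma\geq 0$, the form inequality $-\Delta_\Omega^{\rm N} \leq -\Delta_\Omega^{(\sigma)} \leq -\Delta_\Omega^{\rm D}$ and the min-max principle give $\lambda_n(-\Delta_\Omega^{\rm N}) \leq \lambda_n(-\Delta_\Omega^{(\sigma)}) \leq \lambda_n(-\Delta_\Omega^{\rm D})$ for every $n$, hence
$$
\Tr(-\Delta_\Omega^{\rm D}-\lambda)_\limminus^\gamma \leq \Tr(-\Delta_\Omega^{(\sigma)}-\lambda)_\limminus^\gamma \leq \Tr(-\Delta_\Omega^{\rm N}-\lambda)_\limminus^\gamma \,.
$$
A bounded $C^1$ domain is Lipschitz with the extension property, so Theorems~\ref{thm: Sharp Weyld} and~\ref{thm: Sharp Weyln} apply at both endpoints and yield
$$
\Tr(-\Delta_\Omega^{(\sigma)}-\lambda)_\limminus^\gamma = L_{\gamma,d}^{\rm sc}|\Omega|\lambda^{\gamma+\frac{d}{2}} + O\bigl(\lambda^{\gamma+\frac{d-1}{2}}\bigr) \qquad(\lambda\to\infty)
$$
for every $\gamma>0$.

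For Task II I set $\gamma=1$ and consider
$$
g(\lambda) := \Tr(-\Delta_\Omega^{\rm N}-\lambda)_\limminus - \Tr(-\Delta_\Omega^{(\sigma)}-\lambda)_\limminus \,,
$$
which satisfies $g(0)=0$ and whose distributional derivative $N(\lambda,-\Delta_\Omega^{\rm N}) - N(\lambda,-\Delta_\Omega^{(\sigma)})$ is nonnegative by the eigenvalue comparison above; so $dg$ is a nonnegative measure on $[0,\infty)$. Integration by parts produces
$$
\int_0^\infty e^{-t\lambda}\,dg(\lambda) = t^{-1}\bigl(\Tr(e^{t\Delta_\Omega^{\rm N}})-\Tr(e^{t\Delta_\Omega^{(\sigma)}})\bigr).
$$
A Duhamel expansion, perturbing the Neumann semigroup by the boundary form $u\mapsto\int_{\partial\Omega}\sigma|u|^2\,d\mathcal H^{d-1}$ and using the $C^1$ Neumann heat kernel bounds, gives
$$
0\leq \Tr(e^{t\Delta_\Omega^{\rm N}})-\Tr(e^{t\Delta_\Omega^{(\sigma)}}) \lesssim \|\sigma\|_{L^\infty(\partial\Omega)}\, \mathcal H^{d-1}(\partial\Omega)\, t^{1-\frac{d}{2}} \qquad (t\to 0^+)\,,
$$
so $\int_0^\infty e^{-t\lambda}\,dg(\lambda)= O(t^{-d/2})$. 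Inserting this into the elementary Tauberian inequality $g(\lambda)\leq e^{t\lambda}\int_0^\infty e^{-t\mu}\,dg(\mu)$ with the choice $t=1/\lambda$ yields $g(\lambda) = O(\lambda^{d/2})= o(\lambda^{1+\frac{d-1}{2}})$. Subtracting this from the known $\gamma=1$ Neumann asymptotics of Theorem~\ref{thm: Weyl asymptotics Lipschitz} establishes the $\gamma=1$ case of Theorem~\ref{robin}.

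With Tasks I and II in place, the remaining ranges close cleanly. For $\gamma>1$ the assertion follows from the $\gamma=1$ case via~\eqref{eq: Aizenman-Lieb identity}, exactly as in the Dirichlet case of Section~\ref{sec:mainprooflarge}. For $0<\gamma<1$ the Riesz convexity argument of Section~\ref{sec:mainproofsmall} applies verbatim to
$$
f(\lambda) := L_{0,d}^{\rm sc}|\Omega|\lambda^{\frac{d}{2}} + \tfrac{1}{4}L_{0,d-1}^{\rm sc}\mathcal H^{d-1}(\partial\Omega)\lambda^{\frac{d-1}{2}} - \Tr(-\Delta_\Omega^{(\sigma)}-\lambda)_\limminus^0 \,,
$$
using Proposition~\ref{prop: Riesz logconvexity} with some $\kappa\in(0,\gamma)$ to interpolate the bound $|f^{(\kappa)}(\lambda)|=O(\lambda^{\kappa+\frac{d-1}{2}})$ from Task~I against $|f^{(1)}(\lambda)|=o(\lambda^{1+\frac{d-1}{2}})$ from Task~II. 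The main obstacle is the heat-trace estimate in Task~II: for $C^\infty$ boundaries it is classical (see e.g.~\cite{BrGi}), and under the $C^1$ hypothesis one must combine the Duhamel representation with the boundary heat kernel bounds appearing in Brown's proof of Theorem~\ref{thm: Weyl asymptotics Lipschitz heat} to obtain the $O(t^{1-d/2})$ control of the Robin-minus-Neumann trace difference; this is where the regularity of $\partial\Omega$ really enters, and weakening $C^1$ to Lipschitz would require a more delicate argument along the lines discussed in Remark~\ref{lipschitzvsextprop}.
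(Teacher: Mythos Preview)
Your proposal is correct, and for Task~I, for $\gamma>1$ via~\eqref{eq: Aizenman-Lieb identity}, and for $0<\gamma<1$ via Riesz interpolation it matches the paper exactly.

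The substantive difference is in Task~II (the $\gamma=1$ case). The paper simply invokes \cite[Theorem~1.2]{FrankGeisinger12}, which already proves the two-term asymptotics for the Robin Laplacian on $C^1$ domains; that is in fact the sole reason the $C^1$ hypothesis appears. You instead compare Robin to Neumann at the heat-trace level and feed the difference into an elementary Tauberian step. Your route is more self-contained, and the Duhamel bound you need is in fact cleaner than you suggest: writing
\[
\Tr(e^{t\Delta_\Omega^{\rm N}})-\Tr(e^{t\Delta_\Omega^{(\sigma)}})
=\int_0^t\!\!\int_{\partial\Omega}\sigma(y)\!\int_\Omega k_\Omega^{\rm N}(s,y,z)\,k_\Omega^{(\sigma)}(t-s,z,y)\,dz\,d\mathcal H^{d-1}(y)\,ds,
\]
the pointwise domination $0\le k_\Omega^{(\sigma)}\le k_\Omega^{\rm N}$ (valid for $\sigma\ge 0$) and the semigroup property give the inner $z$-integral $\le k_\Omega^{\rm N}(t,y,y)$, which is $\lesssim t^{-d/2}$ by Lemma~\ref{heatnapriori} for any extension domain. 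So your $O(t^{1-d/2})$ estimate holds already for Lipschitz boundaries, not just $C^1$. Combined with the Lipschitz Neumann asymptotics of Theorem~\ref{thm: Weyl asymptotics Lipschitz}, your argument actually delivers Theorem~\ref{robin} under Lipschitz regularity; this is stronger than the paper proves, and simpler than the route the paper sketches for a Lipschitz extension (which would go through a Robin analogue of Brown's theorem). The trade-off is that your approach uses $\sigma\ge 0$ essentially in both Tasks~I and~II, whereas the cited result from \cite{FrankGeisinger12} that the paper uses for $\gamma=1$ does not require a sign on~$\sigma$.
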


\begin{proof}
    The result for $\gamma=1$ appears in \cite[Theorem 1.2]{FrankGeisinger12}. (This is where the $C^1$ assumption on the boundary comes in. Note also that \cite[Theorem 1.2]{FrankGeisinger12} does not have a restriction on the sign of $\sigma$.) The result for $\gamma>1$ follows from that for $\gamma=1$ by the same integration method as in Section \ref{sec:mainprooflarge}. To prove the result for $0<\gamma<1$ we argue as in Section \ref{sec:mainproofsmall} and reduce matters to proving the order sharp bound
    \begin{equation}
        \label{eq:robinordersharp}    
        \Tr(-\Delta_\Omega^{(\sigma)} -\lambda)_\limminus^\gamma = L_{\gamma,d}^{\rm sc} |\Omega| \lambda^{\gamma+\frac d2} + O(\lambda^{\gamma+\frac{d-1}{2}}) \,.
    \end{equation}
    In our proof of the latter, the sign assumption on $\sigma$ comes in. (Note however that the proof is valid assuming only Lipschitz regularity of the boundary.)
 
    To prove \eqref{eq:robinordersharp}, we note that the assumption $\sigma\geq 0$ implies the operator inequalities $-\Delta_\Omega^{\rm N}\leq-\Delta_\Omega^{(\sigma)}\leq-\Delta_\Omega^{\rm D}$, which in turn implies that
    $$
    \Tr(-\Delta_\Omega^{\rm D} -\lambda)_\limminus^\gamma
    \leq \Tr(-\Delta_\Omega^{(\sigma)} -\lambda)_\limminus^\gamma
    \leq \Tr(-\Delta_\Omega^{\rm N} -\lambda)_\limminus^\gamma \,.
    $$
    Therefore \eqref{eq:robinordersharp} is an immediate consequence of Theorems \ref{thm: Sharp Weyld} and \ref{thm: Sharp Weyln}.
\end{proof}

We believe that Theorem \ref{robin} extends to the case of Lipschitz boundaries and to not necessarily nonnegative functions $\sigma$ and we expect that our methodology should allow to prove this. There are, however, several steps that need additional work.

To obtain two-term asymptotics for $\gamma\geq 1$ in the Lipschitz case one can try to follow the method in Section \ref{sec:mainprooflarge}, comparing again with the Dirichlet Laplacian. (To achieve a higher accuracy one might also compare first $-\Delta_\Omega^{(\sigma)}$ with $-\Delta_\Omega^{(\sigma_\pm)}$ and then compare $-\Delta_\Omega^{(\sigma_\pm)}$ with $\Delta_\Omega^{\rm N}$. This higher accuracy might be relevant for three-term asymptotics.) This comparison reduces the problem to proving an analogue of Brown's result (Theorem \ref{thm: Weyl asymptotics Lipschitz heat}) with Robin boundary conditions. We have not pursued this.

To obtain two-term asymptotics for $0<\gamma< 1$ one can try to follow the method in Section~\ref{sec:mainproofsmall}. Given the asymptotics for $\gamma=1$, one needs to find an order-sharp remainder term as in Theorem \ref{thm: Sharp Weyln}. In the case $\sigma\geq 0$ this was achieved in \eqref{eq:robinordersharp} in the proof above. For general $\sigma$ one could try to follow the proof of Proposition \ref{thm: Sharp Weyln}, based on pointwise bounds. A pointwise bound in the bulk can probably be obtained via wave equation methods. (Some modification to what we explained in Subsection \ref{sec: Wave method} is necessary when the operator is no longer nonnegative.) One should also be able to deduce an analogue of the heat kernel bound in Lemma~\ref{heatnapriori}, which would then lead to a pointwise a priori bound that can be used close to the boundary.

This concludes our outline of how we believe one can extend Theorem \ref{robin}. Finally, we mention that it is also interesting to consider the Robin problem in a semiclassical form where the boundary condition depends on the semiclassical parameter; see \cite{FrankGeisinger12}. In this setting the contribution of the boundary condition can appear already in the second term of the asymptotics. Since one is no longer dealing with the spectrum of a single operator, but rather with the spectra of parameter-dependent operators, it is less clear to which extent the methods of the present paper can be applied. We consider this a worthwhile problem to study.


\part{Tauberian theorems}


\section{A convexity theorem of Riesz}
\label{sec: Riesz convexity}

An important role in our analysis is played by a result of Riesz stating the log-convexity of Riesz means \cite{Riesz1922}. 
For $\lambda > 0$, $\phi$ a bounded measurable function on $[0, \lambda]$, $\kappa >0$, and $\Lambda \in [0, \lambda]$ we define
\begin{equation*}
	\phi^{(\kappa)}(\Lambda) := \frac{1}{\Gamma(\kappa)}\int_0^\Lambda(\Lambda-\mu)^{\kappa-1}\phi(\mu)\,d\mu\,,
\end{equation*}
and set $\phi^{(0)}(\Lambda) := \phi(\Lambda)$. There is no canonical choice for the normalization of the Riesz means. The above choice, which is the one from \cite{Riesz1922}, leads to the semigroup property
\begin{equation}\label{eq: semigroup Riesz prop}
	(\phi^{(\kappa_1)})^{(\kappa_2)}(\Lambda) = \phi^{(\kappa_1+\kappa_2)}(\Lambda)\,.
\end{equation}
The result of Riesz that we shall rely on can now be stated as follows. For the sake of completeness we reproduce Riesz's original proof below; this proof also appears in~\cite{Chandrasekharan_TypicalMeans}. For an alternative proof see \cite{HormanderRieszMeans}.

\begin{proposition}\label{prop: Riesz logconvexity}
	Fix $\gamma>0$. There is a constant $C<\infty$ such that for all $\lambda>0$, every bounded, measurable function $\phi$ on $[0,\lambda]$ and every $0<\sigma<\gamma$, one has
	$$
	\sup_{\Lambda\in [0, \lambda]}|\phi^{(\sigma)}(\Lambda)| \leq C \biggl( \sup_{\Lambda\in[0,\lambda]} |\phi(\Lambda)| \biggr)^{1-\frac\sigma\gamma} \biggl( \sup_{\Lambda\in[0,\lambda]} |\phi^{(\gamma)}(\Lambda)| \biggr)^\frac{\sigma}{\gamma}\,.
	$$
\end{proposition}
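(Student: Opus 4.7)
The plan is to establish the pointwise two-scale interpolation estimate
\begin{equation*}
|\phi^{(\sigma)}(\Lambda)| \leq C_1\, A\, h^\sigma + C_2\, B\, h^{\sigma - \gamma},
\end{equation*}
valid for all $\Lambda \in (0,\lambda]$ and all $h \in (0,\Lambda]$, where $A := \sup_{[0,\lambda]}|\phi|$ and $B := \sup_{[0,\lambda]}|\phi^{(\gamma)}|$, with constants $C_1, C_2$ depending only on $\sigma$ and $\gamma$. Once this is in hand, the inequality of the proposition follows by optimizing in $h$: the minimum of the right-hand side over $h > 0$ is attained at $h_\ast \sim (B/A)^{1/\gamma}$ and equals a constant times $A^{1-\sigma/\gamma} B^{\sigma/\gamma}$. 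In the complementary regime $h_\ast > \Lambda$ one instead uses the trivial bound $|\phi^{(\sigma)}(\Lambda)| \leq A\,\Lambda^\sigma/\Gamma(\sigma+1)$, which already implies the claim, since there $A\,\Lambda^\gamma \lesssim B$ and therefore $A\,\Lambda^\sigma = A\,(\Lambda^\gamma)^{\sigma/\gamma} \lesssim A^{1-\sigma/\gamma}B^{\sigma/\gamma}$.

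To derive the pointwise estimate I split the defining integral of $\phi^{(\sigma)}(\Lambda)$ at the point $\mu = \Lambda - h$. The contribution from $[\Lambda - h, \Lambda]$ is bounded directly by $A\,h^\sigma/\Gamma(\sigma+1)$, which accounts for the first term. For the contribution from $[0, \Lambda - h]$, on which the weight $(\Lambda - \mu)^{\sigma-1}$ is smooth, I will exploit the semigroup identity~\eqref{eq: semigroup Riesz prop} in the form
\begin{equation*}
\phi^{(\gamma)}(\Lambda) = \frac{1}{\Gamma(\gamma - \sigma)}\int_0^\Lambda (\Lambda - \mu)^{\gamma - \sigma - 1}\phi^{(\sigma)}(\mu)\,d\mu
\end{equation*}
together with a fractional integration by parts, which formally transfers $\gamma$ derivatives in $\mu$ from $\phi$ onto the kernel $(\Lambda-\mu)^{\sigma-1}$. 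The resulting expression bounds the long piece by an integral of $|\phi^{(\gamma)}|$ against a kernel of size $(\Lambda - \mu)^{\sigma - \gamma - 1}$ on $[0, \Lambda - h]$, which integrates to $C\,B\,h^{\sigma-\gamma}$. The boundary terms produced at $\mu = \Lambda - h$ involve lower-order Riesz means $\phi^{(\gamma-k)}(\Lambda - h)$, all of which can be controlled by $B$ times suitable powers of $h$ via the same semigroup identity, yielding contributions of the same order.

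The chief obstacle is making the fractional integration by parts rigorous when $\gamma$ is not an integer, since the manipulation is not then a textbook computation. I will handle this by using Fubini's theorem to interchange nested fractional integrations, reducing every identity to evaluating explicit Beta-function integrals; an alternative, which is the route followed in~\cite{Riesz1922}, is to establish the interpolation identity first for integer $\gamma$ by ordinary integration by parts and then to pass to arbitrary $\gamma$ by analytic continuation in the parameter. Either way, the crux is purely kernel-level bookkeeping, after which the optimization step described above produces the claimed geometric-mean bound.
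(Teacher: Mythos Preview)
Your overall strategy—splitting $\phi^{(\sigma)}(\Lambda)$ at $\Lambda - h$, bounding the short piece trivially by $A h^\sigma/\Gamma(\sigma+1)$, seeking a bound $C_2 B h^{\sigma-\gamma}$ for the long piece, and then optimizing in $h$—is exactly the route the paper takes. The difficulty lies entirely in establishing the long-piece bound, and here your argument has a genuine gap.

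You propose bounding
\[
I = \frac{1}{\Gamma(\sigma)}\int_0^{\Lambda-h}(\Lambda-\mu)^{\sigma-1}\phi(\mu)\,d\mu
\]
via ``fractional integration by parts,'' transferring $\gamma$ derivatives from $\phi$ onto the kernel. Even for integer $\gamma \geq 2$ this does not work as you claim: writing $\phi$ as the $\gamma$-th derivative of $\phi^{(\gamma)}$ and integrating by parts $\gamma$ times produces boundary terms $h^{\sigma-k}\phi^{(k)}(\Lambda-h)$ for $k = 1,\ldots,\gamma-1$, and these intermediate Riesz means are \emph{not} controlled by $B = \sup|\phi^{(\gamma)}|$. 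The semigroup identity expresses $\phi^{(\gamma)}$ as an integral of $\phi^{(k)}$, never the reverse; bounding $\phi^{(k)}$ pointwise by $B$ would amount to bounding a function by its antiderivative. For non-integer $\gamma \in (0,1)$ the same obstruction appears in a different guise: the Riemann--Liouville representation $\phi = \frac{d}{d\mu}\bigl[(\phi^{(\gamma)})^{(1-\gamma)}\bigr]$ followed by one ordinary integration by parts leaves a boundary term of size $h^{\sigma-1}(\Lambda-h)^{1-\gamma}$, not $h^{\sigma-\gamma}$. Your Fubini/Beta route hits the same wall, since the would-be identity requires convergence of $\int (\Lambda-\mu)^{\sigma-\gamma-1}(\mu-t)^{\gamma-1}d\mu$ near $\mu=\Lambda$, and $\sigma-\gamma-1 < -1$.

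The paper circumvents this by a different device. For $\gamma \leq 1$ one factors $(\Lambda-\mu)^{\sigma-1} = (\Lambda-\mu)^{\sigma-\gamma}\cdot(\Lambda-\mu)^{\gamma-1}$, applies the second mean value theorem to peel off the monotone factor $(\Lambda-\mu)^{\sigma-\gamma}$ at its extreme value $h^{\sigma-\gamma}$, and then invokes an auxiliary lemma of Riesz asserting that $\int_0^{\lambda_1}(\Lambda-\mu)^{\gamma-1}\phi(\mu)\,d\mu = \Gamma(\gamma)\phi^{(\gamma)}(\lambda_0)$ for some $\lambda_0 \in [0,\lambda_1]$. This yields $|I| \leq 2\Gamma(\gamma)\Gamma(\sigma)^{-1} B\, h^{\sigma-\gamma}$ with no intermediate Riesz means appearing. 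The case $\gamma > 1$ is then reduced to the case $\gamma \leq 1$ by iteration: one partitions $[0,\gamma]$ into equal steps, applies the small-$\gamma$ result to each shifted triple to obtain $\Phi_{\gamma_n}^2 \lesssim \Phi_{\gamma_{n-1}}\Phi_{\gamma_{n+1}}$, and multiplies these inequalities so that all intermediate $\Phi_{\gamma_n}$ cancel.
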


Before giving the proof of the proposition we state and prove an auxiliary assertion.
\begin{lemma}\label{lem: Riesz lemma}
	Let $0<\gamma\leq 1$, $0<\lambda_1\leq\lambda$ and let $\phi$ be a bounded, measurable function on $[0,\lambda_1]$. Then there is a $\lambda_0\in[0,\lambda_1]$ such that
	$$
	\int_0^{\lambda_1} (\lambda-\mu)^{\gamma-1} \phi(\mu) \,d\mu = \int_0^{\lambda_0} (\lambda_0-\mu)^{\gamma-1} \phi(\mu) \,d\mu \,.
	$$
\end{lemma}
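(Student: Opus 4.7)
The cases $\gamma=1$ and $\lambda=\lambda_1$ are trivial: in either case the two integrals coincide when $\lambda_0=\lambda_1$. So I would assume $0<\gamma<1$ and $\lambda>\lambda_1$. Set
$$
G(\lambda_0) := \int_0^{\lambda_0}(\lambda_0-\mu)^{\gamma-1}\phi(\mu)\,d\mu, \qquad I := \int_0^{\lambda_1}(\lambda-\mu)^{\gamma-1}\phi(\mu)\,d\mu.
$$
Since $\gamma>0$ and $\phi$ is bounded, dominated convergence shows that $G$ is continuous on $[0,\lambda_1]$ with $G(0)=0$. The lemma reduces to producing $\lambda_0\in[0,\lambda_1]$ with $G(\lambda_0)=I$, and by the continuity of $G$ and the intermediate value theorem this will follow once one shows the two-sided bound
$$
\min_{\lambda_0\in[0,\lambda_1]}G(\lambda_0) \;\le\; I \;\le\; \max_{\lambda_0\in[0,\lambda_1]}G(\lambda_0).
$$

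The key analytic tool is Bonnet's form of the second mean value theorem. Because the weight $\mu\mapsto(\lambda-\mu)^{\gamma-1}$ is positive, non-decreasing, and (since $\lambda>\lambda_1$ and $\gamma\le 1$) bounded on $[0,\lambda_1]$, Bonnet's theorem gives some $\xi\in[0,\lambda_1]$ with
$$
I \;=\; (\lambda-\lambda_1)^{\gamma-1}\int_\xi^{\lambda_1}\phi(\mu)\,d\mu.
$$
To connect this explicit form of $I$ with the values of $G$, I would extend $\phi$ by zero on $(\lambda_1,\lambda]$ and consider the continuous function $\Phi(r):=\int_0^r(r-\mu)^{\gamma-1}\phi(\mu)\,d\mu$ on $[0,\lambda]$. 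This $\Phi$ coincides with $G$ on $[0,\lambda_1]$ and satisfies $\Phi(0)=0$, $\Phi(\lambda)=I$. On $[\lambda_1,\lambda]$ the kernel $(r-\mu)^{\gamma-1}$ is bounded in $\mu$, so a second application of Bonnet yields for each $r\in[\lambda_1,\lambda]$ some $\eta_r\in[0,\lambda_1]$ with $\Phi(r)=(r-\lambda_1)^{\gamma-1}\int_{\eta_r}^{\lambda_1}\phi(\mu)\,d\mu$. The target is then to show via these two parallel Bonnet representations that $\Phi([\lambda_1,\lambda])\subseteq[\min_{[0,\lambda_1]}G,\max_{[0,\lambda_1]}G]$, whence $I=\Phi(\lambda)$ is bracketed by values of $G$ and IVT supplies the required $\lambda_0$.

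The hard part will be precisely this last comparison. The difficulty is that the kernel $(\lambda_0-\mu)^{\gamma-1}$ defining $G$ is \emph{singular} as $\mu\to\lambda_0$ for $\gamma<1$, so Bonnet's theorem does not apply directly to $G(\lambda_0)$ itself. I expect to circumvent this either by a truncation argument, applying Bonnet to $G_\varepsilon(\lambda_0):=\int_0^{\lambda_0-\varepsilon}(\lambda_0-\mu)^{\gamma-1}\phi(\mu)\,d\mu$ and passing to the limit $\varepsilon\downarrow 0$, or by passing through the absolutely continuous primitive $h(s):=\int_0^s\phi(\mu)\,d\mu$ via a Stieltjes integration by parts (which already rewrites $I=(\lambda-\lambda_1)^{\gamma-1}h(\lambda_1)-(1-\gamma)\int_0^{\lambda_1}h(\mu)(\lambda-\mu)^{\gamma-2}\,d\mu$ and suggests an analogous identity for $G$). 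Either route reduces the problem to continuity of $h$ together with Bonnet applied to bounded kernels, after which the final comparison with the extremes of $G$ on $[0,\lambda_1]$ becomes a matter of tracking the parameters $\xi$ and $\eta_r$.
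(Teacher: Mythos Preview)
Your plan correctly isolates the essential difficulty, but the step you label the ``hard part'' is the entire content of the lemma, and neither of the routes you sketch closes it. From Bonnet you get $I=(\lambda-\lambda_1)^{\gamma-1}\bigl(h(\lambda_1)-h(\xi)\bigr)$ and, for each $r\in(\lambda_1,\lambda]$, $\Phi(r)=(r-\lambda_1)^{\gamma-1}\bigl(h(\lambda_1)-h(\eta_r)\bigr)$, with $h(s)=\int_0^s\phi$. But these representations carry no quantitative information linking the uncontrolled points $\xi,\eta_r$ to the values of $G$ on $[0,\lambda_1]$; knowing that $\Phi(r)$ equals \emph{some} scalar multiple of \emph{some} increment of $h$ says nothing about whether $\Phi(r)$ lies between $\min G$ and $\max G$. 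Your truncation idea yields, at best, $G_\varepsilon(\lambda_0)=\varepsilon^{\gamma-1}\bigl(h(\lambda_0-\varepsilon)-h(\zeta)\bigr)$, which as $\varepsilon\to 0$ does not furnish a usable representation of $G(\lambda_0)$ comparable to the one for $I$. The Stieltjes integration by parts fails outright for $G$ because the kernel $(\lambda_0-\mu)^{\gamma-1}$ is unbounded at $\mu=\lambda_0$, so there is no analogous identity to pair with the one you wrote for $I$.

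The paper takes a different and decisive route: it constructs an explicit nonincreasing function $M$ on $[0,\lambda_1]$ with $M(\lambda_1)=0$ and $0<M(0)<1$ satisfying the pointwise identity
\[
\int_\mu^{\lambda_1}(\Lambda-\mu)^{\gamma-1}M'(\Lambda)\,d\Lambda=-(\lambda-\mu)^{\gamma-1}\qquad(0\le\mu<\lambda_1),
\]
which, after integrating against $\phi$ and swapping the order of integration, gives $I=-\int_0^{\lambda_1}G(\Lambda)\,M'(\Lambda)\,d\Lambda$. Since $-M'\ge 0$ has total mass $M(0)$, the \emph{first} mean value theorem yields $I=M(0)\,G(\lambda')$ for some $\lambda'\in[0,\lambda_1]$. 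Because $0<M(0)<1$ and $G(0)=0$, the value $I$ lies strictly between $0$ and $G(\lambda')$, and the intermediate value theorem on $[0,\lambda']$ produces $\lambda_0$. The key missing idea in your plan is precisely this kernel identity: it expresses the fixed weight $(\lambda-\mu)^{\gamma-1}$ as a genuine positive superposition of the singular weights $(\Lambda-\mu)^{\gamma-1}$, which is what allows $I$ to be compared directly with values of $G$ rather than with Bonnet-type tail integrals of $\phi$.
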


\begin{proof}[Proof of Lemma \ref{lem: Riesz lemma}]
	For $\gamma=1$ the claimed equality follows by choosing $\lambda_0=\lambda_1$.
	
	For fixed $0<\gamma<1$ and $0<\lambda_1\leq \lambda$ we introduce the function
	\begin{equation}
		\label{eq:defm}
		M(\Lambda) := \frac{1}{\Gamma(\gamma)\,\Gamma(1-\gamma)} \int_\Lambda^{\lambda_1} (\lambda-\mu)^{\gamma-1} (\mu-\Lambda)^{-\gamma}\,d\mu
		\qquad\text{for}\  0\leq\Lambda\leq\lambda_1 \,.
	\end{equation}
	From the identity (see, e.g.,~\cite[eq.~5.12.1]{NIST})
	\begin{equation*}
		\frac{1}{\Gamma(\gamma)\Gamma(1-\gamma)}\int_\Lambda^\lambda (\lambda-\mu)^{\gamma-1}(\mu-\Lambda)^{-\gamma}\,d\mu = 1
	\end{equation*}
	it follows that
	\begin{equation}
		\label{eq:malt}
		M(\Lambda) = 1 - \frac{1}{\Gamma(\gamma)\,\Gamma(1-\gamma)} \int_{\lambda_1}^\lambda (\lambda-\mu)^{\gamma-1} (\mu-\Lambda)^{-\gamma}\,d\mu \,.
	\end{equation}
	From this formula it follows that $M$ is differentiable. We claim that
	\begin{equation}
		\label{eq:lemmaclaim}
		\int_\mu^{\lambda_1} (\Lambda-\mu)^{\gamma-1} M'(\Lambda)\,d\Lambda = -(\lambda-\mu)^{\gamma-1}
		\qquad\text{for all}\ 0\leq\mu<\lambda_1 \,.
	\end{equation}
	
	To prove this, note that
	$$
	\int_\mu^{\lambda_1} (\Lambda-\mu)^{\gamma-1} M'(\Lambda)\,d\Lambda
	= -\frac1\gamma \frac{d}{d\mu} \biggl(\int_\mu^{\lambda_1} (\Lambda-\mu)^{\gamma} M'(\Lambda)\,d\Lambda \biggr)\,.
	$$
	By integration by parts, the definition of $M$, and interchanging the order of integration, one computes
	\begin{align*}
		& \int_\mu^{\lambda_1} (\Lambda-\mu)^{\gamma} M'(\Lambda)\,d\Lambda
		= - \gamma \int_\mu^{\lambda_1} (\Lambda-\mu)^{\gamma-1} M(\Lambda)\,d\Lambda \\
		& \quad = - \frac{\gamma}{\Gamma(\gamma)\,\Gamma(1-\gamma)} \int_\mu^{\lambda_1} \int_\Lambda^{\lambda_1} (\Lambda-\mu)^{\gamma-1} (\lambda-\mu')^{\gamma-1} (\mu'-\Lambda)^{-\gamma}\,d\mu' \,d\Lambda \\
		& \quad = - \frac{\gamma}{\Gamma(\gamma)\,\Gamma(1-\gamma)} \int_\mu^{\lambda_1} \int_\mu^{\mu'} (\Lambda-\mu)^{\gamma-1} (\lambda-\mu')^{\gamma-1} (\mu'-\Lambda)^{-\gamma} \,d\Lambda \,d\mu'  \\
		& \quad = - \gamma \int_\mu^{\lambda_1} (\lambda-\mu')^{\gamma-1}\,d\mu' \\
		& \quad = (\lambda-\lambda_1)^\gamma - (\lambda-\mu)^\gamma \,.
	\end{align*}
	Taking the derivative with respect to $\mu$ one arrives at the claimed formula \eqref{eq:lemmaclaim}.

	Integrating \eqref{eq:lemmaclaim} against $\phi$ and interchanging the order of integration, we obtain
	$$
	\int_0^{\lambda_1} (\lambda-\mu)^{\gamma-1} \phi(\mu) \,d\mu = - \int_0^{\lambda_1} \int_0^\Lambda (\Lambda-\mu)^{\gamma-1} \phi(\mu)\,d\mu\, M'(\Lambda)\,d\Lambda \,.
	$$
	Noting that $M'\leq 0$ by \eqref{eq:malt}, we deduce from the first mean value theorem that there is a $0\leq\lambda'\leq\lambda_1$ such that
	$$
	\int_0^{\lambda_1} (\lambda-\mu)^{\gamma-1} \phi(\mu) \,d\mu = - \int_0^{\lambda'} (\lambda'-\mu)^{\gamma-1} \phi(\mu)\,d\mu \ \int_0^{\lambda_1} \, M'(\Lambda)\,d\Lambda \,.
	$$
	Since $M(\lambda_1)=0$, this is the same as
	\begin{equation}\label{eq: mean value theorem riesz lemma}
		\int_0^{\lambda_1} (\lambda-\mu)^{\gamma-1} \phi(\mu) \,d\mu = M(0) \int_0^{\lambda'} (\lambda'-\mu)^{\gamma-1} \phi(\mu)\,d\mu \,.
	\end{equation}
	By \eqref{eq:defm} and \eqref{eq:malt} $0<M(0)<1$. The function $s\mapsto \int_0^s (s-\mu)^{\gamma-1} \phi(\mu)\,d\mu$ is continuous and vanishing at $s=0$. Therefore, the intermediate value theorem implies that there is a $0<\lambda_0<\lambda'$ such that
	$$
	\int_0^{\lambda_0} (\lambda_0-\mu)^{\gamma-1} \phi(\mu)\,d\mu
	= M(0) \int_0^{\lambda'} (\lambda'-\mu)^{\gamma-1} \phi(\mu)\,d\mu \,,
	$$
	which combined with~\eqref{eq: mean value theorem riesz lemma} proves the assertion of the lemma.
\end{proof}

\begin{proof}[Proof of Proposition \ref{prop: Riesz logconvexity}]
	Set for $\kappa \geq 0$
	\begin{equation*}
		\Phi_\kappa := \sup_{\Lambda \in [0, \lambda]}|\phi^{(\kappa)}(\Lambda)|\,.
	\end{equation*}
	The inequality claimed in the proposition then becomes
	\begin{equation*}
		\Phi_\sigma \leq C \Phi_0^{1-\frac\sigma\gamma} \Phi_\gamma^\frac\sigma\gamma\,.
	\end{equation*}
	
	\medskip
	
	\noindent\emph{Step 1.} We first assume $\gamma\leq 1$. For $0\leq\Lambda_1\leq \Lambda\leq \lambda$ to be determined, we split
	$$
	\phi^{(\sigma)}(\Lambda) =\frac{1}{\Gamma(\sigma)} \int_0^\Lambda (\Lambda - \mu)^{\sigma-1} \phi(\mu)\,d\mu = I + II
	$$
	where
	\begin{align*}
		I & := \frac{1}{\Gamma(\sigma)} \int_0^{\Lambda_1} (\Lambda - \mu)^{\sigma-1} \phi(\mu)\,d\mu \,, \\
		II & := \frac{1}{\Gamma(\sigma)} \int_{\Lambda_1}^\Lambda (\Lambda - \mu)^{\sigma-1} \phi(\mu)\,d\mu \,.
	\end{align*}
	
	We bound
	$$
	|II| \leq \frac{1}{\Gamma(\sigma)} \int_{\Lambda_1}^\Lambda (\Lambda - \mu)^{\sigma-1} \phi(\mu)\,d\mu \leq \Phi_0 \, \frac{1}{\Gamma(\sigma)} \int_{\Lambda_1}^\Lambda (\Lambda - \mu)^{\sigma-1} \,d\mu = \frac{1}{\sigma\Gamma(\sigma)}\Phi_0 \, (\Lambda - \Lambda_1)^\sigma \,.
	$$
	
	On the other hand, by the second mean value theorem, for some $0\leq\Lambda_1'\leq\Lambda_1$,
	\begin{align*}
		I & = \frac{1}{\Gamma(\sigma)} \int_0^{\Lambda_1} (\Lambda-\mu)^{\gamma-1} (\Lambda-\mu)^{\sigma-\gamma}\phi(\mu)\,d\mu \\
		& = \frac{1}{\Gamma(\sigma)} (\Lambda - \Lambda_1)^{\sigma-\gamma} \int_{\Lambda_1'}^{\Lambda_1} (\Lambda-\mu)^{\gamma-1} \phi(\mu)\,d\mu \,.
	\end{align*}
	By Lemma~\ref{lem: Riesz lemma}, there are $0\leq\Lambda_0\leq\Lambda_1$ and $0\leq\Lambda_0'\leq\Lambda_1'$ such that
	\begin{align*}
		\int_{0}^{\Lambda_1} (\Lambda-\mu)^{\gamma-1} \phi(\mu)\,d\mu
		& = \int_0^{\Lambda_0} (\Lambda_0-\mu)^{\gamma-1} \phi(\mu)\,d\mu \,, \\
		\int_{0}^{\Lambda_1'} (\Lambda-\mu)^{\gamma-1} \phi(\mu)\,d\mu
		& = \int_0^{\Lambda_0'} (\Lambda_0'-\mu)^{\gamma-1} \phi(\mu)\,d\mu \,.
	\end{align*}
	In particular, we find that
	$$
	\left| \int_{\Lambda_1'}^{\Lambda_1} (\Lambda-\mu)^{\gamma-1} \phi(\mu)\,d\mu \right| \leq 2\Gamma(\gamma) \Phi_\gamma
	$$
	and therefore
	$$
	|I| \leq \frac{2\Gamma(\gamma)}{\Gamma(\sigma)}\, \Phi_\gamma \, (\Lambda - \Lambda_1)^{\sigma-\gamma} \,.
	$$
	
	To summarize, we have shown that, for any $0\leq\Lambda_1\leq\Lambda\leq \lambda$,
	$$
	|\phi^{(\sigma)}(\Lambda)| \leq  \frac{1}{\sigma\Gamma(\sigma)}\Phi_0 \,(\Lambda - \Lambda_1)^\sigma + \frac{2\Gamma(\gamma)}{\Gamma(\sigma)}\, \Phi_\gamma \, (\Lambda - \Lambda_1)^{\sigma-\gamma}  \,.
	$$
	If $\Lambda \geq \bigl( 2(\gamma-\sigma)\Gamma(\gamma) \, \frac{\Phi_\gamma}{\Phi_0} \bigr)^{\frac{1}\gamma}$ we can choose
	$$
	\Lambda_1 = \Lambda - \Bigl(2(\gamma-\sigma)\Gamma(\gamma) \, \frac{\Phi_\gamma}{\Phi_0} \Bigr)^{\frac{1}\gamma}
	$$
	and obtain
	$$
	|\phi^{(\sigma)}(\Lambda)| \leq
	2^\frac\sigma\gamma	\frac{\Gamma(\gamma+1)^{\frac\sigma\gamma}}{\Gamma(\sigma+1)}\Bigl(1-\frac{\sigma}\gamma\Bigr)^{-1+\frac{\sigma}{\gamma}} \Phi_0^{1-\frac\sigma\gamma}\Phi_\gamma^{\frac\sigma\gamma}
	\qquad \text{if}\ \Lambda \geq \Bigl( 2(\gamma-\sigma)\Gamma(\gamma) \, \frac{\Phi_\gamma}{\Phi_0} \Bigr)^{\frac{1}\gamma}\,.
	$$
	At the same time, we have for all $\Lambda \in [0, \lambda]$ the trivial bound
	$$
	|\phi^{(\sigma)}(\Lambda)|\leq \left| \frac{1}{\Gamma(\sigma)} \int_0^\Lambda (\Lambda - \mu)^{\sigma-1} \phi(\mu)\,d\mu \right| \leq \frac{1}{\sigma \Gamma(\sigma)}\Phi_0 \Lambda^\sigma \,,
	$$
	which corresponds to taking $\Lambda_1=0$, so that the term $I$ is absent.
	
	Note that as long as $\Lambda<(1-\frac{\sigma}{\gamma})^{-\frac{1}\sigma} \bigl( 2(\gamma-\sigma)\Gamma(\gamma) \, \frac{\Phi_\gamma}{\Phi_0} \bigr)^{\frac{1}\gamma}$, the trivial bound is better than the first bound. Since $(1-\frac{\sigma}{\gamma})^{-\frac{1}\sigma}>1$, this covers the whole parameter regime and therefore
	\begin{equation*}
		|\phi^{(\sigma)}(\Lambda)| \leq
		2^\frac\sigma\gamma	\frac{\Gamma(\gamma+1)^{\frac\sigma\gamma}}{\Gamma(\sigma+1)}\Bigl(1-\frac{\sigma}\gamma\Bigr)^{-1+\frac{\sigma}{\gamma}}\Phi_0^{1-\frac\sigma\gamma}\Phi_\gamma^{\frac\sigma\gamma}
		\qquad \text{for all}\ \Lambda \in [0, \lambda]\,.
	\end{equation*}
	Furthermore, since $\Gamma(1+\kappa)\in [1/2, 1]$ for $\kappa \in [0, 1]$ we have
	$$
		\frac{\Gamma(\gamma+1)^{\frac\sigma\gamma}}{\Gamma(\sigma+1)} \leq 2
	$$
	for all $0<\sigma<\gamma \leq 1$ and thus
	\begin{equation}\label{eq: final inequality step 1 Riesz prop}
		|\phi^{(\sigma)}(\Lambda)| \leq
		2^{1+\frac\sigma\gamma}\Bigl(1-\frac{\sigma}\gamma\Bigr)^{-1+\frac{\sigma}{\gamma}}\Phi_0^{1-\frac\sigma\gamma}\Phi_\gamma^{\frac\sigma\gamma}
		\qquad \text{for all}\ \Lambda \in [0, \lambda]\,.
	\end{equation}
	By maximizing with respect to $\sigma/\gamma \in (0, 1)$ one observes that $$2^{1+\frac{\sigma}\gamma}\biggl( 1- \frac\sigma\gamma \biggr)^{-1+\frac{\sigma}{\gamma}}\leq 4e^{\frac{1}{2e}}\,,$$
	for all $0<\sigma<\gamma$. 
	
	\medskip
	
	\noindent\emph{Step 2.} We now extend the previous result to arbitrary $\gamma> 1$. 
	
	Fix $\gamma>1$ and choose $N\in\N$ with $N\geq 2$ such that $\frac\gamma N\leq \frac12$. (To make $N$ unique, we could assume $\frac\gamma{N-1}>\frac12$, but this is not necessary.) Set $\gamma_n:=\frac{n\gamma}N$ for $n=0,\ldots,N$. Our first goal is to prove the inequality
	\begin{equation}\label{eq: Goal step 2 Riesz prop}
		\Phi_{\gamma_n} \leq C \Phi_0^{1-\frac{\gamma_n}{\gamma}} \Phi_\gamma^{\frac{\gamma_n}{\gamma}}
		\qquad\text{for all}\ n=1,\ldots,N-1 \,,
	\end{equation}
	with a constant $C$ which is only allowed to depend on $\gamma$.
	
	To prove~\eqref{eq: Goal step 2 Riesz prop}, we use the semigroup property~\eqref{eq: semigroup Riesz prop}.
	Applying the bound~\eqref{eq: final inequality step 1 Riesz prop} from Step 1 to the function $\phi^{(\gamma_{n-1})}$ with $\gamma$ replaced by $\gamma_{n+1}-\gamma_{n-1}= \frac{2\gamma}{N}\leq 1$ and $\sigma$ by $\gamma_{n}-\gamma_{n-1} = \frac{\gamma}{N}$, we find that
	\begin{align*}
		\Phi_{\gamma_n} \leq  4 \left(\Phi_{\gamma_{n-1}}  \Phi_{\gamma_{n+1}} \right)^\frac12 \,.
	\end{align*}
	
	It follows now by iteration that
	$$
	\Phi_{\gamma_n} \leq 4^{n(N-n)} \Phi_{0}^{1-\frac{\gamma_n}{\gamma}} \Phi_\gamma^{\frac{\gamma_n}{\gamma}} \,.
	$$
	Indeed, this inequality is obtained by multiplying the inequalities
	$$
	\Phi_{\gamma_m}^{m(N-n)} \leq 4^{m(N-n)} \left( \Phi_{\gamma_{m-1}}\Phi_{\gamma_{m+1}}\right)^\frac{m(N-n)}{2}
	\qquad\text{for}\ m=1,\ldots, n \,,
	$$
	and
	$$
	\Phi_{\gamma_m}^{n(N-m)} \leq 4^{n(N-m)} \left( \Phi_{\gamma_{m-1}}\Phi_{\gamma_{m+1}} \right)^\frac{n(N-m)}{2}
	\qquad\text{for}\ m=n+1,\ldots, N-1 \,.
	$$
	
	Estimating $4^{n(N-n)}\leq 4^{\frac{N^2}4}$ and noting that $N$ only depends on $\gamma$, we find the inequality we set out to prove. 
	
	This proves the inequality in the proposition if $\sigma =\gamma_n$ for some $n=0,\ldots,N$. When $\sigma$ is not of this form, we choose $n=1,\ldots,N$ such that $\gamma_{n-1}<\sigma<\gamma_n$ and we apply the bound from Step 1 to $\phi^{(\gamma_{n-1})}$, using the semigroup property. We obtain
	$$
	\Phi_\sigma \leq 4e^{\frac{1}{2e}} \Phi_{\gamma_{n-1}}^{\frac{\gamma_n-\sigma}{\gamma_n-\gamma_{n-1}}} \Phi_{\gamma_n}^{\frac{\sigma-\gamma_{n-1}}{\gamma_n - \gamma_{n-1}}} \,.
	$$
	Applying the interpolation inequality for $\gamma_{n-1}$ and $\gamma_n$ proved above we finally obtain the inequality claimed in the proposition. This completes the proof.
\end{proof}


\section{A Tauberian theorem for the Laplace transform}\label{sec: Laplace Tauber}

This section is devoted to Tauberian theorems for the Laplace transform. The results as well as their proofs essentially go back to the work of Ganelius \cite{Ganelius54}. However, in the applications of these results in the present paper it is necessary to have statements that are uniform and not only asymptotic in nature. For this reason we provide complete proofs.

Asymptotic results that are related to those in \cite{Ganelius54} have been announced in \cite{Avakumovic52} for $\gamma=0$ and in \cite{Ganelius56} for general $\gamma\geq 0$ without proofs. See also \cite[Section 2]{FrSa} for a related, but different non-asymptotic version of a Tauberian theorem of Ingham.

\begin{proposition}\label{prop: Tauberian v2}
	Let $\mu$ be a locally finite signed Borel measure on $\R$. Assume that there exist finite collections $\{K_i\}_{i= 0}^N \subset [0, \infty)$ and $\{\nu_i\}_{i= 1}^N \subset (0, \infty)$ such that the Borel measure $\tilde \mu$ on $\R$, defined by 
	$$
		\tilde\mu(\omega) := \mu(\omega)+ K_0\delta_0(\omega)+\sum_{i=1}^N\nu_i K_i \int_{\omega\cap (0, \infty)} u^{\nu_i-1}du \,,
	$$ 
	is nonnegative.  
	Then, for all $\gamma \geq 0$, all integers $k \geq k_0$, and all $u>0$,
	\begin{align*}
		\biggl|\int_{[0, u)} (1-v/u)^\gamma d\mu(v)\biggr| &\leq B^k \sup_{1\leq s \leq k}\biggl|\int_{[0,\infty)} e^{-\frac{s v}u}\,d\mu(v)\biggr| + \frac{C}{k^{\gamma+1}}\sum_{i: \nu_i \geq 1} |K_i| u^{\nu_i}\\
		&\qquad +\frac{D}{k^{\gamma}}\sum_{i: \nu_i < 1} |K_i| \Bigl(\frac{u}k\Bigr)^{\nu_i}\,,
	\end{align*}
	with $B, C, D, k_0$ depending only on $\gamma, \{\nu_1, \ldots, \nu_N\}$.
\end{proposition}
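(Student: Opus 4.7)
My approach is to rescale the problem to $u=1$, construct two-sided exponential-polynomial envelopes of $(1-v)_+^\gamma$ of degree $k$, and then convert these into the claimed bound by exploiting the nonnegativity of $\tilde{\mu}$.

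\textbf{Step 1 (Rescaling).} I would first introduce the pushforward $\mu_u(\omega):=\mu(u\omega)$ of $\mu$ under $v\mapsto v/u$. The identities $\int_{[0,u)}(1-v/u)^\gamma d\mu(v)=\int_{[0,1)}(1-w)^\gamma d\mu_u(w)$ and $\int e^{-sv/u}d\mu(v)=\int e^{-sw}d\mu_u(w)$ show that each side of the desired inequality transforms consistently, provided one replaces $K_i$ by $K_i^* := K_i u^{\nu_i}$. The hypothesis becomes $\tilde{\mu}_u=\mu_u+K_0\delta_0+\sum_i\nu_i K_i^* w^{\nu_i-1}\1_{w>0}dw\geq 0$, so it suffices to prove the bound for $u=1$ with the starred constants.

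\textbf{Step 2 (Two-sided approximation).} The crux of the proof is the construction, for each integer $k\geq k_0(\gamma)$, of two polynomials $P_k^\pm(x)=\sum_{s=1}^k c_s^\pm x^s$ (of degree $k$, no constant term) such that, writing $\phi(v):=(1-v)_+^\gamma$:
\begin{enumerate}
\item[(a)] $P_k^-(e^{-v})\leq \phi(v)\leq P_k^+(e^{-v})$ for all $v\geq 0$;
\item[(b)] $P_k^\pm(1)=\sum_{s=1}^k c_s^\pm=1$ (normalization at $v=0$);
\item[(c)] $\sum_{s=1}^k|c_s^\pm|\leq B^k$ for some $B=B(\gamma)$;
\item[(d)] $\int_0^\infty[P_k^+(e^{-v})-P_k^-(e^{-v})]\,\nu v^{\nu-1}dv\leq C_\gamma k^{-\gamma-1}$ for $\nu\geq 1$ and $\leq D_{\gamma,\nu}\,k^{-\gamma-\nu}$ for $\nu\in(0,1)$.
\end{enumerate}
Under the substitution $x=e^{-v}$ this becomes the problem of approximating $g(x):=(1+\ln x)_+^\gamma$ on $[0,1]$, a function that vanishes on $[0,1/e]$, is $\gamma$-Hölder at $x=1/e$, smooth on $(1/e,1]$, and equals $1$ at $x=1$. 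I would build $P_k^\pm$ by first smoothing $g$ on scale $1/k$ (in the $v$-variable) via convolution with a nonnegative kernel of suitable support, producing a smooth surrogate $g_k$; then taking a polynomial best-approximant of $g_k$ of degree $k$ via a weighted Jackson-type theorem; and finally adding a controlled correction of size $O(k^{-\gamma-1})$ to enforce the one-sided envelopes (a) and the normalization (b). The coefficient estimate (c) follows from Cauchy-type bounds on coefficients of polynomials that are $L^\infty$-bounded on $[0,1]$, using $\sum_s|c_s|\leq \|P\|_{L^\infty([0,1])}\cdot(3+2\sqrt{2})^k$ or a similar Chebyshev-type bound.

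\textbf{Step 3 (Assembly via $\tilde{\mu}\geq 0$).} With $d\tilde{\mu}=d\mu+K_0\delta_0+\sum_i\nu_i K_i^* v^{\nu_i-1}\1_{v>0}dv$, the normalization (b) ensures that $\phi(0)-P_k^\pm(1)=0$, so the $K_0$ contribution drops out:
\begin{equation*}
\int(\phi-P_k^-)d\mu=\int(\phi-P_k^-)d\tilde{\mu}-\sum_i\nu_i K_i^*\int_0^\infty(\phi-P_k^-)(v)v^{\nu_i-1}dv.
\end{equation*}
By (a) and the nonnegativity of $\tilde{\mu}$, the first integral on the right lies in $[0,\int(P_k^+-P_k^-)d\tilde{\mu}]$. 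Expanding this upper bound through the same decomposition yields $\int(P_k^+-P_k^-)d\tilde{\mu}\leq 2B^k\sup_s|\hat{\mu}(s)|+\sum_i\nu_i K_i^*\int(P_k^+-P_k^-)v^{\nu_i-1}dv$. Combining with $|\int P_k^- d\mu|=|\sum c_s^-\hat{\mu}(s)|\leq B^k\sup_s|\hat{\mu}(s)|$ and invoking the estimates in (d) (and undoing the rescaling $K_i^*=K_iu^{\nu_i}$), we obtain the claimed bound after replacing $B$ by $3B$ and adjusting $C,D$.

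\textbf{Main obstacle.} The principal difficulty is Step 2: constructing polynomials that are one-sided envelopes of $\phi$ on all of $[0,\infty)$, whose coefficients grow at most like $B^k$, and whose difference admits the \emph{asymmetric} rates in (d). The dichotomy $\nu\geq 1$ versus $\nu<1$ reflects a genuine scale separation: on the length scale $v\sim 1/k$ near $v=0$, no exponential polynomial of degree $k$ can sandwich $\phi\approx 1$ better than with $O(1)$ oscillations, and $\int_0^{1/k}v^{\nu-1}dv\sim k^{-\nu}$ contributes the additional factor $k^{-\nu}$ for $\nu<1$; for $\nu\geq 1$ the singular weight is tame near the origin and the bulk approximation rate $k^{-\gamma-1}$ dominates. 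Producing an explicit $P_k^\pm$ with all four properties simultaneously is a classical but delicate construction going back to Ganelius, requiring careful bookkeeping of the perturbation used to enforce the one-sided inequalities without inflating the coefficients beyond $B^k$ or degrading the approximation rate.
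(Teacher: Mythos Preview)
Your overall strategy (one-sided exponential-polynomial envelopes of $(1-v)_+^\gamma$, then assembly via $\tilde\mu\geq 0$) is exactly how the paper handles the \emph{integer} case (its Lemma preceding the proposition), citing Korevaar's Theorem~VII.3.4 for the envelopes of $G_m(t)=(1-t)_+^m$. Where you diverge is in how non-integer $\gamma$ is reached: you propose to build envelopes of $(1-v)_+^\gamma$ directly, whereas the paper never does this. Instead, for $m-1<\gamma<m$ and $\theta=\gamma+1-m$ it writes
\[
F_\gamma(u)=\frac{1}{\Gamma(\theta)}\int_0^u (u-v)^{\theta-1}F_m'(v)\,dv,
\]
splits the integral at a point $u_1$, integrates by parts, and feeds in the already-proved integer bounds at levels $m$ and $m-1$ with a rescaled parameter $k$. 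The $\nu_i<1$ versus $\nu_i\geq 1$ dichotomy then appears only in the final optimization over $u_1$ and the rescaling; indeed for integer $\gamma$ the paper's lemma gives the uniform rate $k^{-\gamma-1}$ for \emph{all} $\nu_i>0$, with no dichotomy, and the authors explicitly remark that they do not know whether the dichotomy in the proposition is an artifact.

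The gap in your argument is Step~2: you are, in effect, asserting a non-integer-$\gamma$ extension of the Korevaar/Ganelius approximation theorem, which is not in the literature and which your sketch does not establish. The ``smoothing $+$ Jackson $+$ correction'' outline does not obviously produce \emph{one-sided} envelopes with no constant term and the required weighted-$L^1$ gap: to pass from a two-sided approximant $q_k$ to envelopes one must add a degree-$k$ polynomial vanishing at $x=0$, nonnegative on $(0,1]$, pointwise dominating $|g-q_k|$, yet with small weighted integral and coefficient sum $\leq B^k$ --- this is precisely the delicate core of the Ganelius construction, and you do not supply it. Your localization of the $\nu<1$ loss to ``$v\sim 1/k$ near $v=0$'' is also misplaced: $\phi$ is smooth there and both envelopes are pinned to $1$ at $v=0$, so their difference is $O(v)$ near the origin; the approximation bottleneck is the singularity at $v=1$. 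The paper's fractional-integration route sidesteps all of this by never constructing envelopes for non-integer $\gamma$ and instead leveraging the known integer case as a black box.
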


As we shall multiple times need to use Proposition~\ref{prop: Tauberian v2} in a setting where the Laplace transform satisfies an exponential bound we record it as a separate corollary. 

\begin{corollary}\label{cor: Tauberian cor}
	Let $\mu$ be a locally finite signed Borel measure on $\R$. Assume that there exist $\epsilon, c_1, c_2, A>0$ and finite collections $\{K_i\}_{i= 0}^N \subset [0, \infty)$ and $\{\nu_i\}_{i= 1}^N \subset (0, \infty)$ such that 
	\begin{enumerate}
		\item the Borel measure $\tilde \mu$ on $\R$ defined by 
	$$
		\tilde\mu(\omega) := \mu(\omega)+K_0\delta_0(\omega)+\sum_{i=1}^N\nu_i K_i \int_{\omega \cap (0, \infty)} u^{\nu_i-1}\,du
	$$ 
	is nonnegative, and
		\item for all $t\leq c_2 A$
		\begin{equation}\label{eq: exp Laplace transform bound}
			\biggl|\int_{[0, \infty)} e^{-tv}\,d\mu(v)\biggr| \leq c_1 e^{-(c_2/t)^\epsilon}\,.
		\end{equation}
	\end{enumerate}
	Then, for all $\gamma \geq 0$ there exist constants $C, D$ depending only on $\gamma, \epsilon, \{\nu_1, \ldots, \nu_N\}$ so that
	\begin{equation}\label{eq: pow Riesz bound}
		\biggl|\int_{[0, u)} (1-v/u)^\gamma d\mu(v)\biggr| \leq C (1+c_2 u)^{- \frac{(1+\gamma)\epsilon}{1+\epsilon}}\Bigl(c_1 + \sum_{i:\nu_i\geq 1} |K_i|u^{\nu_i}+ \sum_{i:\nu_i<1} |K_i|u^{\nu_i}(c_2u)^{\frac{(1-\nu_i)\epsilon}{1+\epsilon}}\Bigr)
	\end{equation}
    for all $u \geq \frac{D}{c_2A}(1+A^{-\epsilon})$. In particular, if~\eqref{eq: exp Laplace transform bound} holds for all $t>0$ then~\eqref{eq: pow Riesz bound} holds for all $u>0$.
\end{corollary}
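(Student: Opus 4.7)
The plan is to deduce the corollary by applying Proposition~\ref{prop: Tauberian v2} with an integer $k$ chosen essentially optimally so as to balance the exponentially decaying Laplace transform bound against the algebraic error terms. First I would insert the hypothesis \eqref{eq: exp Laplace transform bound} with $t = s/u$ into the supremum appearing in Proposition~\ref{prop: Tauberian v2}: provided $k \leq c_2 A u$, this bounds the supremum by $c_1 e^{-(c_2 u/k)^\epsilon}$. Matters then reduce to estimating
$$
B^k c_1 e^{-(c_2 u/k)^\epsilon} + \frac{C}{k^{\gamma+1}}\sum_{\nu_i\geq 1}|K_i|u^{\nu_i} + \frac{D}{k^\gamma}\sum_{\nu_i<1}|K_i|(u/k)^{\nu_i} \,.
$$

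Next, I would choose $k$ as (an integer close to) $\alpha (c_2 u)^{\epsilon/(1+\epsilon)}$, so that the exponent of the first term becomes $(\alpha \ln B - \alpha^{-\epsilon})(c_2 u)^{\epsilon/(1+\epsilon)}$. Taking $\alpha$ small enough, depending only on $B$ and $\epsilon$, so that $\alpha \ln B - \alpha^{-\epsilon} \leq -\beta < 0$, and then using the elementary inequality $e^{-\beta x} \leq C_\gamma x^{-(\gamma+1)}$, converts the first term into a constant multiple of $c_1 (c_2 u)^{-(\gamma+1)\epsilon/(1+\epsilon)}$. Substituting the same choice of $k$ into the two algebraic terms and carrying out the arithmetic of exponents yields precisely the powers on the right-hand side of \eqref{eq: pow Riesz bound}; for instance, $k^{-\gamma}(u/k)^{\nu_i} \asymp u^{\nu_i}(c_2 u)^{-(\gamma+\nu_i)\epsilon/(1+\epsilon)}$, which equals $u^{\nu_i}(c_2 u)^{(1-\nu_i)\epsilon/(1+\epsilon)} \cdot (c_2 u)^{-(1+\gamma)\epsilon/(1+\epsilon)}$, matching the stated form.

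It remains to verify the two side constraints $k \geq k_0$ and $k \leq c_2 A u$. Setting $k := \max\bigl(k_0, \lceil \alpha (c_2 u)^{\epsilon/(1+\epsilon)}\rceil\bigr)$ handles the first one for free. A short computation shows that the second constraint follows from $c_2 u \geq \max(k_0, \alpha^{1+\epsilon} A^{-\epsilon})$, which in turn is implied by the hypothesis $u \geq \frac{D}{c_2 A}(1+A^{-\epsilon})$ upon choosing $D$ sufficiently large in terms of $\alpha$ and $k_0$. In the regime where $\alpha(c_2 u)^{\epsilon/(1+\epsilon)} < k_0$ and the maximum therefore equals $k_0$, the quantity $c_2 u$ is bounded by a constant depending only on $\alpha, k_0, \epsilon$, so that $(1+c_2 u)^{-(1+\gamma)\epsilon/(1+\epsilon)}$ is bounded below by a positive constant and the claimed bound holds trivially from $B^{k_0} c_1 e^{-(c_2 u/k_0)^\epsilon} \leq B^{k_0} c_1$ together with the elementary estimates on the algebraic terms. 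Finally, when \eqref{eq: exp Laplace transform bound} holds for all $t>0$ the parameter $A$ may be taken to be arbitrarily large, so the upper constraint $k \leq c_2 A u$ becomes vacuous and the argument applies to every $u>0$. The only genuinely delicate point in the whole argument is the bookkeeping of exponents verifying that the substitution $k \asymp (c_2 u)^{\epsilon/(1+\epsilon)}$ makes the bound collapse to exactly the form \eqref{eq: pow Riesz bound}; the rest is routine.
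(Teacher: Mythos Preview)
Your proposal is correct and follows essentially the same route as the paper's proof: apply Proposition~\ref{prop: Tauberian v2}, use the hypothesis to bound the Laplace-transform supremum by $c_1 e^{-(c_2 u/k)^\epsilon}$ under the constraint $k/u \leq c_2 A$, and then choose $k \asymp (c_2 u)^{\epsilon/(1+\epsilon)}$ with a suitably small multiplicative constant to balance the terms. The paper makes the specific choice $k = \max\{k_0, \lfloor (2\ln B)^{-1/(1+\epsilon)}(c_2 u)^{\epsilon/(1+\epsilon)}\rfloor\}$, but your choice of a small $\alpha$ achieves the same effect; the exponent bookkeeping and the verification of the two constraints match the paper's argument.
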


The key ingredient in our proof is the following polynomial approximation result \cite[Theorem VII.3.4]{Korevaar_TauberianTheory_book}.

\begin{theorem}\label{thm: Pol approximation}
	Fix $m\in \N \cup \{0\}$ and $\alpha, \beta \in \R$. Set
	\begin{equation*}
		G_m(t) = \begin{cases}
			(1-t)^m & \mbox{for }0\leq t \leq 1\,,\\
			0 & \mbox{for }1<t<\infty\,.
		\end{cases}
	\end{equation*}
	There exist constants $B_1>0, B_2>1$ and $k_0 \in \N$ depending only on $m, \beta, \alpha$ such that the following holds. For every integer $k \geq k_0$, there are polynomials in the variable $e^{-t}$ of degree $\leq k$:
	\begin{equation*}
		p_k(t) = \sum_{j=1}^k a_{jk}e^{-jt} \,, \quad P_k(t) = \sum_{j=1}^k b_{kj}e^{-jt}\,,
	\end{equation*}
	which satisfy
	\begin{align*}
		&p_k(t)\leq G_m(t)\leq P_k(t) \quad \mbox{for } 0<t<\infty\,, \quad p_k(0)=P_k(0)=1\,,\\
		&\int_0^\infty e^{\alpha t}t^{-\beta }[P_k(t)-p_k(t)]\,dt \leq \frac{B_1}{k^{m+1}}\,,\\
		&\sum_{j=1}^k |a_{jk}| \leq B_2^k\,, \quad \sum_{j=1}^k |b_{jk}|\leq B_2^k\,.
	\end{align*}
\end{theorem}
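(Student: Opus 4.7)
The plan is to reduce the problem to one-sided polynomial approximation on $[0,1]$ via the substitution $u = e^{-t}$. Under this change of variables, the trigonometric-type polynomials $p_k(t), P_k(t)$ with vanishing constant term in $e^{-t}$ become algebraic polynomials $q_k(u), Q_k(u)$ of degree $\leq k$ with $q_k(0) = Q_k(0) = 0$, while $G_m$ becomes the function
\[
H(u) := (1+\ln u)^m\, \mathbf{1}_{[e^{-1},1]}(u),
\]
and the integral condition reads
\[
\int_0^1 u^{-\alpha-1}(-\ln u)^{-\beta}\bigl[Q_k(u)-q_k(u)\bigr]\,du \leq \frac{B_1}{k^{m+1}}.
\]
By successively differentiating $(1+\ln u)^m$ one checks that $H$ is of class $C^{m-1}$ on $[0,1]$ with a jump of the $m$-th derivative at $u=e^{-1}$. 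This is precisely the regularity that, by classical Jackson/Freud-type one-sided approximation theorems, yields the rate $k^{-(m+1)}$ in a weighted $L^1$ norm.

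The construction of $q_k$ and $Q_k$ would proceed in three steps. First, I would convolve $H$ with a standard smooth bump of width $\sim 1/k$ to obtain upper and lower envelopes $H^\pm$ that squeeze $H$ with weighted $L^1$-gap $\lesssim k^{-(m+1)}$ and have bounded $(m+1)$-st derivative away from the endpoints. Second, I would approximate $H^\pm$ in the supremum norm by polynomials of degree $\leq k - O(1)$ using Jackson-type kernels on $[0,1]$, and shift them uniformly up or down by an amount $O(k^{-(m+1)})$ to turn them into genuine one-sided approximants of $H$; absorbing this shift into the weighted $L^1$ bound preserves the target rate. Third, I would enforce the pinning conditions $q_k(0) = Q_k(0) = 0$ and $q_k(1) = Q_k(1) = 1$: the first is automatic after subtracting the (small) constant term, and the second by adding a correction proportional to a fixed degree-$k$ polynomial that vanishes at $0$ and equals $1$ at $1$ (for example a shifted Chebyshev-type polynomial), chosen to itself be a one-sided perturbation so that the squeeze is preserved.

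The main obstacle, which in my opinion is the least routine part, is the coefficient bound $\sum_j |a_{jk}|, \sum_j |b_{jk}| \leq B_2^k$. The issue is that a polynomial of degree $k$ bounded by $1$ on $[0,1]$ can have monomial coefficients as large as $2^{O(k)}$, so this bound is essentially tight; to secure it one must build $q_k, Q_k$ out of well-conditioned building blocks. The cleanest route is to expand everything in the Chebyshev basis on $[0,1]$, whose basis polynomials have unit sup-norm and whose transition matrix to the monomial basis has entries of size $\lesssim (3+2\sqrt{2})^k$. Since the Jackson-type construction above produces $q_k, Q_k$ with $\sup_{[0,1]}|q_k|, \sup_{[0,1]}|Q_k| \leq C$ independent of $k$, their Chebyshev coefficients are $O(1)$, and hence their monomial coefficients, which are precisely the $a_{jk}, b_{jk}$ after the substitution $u = e^{-t}$, satisfy the required geometric bound.

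Throughout, the parameters $\alpha, \beta$ enter only through the integrability of the weight $u^{-\alpha-1}(-\ln u)^{-\beta}$ near $u = 0$ and $u = 1$; I would verify at the end that the smoothing scale $\sim 1/k$ used near $u = e^{-1}$ is well-separated from both endpoints, so that the weight contributes only a harmless multiplicative constant $B_1$ depending on $m, \alpha, \beta$. The threshold $k_0$ enters to guarantee that the smoothing width is strictly less than $e^{-1}$ and that the Jackson-type approximation is applicable at degree $\geq k_0$.
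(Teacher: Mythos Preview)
The paper does not supply a proof of this theorem; it is quoted from Korevaar's book \cite[Theorem~VII.3.4]{Korevaar_TauberianTheory_book} and used as a black box in the proof of Lemma~\ref{lem: Tauberian v1}. So there is no in-paper argument to compare your proposal against.

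Your reduction via $u=e^{-t}$ to one-sided algebraic polynomial approximation of $H(u)=(1+\ln u)^m\mathbf 1_{[e^{-1},1]}$ on $[0,1]$ is the natural one, and your regularity claim ($H\in C^{m-1}$ with a single jump of $H^{(m)}$ at $u=e^{-1}$) is correct. The gap is in Step~2. After mollifying at scale $\delta\sim 1/k$, the envelopes $H^\pm$ have $(m{+}1)$-st derivative of size $\sim\delta^{-1}\sim k$ near $u=e^{-1}$ (differentiate the smoothed jump of $H^{(m)}$), not $O(1)$ uniformly in $k$. Hence the sup-norm Jackson error for degree-$k$ polynomials is only $O(k^{-(m+1)}\cdot k)=O(k^{-m})$, and the uniform vertical shift you then apply to restore one-sidedness is of size $O(k^{-m})$, which spoils the weighted $L^1$ gap. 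To reach the rate $k^{-(m+1)}$ one must exploit that $H^{(m)}$ is of bounded variation (a single jump), not merely bounded; the standard constructions (Freud, Ganelius, and the argument in Korevaar) build the one-sided approximants more directly, e.g.\ by iterated integration of one-sided approximants of the Heaviside function, rather than by ``mollify, sup-Jackson, then shift''.

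There is a second issue you dismiss too quickly. You assert that $\alpha,\beta$ enter only as harmless constants because the smoothing is localised near $u=e^{-1}$, away from the endpoints. But polynomial approximants are global: generically $Q_k-q_k$ is only $O(u)$ near $u=0$, so the weighted integral $\int_0^1 u^{-\alpha-1}(-\ln u)^{-\beta}(Q_k-q_k)\,du$ diverges at $u=0$ once $\alpha\ge 1$ (equivalently, $e^{\alpha t}(P_k-p_k)$ fails to be integrable at $t=\infty$ unless the coefficients of $e^{-t},\dots,e^{-\lceil\alpha\rceil t}$ in $P_k$ and $p_k$ agree). Since the paper's application uses precisely $\alpha=1$ with $\beta<1$, this matching is genuinely needed, and your outline does not explain how to enforce it.
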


Before we prove the full statement of Proposition~\ref{prop: Tauberian v2} let us show how the polynomial approximation in Theorem~\ref{thm: Pol approximation} can be used to prove a bound when $\gamma \in \N$.

\begin{lemma}\label{lem: Tauberian v1}
	Let $\mu$ be a locally finite signed Borel measure on $\R$. Assume that there exist finite collections $\{K_i\}_{i= 0}^N \subset [0, \infty)$ and $\{\nu_i\}_{i= 1}^N \subset (0, \infty)$ such that the Borel measure $\tilde \mu$ on $\R$, defined by 
	$$
		\tilde\mu(\omega) := \mu(\omega)+ K_0 \delta_0(\omega)+\sum_{i=1}^N\nu_i K_i \int_{\omega\cap (0, \infty)} u^{\nu_i-1}\,du \,,
	$$ 
	is nonnegative. 
	Then, for all $\gamma \in \N\cup \{0\}$, all integers $k \geq k_0$, and all $u>0$,
	\begin{equation*}
		\biggl|\int_{[0, u)} (1-v/u)^\gamma d\mu(v)\biggr| \leq B^k \max_{j=1, \ldots, k}\biggl|\int_{[0,\infty)} e^{-\frac{j v}u}\,d\mu(v)\biggr| + \frac{C}{k^{\gamma+1}}\sum_{i=1}^N |K_i| u^{\nu_i}\,,
	\end{equation*}
	with $B, C, k_0$ depending only on $\gamma, \{\nu_1, \ldots, \nu_N\}$.
\end{lemma}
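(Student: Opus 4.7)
The plan is to apply the polynomial approximation of Theorem~\ref{thm: Pol approximation} with $m=\gamma$, producing polynomials $p_k(t)=\sum_{j=1}^k a_{jk}e^{-jt}$ and $P_k(t)=\sum_{j=1}^k b_{jk}e^{-jt}$ in $e^{-t}$ which sandwich $G_\gamma(t)=(1-t)_\limplus^\gamma$ pointwise on $[0,\infty)$, satisfy $p_k(0)=P_k(0)=1$, and admit $\sum_j|a_{jk}|,\,\sum_j|b_{jk}|\le B_2^k$. Since the measure $\tilde\mu=\mu+K_0\delta_0+\sum_i\lambda_i$ with $d\lambda_i(v)=\nu_iK_iv^{\nu_i-1}\,dv$ on $(0,\infty)$ is nonnegative, the sandwich $p_k(v/u)\le(1-v/u)_\limplus^\gamma\le P_k(v/u)$ may be integrated against $\tilde\mu$, yielding
\begin{equation*}
    \Bigl|\int(1-v/u)_\limplus^\gamma\,d\tilde\mu-\int p_k(v/u)\,d\tilde\mu\Bigr|\le\int(P_k-p_k)(v/u)\,d\tilde\mu.
\end{equation*}

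Next, I would decompose both sides according to the pieces of $\tilde\mu$. For the $\delta_0$-contribution the key observation is that since $p_k(0)=P_k(0)=1=(1-0)_\limplus^\gamma$, the terms $K_0$ appearing in $\int(1-v/u)_\limplus^\gamma\,d\tilde\mu$, $\int p_k(v/u)\,d\tilde\mu$ and $\int(P_k-p_k)(v/u)\,d\tilde\mu$ match exactly and therefore cancel, which is why no $K_0$ term appears on the right side of the target inequality. For each $\lambda_i$ the sandwich is applied a second time (since $\lambda_i\ge 0$) to control both $|\int(1-v/u)_\limplus^\gamma-p_k(v/u)\,d\lambda_i|$ and $\int(P_k-p_k)(v/u)\,d\lambda_i$ by
\begin{equation*}
    \nu_iK_iu^{\nu_i}\int_0^\infty(P_k-p_k)(s)\,s^{\nu_i-1}\,ds\lesssim_{\gamma,\nu_i}\frac{K_iu^{\nu_i}}{k^{\gamma+1}},
\end{equation*}
where the last bound invokes Theorem~\ref{thm: Pol approximation} with $\alpha=0$ and $\beta=1-\nu_i$. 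For the $\mu$-piece, expanding $p_k,P_k$ in powers of $e^{-v/u}$ and using the coefficient bound gives
\begin{equation*}
    \Bigl|\int p_k(v/u)\,d\mu\Bigr|+\Bigl|\int(P_k-p_k)(v/u)\,d\mu\Bigr|\le 3B_2^k\,\max_{1\le j\le k}\Bigl|\int_{[0,\infty)}e^{-jv/u}\,d\mu\Bigr|.
\end{equation*}
Combining these bounds through the triangle inequality yields the conclusion with $B$ any fixed multiple of $B_2$ and $C$ controlled by $B_1$ and $\max_i\nu_i$, once $k\ge k_0$ is large enough to absorb multiplicative constants into the factor $B^k$.

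The main technical point is the need to apply the weighted integrability bound of Theorem~\ref{thm: Pol approximation} with parameter $\beta=1-\nu_i$ for each $i=1,\dots,N$; since that theorem produces polynomials depending on $(\alpha,\beta)$, the cleanest way to handle several $\nu_i$ simultaneously is to run the whole argument once per $i$ (absorbing the dependence into the constants $B,C,k_0$) or, equivalently, to select the polynomials so that the bound holds simultaneously for all weights $\beta_i=1-\nu_i$ --- both options are admissible because $N$ and the $\nu_i$ are fixed and only $\gamma$ and $\{\nu_1,\dots,\nu_N\}$ are allowed to enter the final constants. Everything else is bookkeeping: tracking the cancellation of the $K_0$-terms, controlling the Laplace-transform side with the coefficient bound, and collecting the $u^{\nu_i}/k^{\gamma+1}$ contributions from each weight.
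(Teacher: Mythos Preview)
Your approach is essentially the same as the paper's --- polynomial approximation from Theorem~\ref{thm: Pol approximation} with $m=\gamma$, sandwich $p_k\le G_\gamma\le P_k$, then bound the exponential pieces via the coefficient sum and the weighted pieces via the integral bound on $P_k-p_k$. The paper organizes the sandwich slightly differently: instead of integrating against $\tilde\mu$ and then decomposing, it applies the Hahn--Jordan decomposition $\mu=\mu^+-\mu^-$ directly, writing
\[
\int G_\gamma\,d\mu \le \int P_k\,d\mu^+ - \int p_k\,d\mu^- = \int P_k\,d\mu + \int(P_k-p_k)\,d\mu^-,
\]
and then observing that nonnegativity of $\tilde\mu$ is equivalent to $d\mu^-\le K_0\delta_0 + \sum_i d\lambda_i$, which bounds the last integral by $\sum_i\int(P_k-p_k)\,d\lambda_i$ (the $K_0$ drops out since $P_k(0)=p_k(0)$). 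Your route via $\tilde\mu$ reaches the same place after a couple more triangle inequalities.

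The one point that needs tightening is your handling of several $\nu_i$'s. Your ``run the whole argument once per $i$'' option does not work: the sandwich uses a single pair $(p_k,P_k)$, and the nonnegativity hypothesis is on the full $\tilde\mu$, not on $\mu+K_0\delta_0+\lambda_i$ for each individual $i$, so the $\lambda_j$ with $j\neq i$ cannot be decoupled. Your second option --- choosing the polynomials so that the weighted bound holds for all $\nu_i$ simultaneously --- is right, but the mechanism is not $\alpha=0$ with several $\beta$'s. The paper takes $\alpha=1$ and $\beta=1-\min_i\nu_i$ \emph{once}, and then uses the elementary bound $s^{\nu_i-1}\le C_\nu\,e^s s^{-\beta}$ (valid for every $i$, with $C_\nu$ depending only on $\{\nu_1,\dots,\nu_N\}$) to reduce each $\int_0^\infty s^{\nu_i-1}(P_k-p_k)\,ds$ to the single integral $\int_0^\infty e^s s^{-\beta}(P_k-p_k)\,ds\le B_1/k^{\gamma+1}$ furnished by Theorem~\ref{thm: Pol approximation}.
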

\begin{remark}
    We note that in this lemma we do not need to distinguish between $\nu_i<1$ or $\nu_i\geq 1$. It is not clear to us if the distinction can be avoided also in Proposition~\ref{prop: Tauberian v2}. However, in this paper we shall only need to apply the proposition in situations where $\nu_i \geq 1$ for all $i$.
\end{remark}

\begin{proof}[Proof of Lemma~\ref{lem: Tauberian v1}]
	The proof follows that of Ganelius in~\cite{Ganelius54}.

	Define 
	\begin{equation*}
		Q(t) := \int_{[0, \infty)} e^{-tu}\,d\mu(u)\,.
	\end{equation*}

	Let $p_k, P_k$ be the polynomials provided by Theorem~\ref{thm: Pol approximation} with $m=\gamma, \alpha=1, \beta = 1-\min_{i= 1, \ldots, N}\nu_i$ and $k\geq k_0$ to be chosen below. Then with $\mu= \mu^\limplus- \mu^\limminus$ denoting the Hahn--Jordan decomposition of $\mu$,
	\begin{align*}
		\int_{[0, u)} (1-v/u)^\gamma \,d\mu(v) &= \int_{[0, u)} (1-v/u)^m\,d\mu^\limplus(v)-\int_{[0, u)} (1-v/u)^m\,d\mu^\limminus(v)\\
		&\leq \int_{[0, \infty)} P_k(v/u)\,d\mu^\limplus(v)-\int_{[0, \infty)} p_k(v/u)\,d\mu^\limminus(v)\\
		&= \int_{[0, \infty)} P_k(v/u)\,d\mu(v)+\int_{[0, \infty)} [P_k(v/u)-p_k(v/u)]\,d\mu^\limminus(v)\,.
	\end{align*}
	In the same manner,
	\begin{align*}
		\int_{[0, u)} (1-v/u)^\gamma \,d\mu(v) 
		&\geq \int_{[0, \infty)} p_k(v/u)\,d\mu(v)-\int_{[0, \infty)} [P_k(v/u)-p_k(v/u)]\,d\mu^\limminus(v)\,.
	\end{align*}
	Note that the second integral is nonnegative as $P_k\geq p_k$ and $d\mu^\limminus$ is a nonnegative measure.

	By the properties of $p_k, P_k$, we have
	\begin{align*}
		\biggl|\int_{[0, \infty)} p_k(v/u)\,d\mu(v)\biggr| &= \biggl|\sum_{j=1}^k a_{jk}Q(j/u)\biggr| \leq B_2^k\max_{j=1, \ldots, k}|Q(j/u)|\,,\\
		\biggl|\int_{[0, \infty)} P_k(v/u)\,d\mu(v)\biggr| &= \biggl|\sum_{j=1}^k b_{jk}Q(j/u)\biggr| \leq B_2^k\max_{j=1, \ldots, k}|Q(j/u)|\,.
	\end{align*}
	
	To bound the remaining integral we use the assumption that $\tilde\mu$ is nonnegative (which is equivalent to the measure $-\mu^\limminus(\omega)+ K_0\delta_0(\omega) + \sum_{i=1}^N \nu_i K_i \int_{\omega\cap (0, \infty)} u^{\nu_i-1}\,du$ being nonnegative). Since $P_k(0)=p_k(0)=1$, the assumed nonnegativity implies that
	\begin{align*}
		\int_{[0, \infty)}[P_k(v/u)-p_k(v/u)]\,d\mu^\limminus(v) &\leq  \sum_{i=1}^NK_i|\nu_i|\int_0^\infty[P_k(v/u)-p_k(v/u)]v^{\nu_i-1}\,dv\\
		&= \sum_{i=1}^NK_i|\nu_i| u^{\nu_i}\int_0^\infty[P_k(s)-p_k(s)]s^{\nu_i-1}\,ds\,.
	\end{align*}
	Next note that $\max_{i=1, \ldots, N} s^{\nu_i-1} \leq C_{\nu} e^s s^{-\beta}$ with $C_\nu$ depending only on $\{\nu_1, \ldots, \nu_N\}$. When combined with the fact that $P_k(s)-p_k(s) \geq 0$ and the estimate above, one obtains
	\begin{align*}
		\biggl|\int_{[0, \infty)}[P_k(v/u)-p_k(v/u)]\,d\mu^\limminus(v)\biggr| &\leq C_\nu\sum_{i=1}^N|K_i\nu_i| u^{\nu_i}\int_0^\infty e^s s^{-\beta}[P_k(s)-p_k(s)]\,ds\\
		& \leq \frac{B_1 C_\nu}{k^{\gamma+1}}\sum_{i=1}^N|K_i\nu_i| u^{\nu_i}\,.
	\end{align*}

	Upon combining the above we reach the conclusion that for every $k \geq k_0$ and $u> 0$,
	\begin{equation*}
		\biggl|\int_{[0, u)} (1-v/u)^\gamma d\mu(v)\biggr| \leq B_2^k \max_{j=1, \ldots, k}\biggl|\int_{[0, \infty)} e^{-\frac{jv}u}\,d\mu(v)\biggr| + \frac{B_1 C_\nu}{k^{\gamma+1}}\sum_{i=1}^N |K_i \nu_i| u^{\nu_i}\,.
	\end{equation*}
	This completes the proof Lemma~\ref{lem: Tauberian v1}.
	\end{proof}

	\begin{proof}[Proof of Proposition~\ref{prop: Tauberian v2}]
	To extend Lemma~\ref{lem: Tauberian v1} to arbitrary $\gamma \geq 0$ we utilize the semigroup property of Riesz means. Set
	\begin{equation*}
		F_\gamma(u) = \frac{1}{\Gamma(\gamma+1)}\int_{[0, u)} (u-v)^\gamma\, d\mu(v) = \frac{u^\gamma}{\Gamma(\gamma+1)}\int_{[0, u)}(1-v/u)^\gamma \,d\mu(v)\,.
	\end{equation*}
	Observe that for $\gamma \geq 1$ $F_\gamma$ is Lipschitz continuous and $F_\gamma'(u) = F_{\gamma-1}(u)$ (weakly if $\gamma=1$).

	If $m\in \N$, $m-1 <\gamma <m$ and $\theta = \gamma+1-m$, then an application of Fubini's theorem yields
	\begin{equation*}
		F_{\gamma}(u) =  \frac{1}{\Gamma(\theta)}\int_0^u (u-v)^{\theta-1}F_{m-1}(v)\,dv = \frac{1}{\Gamma(\theta)}\int_0^u (u-v)^{\theta-1}F'_{m}(v)\,dv\,.
	\end{equation*}
	Assume that $0\leq u_1 \leq u$, for $u_1$ to be chosen. Then, by splitting the integral and an integration by parts (using that $F_m$ is continuous and $F_m(0)=0$),
	\begin{equation}\label{eq: Tauberian integration by parts bound}
	\begin{aligned}
		|\Gamma(\theta)F_{\gamma}(u)| &\leq \biggl|\int_0^{u_1} (u-v)^{\theta-1}F'_{m}(v)\,dv\biggr| +\biggl|\int_{u_1}^u (u-v)^{\theta-1}F_{m-1}(v)\,dv\biggr|\\
		&\leq |(u-u_1)^{\theta-1}F_m(u_1)|+(1-\theta)\biggl|\int_0^{u_1}(u-v)^{\theta-2}F_m(v)\,dv\biggr|\\
		&\quad  + \biggl|\int_{u_1}^u (u-v)^{\theta-1}F_{m-1}(v)\,dv\biggr|\\
		&\leq (u-u_1)^{\theta-1}|F_m(u_1)|+(1-\theta)\biggl|\int_0^{u_1}(u-v)^{\theta-2}\,dv\biggr|\sup_{v\in [0, u_1]}|F_m(v)|\\
		&\quad  + \biggl|\int_{u_1}^u (u-v)^{\theta-1}\,dv\biggr|\sup_{v \in [u_1, u]}|F_{m-1}(v)|\\
		&= u^{\theta-1}(1-u_1/u)^{\theta-1}|F_m(u_1)|+u^{\theta-1}((1-u_1/u)^{\theta-1}-1)\sup_{v\in [0, u_1]}|F_m(v)| \hspace{-12pt}\\
		&\quad  + \frac{u^{\theta}(1-u_1/u)^\theta}{\theta}\sup_{v \in [u_1, u]}|F_{m-1}(v)|\,.
	\end{aligned}
	\end{equation}

	By the nonnegativity of $\tilde \mu$ the function
	\begin{equation*}
		u \mapsto \frac{1}{\Gamma(1+\gamma)}\int_{[0, u)} (1-v/u)^\gamma \,d\tilde\mu(v) = u^{-\gamma} F_\gamma(u) + \frac{K_0}{\Gamma(1+\gamma)}+ \sum_{i=1}^N\frac{K_i \nu_i \Gamma(\nu_i)}{\Gamma(\gamma+\nu_i+1)}u^{\nu_i} 
	\end{equation*}
	is nondecreasing and nonnegative. Therefore, $u \mapsto F_\gamma(u)+ \sum_{i=1}^N\frac{K_i \nu_i \Gamma(\nu_i)}{\Gamma(\gamma+\nu_i+1)}u^{\nu_i+\gamma}$ is nondecreasing and nonnegative (as the product of two nonnegative nondecreasing functions). Thus, we can estimate for any $0\leq u_2 \leq u_1$
	\begin{align*}
		\sup_{v \in [0, u_1]}|F_{m}(v)| &\leq \sup_{v\in [0, u_2]}\Bigl|F_{m}(v)+\sum_{i=1}^N\frac{K_i\nu_i\Gamma(\nu_i)}{\Gamma(m+\nu_i+1)}v^{\nu_i+m}-\sum_{i=1}^N\frac{K_i\nu_i\Gamma(\nu_i)}{\Gamma(m+\nu_i+1)}v^{\nu_i+m}\Bigr|\\
		&\quad + \sup_{v \in [u_2, u_1]}|F_{m}(v)|\\
		&\leq \sup_{v\in [0, u_2]}\Bigl|F_{m}(v)+\sum_{i=1}^N\frac{K_i\nu_i\Gamma(\nu_i)}{\Gamma(m+\nu_i+1)}v^{\nu_i+m}\Bigr|\\
        &\quad+\sup_{v\in [0, u_2]}\Bigl|\sum_{i=1}^N\frac{K_i\nu_i\Gamma(\nu_i)}{\Gamma(m+\nu_i+1)}v^{\nu_i+m}\Bigr| +\sup_{v \in [u_2, u_1]}|F_{m}(v)|\\
		&= F_{m}(u_2)+2\sum_{i=1}^N\frac{K_i\nu_i\Gamma(\nu_i)}{\Gamma(m+\nu_i+1)}u_2^{\nu_i+m}+\sup_{v \in [u_2, u_1]}|F_{m}(v)|\\
		&\leq 2\sum_{i=1}^N\frac{K_i\nu_i\Gamma(\nu_i)}{\Gamma(m+\nu_i+1)}u_2^{\nu_i+m}+2\sup_{v \in [u_2, u_1]}|F_{m}(v)|\,.
	\end{align*}
	Inserted into~\eqref{eq: Tauberian integration by parts bound}, this yields
	\begin{equation*}
	\begin{aligned}
		|\Gamma(\theta)F_{\gamma}(u)| &\leq u^{\theta-1}(1-u_1/u)^{\theta-1}|F_m(u_1)|+2u^{\theta-1}((1-u_1/u)^{\theta-1}-1)\sup_{v\in [u_2, u_1]}|F_m(v)|\\
		&\quad  + \frac{u^{\theta}(1-u_1/u)^\theta}{\theta}\sup_{v \in [u_1, u]}|F_{m-1}(v)|\\
		&\quad + 2u^{\theta-1}((1-u_1/u)^{\theta-1}-1)\sum_{i=1}^N\frac{K_i\nu_i\Gamma(\nu_i)}{\Gamma(m+\nu_i+1)}u_2^{\nu_i+m}\\
		&\lesssim_{\gamma}
		u^{\theta-1}(1-u_1/u)^{\theta-1}\sup_{v\in [u_2, u_1]}|F_m(v)| + u^{\theta}(1-u_1/u)^\theta\sup_{v \in [u_1, u]}|F_{m-1}(v)|\\
		&\quad + u^{\theta-1}(1-u_1/u)^{\theta-1}\sum_{i=1}^N|K_i|u_2^{\nu_i+m}\,.
	\end{aligned}
	\end{equation*}

	Since
	\begin{equation*}
		F_\kappa(u) = \frac{u^\kappa}{\Gamma(1+\kappa)}\int_{[0, u)} (1-v/u)^{\kappa}\,d\mu(v)
	\end{equation*}
	we can write this as
	\begin{align*}
		\biggl|\int_{[0, u)} (1-v/u)^\gamma\,d\mu(v)\biggr| &\lesssim_\gamma u^{-m}(1-u_1/u)^{\theta-1}\sup_{v\in [u_2, u_1]}\Bigl|v^m\int_{[0, v)}(1-s/v)^m\,d\mu(s)\Bigr| \\
		&\quad + u^{-m+1}(1-u_1/u)^\theta\sup_{v \in [u_1, u]}\Bigl|v^{m-1}\int_{[0, v)}(1-s/v)^{m-1}\,d\mu(s)\Bigr|\\
		&\quad + u^{-m}(1-u_1/u)^{\theta-1}\sum_{i=1}^N|K_i|u_2^{\nu_i+m}
	\end{align*}

	By Lemma~\ref{lem: Tauberian v1} applied with $\gamma=m$ and $\gamma=m-1$, there exist $B_m, B_{m-1}, k_m, k_{m-1}$ so that for all $k_1\geq k_m, k_2\geq k_{m-1}$,
	\allowdisplaybreaks
	\begin{align*}
		\biggl|\int_{[0, u)}& (1-v/u)^\gamma\,d\mu(v)\biggr|\\
		 &\lesssim_\gamma u^{-m}(1-u_1/u)^{\theta-1}\sup_{v\in [u_2, u_1]}v^m\biggl(B_m^{k_1}\max_{j=1, \ldots, k_1} |Q(j/v)|+ \frac{1}{k_1^{m+1}}\sum_{i=1}^N |K_i|v^{\nu_i}\biggr) \\
		&\quad + u^{-m+1}(1-u_1/u)^\theta\sup_{v \in [u_1, u]}v^{m-1}\biggl(B_{m-1}^{k_2}\max_{j=1, \ldots, k_2} |Q(j/v)|+ \frac{1}{k_2^{m}}\sum_{i=1}^N |K_i|v^{\nu_i}\biggr)\\
		&\quad + u^{-m}(1-u_1/u)^{\theta-1}\sum_{i=1}^N|K_i|u_2^{\nu_i+m}\\
		&\lesssim_\gamma (1-u_1/u)^{\theta-1}\sup_{\delta_1\in [\frac{u_2}u, \frac{u_1}u]}\delta_1^m\biggl(B_m^{k_1}\max_{j=1, \ldots, k_1} |Q(j/(\delta_1 u))|+ \frac{1}{k_1^{m+1}}\sum_{i=1}^N |K_i|(\delta_1u)^{\nu_i}\biggr) \\
		&\quad + (1-u_1/u)^\theta\sup_{\delta_2 \in [\frac{u_1}u, 1]}\delta_2^{m-1}\biggl(B_{m-1}^{k_2}\max_{j=1, \ldots, k_2} |Q(j/(\delta_2u))|+ \frac{1}{k_2^{m}}\sum_{i=1}^N |K_i|(\delta_2u)^{\nu_i}\biggr)\\
		&\quad + (1-u_1/u)^{\theta-1}\Bigl(\frac{u_2}{u}\Bigr)^m\sum_{i=1}^N|K_i|u_2^{\nu_i}\,.
	\end{align*}

	Choosing $k_1 = \delta_1 k$ and $k_2 = \delta_2 k$ with $k \geq \max\{k_m/\delta_1, k_{m-1}/\delta_2\}$ for each $\delta_1, \delta_2$ in the appropriate ranges we arrive at
	\begin{align*}
		\biggl|&\int_{[0, u)} (1-v/u)^\gamma\,d\mu(v)\biggr|\\
		 &\quad\lesssim_\gamma 
		 (1-u_1/u)^{\theta-1}\sup_{\delta_1\in [\frac{u_2}u, \frac{u_1}u]}\delta_1^m\biggl(B_m^{\delta_1k}\sup_{\delta_1^{-1}\leq s\leq k} |Q(s/u)|+ \frac{1}{\delta_1^{m+1}k^{m+1}}\sum_{i=1}^N |K_i|(\delta_1u)^{\nu_i}\biggr) \\
		&\qquad + (1-u_1/u)^\theta\sup_{\delta_2 \in [\frac{u_1}u, 1]}\delta_2^{m-1}\biggl(B_{m-1}^{\delta_2k}\sup_{\delta_2^{-1}\leq s\leq k} |Q(s/u)|+ \frac{1}{\delta_2^mk^{m}}\sum_{i=1}^N |K_i|(\delta_2u)^{\nu_i}\biggr)\\
		&\qquad + (1-u_1/u)^{\theta-1}\Bigl(\frac{u_2}{u}\Bigr)^m\sum_{i=1}^N|K_i|u_2^{\nu_i}\,.
	\end{align*}
	Further choosing $u_1 = u- u/k$ and $u_2=k_m u/k$ which is ok if $k \geq k_{m-1}+1$,  we find
	\begin{align*}
		\biggl|\int_{[0, u)}& (1-v/u)^\gamma\,d\mu(v)\biggr|\\
		 &\lesssim_\gamma 
		 \sup_{\delta_1\in [k_m/k, 1-1/k]}\biggl(k^{1-\theta}\delta_1^mB_m^{\delta_1k}\sup_{\delta_1^{-1}\leq s\leq k} |Q(s/u)|+ \frac{1}{\delta_1k^{\gamma+1}}\sum_{i=1}^N |K_i|(\delta_1u)^{\nu_i}\biggr) \\
		&\quad + \sup_{\delta_2 \in [1-1/k, 1]}\biggl(k^{-\theta}\delta_2^{m-1}B_{m-1}^{\delta_2k}\sup_{\delta_2^{-1}\leq s\leq k} |Q(s/u)|+ \frac{1}{\delta_2k^{\gamma+1}}\sum_{i=1}^N |K_i|(\delta_2u)^{\nu_i}\biggr)\\
		&\quad + \frac{1}{k^\gamma}\sum_{i=1}^N|K_i|(u/k)^{\nu_i}\\
		&\lesssim_\gamma 
		 \Bigl[k^{1-\theta}B_m^{k}+k^{-\theta}B_{m-1}^{k}\Bigr]\sup_{1\leq s\leq k} |Q(s/u)|+ \frac{1}{k^{\gamma+1}}\sum_{i:\nu_i \geq 1} |K_i|u^{\nu_i}\\
		 &\quad +\frac{1}{k^{\gamma}}\sum_{i: \nu_i< 1} |K_i|(u/k)^{\nu_i}\\
		 &\lesssim_\gamma 
		 B^k\sup_{1\leq s\leq k} |Q(s/u)|+ \frac{1}{k^{\gamma+1}}\sum_{i:\nu_i \geq 1} |K_i|u^{\nu_i}+\frac{1}{k^{\gamma}}\sum_{i: \nu_i< 1} |K_i|(u/k)^{\nu_i}
	\end{align*}
	This completes the proof of Proposition~\ref{prop: Tauberian v2}.
\end{proof}

\begin{proof}[Proof of Corollary~\ref{cor: Tauberian cor}]
	By Proposition~\ref{prop: Tauberian v2} there exist constants $B, C, k_0$ such that for all $k \geq k_0$ and $u>0$
	\begin{align*}
		\biggl|\int_{[0, u)} (1-v/u)^\gamma d\mu(v)\biggr| &\leq B^k \sup_{1\leq s\leq k}\biggl|\int_{[0, \infty)} e^{-\frac{s v}u}\,d\mu(v)\biggr| + \frac{C}{k^{\gamma+1}}\sum_{i: \nu_i\geq 1} |K_i| u^{\nu_i}\\
        &\quad + \frac{C}{k^{\gamma}}\sum_{i: \nu_i< 1} |K_i| (u/k)^{\nu_i}\,.
    \end{align*}
    If we choose $k$ satisfying $k/u \leq c_2A$ then~\eqref{eq: exp Laplace transform bound} implies that
    \begin{equation}\label{eq: exp Laplace transform bound 2}
    \begin{aligned}
		\biggl|\int_{[0, u)} (1-v/u)^\gamma d\mu(v)\biggr| &\leq B^k c_1\sup_{1\leq s\leq k} e^{-(c_2u/s)^\epsilon} + \frac{C}{k^{\gamma+1}}\sum_{i: \nu_i\geq 1} |K_i| u^{\nu_i}\\
        &\quad+ \frac{C}{k^{\gamma}}\sum_{i: \nu_i< 1} |K_i| (u/k)^{\nu_i}\\
        &= B^k c_1 e^{-(c_2u/k)^\epsilon} + \frac{C}{k^{\gamma+1}}\sum_{i: \nu_i\geq 1} |K_i| u^{\nu_i}\\
        &\quad + \frac{C}{k^{\gamma}}\sum_{i: \nu_i< 1} |K_i| (u/k)^{\nu_i}\,,
	\end{aligned}
    \end{equation}
	where we used the fact that $(0, \infty) \ni x \mapsto e^{-(c_2 x)^\epsilon}$ is a decreasing function and so the supremum is attained at $s =k$. 
 
    To prove the claimed bound we want to choose $k = \max\{k_0, \lfloor (2\ln(B)^{-\frac{1}{1+\epsilon}}) (c_2u)^{\frac{\epsilon}{1+\epsilon}}\rfloor\}$ for an appropriately chosen $c'$. To justify this particular choice of $k$ we note that 
    \begin{equation*}
        \frac{k}{u} \leq \max\biggl\{\frac{k_0}{u}, \frac{c_2^{\frac{\epsilon}{1+\epsilon}}}{(2\ln(B))^{\frac{1}{1+\epsilon}}u^{\frac{1}{1+\epsilon}}}\biggr\}
    \end{equation*}
    and so $k/u \leq c_2A$ if 
    \begin{equation*}
        u \geq \frac{1}{c_2A}\max\Bigl\{k_0, \frac{1}{2A^{\epsilon}\ln(B)}\Bigr\}\,.
    \end{equation*}
    Thus the choice is justified for all $u \geq \frac{D}{c_2A}(1+A^{-\epsilon})$ for any $D \geq \max\bigl\{k_0, \frac{1}{2\ln(B)}\bigr\}$. In particular, if the bound on the Laplace transform of $\mu$ is valid for all $t>0$ our choice of $k$ is justified for all $u>0$.
    
    Plugging the choice of $k$ into~\eqref{eq: exp Laplace transform bound 2} yields,
	\begin{align*}
		\biggl|\int_{[0, u)} (1-v/u)^\gamma d\mu(v)\biggr| &\lesssim_{\epsilon, \gamma, \{\nu_i\}} (1+c_2 u)^{-\frac{(1+\gamma)\epsilon}{1+\epsilon}}\Bigl(c_1+ \sum_{i: \nu_i \geq 1}|K_i|u^{\nu_i}\\
  &\quad + \sum_{i:\nu_i<1} |K_i|u^{\nu_i}(c_2u)^{\frac{(1-\nu_i)\epsilon}{1+\epsilon}}\Bigr)\,,
	\end{align*}
    for all $u > \frac{D}{c_2A}(1+A^{-\epsilon})$.
\end{proof}


\section{A Tauberian theorem for the Fourier transform}\label{sec: Fourier tauber}

Let $\mu$ be a locally finite signed Borel measure on $\R$. Assume that $T_\mu$, the distribution associated with $\mu$ through
\begin{equation*}
	T_\mu(\psi) = \int_\R \psi(\tau)\,d\mu(\tau), \quad \psi\in \mathcal{S}(\R),
\end{equation*}
is a temperate distribution. There exists a unique function $N_\mu \in BV_{loc}(\R)$ satisfying that
\begin{enumerate}
 	\item the distributional derivative of $N_\mu$ is $T_\mu$,
 	\item $N_\mu(0)=0$, and 
 	\item for all $\tau \in \R$
 	\begin{equation*}
	\lim_{\epsilon \to 0} \frac{N_\mu(\tau+\epsilon)+N_\mu(\tau-\epsilon)}{2} = N_\mu(\tau)\,.
\end{equation*}
\end{enumerate}

Explicitly, we can write the \emph{regular distribution function} $N_\mu$ as
\begin{equation*}
	N_\mu(\tau) = 
    \begin{cases}
        \frac{\mu([0, \tau])+\mu([0, \tau))}{2} & \text{if}\ \tau> 0 \,,\\
        0   & \text{if}\ \tau=0 \,,\\
        - \frac{\mu([\tau, 0])+\mu((\tau, 0])}{2} & \text{if}\ \tau< 0 \,.
    \end{cases}
\end{equation*}

We are interested in the asymptotic behaviour of $N_\mu(\tau)$ as $\tau \to \infty$ and also in the asymptotics of the quantities
\begin{equation*}
	R_\mu^\gamma(\tau) := \frac{2\gamma}{\tau}\int_0^\tau \Bigl(1- \frac{\sigma^2}{\tau^2}\Bigr)^{\gamma-1}\frac{\sigma}{\tau} N_\mu(\sigma)\,d\sigma
\end{equation*}
for $\gamma >0$. Note that we can also write this as
\begin{equation*}
	R_\mu^\gamma(\tau)=\int_{[0, \tau]} \Bigl(1- \frac{\sigma^2}{\tau^2}\Bigr)^\gamma \,d\mu(\sigma)\,.
\end{equation*}
If $f(x) = N_\mu(\sqrt{x})$ for $x\geq 0$, then $R_\mu^\gamma(\tau)$ is the $\gamma$-Riesz mean of $f$ evaluated at $\tau^2$, up to multiplication by an explicit factor; specifically, $R_\mu^\gamma(\tau) = \Gamma(\gamma+1)\tau^{-2\gamma}f^{(\gamma)}(\tau^2)$ in the notation of Section \ref{sec: Riesz convexity}.

Our aim is to show that the asymptotic behaviour of $N_\mu$ or $R_\mu^\gamma$ can be computed in terms of the corresponding quantity for a second measure $\nu$ under the assumption that the distributional Fourier transforms of $T_\mu$ and $T_\nu$ agree on an interval around zero, or more generally, that their difference is small on this interval.

Fix a function $\phi \in \mathcal{S}(\R)$ such that
\begin{enumerate}
	\item $\phi \geq 0$,
	\item $\phi$ is even,
	\item $\supp \hat \phi \subseteq [-1, 1]$,
	\item $\int_\R \phi(\tau)\,d\tau = 1$.
\end{enumerate}
For $a>0$ define $\phi_a$ by $\phi_a(\tau) := a^{-1}\phi(\tau/a)$. Note that $\hat \phi_a(t) = \hat \phi(a t)$ and consequently $\supp \hat \phi_a \subset [-1/a, 1/a]$. In particular, if $\hat T_\mu = \hat T_\nu$ on $(-1/a, 1/a)$, then the above assumptions on $\phi$ imply that $\phi_a*T_\mu = \phi_a*T_\nu$.

Our aim in this section is to prove the following result:

\begin{theorem}\label{thm: Fourier Tauberian}
	Let $\mu, \nu$ be two signed measures satisfying that
	\begin{enumerate}
		\item $N_\mu$ is nondecreasing,
		\item the distribution functions $N_\mu, N_\nu$ are odd,
		\item $|T_\nu(\psi)| \leq M_0 \int_\R |\psi(\tau)|(a_0+|\tau|)^{\alpha}\,d\tau$ for all $\psi\in C_0^\infty(\R)$,
	 	\item $|\phi_a*(T_\mu-T_\nu)(\tau)|\leq M_1(a_1+|\tau|)^\beta$ for all $\tau\in \R$, and
	\end{enumerate} 
    for some $\alpha \geq \beta \geq 0$ and $a_0, a_1 \geq a$.
	If $m \in \N$ there exists a constant $C$ depending only on $m, \phi, \alpha, \beta$ such that for all $\tau \geq 0$
    \begin{align*}
	   \bigl|R_{\mu}^m(\tau) -R_{\nu}^m(\tau)\bigr| \leq C(M_0a^{1+m}(a_0+|\tau|)^{\alpha-m}+M_1 (a+|\tau|)(a_1+|\tau|)^{\beta})\,.
    \end{align*}
\end{theorem}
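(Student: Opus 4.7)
The approach is to convert the difference of Riesz means into a distribution pairing with the Riesz kernel and then exploit the mollifier $\phi_a$ together with the monotonicity hypothesis. Since $N_\mu$ and $N_\nu$ are odd, both $T_\mu$ and $T_\nu$ are even distributions, so for $m \in \N$ the Riesz mean may be written as
\begin{equation*}
2(R_\mu^m(\tau) - R_\nu^m(\tau)) = (T_\mu - T_\nu)(\chi_\tau),
\qquad \chi_\tau(s) := (1 - s^2/\tau^2)_\limplus^m.
\end{equation*}
Using the evenness of $\phi_a$, I would split
\begin{equation*}
(T_\mu - T_\nu)(\chi_\tau) = (\phi_a \ast (T_\mu - T_\nu))(\chi_\tau) + (T_\mu - T_\nu)(\chi_\tau - \phi_a \ast \chi_\tau).
\end{equation*}
The first term is bounded directly by hypothesis~(4):
\begin{equation*}
|(\phi_a \ast (T_\mu - T_\nu))(\chi_\tau)| \leq M_1 \int_{-\tau}^{\tau} (a_1 + |s|)^\beta \, ds \leq 2 M_1 \tau (a_1 + \tau)^\beta,
\end{equation*}
which contributes the announced $M_1$-term after absorbing $2\tau \leq 2(a+\tau)$.

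Next I would treat the second term, splitting it into $T_\mu$- and $T_\nu$-contributions. The $T_\nu$-contribution is controlled directly by hypothesis~(3):
\begin{equation*}
|T_\nu(\chi_\tau - \phi_a \ast \chi_\tau)| \leq M_0 \int |\chi_\tau(s) - (\phi_a \ast \chi_\tau)(s)| (a_0 + |s|)^\alpha \, ds.
\end{equation*}
The $T_\mu$-contribution uses the nonnegativity of $\mu$ (which follows from $N_\mu$ being nondecreasing) in a H\"ormander-style argument: one constructs a nonnegative majorant $B$ with $|\chi_\tau - \phi_a \ast \chi_\tau| \leq \phi_a \ast B$, so that
\begin{equation*}
\int |\chi_\tau - \phi_a \ast \chi_\tau| \, d\mu \leq \int B(s) \, (\phi_a \ast T_\mu)(s)\, ds,
\end{equation*}
and then invokes the derived bound $|(\phi_a \ast T_\mu)(s)| \lesssim M_0(a_0+|s|)^\alpha + M_1(a_1+|s|)^\beta$ obtained by combining hypotheses~(3) and~(4). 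In this way the whole problem is reduced to the weighted $L^1$-mollification estimate
\begin{equation*}
\int |\chi_\tau(s) - (\phi_a \ast \chi_\tau)(s)| (a_0 + |s|)^\alpha \, ds \leq C \, a^{m+1} (a_0 + \tau)^{\alpha - m},
\end{equation*}
together with a matching estimate for the majorant $B$.

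This last inequality is the technical heart of the proof and the main obstacle. A naive Taylor expansion that exploits only the evenness of $\phi$ — so that odd moments of $\phi_a$ vanish — yields merely an error of order $a^2 \|\chi_\tau''\|_\infty \sim a^2/\tau^2$ in the bulk region $|s| \ll \tau$, far short of the desired $a^{m+1}$. The extra gain must be extracted from the special structure of $\chi_\tau$ as an $m$-fold Riesz kernel: $\chi_\tau$ belongs to $C^{m-1}(\R)$ and vanishes to order $m$ at $\pm\tau$, which is reflected on the Fourier side by the Bessel decay $\hat\chi_\tau(\xi) \sim \tau^{-m} |\xi|^{-m-1}$. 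Combining this with $1 - \hat\phi(a\xi) = O((a\xi)^2)$ near the origin and the Schwartz tails of $\phi_a$ (which dispose of the boundary layer $|s| \sim \tau$), or alternatively iterating the identity $\tau^{2m} R^m(\tau) = \int_0^\tau 2m t^{2m-1} R^{m-1}(t)\,dt$ and repeatedly applying the $m=0$ Tauberian estimate to capture the oscillatory nature of $N_\mu - \phi_a \ast N_\mu$ when integrated against a smooth kernel, leads to the desired weighted bound. Assembling the pieces then yields the claim with the stated dependence of $C$ on $m$, $\phi$, $\alpha$, and $\beta$.
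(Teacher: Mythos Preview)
Your reduction hinges on the weighted $L^1$ mollification estimate
\[
\int_\R |\chi_\tau(s) - (\phi_a \ast \chi_\tau)(s)|\,(a_0 + |s|)^\alpha \, ds \leq C\, a^{m+1} (a_0 + \tau)^{\alpha - m},
\]
but this inequality is \emph{false} for $m \geq 2$. Take $a = a_0 = 1$, $\alpha = 0$, and $\tau$ large. In the bulk $|s| \leq \tau/2$ the leading term of the Taylor expansion gives $(\phi \ast \chi_\tau)(s) - \chi_\tau(s) = \tfrac12 \chi_\tau''(s) \int y^2 \phi(y)\,dy + O(\tau^{-4})$, and since $\chi_\tau''(s)$ is of order $\tau^{-2}$ with a \emph{fixed sign} there, the integrand is $\sim c\tau^{-2}$ with $c > 0$. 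Integrating over an interval of length $\sim \tau$ gives a lower bound $\sim \tau^{-1}$, whereas your claimed right-hand side is $\sim \tau^{-m}$. The Fourier argument you sketch cannot rescue this: once you have taken the absolute value inside the integral, no amount of decay of $\widehat{\chi_\tau}$ will recover the lost cancellation. Your alternative suggestion --- iterating $\tau^{2m} R^m(\tau) = \int_0^\tau 2m t^{2m-1} R^{m-1}(t)\,dt$ --- points in a viable direction, but it would have to replace the $L^1$ reduction entirely rather than prove it, and you have not carried this out.

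The paper's proof avoids this trap by never taking absolute values prematurely. Instead it develops an iterated integration-by-parts identity (Lemma~\ref{lem: Integration by parts identity} and Proposition~\ref{prop: itterated Safarov identity}) that rewrites $R_{\mu}^m(\tau)$ as a finite sum of terms of the form $\tau^{-j} \int_0^\tau G_m^{(j)}(\sigma/\tau)\, \phi \ast N_\mu(\sigma)\,d\sigma$, $\tau^{-m} \int_0^\tau G_m^{(m)}(\sigma/\tau)\, \phi_{k}\ast T_\mu(\sigma)\,d\sigma$, and boundary terms $\tau^{-m} \phi_k \ast N_\mu(\tau)$, where the $\phi_k$ are explicit antiderivatives of $\phi$ corrected to lie in $\mathcal S(\R)$. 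The factor $\tau^{-m}$ appears \emph{before} any estimation, and only then does one apply the $m=0$ H\"ormander bound \eqref{eq: Hormanders bound} to $N_\mu - N_\nu$. This is the mechanism that produces $a^{m+1}(a_0+\tau)^{\alpha-m}$; your decomposition via $\chi_\tau - \phi_a \ast \chi_\tau$ cannot reach it.
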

\begin{remark}
    The case $m=0$ of Theorem~\ref{thm: Fourier Tauberian} is the content of \cite[Lemma 17.5.6]{Hormander_bookIII}. The special case where $m=1$ and where $\nu$ is absolutely continuous with a monomial density is proved in \cite{Safarov01}, disregarding the issue discussed in Remark \ref{safarovrem}.
\end{remark}

\begin{remark}
	For us the important case is when $\hat T_\mu = \hat T_\nu$ on $(-1/a,1/a)$. In this case assumption (4) in Theorem \ref{thm: Fourier Tauberian} holds with $M_1=0$.
\end{remark}

By combining Theorem \ref{thm: Fourier Tauberian} with Riesz's concavity theorem (Proposition~\ref{prop: Riesz logconvexity}) we can also get a bound for non-integer $\gamma$.

\begin{corollary}\label{cor: Fourier Tauberian general gamma}
    Let $\mu, \nu$ be as in Theorem~\ref{thm: Fourier Tauberian}. For any $\gamma\geq 0$ there exists $C$ depending only on $\gamma, \phi, \alpha, \beta$ such that for all $\tau \geq 0$
    \begin{align*}
	   \bigl|R_{\mu}^\gamma(\tau) &-R_{\nu}^\gamma(\tau)\bigr|\\ &\leq C\Bigl(M_0a^{1+\gamma}(a_0+|\tau|)^{\alpha-\gamma}+M_1 (a+|\tau|)(a_1+|\tau|)^{\beta}\\
        &\quad + (M_0a^{1+\gamma}(a_0+\tau)^{\alpha-\gamma})^{1-\{\gamma\}} (M_1 (a+\tau)(a_1+\tau)^\beta)^{\{\gamma\}}(a^{-1}(a_0+\tau))^{\{\gamma\}(1-\{\gamma\})}\Bigr)\,,
    \end{align*}
    where $\{\gamma\}$ denotes the fractional part of $\gamma$, that is the unique number in $[0, 1)$ so that $\gamma -\{\gamma\} \in \Z$.
\end{corollary}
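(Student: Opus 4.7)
The plan is to interpolate the integer-order bounds from Theorem~\ref{thm: Fourier Tauberian} at $m := \lfloor \gamma \rfloor$ and $m+1 = \lceil \gamma \rceil$ via the log-convexity of Riesz means in Proposition~\ref{prop: Riesz logconvexity}. If $\gamma \in \N \cup \{0\}$ the conclusion is already contained in Theorem~\ref{thm: Fourier Tauberian}, so I may assume $t := \{\gamma\} \in (0,1)$.

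First I would introduce the auxiliary function $\phi_\mu(x) := N_\mu(\sqrt{x}) - N_\nu(\sqrt{x})$ for $x \geq 0$. Since $N_\mu$ is nondecreasing and $N_\nu$ has polynomially bounded derivative by hypothesis~(3), $\phi_\mu$ is bounded and measurable on every bounded interval, so the Riesz transforms $\phi_\mu^{(\kappa)}$ of Section~\ref{sec: Riesz convexity} are well-defined. A single integration by parts in the definition of $R_\mu^\kappa$, followed by the substitution $x = \sigma^2$, yields the key identity
\begin{equation*}
    \phi_\mu^{(\kappa)}(\tau^2) = \frac{\tau^{2\kappa}}{\Gamma(\kappa+1)}\bigl(R_\mu^\kappa(\tau) - R_\nu^\kappa(\tau)\bigr), \qquad \kappa>0,\ \tau>0,
\end{equation*}
and the semigroup property~\eqref{eq: semigroup Riesz prop} gives $(\phi_\mu^{(m)})^{(t)} = \phi_\mu^{(\gamma)}$.

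Writing
\begin{equation*}
    B_k(\tau) := C\bigl(M_0 a^{1+k}(a_0+\tau)^{\alpha-k} + M_1(a+\tau)(a_1+\tau)^\beta\bigr)
\end{equation*}
for the bound provided by Theorem~\ref{thm: Fourier Tauberian} at integer order $k$, the identity above together with the (easily checked) monotonicity of $\tau \mapsto \tau^{2k}(a_0+\tau)^{\alpha-k}$ and $\tau \mapsto \tau^{2k}(a+\tau)(a_1+\tau)^\beta$ on $[0,\infty)$ upgrades the pointwise integer-order estimate to
\begin{equation*}
    \sup_{\Lambda \in [0,\tau^2]} |\phi_\mu^{(k)}(\Lambda)| \leq \frac{\tau^{2k}}{\Gamma(k+1)}\, B_k(\tau),\qquad k \in \{m, m+1\}.
\end{equation*}
Proposition~\ref{prop: Riesz logconvexity}, applied to $\phi_\mu^{(m)}$ with the roles of $\sigma$ and $\gamma$ there taken to be $t$ and $1$, followed by the semigroup identity and multiplication by $\Gamma(\gamma+1)\tau^{-2\gamma}$, then delivers
\begin{equation*}
    |R_\mu^\gamma(\tau) - R_\nu^\gamma(\tau)| \leq C\, B_m(\tau)^{1-t}\, B_{m+1}(\tau)^{t}.
\end{equation*}

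The final step is an algebraic expansion. Setting $P := M_0 a^{1+\gamma}(a_0+\tau)^{\alpha-\gamma}$, $Q := M_1(a+\tau)(a_1+\tau)^\beta$, and $r := (a_0+\tau)/a \geq 1$, one verifies the clean factorization $B_m(\tau) = P r^{t} + Q$ and $B_{m+1}(\tau) = P r^{-(1-t)} + Q$. Applying the subadditivity $(A+B)^{s} \leq A^s + B^s$ for $s \in [0,1]$ to both factors expands the product $B_m^{1-t}B_{m+1}^t$ into four cross terms, of which three match the statement directly,
\begin{equation*}
    (Pr^t)^{1-t}(Pr^{-(1-t)})^t = P,
    \quad
    Q^{1-t}Q^t = Q,
    \quad
    (Pr^t)^{1-t}Q^t = P^{1-t}Q^t\, r^{t(1-t)},
\end{equation*}
while the remaining term satisfies $Q^{1-t}(Pr^{-(1-t)})^t = P^t Q^{1-t} r^{-t(1-t)} \leq P^t Q^{1-t} \leq P + Q$ by $r \geq 1$ and Young's inequality. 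Summing these yields a bound of the advertised form $C(P + Q + P^{1-t}Q^t r^{t(1-t)})$. The only delicate point in the argument is this bookkeeping at the end: the interpolation naturally produces \emph{four} cross terms while the target bound has only \emph{three}, and the unwanted fourth term must be absorbed using both $r \geq 1$ and Young's inequality. Everything else is a direct combination of previously established tools.
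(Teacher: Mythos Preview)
Your proof is correct and follows essentially the same route as the paper: both introduce the auxiliary function $f(x)=N_\mu(\sqrt{x})-N_\nu(\sqrt{x})$, invoke Theorem~\ref{thm: Fourier Tauberian} at the integer orders $m=\lfloor\gamma\rfloor$ and $m+1$, use the monotonicity of $\tau\mapsto\tau^{2k}(a_0+\tau)^{\alpha-k}$ to pass to suprema, and then interpolate via Proposition~\ref{prop: Riesz logconvexity}. The only cosmetic difference is the final algebra: the paper bounds $B_m^{1-t}B_{m+1}^{t}$ by replacing each sum by a $\max$ and doing a short case analysis (if the second max is attained by the $M_0$-term then so is the first, since $a(a_0+\tau)^{-1}\le 1$), while you instead use subadditivity $(A+B)^{s}\le A^{s}+B^{s}$ to expand into four cross terms and absorb the extra one via $r\ge 1$ and Young's inequality---both routes yield the same three-term bound.
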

\begin{remark}
    If $\gamma \notin (\lfloor \alpha-\beta-1\rfloor, \lceil \alpha-\beta-1\rceil)$ then the third term in the bound is insignificant in the limit $\tau \to \infty$. While we believe that the first two terms are necessary, it is unclear to us if one might argue differently and prove a bound for $\gamma \notin\N$ without this additional third term (corresponding to the bound for $\gamma \in \N$ of Theorem~\ref{thm: Fourier Tauberian}). Since in this paper we will apply Corollary \ref{cor: Fourier Tauberian general gamma} only with $M_1=0$, where the third term vanishes, we did not explore this further.
\end{remark}

\begin{proof}[Proof of Corollary~\ref{cor: Fourier Tauberian general gamma}] 
    If $\gamma \in \N\cup \{0\}$ the claimed bound is exactly Theorem~\ref{thm: Fourier Tauberian}. Therefore, without loss of generality we assume that $\gamma \notin \N$.

    Set $f(s) := N_\mu(\sqrt{s})-N_\nu(\sqrt{s})$. As observed above, a change of variables in the integrals defining $R_\mu^\gamma$ and $R_\nu^\gamma$ implies that
    \begin{equation*}
        R^\gamma_\mu(\tau)-R_\nu^\gamma(\tau) = \frac{\Gamma(\gamma+1)}{\tau^{2\gamma}} f^{(\gamma)}(\tau^2)\,.
    \end{equation*}
    In particular, for any $m \in \N$ Theorem~\ref{thm: Fourier Tauberian} implies that
    \begin{equation*}
        |f^{(m)}(s)| \leq C s^m (M_0 a^{1+m}(a_0+\sqrt{s})^{\alpha-m}+M_1 (a+\sqrt{s})(a_1+\sqrt{s})^\beta)
    \end{equation*}
    with $C$ depending only on $m, \phi, \alpha, \beta$. 
    
    Write $\gamma = m + \kappa$ with $m \in \N \cup \{0\}, \kappa \in [0, 1)$ (i.e.\ $\kappa = \{\gamma\}$). By Proposition~\ref{prop: Riesz logconvexity} and the semigroup property of Riesz means
    \begin{align*}
        |R^\gamma_\mu(\tau)-&R_\nu^\gamma(\tau)|\\ &=  \frac{\Gamma(\gamma+1)}{\tau^{2\gamma}}|f^{(\gamma)}(\tau^2)|\\
        &\leq  \frac{C}{\tau^{2\gamma}}\biggl[\sup_{0\leq s\leq \tau^2}|f^{(m)}(s)|\biggr]^{1-\kappa}\biggl[\sup_{0\leq s\leq \tau^2}|f^{(m+1)}(s)|\biggr]^{\kappa}\\
        &\leq   \frac{C}{\tau^{2\gamma}}\biggl[\sup_{0\leq s\leq \tau^2}\Bigl(s^{m} (M_0 a^{1+m}(a_0+\sqrt{s})^{\alpha-m}+M_1 (a+\sqrt{s})(a_1+\sqrt{s})^\beta)\Bigr)\biggr]^{1-\kappa}\\
        &\quad \times \biggl[\sup_{0\leq s\leq \tau^2}\Bigl(s^{m+1} (M_0 a^{m+2}(a_0+\sqrt{s})^{\alpha-m-1}+M_1 (a+\sqrt{s})(a_1+\sqrt{s})^\beta)\Bigr)\biggr]^{\kappa}\\
        &= C\Bigl(M_0 a^{1+m}(a_0+\tau)^{\alpha-m}+M_1 (a+\tau)(a_1+\tau)^\beta\Bigr)^{1-\kappa}\\
        &\quad \times \Bigl( M_0 a^{2+m}(a_0+\tau)^{\alpha-m-1}+M_1 (a+\tau)(a_1+\tau)^\beta\Bigr)^{\kappa}\,,
    \end{align*}
    where $C$ depends on $\gamma, \phi, \alpha, \beta$ and the final step uses the fact that $s \mapsto s^k (a_0+\sqrt{s})^{\alpha-k}$ is monotone increasing if $k \in \N\cup \{0\}$ and $\alpha \geq 0$. 
    
    In particular, the previous bound implies that
    \begin{align*}
        |R^\gamma_\mu(\tau)-R_\nu^\gamma(\tau)| 
        &\leq  C \max\Bigl\{M_0 a^{1+m}(a_0+\tau)^{\alpha-m}, M_1 (a+\tau)(a_1+\tau)^\beta\Bigr\}^{1-\kappa}\\
        &\quad \times \max\Bigl\{ M_0 a^{1+m}(a_0+\tau)^{\alpha-m}a (a_0+\tau)^{-1}, M_1 (a+\tau)(a_1+\tau)^\beta\Bigr\}^{\kappa}\,.
    \end{align*}
    Since $a_0\geq a, \tau \geq 0$ the factor $a(a_0+\tau)^{-1}\leq 1$ and so if the second maxima is attained by the first quantity the same holds for the first maxima. Therefore, 
    \begin{align*}
        |R^\gamma_\mu(\tau)-R_\nu^\gamma(\tau)| 
        &\leq  C \Bigl(M_0 a^{1+\gamma}(a_0+\tau)^{\alpha-\gamma}+ M_1 (a+\tau)(a_1+\tau)^\beta\\
        & +(M_0a^{1+\gamma}(a_0+\tau)^{\alpha-\gamma})^{1-\kappa} (M_1 (a+\tau)(a_1+\tau)^\beta)^{\kappa}(a^{-1}(a_0+\tau))^{\kappa(1-\kappa)} \Bigr).
    \end{align*}
    This completes the proof of the corollary.
\end{proof}

We now turn our attention to the proof of Theorem~\ref{thm: Fourier Tauberian}. One of the main ingredients in its proof is an integral identity that is recorded in Lemma~\ref{lem: Integration by parts identity}. Our aim is to apply this identity to the integral defining $R_\mu^\gamma$. In order to do this, we need to first introduce some auxiliary functions that appear in the analysis.

Fix $\chi \in C_0^\infty(\R)$ with $\chi \geq 0$, $\chi$ even, $\int_\R \chi(\tau)\,d\tau =1$, and $\supp \chi \subseteq [-1, 1]$. For $\epsilon>0$ define
\begin{equation*}
	\chi_\epsilon(\tau) := \epsilon^{-1}\chi\Bigl(\frac{\tau}{\epsilon}\Bigr)\,.
\end{equation*}

For $\epsilon>0$ define
\begin{equation*}
	\phi_{0, \epsilon}(\tau) := \phi(\tau)\,.
\end{equation*}
For $k \geq 1, \epsilon>0$ define $\phi_{k, \epsilon}\in \mathcal{S}(\R)$ by the property
\begin{equation*}
 	\phi_{k, \epsilon}'(\tau) := \phi_{k-1, \epsilon}(\tau) - \Bigl(\int_\R \phi_{k-1,\epsilon}(\sigma)\,d\sigma \Bigr) \chi_\epsilon(\tau) \,,
 \end{equation*} 
which is equivalent to setting
\begin{equation*}
	\phi_{k, \epsilon}(\tau) := \int_{-\infty}^\tau \Bigl(\phi_{k-1, \epsilon}(\sigma)-\Bigl(\int_\R \phi_{k-1, \epsilon}(\sigma')\,d\sigma' \Bigr)\chi_\epsilon(\sigma)\Bigr)\,d\sigma \,.
\end{equation*}

Since $\phi, \chi$ are assumed to be even, it follows that $\phi_{k, \epsilon}$ is odd for odd $k$ and even for even $k$. In particular, for even $k$ the defining equation simplifies to $\phi_{k,\epsilon}' = \phi_{k-1, \epsilon}$.

With these definitions we record the following lemma. When $m=1$ the statement (after taking the limit $\epsilon\to 0^\limplus$) corresponds to \cite[Theorem 1.4]{Safarov01}. Our new observation is that this idea can be iterated.

\begin{lemma}\label{lem: Integration by parts identity}
	Let $\{\phi_{k,\epsilon}\}_{k\geq 0}, \chi_\epsilon$ be defined as above. Define recursively
	\begin{equation*}
		b_0 := 1 \quad \mbox{and} \quad b_m := (-1)^{m-1} \sum_{\substack{j=0\\j \text{ even}}}^{m-1} b_j \int_\R \phi_{m-j}(\sigma)\,d\sigma \quad \mbox{for }m\in \N\,.
	\end{equation*}
    Then $b_m=0$ when $m$ is odd. Moreover, the following holds for every $m\in\N_0$ and $\tau>0$. For any $u \in \mathcal{S}'(\R)$ and $f \in C^m(0, \tau)$ for which the limits
	\begin{equation*}
	f^{(j)}(0^+):=\lim_{\sigma \to 0^+}f^{(j)}(\sigma) \quad \mbox{and}\quad f^{(j)}(\tau^-):=\lim_{\sigma \to \tau^-}f^{(j)}(\sigma)
	\end{equation*}
	exist for all $0 \leq j \leq m-1$, it holds that
	\begin{align*}
		\int_0^\tau f(\sigma)&\chi_\epsilon*u(\sigma)\,d\sigma\\ 
		&=
		\sum_{\substack{j=0\\j \text{ even}}}^m b_j \int_0^\tau f^{(j)}(\sigma)\phi_{0,\epsilon}*u(\sigma)\,d\sigma\\
		&\quad 
		-(-1)^{m}\sum_{\substack{j=0\\j \text{ even}}}^{m} b_{j} \int_0^\tau f^{(m)}(\sigma)\phi_{m+1-j,\epsilon}*u'(\sigma)\,d\sigma\\
		&\quad 
		- \sum_{k=0}^{m-1}\sum_{\substack{j=0\\j \text{ even}}}^{k}(-1)^{k}b_{j} \Bigl[f^{(k)}(\tau^-)\phi_{k+1-j,\epsilon}*u(\tau)-f^{(k)}(0^+)\phi_{k+1-j,\epsilon}*u(0)\Bigr]\,.
	\end{align*}
\end{lemma}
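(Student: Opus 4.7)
The proof proceeds by induction on $m$, with a short preliminary step to handle the parity facts. First I would establish by induction on $k$ that $\phi_{k,\epsilon}$ inherits the parity $(-1)^k$ (even for even $k$, odd for odd $k$) from the parities of $\phi$ and $\chi_\epsilon$: given that $\phi_{k-1,\epsilon}$ has parity $(-1)^{k-1}$, the integrand $\phi_{k-1,\epsilon}-(\int\phi_{k-1,\epsilon})\chi_\epsilon$ in the definition of $\phi_{k,\epsilon}$ has the same parity and integrates to zero over $\R$, and one checks directly that its antiderivative vanishing at $-\infty$ then has parity $(-1)^k$. Consequently $\int_\R\phi_{k,\epsilon}(\sigma)\,d\sigma=0$ for every odd $k$, so that when $m$ is odd every summand in the recursive definition of $b_m$ contains a factor $\int\phi_{m-j,\epsilon}$ with $m-j$ odd and hence vanishes; this gives $b_m=0$ for odd $m$.

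The base case $m=0$ is essentially the defining relation for $\phi_{1,\epsilon}$. Since $\int\phi_{0,\epsilon}=1$, that relation reads $\phi_{1,\epsilon}'=\phi_{0,\epsilon}-\chi_\epsilon$; convolving with $u$ and using $(\phi_{1,\epsilon}*u)'=\phi_{1,\epsilon}*u'$ yields $\chi_\epsilon*u=\phi_{0,\epsilon}*u-\phi_{1,\epsilon}*u'$, which after multiplying by $f$ and integrating on $[0,\tau]$ is exactly the claim at $m=0$.

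For the inductive step, assume the identity for $m$. I would transform the bulk term $-(-1)^m\sum_{j\,\textup{even}\le m}b_j\int_0^\tau f^{(m)}(\sigma)\,\phi_{m+1-j,\epsilon}*u'(\sigma)\,d\sigma$ in two stages. First, write $\phi_{m+1-j,\epsilon}*u'=(\phi_{m+1-j,\epsilon}*u)'$ and integrate by parts in $\sigma$; the boundary contributions at $\sigma=0^+$ and $\sigma=\tau^-$ fit together (with their signs and powers of $-1$) to produce exactly the new $k=m$ outer slice of the third sum at level $m+1$, while the interior term is $(-1)^m\sum b_j\int_0^\tau f^{(m+1)}(\sigma)\,\phi_{m+1-j,\epsilon}*u(\sigma)\,d\sigma$. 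Second, apply the defining recursion rearranged as $\phi_{m+1-j,\epsilon}=\phi_{m+2-j,\epsilon}'+(\int\phi_{m+1-j,\epsilon})\chi_\epsilon$ and use $\phi_{m+2-j,\epsilon}'*u=\phi_{m+2-j,\epsilon}*u'$. The first piece supplies the bulk term at level $m+1$ for all indices $j\le m$; the residue is a multiple of $\int_0^\tau f^{(m+1)}\chi_\epsilon*u$ with coefficient $(-1)^m\sum_{j\,\textup{even}\le m}b_j\int\phi_{m+1-j,\epsilon}$. When $m$ is even each $m+1-j$ is odd so the residue vanishes and the new first sum acquires no $j=m+1$ contribution (consistently, $b_{m+1}=0$). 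When $m$ is odd the residue survives, and by the recursive definition of $b_{m+1}$ its coefficient equals $b_{m+1}$ exactly; applying the already proved $m=0$ identity with $f$ replaced by $f^{(m+1)}$ rewrites $b_{m+1}\int_0^\tau f^{(m+1)}\chi_\epsilon*u$ as $b_{m+1}\int_0^\tau f^{(m+1)}\phi_{0,\epsilon}*u-b_{m+1}\int_0^\tau f^{(m+1)}\phi_{1,\epsilon}*u'$, which are precisely the missing $j=m+1$ terms in the first and in the bulk sums of the identity at level $m+1$.

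The only real obstacle is combinatorial: tracking the ranges of $j$ and the signs across the two parity cases and verifying the key algebraic identity that the residue coefficient produced by the transformation is literally $b_{m+1}$. This is by design rather than by coincidence: the recursive definition of $b_m$ is set up so that the $\chi_\epsilon*u$ residues generated at each step of the induction consolidate into a single $b_{m+1}\chi_\epsilon*u$ contribution that can be dispatched by the base case. Once this is recognized, both parity cases of the induction reduce to direct comparisons of finite sums, so no further analysis is required.
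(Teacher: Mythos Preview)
Your proposal is correct and follows essentially the same route as the paper. Both proofs proceed by induction on $m$, with the base case coming from the identity $\chi_\epsilon*u=\phi_{0,\epsilon}*u-\phi_{1,\epsilon}*u'$, and the inductive step driven by integration by parts together with the relation $\phi_{k,\epsilon}*u=\phi_{k+1,\epsilon}*u'+(\int\phi_{k,\epsilon})\chi_\epsilon*u$. The only organizational difference is that the paper first packages these two moves (plus one more application of the base identity) into a standalone formula for $\int_0^\tau f\,\phi_{k,\epsilon}*u'$ and then applies that formula in one stroke during the induction, whereas you perform the integration by parts and the substitution as two explicit stages inside the inductive step and only then invoke the $m=0$ identity on the $\chi_\epsilon*u$ residue; the resulting computations are identical.
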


In the statement of the lemma, we use the convention that $\sum_0^{m-1}\ldots = 0$ when $m=0$. Similarly, the assumption on $f^{(j)}$ for $0\leq j\leq m-1$ is void in this case.

\begin{remark}\label{rem:bkexplicit}
    The recursion defining $b_m$ can be solved and we find
    $$
    b_m := \sum_{j=1}^m (-1)^j \sum_{\substack{(k_1, \ldots, k_j)\in \N^j\\ \sum_{i=1}^j k_i = m}}\prod_{i=1}^j\biggl(\int_\R \phi_{k_i, \epsilon}(\sigma)\,d\sigma \biggr)\quad \mbox{for }m\in \N\,.
    $$
\end{remark}

We defer the proof of Lemma~\ref{lem: Integration by parts identity} to the following subsection and continue towards the proof of Theorem \ref{thm: Fourier Tauberian}.

Define
$$
G_\gamma(\tau) := (1-\tau^2)^{\gamma-1}\tau 
\qquad\text{for}\ \tau\in(-1,1) \,.
$$
Note that $G_\gamma \colon (-1, 1)\to \R$ is an odd function that is continuously differentiable to all orders on $(-1, 1)$ with
\begin{equation*}
	G_\gamma(0)=0 \quad \mbox{and} \quad G_\gamma^{(2j)}(0) = 0 \quad \mbox{for all }j \in \N\,,
\end{equation*}
and furthermore
\begin{equation*}
	 \lim_{\tau \to 1^-}G_\gamma^{(j)}(\tau) =0 \quad \mbox{for all }j \in \N \cap \bigl[0, \gamma-1\bigr)\,.
\end{equation*}
For higher derivatives there is a singularity as $\tau \to 1^-$ unless $\gamma$ is an integer.
For $j = \gamma-1 \in \N$ we have $\lim_{\tau\to 1^-}G_\gamma^{(j)}(\tau)=(-2)^{j}j! = (-2)^{\gamma-1}(\gamma-1)!$.

The relevance of the function $G$ for us is that
\begin{equation*}
	R_\mu^\gamma(\tau) = \frac{2\gamma}{\tau}\int_0^\tau G_\gamma(\sigma/\tau)N_\mu(\sigma)\,d\sigma\,.
\end{equation*}
For technical reasons we will rather work with the smoothed quantities
\begin{equation*}
	R_{\mu,\epsilon}^\gamma(\tau) := \frac{2\gamma}{\tau}\int_0^\tau G_\gamma(\sigma/\tau)\chi_\epsilon*N_\mu(\sigma)\,d\sigma\,.
\end{equation*}
Note that in the sense of distributions $\chi_\epsilon \to \delta_0$ as $\epsilon \to 0$. In particular, the fact that $\lim_{\epsilon\to 0}\chi_\epsilon *N_\mu(\tau) = N_\mu(\tau)$ for all $\tau$, implies
$\lim_{\epsilon\to 0} R_{\mu,\epsilon}^\gamma(\tau) = R_{\mu}^\gamma(\tau)$.

The following identity will follow from Lemma \ref{lem: Integration by parts identity}, applied to the integral defining $R_{\mu, \epsilon}^m$.

\begin{proposition}\label{prop: itterated Safarov identity}
	Let $\mu$ be a signed measures such that the distribution function $N_\mu$ is odd. Then, with $\{\phi_{k, \epsilon}\}_{k\geq 0}$ defined as above and $m \in \N$,
	\begin{align*}
	R_{\mu, \epsilon}^m(\tau)
		&= 
		2m\sum_{\substack{j=0\\j \text{ even}}}^m b_j \int_0^\tau \tau^{-j-1}G_m^{(j)}(\sigma/\tau)\phi_{0,\epsilon}*N_\mu(\sigma)\,d\sigma\\
	&\quad - 2m\sum_{\substack{j=0\\j \text{ even}}}^{m} (-1)^{m}b_{j} \int_0^\tau \tau^{-m-1}G_m^{(m)}(\sigma/\tau)\phi_{m+1-j,\epsilon}*T_\mu(\sigma)\,d\sigma\\
		&\quad - \sum_{\substack{j=0\\ j\text{ even}}}^{m-1}2^{m}m!b_j \tau^{-m}\phi_{m-j,\epsilon}*N_\mu(\tau) \,.
\end{align*}
\end{proposition}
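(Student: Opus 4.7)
The natural approach is to apply Lemma~\ref{lem: Integration by parts identity} directly to the integral defining $R_{\mu,\epsilon}^m$, with the choice $f(\sigma)=G_m(\sigma/\tau)$ (regarded as a function of $\sigma$ with $\tau>0$ held fixed) and $u=N_\mu$, so that $u'=T_\mu$ in the sense of distributions. The lemma then expresses $\int_0^\tau G_m(\sigma/\tau)\,\chi_\epsilon*N_\mu(\sigma)\,d\sigma$ as a sum of three blocks: a bulk integral against $\phi_{0,\epsilon}*N_\mu$ weighted by derivatives $G_m^{(j)}$, a bulk integral against $\phi_{m+1-j,\epsilon}*T_\mu$ involving the top derivative $G_m^{(m)}$, and endpoint contributions at $\sigma=0^+$ and $\sigma=\tau^-$. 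Multiplying through by $2m/\tau$ and substituting $f^{(j)}(\sigma)=\tau^{-j}G_m^{(j)}(\sigma/\tau)$ reproduces the first two lines of the claimed identity verbatim.

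The substance of the argument is then to show that the boundary contribution collapses to the single sum appearing on the third line. This will follow entirely from the parity and vanishing properties of $G_m$ recorded immediately after the definition of $G_\gamma$. At $\sigma=0^+$, terms with $k$ even vanish because $G_m$ is odd, so $G_m^{(k)}(0)=0$; for $k$ odd the surviving factor is $\phi_{k+1-j,\epsilon}*N_\mu(0)$ with $k+1-j$ even, hence $\phi_{k+1-j,\epsilon}$ is even and its convolution with the odd function $N_\mu$ vanishes at the origin. In either case, the $\sigma=0^+$ endpoint drops out completely.

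At $\sigma=\tau^-$, the factor $(1-\tau^2)^{m-1}$ in $G_m$ forces $G_m^{(k)}(1^-)=0$ for every $0\le k\le m-2$, whereas $G_m^{(m-1)}(1^-)=(-2)^{m-1}(m-1)!$. Consequently only the index $k=m-1$ survives, leaving
\begin{equation*}
-\frac{2m}{\tau}\sum_{\substack{j=0\\j\text{ even}}}^{m-1}(-1)^{m-1}b_j\,\tau^{-(m-1)}(-2)^{m-1}(m-1)!\,\phi_{m-j,\epsilon}*N_\mu(\tau),
\end{equation*}
which after consolidating signs and factorials becomes exactly $-\sum_{j\text{ even}}^{m-1}2^m m!\,b_j\,\tau^{-m}\,\phi_{m-j,\epsilon}*N_\mu(\tau)$, the third line of the statement. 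The only real obstacle in executing this plan is bookkeeping: one must track the interplay between the alternating signs $(-1)^k$, the restriction that $j$ be even, and the two distinct sources of vanishing (oddness of $G_m$ at $0$ versus the zero of order $m-1$ of $G_m$ at $1$). No analytic input beyond Lemma~\ref{lem: Integration by parts identity} and the listed properties of $G_m$ is required.
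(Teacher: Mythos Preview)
Your proposal is correct and follows essentially the same route as the paper's proof: apply Lemma~\ref{lem: Integration by parts identity} with $f(\sigma)=G_m(\sigma/\tau)$ and $u=N_\mu$, then eliminate the boundary terms using the vanishing of $G_m^{(k)}(1^-)$ for $k\le m-2$, the oddness of $G_m$ at $0$, and the observation that $\phi_{k+1-j,\epsilon}*N_\mu(0)=0$ when $k$ is odd and $j$ is even (since $\phi_{k+1-j,\epsilon}$ is then even and $N_\mu$ is odd). The sign and factorial bookkeeping you carry out for the surviving $k=m-1$ term matches the paper's computation exactly.
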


\begin{proof}[Proof of Proposition~\ref{prop: itterated Safarov identity}]
Fix $\gamma =m \in \N$, $\epsilon, \tau>0$. By Lemma~\ref{lem: Integration by parts identity}, applied with $f(\sigma)=G_m(\sigma/\tau)$ and $u(\sigma)=N_\mu(\sigma)$, we have
\begin{align*}
	\frac{\tau}{2m}R_{\mu, \epsilon}^m(\tau) &=
	\sum_{\substack{j=0\\j \text{ even}}}^m b_j \int_0^\tau \tau^{-j}G_m^{(j)}(\sigma/\tau)\phi_{0,\epsilon}*N_\mu(\sigma)\,d\sigma\\
		&\quad 
		- \sum_{\substack{j=0\\j \text{ even}}}^{m} (-1)^{m}b_{j} \int_0^\tau \tau^{-m}G_m^{(m)}(\sigma/\tau)\phi_{m+1-j,\epsilon}*T_\mu(\sigma)\,d\sigma\\
		&\quad - \sum_{\substack{j=0\\ j\text{ even}}}^{m-1}2^{m-1}(m-1)!b_j \tau^{-m+1}\phi_{m-j,\epsilon}*N_\mu(\tau) \\
        &\quad 
		+ \sum_{\substack{k=1\\k \text{ odd}}}^{m-1}\sum_{\substack{j=0\\j \text{ even}}}^{k}(-1)^{k} b_{j} \tau^{-k}G_m^{(k)}(0^+)\phi_{k+1-j,\epsilon}*N_\mu(0) \,.
\end{align*}
Here, we have simplified the boundary terms using the properties of $G_m^{(k)}(1^-)$ and $G_m^{(k)}(0)$ that we listed before.

The identity that we have shown is essentially the claimed identity in the proposition, except for the double sum. We claim that each term in this double sum vanishes. This is seen as follows: As $N_\mu$ is an odd function, the fact that $\phi_{l, \epsilon}$ is an even function for $l \in 2\N$ yields 
\begin{equation*}
	\phi_{l,\epsilon}*N_\mu(0) = \int_\R \phi_{l,\epsilon}(-\sigma)N_\mu(\sigma)\,d\sigma = 0
\end{equation*}
whenever $l \in 2\N$. In the double sum we have $k$ odd and $j$ even, so $l=k+1-j \in 2\N$. This completes the proof of the proposition.
\end{proof}

To prove Theorem~\ref{thm: Fourier Tauberian} we shall apply the above proposition for the two measures $\mu$ and $\nu$ and use the assumptions to control the differences of the terms in the corresponding representations. To prove something more concrete about the convolutions that appear in this approach requires bounds for the functions $|\phi_{k, \epsilon}(\sigma)|$ that are uniform in $\epsilon$.  

\begin{lemma}\label{lem: psik bound}
Define $\psi_{-1}:=\psi_0:=\phi$ and for $k \geq 1$
\begin{equation*}
	\psi_k(\tau) := 
		\int_\tau^\infty \sigma \psi_{k-2}(\sigma)\,d\sigma \,.
\end{equation*}
Then, for each $k\geq -1$, $\psi_k$ belongs to $\mathcal{S}(\R)$ and is even and nonnegative. Moreover, for each $k \geq 0$ there is a constant $c_k$ so that for all $\tau \in \R$ and $\epsilon\in (0, 1]$ we have
$$|\phi_{k, \epsilon}(\tau)|\leq c_k\psi_k(\tau) \,.$$ 
\end{lemma}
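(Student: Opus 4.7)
The proof will split into the two claims of the lemma, each handled by an induction on $k$.

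For the properties of $\psi_k$, I would induct on $k \geq -1$, with the cases $k = -1, 0$ immediate from $\psi_{-1} = \psi_0 = \phi$. Assuming $\psi_{k-2}$ is even, nonnegative, and Schwartz, the integrand $\sigma\psi_{k-2}(\sigma)$ is odd and Schwartz, hence has vanishing total integral. This yields evenness of $\psi_k$ via
\begin{equation*}
\psi_k(-\tau) = \int_{-\tau}^\infty \sigma\psi_{k-2}(\sigma)\,d\sigma = -\int_{-\infty}^{-\tau}\sigma\psi_{k-2}(\sigma)\,d\sigma = \int_\tau^\infty u\psi_{k-2}(u)\,du = \psi_k(\tau)\,,
\end{equation*}
where the third equality uses the substitution $u = -\sigma$ together with evenness of $\psi_{k-2}$. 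Nonnegativity on $[0,\infty)$ is immediate and extends to $\R$ by evenness, and the Schwartz decay of $\psi_k$ follows from that of $\psi_{k-2}$ by standard tail estimates.

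For the pointwise bound, the key idea is to introduce the $\epsilon$-independent functions $F_0 := \phi$ and $F_k(\tau) := \int_\tau^\infty F_{k-1}(\sigma)\,d\sigma$ for $k \geq 1$ (equivalently $F_k(\tau) = \frac{1}{(k-1)!}\int_\tau^\infty (\sigma-\tau)^{k-1}\phi(\sigma)\,d\sigma$), and to prove by induction on $k \geq 0$ the exact identity
\begin{equation*}
\phi_{k,\epsilon}(\tau) = (-1)^k F_k(\tau) \qquad \text{for all } \tau \geq \epsilon,\ \epsilon \in (0, 1]\,.
\end{equation*}
The inductive step expands $\phi_{k,\epsilon}(\tau) = \int_{-\infty}^\tau (\phi_{k-1,\epsilon} - M_{k-1,\epsilon}\chi_\epsilon)$ with $M_{k-1,\epsilon} := \int_\R \phi_{k-1,\epsilon}$, and splits the integration region into $(-\infty, -\epsilon]$, $[-\epsilon,\epsilon]$, and $[\epsilon, \tau]$. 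The parity of $\phi_{k-1,\epsilon}$ converts $\int_{-\infty}^{-\epsilon}\phi_{k-1,\epsilon}$ into $\pm\int_\epsilon^\infty\phi_{k-1,\epsilon}$ so the induction hypothesis applies on the outer pieces; the middle integral either vanishes (when $k$ is even, using that $\phi_{k-1,\epsilon}$ is odd and $M_{k-1,\epsilon} = 0$) or cancels exactly against the contribution of $M_{k-1,\epsilon}\int_{-\infty}^\tau \chi_\epsilon$ via the decomposition $M_{k-1,\epsilon} = 2\int_\epsilon^\infty\phi_{k-1,\epsilon} + \int_{-\epsilon}^\epsilon\phi_{k-1,\epsilon}$ (when $k$ is odd). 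This exact cancellation is precisely why the $\epsilon$-dependence disappears on $\tau \geq \epsilon$.

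Given the identity, one must bound $|F_k(\tau)| \leq c_k\psi_k(\tau)$. Iterating the defining recursion for $\psi_k$ yields the closed forms $\psi_{2m+1}(\tau) = \frac{1}{2^m m!}\int_\tau^\infty s(s^2-\tau^2)^m\phi(s)\,ds$ and $\psi_{2m}=\psi_{2m-1}$; combined with $(\sigma-\tau)^{k-1} \leq \sigma^{k-1}$ on $\sigma\geq\tau\geq 0$ and the factorization $(s^2-\tau^2)^m = (s-\tau)^m(s+\tau)^m \geq s^m(s-\tau)^m$, elementary manipulations give $|F_k| \leq c_k \psi_k$ on $[1,\infty)$. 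For $|\tau|<1$, I would combine a uniform bound $\|\phi_{k,\epsilon}\|_\infty \leq \|\phi_{k-1,\epsilon}\|_{L^1} + |M_{k-1,\epsilon}| \leq 2c_{k-1}\|\psi_{k-1}\|_{L^1}$ (proved inductively) with the fact that $\psi_k$ is even, decreasing on $[0,\infty)$ (since $\psi_k'(\tau) = -\tau\psi_{k-2}(\tau) \leq 0$), and strictly positive on $[-1,1]$; the last point follows inductively from the observation that $\phi$, being entire of exponential type by Paley--Wiener, cannot vanish identically on any half-line, forcing $\psi_k(1) > 0$. The parity of $\phi_{k,\epsilon}$ and $\psi_k$ then extends the bound to $\tau < 0$. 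The hard part will be executing the algebra needed to establish the identity $\phi_{k,\epsilon}(\tau) = (-1)^k F_k(\tau)$ on $\tau \geq \epsilon$, since the cancellations must be exact (rather than merely approximate in $\epsilon$) in order for the constants $c_k$ to be uniform in $\epsilon$.
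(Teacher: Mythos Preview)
Your proposal is correct, but the paper takes a shorter route for the main step. Rather than proving the exact identity $\phi_{k,\epsilon}(\tau)=(-1)^kF_k(\tau)$ on $\tau\geq\epsilon$ and then comparing $F_k$ to $\psi_k$ via closed-form integral formulas, the paper observes that on $\tau\geq 1$ (where $\chi_\epsilon$ vanishes) iterating the recursion \emph{twice} gives directly
\[
\phi_{k,\epsilon}(\tau)=\int_\tau^\infty (t-\tau)\,\phi_{k-2,\epsilon}(t)\,dt\,,
\]
and since $0\leq t-\tau\leq t$ this yields $|\phi_{k,\epsilon}(\tau)|\leq\int_\tau^\infty t\,|\phi_{k-2,\epsilon}(t)|\,dt\leq\int_\tau^\infty t\,\psi_{k-2}(t)\,dt=\psi_k(\tau)$ by a step-two induction with constant $1$. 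This exploits the fact that the two-step recursion for $\phi_{k,\epsilon}$ matches the defining recursion for $\psi_k$ up to the factor $(t-\tau)$ versus $t$, so no closed forms for $\psi_k$ or $F_k$ are needed. Your approach, by contrast, extracts more information (an exact $\epsilon$-independent formula for $\phi_{k,\epsilon}$ outside $[-\epsilon,\epsilon]$), which is conceptually nice and explains \emph{why} the constants are uniform in $\epsilon$, at the cost of the cancellation bookkeeping and the pointwise comparison of the two closed forms. The treatment of $|\tau|<1$ and the positivity argument for $\psi_k$ are essentially the same in both; your Paley--Wiener justification of $\psi_k(1)>0$ is more explicit than what the paper writes.
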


\begin{proof}
We begin by proving the claimed properties of $\psi_k$. We argue by induction. Since $\psi_0=\psi_{-1}=\varphi$ these properties hold for $k\in\{0,-1\}$ by the assumptions on $\varphi$.

For $k\geq 1$ assume that we have proven the claimed properties for $\psi_{k-2}$. The fact that $\psi_{k-2}\in \mathcal{S}(\R)$ ensures on the one hand that $\psi_k$ is well defined. The fact that $\psi_{k-2}$ is even leads to
\begin{equation*}
    \psi_k(-\tau) = \int_{-\infty}^\infty \sigma\psi_{k-2}(\sigma)\,d\sigma -\psi_k(\tau) = \psi_k(\tau) \,,
\end{equation*}
since $\sigma\mapsto \sigma \psi_{k-2}(\sigma)$ is odd, that is, $\psi_k$ is also even. Since $\psi_{k-2}$ is nonnegative, it follows directly from the definition that $\psi_k$ is positive for $\tau \geq 0$. That $\psi_k$ is positive for $\tau<0$ follows as we have already proved that $\psi_k$ is even. That $\psi_k \in \mathcal{S}(\R)$ can be readily verified by using the entailed decay estimates ensured for $\psi_{k-2}$ and its derivatives by the fact that $\psi_{k-2}\in \mathcal{S}(\R)$.

Since $\phi_{k, \epsilon}$ is either odd or even depending on the parity of $k$, it suffices to prove bounds for $\tau \geq 0$.

We begin by proving that $|\phi_{k,\epsilon}(\tau)|\leq \psi_k(\tau)$ for $\tau \geq 1$. We argue by induction.

Clearly $|\phi_{0,\epsilon}(\tau)|\leq \psi_0(\tau)$ for each $\tau\in \R$ by definition. Since $\supp \chi_\epsilon \subseteq [-\epsilon, \epsilon]\subseteq [-1, 1]$ we have for all $\tau \geq 1$ that
\begin{align*}
	|\phi_{1, \epsilon}(\tau)| &= \biggl|\int_{\tau}^\infty \phi_{0, \epsilon}(\sigma)\,d\sigma \biggr|\\
	&\leq \frac{1}{\tau}\biggl|\int_{\tau}^\infty \sigma \phi_{0, \epsilon}(\sigma)\,d\sigma\biggr|\\
	&= \frac{\psi_1(\tau)}{\tau} \leq \psi_1(\tau)\,.
\end{align*}

Since $\supp \chi_\epsilon \subseteq [-\epsilon, \epsilon]\subseteq [-1, 1]$ we have for any $k \geq 2$ and all $\tau \geq 1$ by the induction hypothesis that
\begin{align*}
	|\phi_{k, \epsilon}(\tau)| &= \biggl|\int_{\tau}^\infty \int_s^\infty \phi_{k-2, \epsilon}(t)\,dtds\biggr|\\
	& = \biggl|\int_\tau^\infty (t-\tau)\phi_{k-2, \epsilon}(t)\,dt\biggr| \leq \int_\tau^\infty t|\phi_{k-2, \epsilon}(t)|\,dt \\
	&\leq \int_\tau^\infty t\psi_{k-2}(t)\,dt \\
	&= \psi_{k}(\tau)\,.
\end{align*}

Thus we have proved that for all $k\geq 0, \epsilon \in (0, 1], |\tau|\geq 1$ it holds that
\begin{equation*}
	|\phi_{k, \epsilon}(\tau)|\leq \psi_k(\tau)\,.
\end{equation*}

For $\tau\in (-1, 1)$ we again argue by induction. By definition the bound $|\phi_{0, \epsilon}(\tau)|\leq \psi_0(\tau)$ holds for all $\tau \in \R$ independently of $\epsilon$. Assuming that $|\phi_{k-1, \epsilon}(\tau)|\leq c_{k-1}\psi_{k-1}(\tau)$ for all $\tau \in \R$, then by the bound $|\phi_{k,\epsilon}(\tau)|\leq \psi_k(\tau)$ for $\tau \geq 1$ we also have that, for any $\tau \in (-1, 1)$,
\begin{align*}
	|\phi_{k, \epsilon}(\tau)|
	&=
	\biggl|\phi_{k,\epsilon}(1)- \int_\tau^1 \phi_{k, \epsilon}'(\sigma)\,d\sigma\biggr|\\
	&=
	\biggl|\phi_{k,\epsilon}(1)- \int_\tau^1 \Bigl(\phi_{k-1, \epsilon}(\sigma)-\Bigl(\int_\R \phi_{k-1,\epsilon}(\sigma')\,d\sigma'\Bigr)\chi_\epsilon(\sigma)\Bigr)\,d\sigma\biggr|\\
	&\leq
	|\phi_{k,\epsilon}(1)| + \int_\tau^1 |\phi_{k-1,\epsilon}(\sigma)|\,d\sigma + \biggl(\int_\R |\phi_{k-1,\epsilon}(\sigma')|\,d\sigma'\biggr)\int_\tau^1 \chi_\epsilon(\sigma)\,d\sigma\\
	&\leq
	\psi_k(1) +2c_{k-1}\int_\R \psi_{k-1}(\sigma)\,d\sigma \,,
\end{align*}
where we used $\chi_\epsilon\geq 0$ and $\int_\R \chi_\epsilon(\sigma)\,d\sigma =1$. We argued above that $\psi_k(\tau)>0$ for all $\tau \in \R$ and $k\geq 1$. Therefore, if we define
\begin{equation*}
	c_k := \frac{\psi_{k}(1)+2c_{k-1}\int_\R \psi_{k-1}(\sigma)\,d\sigma}{\inf_{\sigma\in [-1, 1]}\psi_k(\sigma)} >1 \,,
\end{equation*}
then, for all $\tau \in \R$,
\begin{equation*}
	|\phi_{k,\epsilon}(\tau)| \leq c_k\psi_k(\tau)\,.
\end{equation*}
This completes the proof of the lemma.
\end{proof}

We are now ready to prove Theorem~\ref{thm: Fourier Tauberian}. 
\begin{proof}[Proof of Theorem~\ref{thm: Fourier Tauberian}]
    We begin by reducing to the case $a=1$. Assume that $\mu, \nu$ satisfy the assumptions in the theorem with the parameters $a=a', a_0=a_0', a_1=a_1', M_0=M_0', M_1=M_2', \alpha', \beta'$. Define measures $\tilde\mu, \tilde\nu$ by
    \begin{equation*}
        \tilde\mu(\omega) := \frac{1}{a'}\mu(a'\omega) \quad \mbox{and} \quad \tilde\nu(\omega) := \frac{1}{a'}\nu(a'\omega)\,.
    \end{equation*}
    Then $\tilde \mu, \tilde \nu$ satisfy the assumptions of the theorem with parameters $a=1, a_0=a_0'/a', a_1=a_1'/a', M_0=(a')^{\alpha}M_0', M_1=(a')^\beta M_1', \alpha = \alpha', \beta= \beta'$. Indeed, one can check that
    \begin{equation*}
       \phi*(T_{\tilde\mu}-T_{\tilde\nu})(\tau) =\phi_{a'}*(T_\mu-T_\nu)(a'\tau) \quad \mbox{and} \quad  T_{\tilde \nu}(\psi)= \frac{1}{a'}T_{\nu}(\psi(\cdot/a')) \quad \forall \psi \in C_0^\infty(\R) \,,
    \end{equation*}
    and so the claimed bounds for $\tilde \mu, \tilde \nu$ follow from those assumed for $\mu, \nu$. Furthermore, the definition of $\tilde \mu, \tilde \nu$ together with a change of variables shows that
    \begin{equation*}
        R_{\tilde \mu}^\gamma(\tau) = \frac{1}{a'}R_\mu^\gamma(a'\tau) \quad \mbox{and}\quad R_{\tilde \nu}^\gamma(\tau) = \frac{1}{a'}R_\nu^\gamma(a'\tau)\,.
    \end{equation*}
    Consequently, the claimed bound for $\mu, \nu$ follows from the theorem when applied to $\tilde \mu, \tilde \nu$. Therefore, we may without loss of generality assume that $a=1$.

    The case $\gamma =0$ is the content of~\cite[Lemma~17.5.6]{Hormander_bookIII}, which states that there exists a constant $C$, depending only on $\alpha, \beta$, so that
    \begin{equation}\label{eq: Hormanders bound}
        |N_\mu(\tau)-N_\nu(\tau)|\leq C(M_0(a_0+|\tau|)^\alpha + M_1(1+|\tau|)(|\tau|+a_1)^\beta)\,.
    \end{equation}

	By applying Proposition~\ref{prop: itterated Safarov identity} with both measures $\mu, \nu$, we find
	\begin{align*}
	\bigl|R_{\mu, \epsilon}^m(\tau) -R_{\nu, \epsilon}^m(\tau)\bigr|
		&\leq 
		2m\sum_{\substack{j=0\\j \text{ even}}}^m |b_j| \biggl|\int_0^\tau \tau^{-j-1}G_m^{(j)}(\sigma/\tau)\phi_{0,\epsilon}*(N_\mu-N_\nu)(\sigma)\,d\sigma\biggr|\\
	&\quad +2m\sum_{\substack{j=0\\j \text{ even}}}^{m}|b_{j}| \biggl|\int_0^\tau \tau^{-m-1}G_m^{(m)}(\sigma/\tau)\phi_{m+1-j,\epsilon}*(T_\mu-T_\nu)(\sigma)\,d\sigma\biggr|\\
		&\quad + \sum_{\substack{j=0\\ j\text{ even}}}^{m-1}2^{m}m!|b_j| \tau^{-m}|\phi_{m-j,\epsilon}*(N_\mu-N_\nu)(\tau)|\\
		&\leq
		2m\sum_{\substack{j=0\\j \text{ even}}}^m |b_j| \tau^{-j}\|G_m^{(j)}\|_{\infty} \sup_{\sigma \in [0, \tau]}|\phi_{0,\epsilon}*(N_\mu-N_\nu)(\sigma)|\\
	&\quad +2m\sum_{\substack{j=0\\j \text{ even}}}^{m}|b_{j}| \tau^{-m}\|G_m^{(m)}\|_\infty\sup_{\sigma \in [0, \tau]}|\phi_{m+1-j,\epsilon}*(T_\mu-T_\nu)(\sigma)|\\
		&\quad + \sum_{\substack{j=0\\ j\text{ even}}}^{m-1}2^{m}m!|b_j| \tau^{-m}|\phi_{m-j,\epsilon}*(N_\mu-N_\nu)(\tau)|\,.
\end{align*}

Since $\phi_{0, \epsilon}=\phi$ and $\phi*(N_\mu-N_\nu)(0)=0$ by symmetry, the assumptions yield that
\begin{equation*}
	|\phi_{0, \epsilon}*(N_\mu-N_\nu)(\sigma)| = \biggl|\int_0^\sigma \phi_{0, \epsilon}*(T_\mu-T_\nu)(s)\,ds\biggr| \leq M_1 (a_1+|\sigma|)^{\beta+1} \,.
\end{equation*}
This allows us to bound the terms in the first sum.

By an integration by parts and the estimate $|\phi_{k, \epsilon}(\sigma)|\leq c_k \psi_k(\sigma)$ from Lemma \ref{lem: psik bound}, we obtain
\begin{align*}
	|\phi_{m+1-j, \epsilon}&*(T_\mu-T_\nu)(\sigma)|\\ &= \biggl|\int_\R\phi'_{m+1-j,\epsilon}(\sigma')(N_\mu(\sigma-\sigma')-N_\nu(\sigma-\sigma'))\,d\sigma'\biggr|\\
	&= \biggl|\int_\R\phi_{m-j,\epsilon}(\sigma')(N_\mu(\sigma-\sigma')-N_\nu(\sigma-\sigma'))\,d\sigma'\\
	&\quad 
	-\Bigl(\int_\R \phi_{m-j, \epsilon}(\sigma'')\,d\sigma'' \Bigr) \int_\R\chi_\epsilon(\sigma')(N_\mu(\sigma-\sigma')-N_\nu(\sigma-\sigma'))\,d\sigma'\biggr|\\
	& 
	\leq 
	c_{m-j}\psi_{m-j}*|N_\mu-N_\nu|(\sigma)\\
	&\quad 
	+\Bigl|\int_\R \phi_{m-j, \epsilon}(\sigma'')\,d\sigma''\Bigr| \chi_\epsilon*|N_\mu-N_\nu|(\sigma)\,,
\end{align*}
and
\begin{equation*}
	|\phi_{m-j,\epsilon}*(N_\mu-N_\nu)(\tau)| \leq c_{m-j}\psi_{m-j}*|N_\mu-N_\nu|(\tau)\,.
\end{equation*}

To control the remaining convolutions we utilize~\eqref{eq: Hormanders bound} together with the elementary inequality $(a_0+|\sigma-\sigma'|) \leq (a_0+|\sigma|+|\sigma'|)\leq (a_0+|\sigma|)(1+|\sigma'|)$. This yields the bound
\begin{align*}
	\psi_k\!*\!|N_\mu-N_\nu|(\sigma) 
	&= \int_\R \psi_k(\sigma')|N_\mu(\sigma-\sigma')-N_\nu(\sigma-\sigma')|\,d\sigma'\\
	&\leq C\!\!\int_\R \!\psi_k(\sigma')\bigl(M_0 (a_0\!+\!|\sigma\!-\!\sigma'|)^{\alpha}\!+ \!M_1(1\!+\!|\sigma\!-\!\sigma'|)(a_1\!+\!|\sigma\!-\!\sigma'|)^{\beta}\bigr)\,d\sigma' \\
	&\leq C M_0 (a_0+|\sigma|)^{\alpha}\int_\R \psi_k(\sigma')(1+|\sigma'|)^{\alpha}\,d\sigma'\\
    &\quad +CM_1(1+|\sigma|)(a_1+|\sigma|)^{\beta}\int_\R \psi_k(\sigma')(1+|\sigma'|)^{\beta+1}\,d\sigma'\\
    &\leq C_{k, \varphi, \alpha, \beta} \bigl(M_0(a_0+|\sigma|)^\alpha+ M_1(1+|\sigma|)(a_1+|\sigma|)^\beta\bigr)\,.
\end{align*}
Similarly, for any $\epsilon\in (0, 1]$
\begin{align*}
	\chi_\epsilon*|N_\mu&-N_\nu|(\sigma) \\
	&\leq \|\chi\|_\infty\int_{-1}^1 C(M_0(a_0+|\sigma-\epsilon \sigma'|)^{\alpha}+M_1(1+|\sigma-\epsilon \sigma'|)(a_1+|\sigma- \epsilon \sigma'|)^{\beta})\,d\sigma'\\
	&\leq\|\chi\|_\infty C_{\alpha, \beta}\bigl(M_0(a_0+|\sigma|)^{\alpha}+M_1(1+|\sigma|)(a_1+|\sigma|)^{\beta}\bigr)\,.
\end{align*}

Putting all this back into the bound for the difference of the Riesz means yields
\begin{align*}
	\bigl|R_{\mu, \epsilon}^m(\tau) &-R_{\nu, \epsilon}^m(\tau)\bigr|\\
		&\leq
		2m\sum_{\substack{j=0\\j \text{ even}}}^m |b_j| \tau^{-j}\|G_m^{(j)}\|_{\infty} M_1 (a_1+|\sigma|)^{\beta+1}\\
	&\quad +2m\sum_{\substack{j=0\\j \text{ even}}}^{m} \tau^{-m}\|G_m^{(m)}\|_\infty C'_{m-j, \phi, \alpha, \beta}\bigl(M_0 (a_0+|\tau|)^{\alpha}+ M_1(1+|\tau|)(a_1+|\tau|)^{\beta}\bigr)\\
		&\quad + \sum_{\substack{j=0\\ j\text{ even}}}^{m-1}2^{m}m!|b_j| \tau^{-m}C''_{m-j,\phi,\alpha,\beta}\bigl(M_0 (a_0+|\tau|)^{\alpha}+ M_1(1+|\tau|)(a_1+|\tau|)^{\beta}\bigr)\,.
\end{align*}
Since $G_m$ is a polynomial whose coefficients depend only on $m$, we conclude that there exists a constant $C_*$ depending only on $\phi, \alpha, \beta, m$ such that
\begin{align*}
	\bigl|R_{\mu, \epsilon}^m(\tau) -R_{\nu, \epsilon}^m(\tau)\bigr|
		&\leq
		C_* (M_0(a_0+|\tau|)^{\alpha-m}+M_1 (1+|\tau|)(a_1+|\tau|)^{\beta})\,.
\end{align*}
Taking the limit $\epsilon \to 0$ completes the proof.
\end{proof}


\subsection{Proof of Lemma \ref{lem: Integration by parts identity}}

We finally derive the identity whose proof we deferred.

\begin{proof}[Proof of Lemma~\ref{lem: Integration by parts identity}]
{\allowdisplaybreaks
    \emph{Step 1.} In this step we will prove two key identities on which the proof hinges. The first one is
    \begin{equation}\label{eq: identity 1}
	   \chi_\epsilon* u(\sigma) = \phi*u(\sigma) - \phi_{1,\epsilon}*u'(\sigma)\,,
    \end{equation}
    and the second one is, for any $k\in\N_0$,
    \begin{equation}\label{eq: identity 3}
    \begin{aligned}
    	\int_0^\tau f(\sigma)\phi_{k, \epsilon}*u'(\sigma)\,d\sigma 
	   &=
	   -\int_0^\tau f'(\sigma)\phi_{k+1,\epsilon}*u'(\sigma) d\sigma\\
    	& \quad +f(\tau^\limminus)\phi_{k,\epsilon}*u(\tau)-f(0^\limplus)\phi_{k,\epsilon}*u(0)\\
    	&\quad -\Bigl(\int_\R \phi_{k,\epsilon}(\sigma')\,d\sigma' \Bigr)\int_0^\tau f'(\sigma) \phi_{0,\epsilon}*u(\sigma) \,d\sigma\\
    	&\quad +\Bigl(\int_\R \phi_{k,\epsilon}(\sigma')\,d\sigma' \Bigr)\int_0^\tau f'(\sigma) \phi_{1,\epsilon}*u'(\sigma)\,d\sigma \,.
    \end{aligned}
    \end{equation}

    To prove these identities, we recall the definition of $\phi_{k,\epsilon}$ and integrate by parts to obtain
    \begin{align*}
	   \phi_{k,\epsilon} * u(\sigma) 
	   &= \int_\R \phi_{k,\epsilon}(\sigma-\sigma')u(\sigma')\,d\sigma'\\
	   &= \int_\R \Bigl(\phi_{k,\epsilon}(\sigma-\sigma')-\Bigl(\int_\R\phi_{k,\epsilon}(\sigma'')\,d\sigma''\Bigr)\chi_\epsilon(\sigma-\sigma')\Bigr)u(\sigma')\,d\sigma' \\
	   &\quad + \Bigl(\int_\R \phi_{k,\epsilon}(\sigma'')\,d\sigma'' \Bigr)\Bigl(\int_\R \chi_\epsilon(\sigma-\sigma')u(\sigma')\,d\sigma'\Bigr)\\
	   &= \phi_{k+1, \epsilon}'*u(\sigma) + \Bigl(\int_\R \phi_{k,\epsilon}(\sigma'')\,d\sigma'' \Bigr)\chi_\epsilon*u(\sigma)\\
	   &= \phi_{k+1, \epsilon}*u'(\sigma)+ \Bigl(\int_\R \phi_{k,\epsilon}(\sigma'')\,d\sigma'' \Bigr)\chi_\epsilon*u(\sigma)\,.
    \end{align*}
    That is, for any $k\geq 0$,
    \begin{equation}\label{eq: identity 0}
	   \Bigl(\int_\R \phi_{k,\epsilon}(\sigma'')\,d\sigma'' \Bigr)\chi_\epsilon* u(\sigma) = \phi_{k,\epsilon}*u(\sigma) - \phi_{k+1,\epsilon}*u'(\sigma)\,.
    \end{equation}
    In particular, for $k=0$ since $\phi_{0,\epsilon}=\phi$ and $\int_\R \phi(\tau)\,d\tau=1$, we obtain the claimed identity \eqref{eq: identity 1}.
    
    By using~\eqref{eq: identity 0} and an integration by parts
    \begin{equation}\label{eq: midstep}
    \begin{aligned}
    	\int_0^\tau f(\sigma)&\phi_{k, \epsilon}*u'(\sigma)\,d\sigma\\ 
    	&= \int_0^\tau f(\sigma)\Bigl(\phi_{k+1,\epsilon}*u'(\sigma)+\Bigl(\int_\R \phi_{k,\epsilon}(\sigma'')\,d\sigma'' \Bigr)\chi_\epsilon* u(\sigma)\Bigr)'d\sigma\\
    	&=
    	\int_0^\tau f(\sigma)\Bigl(\phi_{k+1,\epsilon}*u'(\sigma)\Bigr)'d\sigma\\
    	&\quad+\Bigl(\int_\R \phi_{k,\epsilon}(\sigma'')\,d\sigma'' \Bigr)\int_0^\tau f(\sigma)\chi_\epsilon* u'(\sigma)d\sigma\\
    	&=
    	-\int_0^\tau f'(\sigma)\phi_{k+1,\epsilon}*u'(\sigma) d\sigma\\
    	&\quad +f(\tau^\limminus)\phi_{k+1,\epsilon}*u'(\tau)-f(0^\limplus)\phi_{k+1,\epsilon}*u'(0)\\
    	&\quad+\Bigl(\int_\R \phi_{k,\epsilon}(\sigma'')\,d\sigma'' \Bigr)\int_0^\tau f(\sigma)\chi_\epsilon* u'(\sigma)d\sigma\\
    \end{aligned}
    \end{equation}

    Again by integration by parts it also holds that
    \begin{align*}
	   \int_0^\tau f(\sigma)\chi_\epsilon* u'(\sigma)\,d\sigma = - \int_0^\tau f'(\sigma) \chi_\epsilon*u(\sigma)\,d\sigma + f(\tau^-)\chi_\epsilon*u(\tau)-f(0^+)\chi_\epsilon*u(0)\,.
    \end{align*}
    Plugging this into the last integral in~\eqref{eq: midstep} and using \eqref{eq: identity 0} to rewrite the boundary terms we have proved
    \begin{align*}
    	\int_0^\tau f(\sigma)\phi_{k, \epsilon}*u'(\sigma)\,d\sigma 
    	&=
    	-\int_0^\tau f'(\sigma)\phi_{k+1,\epsilon}*u'(\sigma) d\sigma\\
    	&\quad +f(\tau^\limminus)\Bigl(\phi_{k,\epsilon}*u(\tau)-\Bigl(\int_\R \phi_{k,\epsilon}(\sigma')\,d\sigma'\Bigr)\chi_\epsilon*u(\tau)\Bigr)\\
    	&\quad-f(0^\limplus)\Bigl(\phi_{k,\epsilon}*u(0)-\Bigl(\int_\R \phi_{k,\epsilon}(\sigma')\,d\sigma'\Bigr)\chi_\epsilon*u(0)\Bigr)\\
    	&\quad+\Bigl(\int_\R \phi_{k,\epsilon}(\sigma')\,d\sigma' \Bigr)\Bigl(- \int_0^\tau f'(\sigma) \chi_\epsilon*u(\sigma)\,d\sigma\\
	   &\quad + f(\tau^-)\chi_\epsilon*u(\tau)-f(0^+)\chi_\epsilon*u(0)\Bigr)\\
	   &=
	   -\int_0^\tau f'(\sigma)\phi_{k+1,\epsilon}*u'(\sigma) d\sigma\\
    	&\quad +f(\tau^\limminus)\phi_{k,\epsilon}*u(\tau)-f(0^\limplus)\phi_{k,\epsilon}*u(0)\\
    	&\quad-\Bigl(\int_\R \phi_{k,\epsilon}(\sigma')\,d\sigma' \Bigr)\int_0^\tau f'(\sigma) \chi_\epsilon*u(\sigma)\,d\sigma
    \end{align*}
    Finally, we use again~\eqref{eq: identity 1} to rewrite the remaining integral containing $\chi_\epsilon *u$ and obtain the claimed identity \eqref{eq: identity 3}. This concludes the first step of the proof.
    }
    
    \medskip

    \emph{Step 2.} With the identities from Step 1 at hand, we now turn to the main part of the proof of Lemma~\ref{lem: Integration by parts identity}. The idea is to apply identity \eqref{eq: identity 3} iteratively.

    Before carrying this out, however, it is imporant to note that the numbers $b_m$ vanish when $m$ is odd. This follows easily by induction, observing that $\phi_{j,\epsilon}$ is odd when $j$ is odd, so $\int_\R \phi_{j,\epsilon}(\tau)\,d\tau=0$ in this case.

    We will prove the identity in the lemma by induction on $m\in\N_0$. The base case $m=0$ with $b_0=1$ follows immediately by integrating identity \eqref{eq: identity 1} against $f$ over the interval $[0,\tau]$.

    Now let $m\geq 1$ and assume that the identity claimed in the lemma holds when $m=M-1$, that is,
    \begin{align*}
		\int_0^\tau &f(\sigma)\chi_\epsilon*u(\sigma)\,d\sigma \\
		&=
		\sum_{\substack{j=0\\j \text{ even}}}^{M-1} b_{j} \int_0^\tau f^{(j)}(\sigma)\phi_{0,\epsilon}*u(\sigma)\,d\sigma\\
		&\quad 
		-(-1)^{M-1} \sum_{\substack{j=0\\j \text{ even}}}^{M-1} b_{j} \int_0^\tau f^{(M-1)}(\sigma)\phi_{M-j,\epsilon}*u'(\sigma)\,d\sigma\\
		&\quad 
		- \sum_{k=0}^{M-2}\sum_{\substack{j=0\\j \text{ even}}}^{k} (-1)^k b_j \Bigl[f^{(k)}(\tau^-)\phi_{k+1-j,\epsilon}*u(\tau)-f^{(k)}(0^+)\phi_{k+1-j,\epsilon}*u(0)\Bigr] \,.
    \end{align*}
    We rewrite the terms in the second sum using identity~\eqref{eq: identity 3} with $k=M-j$ and with $f$ replaced by $f^{(M-1)}$ and find
    \begin{align*}
        \int_0^\tau \!\!f^{(M-1)}(\sigma)\phi_{M-j,\epsilon}*u'(\sigma)\,d\sigma\!
	    &=
        -\Bigl(\int_\R \phi_{M-j,\epsilon}(\sigma')\,d\sigma' \Bigr)\int_0^\tau f^{(M)}(\sigma) \phi_{0,\epsilon}*u(\sigma) \,d\sigma\\    
        &\quad 
	   -\int_0^\tau f^{(M)}(\sigma)\phi_{M+1-j,\epsilon}*u'(\sigma) d\sigma\\
       & \quad +\Bigl(\int_\R \phi_{M-j,\epsilon}(\sigma')\,d\sigma' \Bigr)\int_0^\tau f^{(M)}(\sigma) \phi_{1,\epsilon}*u'(\sigma)\,d\sigma \\
       & \quad +f^{(M-1)}(\tau^\limminus)\phi_{M-j,\epsilon}*u(\tau)\!-\!f^{(M-1)}(0^\limplus)\phi_{M-j,\epsilon}*u(0)\,.
    \end{align*}
    Inserting this formula into the previous one and reallocating the terms in the sum yields
	\begin{align*}
		\int_0^\tau &f(\sigma)\chi_\epsilon*u(\sigma)\,d\sigma \\
		&=
		\sum_{\substack{j=0\\j\text{ even}}}^{M-1} b_{j}\int_0^\tau f^{(j)}(\sigma)\phi_{0,\epsilon}*u(\sigma)\,d\sigma \\
        &\quad
        + (-1)^{M-1}\sum_{\substack{j=0\\j\text{ even}}}^{M-1} b_{j} \Bigl(\int_\R \phi_{M-j,\epsilon}(\sigma')\,d\sigma' \Bigr)\int_0^\tau f^{(M)}(\sigma) \phi_{0,\epsilon}*u(\sigma) \,d\sigma\\
		&\quad 
		- (-1)^{M} \sum_{\substack{j=0\\j\text{ even}}}^{M-1} b_{j}
        \int_0^\tau f^{(M)}(\sigma)\phi_{M+1-j,\epsilon}*u'(\sigma) d\sigma
        \\
        &\quad -(-1)^{M-1} \sum_{\substack{j=0\\j\text{ even}}}^{M-1} b_{j} \Bigl(\int_\R \phi_{M-j,\epsilon}(\sigma')\,d\sigma' \Bigr)\int_0^\tau f^{(M)}(\sigma) \phi_{1,\epsilon}*u'(\sigma)\,d\sigma \\
		&\quad 
		- \sum_{k=0}^{M-2}\sum_{\substack{j=0\\j\text{ even}}}^{k} (-1)^k b_j \Bigl[f^{(k)}(\tau^-)\phi_{k+1-j,\epsilon}*u(\tau)-f^{(k)}(0^+)\phi_{k+1-j,\epsilon}*u(0)\Bigr]\\
        & \quad - (-1)^{M-1} \sum_{\substack{j=0\\j\text{ even}}}^{M-1} b_j \Bigl[ f^{(M-1)}(\tau^\limminus)\phi_{M-j,\epsilon}*u(\tau)-f^{(M-1)}(0^\limplus)\phi_{M-j,\epsilon}*u(0) \Bigr].
    \end{align*}
    Recalling the recursive definition of $b_M$, we can write this as
    \begin{align*}
		\int_0^\tau &f(\sigma)\chi_\epsilon*u(\sigma)\,d\sigma \\
		&=
		\sum_{\substack{j=0\\ j \text{ even}}}^{M} b_{j} \int_0^\tau f^{(j)}(\sigma)\phi_{0,\epsilon}*u(\sigma)\,d\sigma \\
		&\quad 
		-(-1)^{M} \sum_{\substack{j=0\\ j \text{ even}}}^{M-1} b_{j}
        \int_0^\tau f^{(M)}(\sigma)\phi_{M+1-j,\epsilon}*u'(\sigma) d\sigma
        - b_M \int_0^\tau f^{(M)}(\sigma) \phi_{1,\epsilon}*u'(\sigma)\,d\sigma
        \\
		&\quad 
		- \sum_{k=0}^{M-1}\sum_{\substack{j=0\\ j \text{ even}}}^{k} (-1)^k b_j \Bigl[f^{(k)}(\tau^-)\phi_{k+1-j,\epsilon}*u(\tau)-f^{(k)}(0^+)\phi_{k+1-j,\epsilon}*u(0)\Bigr] \,.
    \end{align*}
    This is almost the claimed identity for $m=M$, except that the integral involving $\phi_{1,\epsilon}*u'$ has the prefactor $-b_M$ instead of $-(-1)^M b_M$. Therefore the proof is completed by observing that $b_M = (-1)^M b_M$, which follows from the fact that $b_M=0$ if $M$ is odd.
\end{proof}

\begin{remark}\label{safarovrem}
    We close this subsection by pointing out what we believe to be an inaccuracy in~\cite{Safarov01}. The issue arises in Lemma~2.7 of that paper, in which the author claims certain bounds for three convolutions involving a function $F$ with $\supp F\subset (0, \infty)$ or its derivative. However, to our understanding the proof as written only yields these bounds for the corresponding convolutions involving the odd extension of $F$ rather than with $F$ itself. Carrying out the subsequent applications of Lemma~2.7 with $F$ replaced by its odd extension and restricting at the very end of the argument to positive values of the argument one recovers the claimed results.
\end{remark}

\appendix

\phantomsection
\addcontentsline{toc}{part}{Appendix}


\section{Convex geometry toolbox}
\label{app: Convex geometry}

In this appendix we record some elements of convex geometry. Most of the results that we shall make use of are either well known to experts or follow from well-known results.

We begin by recalling the fact that perimeter is monotonically increasing under inclusion of convex sets.
\begin{lemma}\label{lem: local monotonicity of perimieter}
	If $\Omega' \subset \Omega \subset \R^d$ are convex sets, then 
	$$
		\Haus^{d-1}(\partial\Omega') \leq \Haus^{d-1}(\partial\Omega)\,.
	$$ 
	In particular, if $\Omega \subset \R^d$ is convex and $x\in \R^d, r>0$ then
	\begin{equation*}
		\Haus^{d-1}(\partial\Omega \cap B_r(x)) \leq \Haus^{d-1}(\partial B_r(x)) \lesssim_d r^{d-1}\,.
	\end{equation*}
\end{lemma}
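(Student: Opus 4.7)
The inequality is a classical monotonicity property of perimeter under inclusion of convex bodies. My plan is to prove it via the nearest-point projection. Let $\pi\colon\R^d\to \overline{\Omega'}$ denote the map sending each point to its unique nearest point in the closed convex set $\overline{\Omega'}$. A standard argument shows that $\pi$ is $1$-Lipschitz: the variational characterization $(x_i-y_i)\cdot(y-y_i)\leq 0$ for all $y\in\overline{\Omega'}$, where $y_i=\pi(x_i)$, applied with $y=y_j$ (other index) and then summed over $i\in\{1,2\}$, gives $(y_1-y_2)\cdot(x_1-x_2)\geq |y_1-y_2|^2$, whence $|y_1-y_2|\leq|x_1-x_2|$ by Cauchy--Schwarz.

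Without loss of generality I will assume $\Haus^{d-1}(\partial\Omega)<\infty$, which for convex $\Omega$ implies $\Omega$ is bounded. The key geometric step is to show that $\pi(\partial\Omega)\supseteq\partial\Omega'$. Given $y\in\partial\Omega'$, pick $n\in\S^{d-1}$ outer normal to a supporting hyperplane of $\Omega'$ at $y$; the ray $\{y+tn:t\geq 0\}$ exits $\overline{\Omega}$ at some $z\in\partial\Omega$ (possibly $z=y$). For every $w\in\overline{\Omega'}$ we have $n\cdot(w-y)\leq 0$, hence
\begin{equation*}
|z-w|^2=|z-y|^2+2(z-y)\cdot(y-w)+|y-w|^2\geq |z-y|^2,
\end{equation*}
so $\pi(z)=y$ and surjectivity onto $\partial\Omega'$ follows. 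The standard Lipschitz image bound for Hausdorff measure then yields
\begin{equation*}
\Haus^{d-1}(\partial\Omega')\leq\Haus^{d-1}(\pi(\partial\Omega))\leq \mathrm{Lip}(\pi)^{d-1}\Haus^{d-1}(\partial\Omega)\leq\Haus^{d-1}(\partial\Omega).
\end{equation*}

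For the second assertion I apply the first part with $\Omega$ replaced by the bounded convex set $B_r(x)$ and $\Omega'$ replaced by the convex set $\Omega\cap B_r(x)$. The point-set inclusion $\partial\Omega\cap B_r(x)\subseteq \partial(\Omega\cap B_r(x))$ (which holds since any $p\in\partial\Omega\cap B_r(x)$ lies in $\overline{\Omega\cap B_r(x)}$ but not in its interior) gives
\begin{equation*}
\Haus^{d-1}(\partial\Omega\cap B_r(x))\leq \Haus^{d-1}(\partial(\Omega\cap B_r(x)))\leq \Haus^{d-1}(\partial B_r(x))\lesssim_d r^{d-1}.
\end{equation*}

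The only substantive step is the surjectivity of the projection onto the boundary of the inner convex body and the $1$-Lipschitz property; both are classical, so I do not anticipate any serious obstacle. The remaining arguments amount to verifying straightforward inclusions between boundaries of open convex sets and their intersections with balls.
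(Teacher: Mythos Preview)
Your proof is correct. The paper does not supply a proof of this lemma; it simply records the statement as a well-known fact from convex geometry (the appendix opens by saying most results there ``are either well known to experts or follow from well-known results''). Your argument via the $1$-Lipschitz nearest-point projection onto $\overline{\Omega'}$, together with the surjectivity onto $\partial\Omega'$ along outer normals, is one of the standard proofs of this monotonicity, and your deduction of the ``in particular'' part by applying the first assertion to $\Omega\cap B_r(x)\subset B_r(x)$ and the inclusion $\partial\Omega\cap B_r(x)\subset\partial(\Omega\cap B_r(x))$ is exactly the intended reading.
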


The second lemma provides an upper and a lower bound for the size of the level set of the distance function $d_\Omega$, defined in \eqref{eq:distance}. It is stated in terms of the \emph{inradius}
\begin{equation}
    \label{eq:inrad}
    r_{\rm in}(\Omega) := \sup_{x\in\Omega} d_\Omega(x) \,.
\end{equation}
The lower bound is proved in \cite{LarsonJFA} and the upper is a consequence of Lemma~\ref{lem: local monotonicity of perimieter}.

\begin{lemma}\label{lem: inner parallel perimeter bounds}
	If $\Omega \subset \R^d$ is a bounded convex set, then for any $s \in (0, r_{\rm in}(\Omega)]$
\begin{equation*}
	\Bigl(1-\frac{s}{r_{\rm in}(\Omega)}\Bigr)^{d-1}\Haus^{d-1}(\partial\Omega)\leq \Haus^{d-1}(\{x\in \Omega: d_\Omega(x)=s\}) \leq \Haus^{d-1}(\partial\Omega)\,.
\end{equation*}
\end{lemma}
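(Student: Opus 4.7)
The plan is to derive both inequalities from Lemma~\ref{lem: local monotonicity of perimieter} by comparing the inner parallel body of $\Omega$ with two convex sets that sandwich it. Set $\Omega_s := \{x \in \Omega : d_\Omega(x) \geq s\}$. First I would note that $\Omega_s$ is convex: it is the intersection of $\Omega$ with the closed half-spaces obtained by translating each supporting hyperplane of $\Omega$ inward by $s$. Moreover, for $s \in (0, r_{\rm in}(\Omega))$, the boundary $\partial \Omega_s$ agrees with the level set $\{x \in \Omega : d_\Omega(x) = s\}$. (At $s = r_{\rm in}(\Omega)$ the level set may be lower-dimensional, in which case both the lower and upper bounds are trivial, and at $s > r_{\rm in}(\Omega)$ the claim is vacuous since $\Omega_s = \emptyset$.)

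For the upper bound, I would apply Lemma~\ref{lem: local monotonicity of perimieter} directly to the inclusion of convex sets $\Omega_s \subset \Omega$, giving
\begin{equation*}
    \Haus^{d-1}(\{x\in \Omega: d_\Omega(x)=s\}) = \Haus^{d-1}(\partial \Omega_s) \leq \Haus^{d-1}(\partial\Omega)\,.
\end{equation*}

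For the lower bound, the key geometric input is to exhibit a scaled copy of $\Omega$ inside $\Omega_s$. Pick an incenter $x_0 \in \Omega$, so that $B_{r_{\rm in}(\Omega)}(x_0) \subseteq \Omega$. For any $y \in \Omega$ and $\lambda \in [0, 1]$, the convexity of $\Omega$ yields
\begin{equation*}
    (1-\lambda) B_{r_{\rm in}(\Omega)}(x_0) + \lambda\, y = B_{(1-\lambda) r_{\rm in}(\Omega)}\bigl((1-\lambda) x_0 + \lambda y\bigr) \subseteq \Omega\,,
\end{equation*}
so $d_\Omega\bigl((1-\lambda) x_0 + \lambda y\bigr) \geq (1-\lambda) r_{\rm in}(\Omega)$. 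Setting $\lambda = 1 - s/r_{\rm in}(\Omega)$, this shows
\begin{equation*}
    \tfrac{s}{r_{\rm in}(\Omega)}\, x_0 + \bigl(1 - \tfrac{s}{r_{\rm in}(\Omega)}\bigr)\, \Omega \;\subseteq\; \Omega_s\,.
\end{equation*}
The left-hand side is a homothetic copy of $\Omega$ with ratio $1 - s/r_{\rm in}(\Omega)$, so it has perimeter exactly $(1 - s/r_{\rm in}(\Omega))^{d-1}\, \Haus^{d-1}(\partial\Omega)$. Applying Lemma~\ref{lem: local monotonicity of perimieter} once more to this inclusion of convex sets yields the desired lower bound
\begin{equation*}
    \bigl(1 - \tfrac{s}{r_{\rm in}(\Omega)}\bigr)^{d-1} \Haus^{d-1}(\partial\Omega) \leq \Haus^{d-1}(\partial \Omega_s)\,.
\end{equation*}

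There is no real obstacle here: once one recognizes that $\Omega_s$ is convex, both bounds reduce to monotonicity of perimeter (Lemma~\ref{lem: local monotonicity of perimieter}) under the inclusions $\tfrac{s}{r_{\rm in}(\Omega)} x_0 + (1 - \tfrac{s}{r_{\rm in}(\Omega)})\Omega \subseteq \Omega_s \subseteq \Omega$. The only mildly subtle point is verifying the identity $\partial \Omega_s = \{d_\Omega = s\}$; this follows from the continuity of $d_\Omega$ together with the fact that, for $s < r_{\rm in}(\Omega)$, $\Omega_s$ has nonempty interior (namely $\{d_\Omega > s\}$).
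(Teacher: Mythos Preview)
Your argument is correct and matches what the paper omits: the paper attributes the upper bound to Lemma~\ref{lem: local monotonicity of perimieter}, exactly as you do, and cites \cite{LarsonJFA} for the lower bound, where your homothety-around-an-incenter construction is precisely the key step. One minor correction: at $s=r_{\rm in}(\Omega)$ the set of incenters need not have $\Haus^{d-1}$-measure zero (think of a long thin box in $\R^3$, where the incenter set is a $2$-dimensional rectangle), so the upper bound there is not literally ``trivial''; it follows by taking $s\uparrow r_{\rm in}(\Omega)$ and using the monotonicity in $s$ you have already established.
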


By an application of the co-area formula and the bounds in Lemma~\ref{lem: inner parallel perimeter bounds} one can prove the next two lemmas (see for instance~\cite[Section 5]{FrankLarson_Crelle20} or \cite{LarsonJST}).
\begin{lemma}\label{lem: volume bdry neighbourhood bound}
	If $\Omega \subset \R^d$ is a bounded convex set, then for any $s \in [0, r_{\rm in}(\Omega)]$
\begin{equation*}
	|\{x\in \Omega: d_\Omega(x)<s\}| \leq s\Haus^{d-1}(\partial\Omega)\,.
\end{equation*}
\end{lemma}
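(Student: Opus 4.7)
The plan is to apply the co-area formula to the distance function $d_\Omega$ and then invoke the upper bound of Lemma~\ref{lem: inner parallel perimeter bounds}. Since $\Omega$ is open and $d_\Omega(x) = \dist(x,\Omega^c)$ is $1$-Lipschitz on $\R^d$ with $|\nabla d_\Omega| = 1$ almost everywhere on $\Omega$, the co-area formula gives
\begin{equation*}
    |\{x \in \Omega : d_\Omega(x) < s\}| = \int_0^s \Haus^{d-1}(\{x \in \Omega : d_\Omega(x) = t\})\,dt
\end{equation*}
for every $s \in [0, r_{\rm in}(\Omega)]$.

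Next, I would use the upper bound from Lemma~\ref{lem: inner parallel perimeter bounds}, which states that
\begin{equation*}
    \Haus^{d-1}(\{x \in \Omega : d_\Omega(x) = t\}) \leq \Haus^{d-1}(\partial\Omega)
\end{equation*}
for every $t \in (0, r_{\rm in}(\Omega)]$. Substituting this uniform bound into the co-area identity above yields
\begin{equation*}
    |\{x \in \Omega : d_\Omega(x) < s\}| \leq \int_0^s \Haus^{d-1}(\partial\Omega)\,dt = s\,\Haus^{d-1}(\partial\Omega),
\end{equation*}
which is the claim.

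There is no real obstacle here: once the co-area formula is invoked, the content reduces to the already-stated perimeter monotonicity for inner parallel sets. The only minor point worth a line of justification is that $d_\Omega$ is Lipschitz with $|\nabla d_\Omega| = 1$ a.e., so that the co-area formula applies with a clean Jacobian factor of $1$ and the integrand is precisely the surface measure of the level set.
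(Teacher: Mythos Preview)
Your proof is correct and follows exactly the approach indicated in the paper: apply the co-area formula to $d_\Omega$ and bound the integrand using the upper inequality from Lemma~\ref{lem: inner parallel perimeter bounds}. The paper does not spell out the details but points to this route explicitly.
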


\begin{lemma}
	\label{lem: inradius, volume, perimeter bound}
	If $\Omega \subset \R^d$ is a bounded convex set, then
	\begin{equation*}
		\frac{|\Omega|}{\Haus^{d-1}(\partial\Omega)} \leq r_{\rm in}(\Omega) \leq \frac{d|\Omega|}{\Haus^{d-1}(\partial\Omega)}\,.
	\end{equation*}
\end{lemma}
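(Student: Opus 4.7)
The plan is to deduce both inequalities directly from the co-area formula together with the two-sided estimates on the perimeter of the inner parallel sets provided by Lemma~\ref{lem: inner parallel perimeter bounds}. Writing $R := r_{\rm in}(\Omega)$ and applying the co-area formula to the Lipschitz function $d_\Omega$, one has
\begin{equation*}
    |\Omega| = \int_0^{R} \Haus^{d-1}(\{x\in\Omega:d_\Omega(x)=s\})\,ds\,,
\end{equation*}
since $d_\Omega$ is $1$-Lipschitz and $|\{x\in\Omega: d_\Omega(x)\geq R\}|=0$.

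For the lower bound $|\Omega|/\Haus^{d-1}(\partial\Omega)\leq R$, I would insert the upper bound $\Haus^{d-1}(\{x\in\Omega:d_\Omega(x)=s\})\leq \Haus^{d-1}(\partial\Omega)$ from Lemma~\ref{lem: inner parallel perimeter bounds} into the co-area identity, yielding $|\Omega|\leq R\,\Haus^{d-1}(\partial\Omega)$. (Alternatively, this is immediate from Lemma~\ref{lem: volume bdry neighbourhood bound} applied with $s=R$, since every point of $\Omega$ satisfies $d_\Omega(x)\leq R$.)

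For the upper bound $R\leq d|\Omega|/\Haus^{d-1}(\partial\Omega)$, I would instead invoke the lower bound from Lemma~\ref{lem: inner parallel perimeter bounds}, which gives
\begin{equation*}
    |\Omega|\geq \Haus^{d-1}(\partial\Omega)\int_0^R \Bigl(1-\tfrac{s}{R}\Bigr)^{d-1}ds = \Haus^{d-1}(\partial\Omega)\,\frac{R}{d}\,,
\end{equation*}
after the change of variable $t=s/R$ and the elementary evaluation $\int_0^1(1-t)^{d-1}dt=1/d$. Rearranging yields the claimed upper bound.

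Since both ingredients (the co-area formula for the Lipschitz function $d_\Omega$ and the two-sided parallel-perimeter inequalities) are already in hand, no genuine obstacle remains; the only point to justify carefully is that the co-area identity for $|\Omega|$ really runs only up to $R$, which follows from the definition of the inradius together with the fact that the level set $\{d_\Omega=R\}$ has $d$-dimensional Lebesgue measure zero.
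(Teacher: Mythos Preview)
Your proof is correct and follows precisely the route indicated in the paper, namely the co-area formula combined with the two-sided bounds on $\Haus^{d-1}(\{d_\Omega=s\})$ from Lemma~\ref{lem: inner parallel perimeter bounds}. The only detail worth tightening is the justification that $|\{d_\Omega\geq R\}|=0$: since $d_\Omega$ is concave on the bounded convex set $\Omega$ and vanishes on $\partial\Omega$, its maximum set is convex with empty interior, hence null.
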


Throughout the paper we several times make use of the following consequence of the Bishop--Gromov comparison theorem, which in Euclidean space has an elementary proof.

\begin{lemma}\label{lem: Bishop-Gromov monotonicity}
	If $\Omega \subset \R^d$ is convex, then the function
\begin{equation*}
	(0, \infty) \ni r \mapsto \frac{|\Omega \cap B_r(a)|}{r^d}
\end{equation*}
is nonincreasing for any fixed $a\in \overline{\Omega}$.
\end{lemma}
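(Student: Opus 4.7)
The plan is to reduce the claim to a ray-by-ray statement by introducing polar coordinates centered at $a$. For each $\theta\in\Sph^{d-1}$ define the radial distance in direction $\theta$,
$$
\rho(\theta) := \sup\{\,t\geq 0 : a+t\theta\in\Omega\,\} \in [0,\infty].
$$
The first step is to observe that, thanks to convexity of $\Omega$, the set $\{t\geq 0 : a+t\theta\in\Omega\}$ is a convex subset of $[0,\infty)$, hence an interval. Since $a\in\overline\Omega$, the closure of this interval contains the origin; consequently, up to a one-dimensional null set, it coincides with $[0,\rho(\theta)]$.

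With this in hand, one writes in polar coordinates
$$
|\Omega\cap B_r(a)| = \int_{\Sph^{d-1}} \int_0^{\min\{r,\rho(\theta)\}} t^{d-1}\,dt\,d\theta = \frac{1}{d}\int_{\Sph^{d-1}} \min\{r,\rho(\theta)\}^d\,d\theta,
$$
so that after dividing by $r^d$,
$$
\frac{|\Omega\cap B_r(a)|}{r^d} = \frac{1}{d}\int_{\Sph^{d-1}} \min\{1,\rho(\theta)/r\}^d\,d\theta.
$$

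For each fixed $\theta$ the integrand $r\mapsto\min\{1,\rho(\theta)/r\}^d$ is manifestly nonincreasing in $r$: it equals $1$ for $r\leq\rho(\theta)$ and $(\rho(\theta)/r)^d$ for $r>\rho(\theta)$. Monotone convergence (or simply monotonicity of the integral in $r$) then yields the claimed monotonicity of $r\mapsto |\Omega\cap B_r(a)|/r^d$.

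There is no real obstacle here; the only point requiring a sentence of justification is the starlike-from-$a$ property of $\Omega$ along each ray, which is where the hypotheses $\Omega$ convex and $a\in\overline\Omega$ are used. Everything else is a computation in polar coordinates followed by a one-variable monotonicity observation.
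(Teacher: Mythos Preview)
Your proof is correct. It differs from the paper's argument, which proceeds by a single scaling inclusion: for $R>r>0$ the paper shows that the dilate $(1-\tfrac rR)\{a\}+\tfrac rR(\Omega\cap B_R(a))$ is contained in $\Omega\cap B_r(a)$, and then reads off the inequality from the volume scaling $|\,\tfrac rR\,E\,|=(r/R)^d|E|$. Your approach instead produces the explicit formula
\[
\frac{|\Omega\cap B_r(a)|}{r^d}=\frac1d\int_{\Sph^{d-1}}\min\{1,\rho(\theta)/r\}^d\,d\theta
\]
and observes that the integrand is nonincreasing in $r$ pointwise in $\theta$. Both arguments are elementary and rest on the same geometric fact, namely that $\Omega$ is star-shaped about $a$; the paper packages this as a set inclusion, you as a radial decomposition. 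Your formula has the mild advantage of being reusable if one later wants quantitative information, while the paper's inclusion argument avoids the small measure-zero bookkeeping at the boundary (which you handle correctly by passing to $\overline\Omega$, since $|\partial\Omega|=0$ for convex sets).
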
 

\begin{proof}
	For $a\in\overline\Omega$ and $R>r>0$ consider the set
	\begin{align*}
		E & := \left( 1 -\frac rR\right)\{a\} + \frac rR\, \left(\Omega\cap B_R(a)\right) \\
		& = \left\{ \left( 1 -\frac rR\right)a + \frac rR\,x :\ x \in \Omega\cap B_R(a) \right\}.
	\end{align*}
	Then,
	$$
	|E| = \left| \frac rR\, \left(\Omega\cap B_R(a)\right) \right| = \left( \frac{r}{R} \right)^d |\Omega\cap B_R(a)|\,.
	$$
	Meanwhile,
	$$
	E \subset \Omega\cap B_r(a) \,.
	$$
	Indeed, on the one hand, $E\subset \left( 1 -\frac rR\right)\{a\} + \frac rR\, B_R(a) = B_r(a)$ and, on the other hand $E\subset \left( 1 -\frac rR\right)\{a\} + \frac rR\, \Omega \subset\Omega$ by convexity of $\Omega$. Combining the bound $|E|\leq |\Omega\cap B_r(a)|$ with the explicit expression for $|E|$ we obtain the inequality
	\begin{equation*}
		\frac{|\Omega \cap B_r(a)|}{r^d} \geq \frac{|\Omega \cap B_R(a)|}{R^d}\,,
	\end{equation*}
	which is the desired monotonicity.
\end{proof}

\begin{proposition}\label{prop: Minkowski sum bounds}
	Fix $d \geq 1$. Given $c_1>0$ there exists $c_2>0$ with the following properties. 
	If $\Omega \subset \R^d$ is open, bounded, and convex, then
	\begin{align*}
		|\Omega + B_r|-|\Omega| & \geq r\Haus^{d-1}(\partial\Omega) \quad \mbox{for all }r>0\,,\\
		|\Omega + B_r|-|\Omega|&\lesssim_d \Haus^{d-1}(\partial\Omega)r\Bigl[1+\Bigl(\frac{r}{r_{\rm in}(\Omega)}\Bigr)^{d-1}\Bigr] \quad \mbox{for all }r>0\,,\\
		|\Omega+B_r| - |\Omega| &\leq \Haus^{d-1}(\partial\Omega)r\Bigl[1 + c_2 \frac{r}{r_{\rm in}(\Omega)}\Bigr]\quad \mbox{for all }r\in [0, c_1 r_{\rm in}(\Omega)]\,,\\
		|\Omega+B_r| &\leq c_2 \Haus^{d-1}(\partial\Omega) r \Bigl(\frac{r}{r_{\rm in}(\Omega)}\Bigr)^{d-1} \quad \mbox{for all } r\geq c_1 r_{\rm in}(\Omega)\,.
	\end{align*}
\end{proposition}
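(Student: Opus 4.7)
The strategy is to exploit the identity $\tfrac{d}{dr}|\Omega+B_r|=\Haus^{d-1}(\partial(\Omega+B_r))$, which is a standard consequence of Steiner's formula for convex bodies (or alternatively follows from the co-area formula applied to the $1$-Lipschitz function $x\mapsto\dist(x,\Omega)$), together with the monotonicity of perimeter under inclusion of convex sets from Lemma~\ref{lem: local monotonicity of perimieter} and the estimate $|\Omega|\leq r_{\rm in}(\Omega)\Haus^{d-1}(\partial\Omega)$ from Lemma~\ref{lem: inradius, volume, perimeter bound}. The first inequality follows immediately: the inclusion $\Omega\subset\Omega+B_s$ and Lemma~\ref{lem: local monotonicity of perimieter} give $\Haus^{d-1}(\partial(\Omega+B_s))\geq\Haus^{d-1}(\partial\Omega)$ for every $s\geq 0$, and integrating over $[0,r]$ produces $|\Omega+B_r|-|\Omega|\geq r\Haus^{d-1}(\partial\Omega)$.

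For the three upper bounds, the key geometric input is the dilation inclusion
\begin{equation*}
\Omega+B_r\subset\bigl(1+r/r_{\rm in}(\Omega)\bigr)\,\Omega,
\end{equation*}
valid after translating $\Omega$ so that an inscribed ball of radius $r_{\rm in}:=r_{\rm in}(\Omega)$ is centered at the origin. The proof is a short convexity argument: writing $\lambda=1+r/r_{\rm in}$ and decomposing $x\in\Omega+B_r$ as $x=y+z$ with $y\in\Omega,|z|\leq r$, one has $z/(\lambda-1)\in B_{r_{\rm in}}\subset\Omega$ and therefore $x/\lambda=\lambda^{-1}y+(1-\lambda^{-1})\bigl(z/(\lambda-1)\bigr)\in\Omega$ by convexity. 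Combining this inclusion with Lemma~\ref{lem: local monotonicity of perimieter} gives $\Haus^{d-1}(\partial(\Omega+B_r))\leq(1+r/r_{\rm in})^{d-1}\Haus^{d-1}(\partial\Omega)$, and integrating produces
\begin{equation*}
|\Omega+B_r|-|\Omega|\leq r\,(1+r/r_{\rm in})^{d-1}\Haus^{d-1}(\partial\Omega).
\end{equation*}
The second inequality now follows from the elementary estimate $(1+x)^{d-1}\leq 2^{d-1}(1+x^{d-1})$ valid for all $x\geq 0$, while the third inequality follows from the convexity of $x\mapsto(1+x)^{d-1}$ on $[0,c_1]$, which yields $(1+x)^{d-1}\leq 1+c_2 x$ there with $c_2=c_2(c_1,d)$. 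For the fourth inequality, the dilation inclusion directly gives $|\Omega+B_r|\leq(1+r/r_{\rm in})^d|\Omega|$; applying Lemma~\ref{lem: inradius, volume, perimeter bound} to estimate $|\Omega|\leq r_{\rm in}\Haus^{d-1}(\partial\Omega)$ and using $1+r/r_{\rm in}\leq(1+c_1^{-1})r/r_{\rm in}$ in the regime $r\geq c_1 r_{\rm in}$ extracts the desired factor $(r/r_{\rm in})^{d-1}$.

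The only non-routine step is the dilation inclusion; it is the sole place where convexity of $\Omega$ plays an essential role, and it is what allows the geometry to be controlled solely by the two scalars $r_{\rm in}(\Omega)$ and $\Haus^{d-1}(\partial\Omega)$ without reference to higher-order quermassintegrals. Once this inclusion is in hand, the four inequalities are obtained by elementary manipulations together with the two lemmas already established in the appendix.
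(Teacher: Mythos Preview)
Your proof is correct and takes a genuinely different, more elementary route than the paper. The paper expands $|\Omega+B_r|$ via Steiner's formula in terms of mixed volumes and then uses the monotonicity of mixed volumes together with the inclusion $\overline{B_{r_{\rm in}(\Omega)}(x_0)}\subset\overline{\Omega}$ to bound each term $W(\overline{\Omega},\ldots,\overline{\Omega},\overline{B_{r_{\rm in}}},\ldots,\overline{B_{r_{\rm in}}},\overline{B_1})$ by $d^{-1}\Haus^{d-1}(\partial\Omega)$. You instead encode the same geometric input through the dilation inclusion $\Omega+B_r\subset(1+r/r_{\rm in})\Omega$ and combine it with the derivative identity $\tfrac{d}{dr}|\Omega+B_r|=\Haus^{d-1}(\partial(\Omega+B_r))$ and perimeter monotonicity. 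Your approach avoids the mixed-volume formalism entirely and stays within the tools already developed in the appendix; the paper's approach is more systematic and yields slightly more explicit intermediate constants (e.g.\ $c_2=\tfrac{(1+c_1)^d-1-dc_1}{dc_1^2}$ for the third bound). One minor inaccuracy in your write-up: the dilation inclusion is not the \emph{sole} place convexity enters---Lemma~\ref{lem: local monotonicity of perimieter}, which you invoke repeatedly, also requires it.
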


\begin{proof}[Proof of Proposition~\ref{prop: Minkowski sum bounds}]
	The proof of these inequalities are based on writing $|\Omega+ B_r|$ in terms of mixed volumes. (Technically mixed volumes are usually defined for compact convex sets, but as $\Omega$ is convex and may be assumed non-empty each of the geometric quantities appearing in the statement are the same if $\Omega, B_r$ are replaced by their respective closures.)

	With $W$ denoting the mixed volume,
	\begin{equation}\label{eq: Minkowski sum}
		|\Omega+B_r| = \sum_{j=0}^d \binom{d}{j}W(\underbrace{\overline{\Omega}, \ldots, \overline{\Omega}}_{d-j \textrm{ copies}}, \underbrace{\overline{B_{r}}, \ldots, \overline{B_{r}}}_{j \textrm{ copies}})\,.
	\end{equation}
	To prove the estimates we recall the following basic properties of $W$ (see, for instance,~\cite{SchneiderBook}); if $K_1, \ldots, K_d$ are convex bodies, then
	\begin{enumerate}[label=(\roman*)]
		\item\label{itm: MV nonneg} $W(K_1, \ldots, K_d)\geq 0$,
		\item\label{itm: MV monotone} $W(K_1, \ldots, K_d) \leq W(K_1', K_2, \ldots, K_d)$ for any $K_1' \supset K_1$,
		\item\label{itm: MV transl. inv.} $W(K_1 + \{x\}, K_2, \ldots, K_d)=W(K_1, K_2, \ldots, K_d)$ for any $x \in \R^d$,
		\item\label{itm: MV perm. inv.} $W(K_1, \ldots, K_d) = W(K_{\sigma(1)}, \ldots, K_{\sigma(d)})$ for any permutation $\sigma$,
		\item\label{itm: MV scaling} $W(rK_1, K_2, \ldots, K_d) = r W(K_1, \ldots, K_d)$ for any $r>0$,
		\item\label{itm: MV vol.} $W(K_1, \ldots, K_1) = |K_1|$,
		\item\label{itm: MV per.} $W(K_1, \ldots, K_1, B_1) = d^{-1}\Haus^{d-1}(\partial K_1)$.
	\end{enumerate}
	Note that by \ref{itm: MV perm. inv.}-\ref{itm: MV per.},
	\begin{equation}\label{eq: Minkowski sum first two terms}
		\sum_{j=0}^1 \binom{d}{j}W(\underbrace{\overline{\Omega}, \ldots, \overline{\Omega}}_{d-j \textrm{ copies}}, \underbrace{\overline{B_{r}}, \ldots, \overline{B_{r}}}_{j \textrm{ copies}}) = |\Omega| + r \Haus^{d-1}(\partial\Omega)\,.
	\end{equation}

	The first bound in the proposition follows by \eqref{eq: Minkowski sum first two terms} and dropping all the terms in the right-hand side of~\eqref{eq: Minkowski sum} with $j \geq 2$ (which are nonnegative by~\ref{itm: MV nonneg}).

	For the second and third bounds in the proposition note that by \ref{itm: MV perm. inv.} and \ref{itm: MV scaling},
	\begin{equation}\label{eq: Minkowski sum 2}
	\begin{aligned}
	 	|\Omega+B_{r}| 
	 	-|\Omega|&=\sum_{j=1}^d \binom{d}{j} W(\underbrace{\overline{\Omega}, \ldots, \overline{\Omega}}_{d-j \textrm{ copies}}, \underbrace{\overline{B_{r}}, \ldots, \overline{B_{r}}}_{j \textrm{ copies}})\\
	 	&= r_{\rm in}(\Omega)\sum_{j=1}^d \binom{d}{j} \Bigl(\frac{r}{r_{\rm in}(\Omega)}\Bigr)^{j} W(\underbrace{\overline{\Omega}, \ldots, \overline{\Omega}}_{d-j \textrm{ copies}}, \underbrace{\overline{B_{r_{\rm in}(\Omega)}}, \ldots, \overline{B_{r_{\rm in}(\Omega)}}}_{j-1 \textrm{ copies}}, \overline{B_1})\,.
	\end{aligned}
	\end{equation}
	Since there exists $x_0 \in \Omega$ such that $\overline{B_{r_{\rm in}(\Omega)}(x_0)}\subset \overline{\Omega}$ it follows from \ref{itm: MV monotone}-\ref{itm: MV perm. inv.} and \ref{itm: MV per.} that
	\begin{equation}\label{eq: MV per. bound}
	 	W(\underbrace{\overline{\Omega}, \ldots, \overline{\Omega}}_{d-j \textrm{ copies}}, \underbrace{\overline{B_{r_{\rm in}(\Omega)}}, \ldots, \overline{B_{r_{\rm in}(\Omega)}}}_{j-1 \textrm{ copies}}, \overline{B_1}) \leq W(\underbrace{\overline{\Omega}, \ldots, \overline{\Omega}}_{d-1 \textrm{ copies}}, \overline{B_1}) = d^{-1}\Haus^{d-1}(\partial\Omega)\,,
	 \end{equation}
	  for any $j =1, \ldots, d$ (with equality if $j=1$).

	  From~\eqref{eq: Minkowski sum 2} and~\eqref{eq: MV per. bound},
	  \begin{align*}
	  	|\Omega+B_{r}|-|\Omega| 
	 		&\leq 
	 			d^{-1}r_{\rm in}(\Omega)\Haus^{d-1}(\partial\Omega)\sum_{j=1}^d \binom{d}{j} \Bigl(\frac{r}{r_{\rm in}(\Omega)}\Bigr)^{j} \\
	 			&=r_{\rm in}(\Omega)\Haus^{d-1}(\partial\Omega)\frac{\bigl(1+\frac{r}{r_{\rm in}(\Omega)}\bigr)^d-1}{d}\\
	 			&\lesssim_d \Haus^{d-1}(\partial\Omega)r \Bigl[1+\Bigl(\frac{r}{r_{\rm in}(\Omega)}\Bigr)^{d-1}\Bigr]\,,
	  \end{align*}
	  this proves the second bound.

	  Similarly from~\eqref{eq: Minkowski sum 2},~\eqref{eq: MV per. bound}, and the assumption $r\leq c_1 r_{\rm in}(\Omega)$,
	  \begin{align*}
	  	|\Omega+B_{r}|-|\Omega| 
	 		&\leq 
	 			r\Haus^{d-1}(\partial\Omega) + d^{-1}r_{\rm in}(\Omega)\Haus^{d-1}(\partial\Omega)\sum_{j=2}^d \binom{d}{j} \Bigl(\frac{r}{r_{\rm in}(\Omega)}\Bigr)^{j} \\
	 			&=r\Haus^{d-1}(\partial\Omega) + d^{-1}r_{\rm in}(\Omega)\Haus^{d-1}(\partial\Omega)\Bigl(\frac{r}{r_{\rm in}(\Omega)}\Bigr)^{2}\sum_{j=2}^d \binom{d}{j} \Bigl(\frac{r}{r_{\rm in}(\Omega)}\Bigr)^{j-2} \\
	 			&\leq r\Haus^{d-1}(\partial\Omega) + d^{-1}r_{\rm in}(\Omega)\Haus^{d-1}(\partial\Omega)\Bigl(\frac{r}{r_{\rm in}(\Omega)}\Bigr)^{2}\sum_{j=2}^d \binom{d}{j} c_1^{j-2} \\
	 			&=
	 			r\Haus^{d-1}(\partial\Omega)\Bigl[1 + \frac{r}{r_{\rm in}(\Omega)}\frac{(1+c_1)^d-1-dc_1}{dc_1^2}\Bigr]\,.
	  \end{align*}
	  This proves the third bound for any
	  \begin{equation*}
	  	c_2 \geq \frac{(1+c_1)^d-1-dc_1}{dc_1^2}\,.
	  \end{equation*}

	  For the final bound we deduce from~\eqref{eq: Minkowski sum 2},~\eqref{eq: MV per. bound}, the inequality $|\Omega|\leq r_{\rm in}(\Omega)\Haus^{d-1}(\partial\Omega)$ (see Lemma~\ref{lem: inradius, volume, perimeter bound}), and the assumption $r \geq c_1 r_{\rm in}(\Omega)$
	  \begin{align*}
	  	|\Omega+B_{r}|
	 		&\leq 
	 			r_{\rm in}(\Omega)\Haus^{d-1}(\partial\Omega)+d^{-1}r_{\rm in}(\Omega)\Haus^{d-1}(\partial\Omega)\sum_{j=1}^d \binom{d}{j} \Bigl(\frac{r}{r_{\rm in}(\Omega)}\Bigr)^{j} \\
	 			&=r_{\rm in}(\Omega)\Haus^{d-1}(\partial\Omega)\frac{\bigl(1+\frac{r}{r_{\rm in}(\Omega)}\bigr)^d+d-1}{d}\\
	 			&\leq c_2 \Haus^{d-1}(\partial\Omega)r\Bigl(\frac{r}{r_{\rm in}(\Omega)}\Bigr)^{d-1}\,,
	  \end{align*}
	  for any
	  \begin{equation*}
	  	c_2 \geq \sup_{s \in [c_1, \infty)}\frac{(1+s)^d+d-1}{d s^d}\,.
	  \end{equation*}
	  This completes the proof of Proposition~\ref{prop: Minkowski sum bounds}.
\end{proof}


\subsection*{Acknowledgment}
The authors are grateful for the anonymous referee’s careful reading and helpful suggestions. We would like to thank Jean Lagac\'e for stimulating discussions on asymptotics for polygons and
Nikolay Filonov for making us aware of a number of inaccuracies in an earlier version of this paper.




\phantomsection
\addcontentsline{toc}{part}{References}
\bibliographystyle{amsalpha}
\addtocontents{toc}{\SkipTocEntry}

\def\myarXiv#1#2{\href{http://arxiv.org/abs/#1}{\texttt{arXiv:#1\,[#2]}}}

\end{document}